\documentclass[a4paper,11pt]{article}
\usepackage{graphicx} 
\usepackage[english]{babel}
\usepackage[utf8]{inputenc}
\usepackage{amsmath}
\usepackage{amsthm}
\usepackage{amssymb}
\usepackage{url}
\usepackage{hyperref}
\usepackage{mathabx}
\usepackage{biblatex}
\usepackage{enumerate}
\usepackage{appendix}
\usepackage{float}
\usepackage[colorinlistoftodos]{todonotes}
\usepackage[skip=5pt, indent=20pt]{parskip}
\usepackage{setspace}
\usepackage{multirow}
\usepackage[font=small,labelfont=bf]{caption}
\usepackage{geometry}
\newtheorem{lemma}{Lemma}[section]
\newtheorem{theorem}{Theorem}[section]
\newtheorem{remark}{Remark}[section]
\newtheorem{corollary}{Corollary}[section]

\allowdisplaybreaks[4]

\title{High-Moment Stability and Error Analysis of a Fully Discrete LDG-IMEX Method for High Dimensional Nonlinear Stochastic Convection-Diffusion Equations}
\author{    
            Yiming Chen\thanks{
			Department of Mathematics, The Ohio State University, Columbus, OH 43210,
			USA. Email: \href{mailto:chen.11042@osu.edu}{chen.11042@osu.edu}. }
            \and
            Yunzhang Li\thanks{
			Research Institute of Intelligent Complex Systems, Fudan University, Shanghai 200433, P.R. China. Email: \href{mailto:li_yunzhang@fudan.edu.cn}{li\_yunzhang@fudan.edu.cn}.}
            \and 
			Yulong Xing\thanks{
			Department of Mathematics, The Ohio State University, Columbus, OH 43210,
			USA. Email: \href{mailto:xing.205@osu.edu}{xing.205@osu.edu}.}             
		    }
\date{\vspace{-2ex}}

\begin{document}
\pagenumbering{arabic}

\maketitle

\begin{abstract}
A fully discrete local discontinuous Galerkin (LDG) method coupled with an implicit-explicit (IMEX) Euler time discretization is presented and analyzed for a class of high dimensional nonlinear stochastic convection-diffusion equations driven by multiplicative $\mathcal Q$-Wiener noise. 
The model allows nonlinear leading coefficients, nonlinear convection terms, dissipative source terms, and gradient-dependent noise. 
The diffusion operator is treated implicitly through the LDG formulation, while the nonlinear convection, lower-order drift, and stochastic terms are evaluated explicitly.
The main contribution is a high-moment stability and error analysis for the fully discrete scheme. 
A central difficulty is that the nonlinear terms lead to pathwise growth factors that cannot be controlled uniformly on the full sample space. To provide the stability and error estimate, we introduce recursively defined nested subsets adapted to the numerical solution.
Under the stated stochastic parabolicity and refinement conditions, we prove that these subsets have probabilities converging to one.
On these subsets, the numerical solution satisfies high-moment stability, and the fully discrete error converges with order arbitrarily close to $r+1$ in space and $1/2$ in time. 
We also derive a pathwise error estimate by combining the high-moment error bound with a discrete Kolmogorov argument. 
Numerical experiments for stochastic Burgers' and Allen-Cahn equations confirm the theoretical rates and demonstrate the robustness of the proposed method for nonlinear stochastic models.
\end{abstract}
\vspace{2ex}

\textit{2020 Mathematics Subject Classification:} Primary 65M12, 65M15, 65M60, 65C30; Secondary 60H15

{\textbf{Keywords:} nonlinear stochastic PDEs; fully discrete local discontinuous Galerkin; pathwise error estimate for nonlinear convection terms; high-moment stability}


\begin{section}{Introduction}
Stochastic convection-diffusion equations arise naturally in the modeling of transport, diffusion, and reaction processes subject to random perturbations. 
They appear in applications such as fluid flow, heat transfer, porous-media transport, and phase-field dynamics, where uncertainty may enter through external forcing, unresolved scales, or random material properties. 
Compared with deterministic convection-diffusion equations, stochastic models introduce additional analytical and numerical difficulties: solutions are generally not differentiable in time, pathwise maximum principles may fail, and nonlinear drift or noise terms can generate growth factors that are difficult to control uniformly on the full sample space. 
These issues become more pronounced when the diffusion matrix depends on the solution, the convective fluxes have polynomial growth, the drift contains a nonlinear dissipative reaction, and the multiplicative noise depends on the solution gradient.
In this paper, we propose and analyze a fully discrete local discontinuous Galerkin (LDG) method coupled with an implicit-explicit (IMEX) time discretization for such nonlinear stochastic convection-diffusion equations.

We consider the following nonlinear stochastic convection-diffusion equation with a solution-dependent leading diffusion matrix, driven by multiplicative $\mathcal Q$-Wiener noise:
\begin{gather}
\label{spde:conv-diff}
    \begin{cases}
        \mathrm{d}u = \left\{\nabla \!\cdot\! [A(\cdot,u)\nabla u] - \nabla \!\cdot\! F(u) + \psi(\cdot,u,\nabla u) \right\}\mathrm{d}t + g(\cdot,u,\nabla u) \mathrm{d}W_t,  & (\omega,x,t)\in \Omega\times \mathcal{D} \times (0,T], \\
        u(\cdot,0) = u_0,  & (\omega,x) \in \Omega \times \mathcal{D}.
    \end{cases}
\end{gather}
where $\mathcal{D}=\mathbb{T}^d$ is the $d$-dimensional torus with periodic boundary conditions, $A = [a_{ij}(\omega,x,t,u)]$ is a symmetric, uniformly positive definite matrix, and $F(u) = (f_1(u),\ldots,f_d(u))$ denotes the vector of nonlinear convection flux functions. 
The lower-order drift term $\psi$ and the noise coefficient $g$ may depend on both the solution and its gradient.
Let $\{e_m\}_{m\geq1}$ be a set of orthonormal bases of $L^2(\mathcal{D})$ and $\mathcal{Q}$ be a symmetric Hilbert-Schmidt operator on $L^2(\mathcal{D})$ with finite trace, where $\mathcal{Q} e_m = \gamma_m e_m$, $\gamma_m > 0$, and $\text{Tr}(\mathcal{Q}) := \sum_{m=0}^{\infty} \gamma_m < \infty$. The space-time $\mathcal Q$-Wiener process $\{W_t\}_{t\in[0,T]}$ is defined as 
\[ 
    W_t = W_t(t,x,\omega) = \sum_{m=1}^{\infty} \sqrt{\gamma_m} e_m(x)\mathcal{B}_m(t,\omega) \quad x\in \mathcal{D}, 
\]
where $\{ \mathcal{B}_m\}_{m\geq 1}$ is a set of independent standard Brownian motions defined on the filtered probability space $(\Omega, \mathcal{F},\{ \mathbb{F}_t\}_{0 \leq t \leq T}, \mathbb{P})$.

The well-posedness and regularity of parabolic stochastic partial differential equations (SPDEs) have been extensively studied by semigroup methods \cite{Chow2007_Parabolic_SPDEs, Prato_Zabczyk_2014}, variational approaches \cite{Krylov_Rozovskii_1981, Prevot_Rockner2007}, and $L^p$-regularity theory \cite{Krylov_SIAMNu1996, Krylov_AMS1999}.
Existence, uniqueness, and regularity results are available for linear and semilinear equations \cite{Chow2007_Parabolic_SPDEs, Prato_Zabczyk_2014}, as well as for various quasilinear problems \cite{Debussche_SIAMMA2015, DuLiu_2019Cauchy, Krylov_AMS1999, Pardoux_Peng_1994}. 
These theories provide the functional framework and solution regularity required for the stability and convergence analysis of numerical approximations.
Since analytic solutions are rarely available, efficient and accurate numerical methods are essential for practical simulation of SPDEs. 
We refer to \cite{LiShuTang2021_ESAIM, YangZhaoZhao2023_NMPDE,ondrejat_Prohl_Walkington_2023} for further references on finite difference, finite element, and spectral methods for parabolic SPDEs.

The discontinuous Galerkin (DG) methods considered in the paper are high-order finite element methods with discontinuous piecewise polynomial approximation spaces. 
They were originally introduced for neutron transport problems \cite{ReedHill1973} and were later developed extensively for hyperbolic conservation laws, especially through the Runge-Kutta DG framework \cite{CockburnShu_I_M2AN_1991, cockburnShu_II_Mathcomp1989, cockburnShu_III_JCP1989, cockburnHouShu_IV_Mathcomp1990, CockburnShu_V_JCP1998}. 
The LDG method was later introduced for convection-diffusion equations by rewriting the higher-order spatial operators as first-order systems \cite{CockburnShu_SIAMNu1998}.
DG and LDG methods are attractive for convection-dominated and nonlinear problems because of their local conservation, high-order accuracy, flexibility for complex geometries, suitability for $h$-$p$ adaptivity, and efficient parallel implementation. 
The LDG methods have been successfully applied to many deterministic nonlinear problems, such as Navier-Stokes equation \cite{Cockburn_Mathcomp2005_NSE}, wave equation \cite{ChouShuXing_JCP2014, GuoXing_JSC2021}, convection-diffusion equation \cite{Castillo_Cockburn_Mathcomp2002} and KdV-type equations \cite{XuShu_CMAME2007, YanShu_SIAMNu2002, KX2016}. 
Fully discrete stability and error estimates for DG and LDG methods have also been developed for deterministic nonlinear problems;
for example, Zhang and Shu analyzed explicit Runge-Kutta DG schemes for scalar conservation laws \cite{ZhangShu2004, ZhangShu2010}, and Wang \textit{et al.} studied LDG schemes with IMEX Runge-Kutta time stepping for nonlinear convection-diffusion equations \cite{WangShuZhang2016AMC}.
For problems with nonlinear convection and stiff diffusion, IMEX time discretizations are particularly useful because they treat the diffusion term implicitly while allowing nonlinear convection terms to be handled explicitly.

In recent years, there has been a growing interest in designing DG and finite element methods for nonlinear stochastic partial differential equations, including stochastic Navier-Stokes equation \cite{FengVo_CiCP2024}, nonlinear wave equations \cite{HongHouSun_JCP2022, Li_Wu_Xing_2022}, the stochastic Allen-Cahn equation \cite{YangZhaoZhao_CiCP2024}, the stochastic Cahn-Hilliard equation \cite{LiQinMingWang_CMA2018, zhouLi_CiCP2022_LDG} and the stochastic Boussinesq equations \cite{Vo_Boussinesq_Multiplicative2025}. 
Among these works, Li \textit{et al.} studied one-dimensional DG, ultra-weak DG and LDG methods for stochastic conservation laws \cite{LiShuTang2020_SISC}, stochastic KdV equation \cite{LiShuTang_JSC2020} and stochastic parabolic equation \cite{LiShuTang2021_ESAIM}, respectively. 
In these studies, the analysis is mainly carried out for semi-discrete methods, and optimal error estimates are obtained in the semilinear setting. 
However, a complete fully discrete high-moment error analysis for nonlinear SPDEs remains substantially more challenging.
At the fully discrete level, the low temporal regularity of stochastic solutions must be handled simultaneously with the spatial projection error and stochastic increments. 
Moreover, polynomially growing convective fluxes and reaction terms produce random growth factors involving pathwise norms of the exact and numerical solutions.
Solution-dependent diffusion and gradient-dependent noise introduce additional coupling among the errors in the solution, its gradient, and the LDG auxiliary flux variables. 
Because these random growth factors cannot generally be bounded uniformly on the full sample space, a direct application of the deterministic error argument may not work in a straightforward way.
These difficulties motivate the development of a fully discrete LDG analysis that can handle nonlinear diffusion, nonlinear convection and source terms, and gradient-dependent multiplicative noise. 

The present paper extends the quasilinear fully discrete analysis in \cite{chen_semi-linear_2d} to a high-dimensional fully discrete LDG-IMEX-Euler approximation of nonlinear stochastic convection-diffusion equations on Cartesian meshes. 
Compared with the semilinear model, the equation studied here includes solution-dependent leading coefficients, nonlinear convection terms, and a dissipative nonlinear source term. 
These additional nonlinearities generate pathwise growth factors that are absent from the semilinear error analysis and cannot, in general, be controlled uniformly on the full sample space.
The main contributions of this paper are fourfold. 
First, we introduce novel recursively defined localized subsets of the sample space to control the nonlinear growth factors appearing in the fully discrete stability and error analyses.
Subset and stopping-time type arguments have been used in the SPDE numerical literature to facilitate discrete Gr\"onwall estimates or to recover pathwise regularity on high-probability events \cite{Carelli_Prohl_SIAMNu2012, Li_Wu_Xing_2022, Vo_Boussinesq_Multiplicative2025}. 
In the present setting, the relevant growth factors depend on the numerical solution itself, and hence the subsets must be constructed recursively in time.
This construction keeps the indicators compatible with the adaptedness of the numerical solution and the martingale estimates. 
On these localized subsets, the nonlinear growth factors can be controlled pathwise, and we prove that the subsets converge to the full sample space in probability as the discretization parameters tend to zero.
Second, we establish fully discrete high-moment stability estimates for the nonlinear LDG-IMEX-Euler scheme on the localized subsets. 
The proof is nontrivial because the leading diffusion matrix is solution-dependent, while the lower-order drift and multiplicative noise may be driven by the solution gradient. 
The energy argument must absorb and control the gradient-dependent drift and stochastic terms using the coercivity of the implicit LDG diffusion operator. 
This leads to a stochastic parabolicity condition on the leading coefficient matrix.
Third, we prove high-moment error estimates for the fully discrete scheme in the general nonlinear setting. 
On the localized subsets, the resulting error bound is arbitrarily close to the optimal order $r+1$ in space and order $1/2$ in time. 
The proof must control the spatial projection error, the temporal discretization error, the stochastic error, and the nonlinear convection and source terms in a unified argument. 
In two dimensions, additional LDG coupling terms involving the auxiliary variables and projection errors do not vanish. 
We derive auxiliary estimates relating the errors in the numerical flux and gradient variables in order to close the high-moment error estimate.
Fourth, we derive a pathwise error estimate for the fully discrete method by applying a discrete Kolmogorov lemma to the high-moment error bound.
The framework covers stochastic Burgers-type equations with nonlinear convection and gradient-dependent noise, as well as stochastic Allen--Cahn-type equations through the dissipative reaction term.
We present the analysis in two space dimensions to clearly demonstrate the main ideas, and the framework can be extended to higher-dimensional Cartesian meshes.
The treatment of more general meshes and fully implicit or higher-order time discretizations is left for future work.

To facilitate the analysis, we use the following notation throughout the paper. 
Let $(\cdot, \cdot)$ represent the standard $L^2$ inner product over the domain $\mathcal{D}$. 
For a non-negative integer $r$, the spatial $L^2$ and Sobolev $H^r$ norms with respect to $x \in \mathbb{R}^d$ are designated as $\| \cdot\|$ and $\| \cdot\|_r := \| \cdot\|_{H^r}$, respectively. 
For a stochastic process $\phi:\Omega\times[0,T]\to H^r(\mathcal D)$, we use the norms
$$\| \phi\|_{L^{2}(\Omega \times [0,T]; H^{r})}^2 := \mathbb{E}\left[ \int_0^T \| \phi(t) \|_{H^r}^2 \mathrm{d}t\right] \quad \mathrm{and} \quad \| \phi\|_{L^{2}(\Omega,L^{\infty}[0,T; H^{r}])}^2 := \mathbb{E}\left[ \sup_{t \in [0,T]} \| \phi(t) \|_{H^r}^2 \right].$$ 
We make the assumption that the quantity $K := \sum_{i=0}^{\infty} \gamma_i \| e_i(x)\|_{\infty}^2$ remains bounded throughout this work. 
For any predictable stochastic process $\Phi(t)$ taking values in $\mathcal{L}_2^r := \mathcal L_2\bigl(\mathcal Q^{1/2}L^2(\mathcal D); H^r(\mathcal D)\bigr)$, its corresponding operator norm is given by $\| \Phi(t)\|_{\mathcal{L}_2^r}^2 := \sum_{m=1}^{\infty} \gamma_m \| \Phi e_m\|_r^2$. 
In this context, we require the sequence of basis functions $\{ e_i(x)\}_{i=0}^{\infty}$ to reside in $H^r(\mathcal{D})$. 
Lastly, we use $C>0$ to denote a generic constant independent of the temporal and spatial mesh parameters which may vary from line to line.  
In the two-dimensional analysis, we take $\mathcal{D} = [0,2\pi]^2$ with periodic boundary conditions.

The rest of the paper is organized as follows. Section \ref{sec:2d-scheme-prelim} introduces the two-dimensional fully-discrete LDG method coupled with an IMEX-Euler time discretization. It also provides preliminary results necessary for the analysis. Section \ref{sec:stability-2d} establishes high-moment stability estimates for the fully discrete scheme on localized subsets of the sample space and proves that these subsets converge to the full sample space in probability.
The high moment error estimates and the corresponding pathwise error estimate are derived in Section \ref{sec:error-estimate-2d}. Numerical tests are presented in Section \ref{sec:numerical-test} to verify the theoretical stability condition and convergence rate, and illustrate the performance of the proposed method for nonlinear stochastic problems.
Section \ref{sec:conclusion} summarizes the main conclusions.
Some technical proofs are provided in Appendix \ref{appendixA}.

\end{section}

\begin{section}{IMEX-LDG Scheme and Preliminaries}
\label{sec:2d-scheme-prelim}

In this section, we present the fully discrete scheme for the two-dimensional (2D) stochastic convection-diffusion equation. Some preliminary lemmas are also proposed for the subsequent stability and error analyses. We start by considering the two-dimensional version of the model \eqref{spde:conv-diff} on the periodic domain $\mathcal{D}$, which takes the form
\begin{gather}
\label{spde:conv-diff-2d}
    \begin{cases}
        \mathrm{d}u =  \nabla \!\cdot\! \bigl(A(\cdot,u)\nabla u \bigr) \mathrm{d}t - f_1(u)_x \,\mathrm{d}t - f_2(u)_y \,\mathrm{d}t \\
        \quad\quad + \psi(\cdot,u,u_x,u_y) \mathrm{d}t + g(\cdot,u,u_x,u_y) \mathrm{d}W_t, \quad & (\omega,x,y,t) \in \Omega \times \mathcal{D} \times (0,T]; \\
        u(\cdot,x,y,0) = u_0(x,y),  & (\omega,x,y) \in \Omega \times \mathcal{D}.
    \end{cases}
\end{gather}
The leading diffusion matrix $A = \{ a_{ij}(\cdot,x,y,t,u)\}$ may depend nonlinearly on the solution $u$. The nonlinear convection terms are given by $f_1(u)_x$ and $f_2(u)_y$. The term $\psi(\cdot,x,y,t,u,u_x,u_y)$ represents a general lower-order drift including convection or nonlinear source term specified below. The stochastic perturbation is represented by the multiplicative noise term $g(\cdot,x,y,t,u,u_x,u_y) \mathrm{d}W_t$, where $W_t$ is a $\mathcal{Q}$-Wiener process. We make the following assumptions on these terms in \eqref{spde:conv-diff-2d}:
\begin{enumerate}[(i)]
    \item (Initial condition) The initial condition $u_0 \in H^1(\mathcal{D})$.
    \item ({Leading coefficient}) The leading coefficient matrix $A=\{ a_{ij}\}(\cdot,x,y,t,u)$ is infinitely differentiable in each variable. Moreover, there exist $\alpha, \Lambda>0$, and $\alpha_1, \alpha_2 \geq 0$, such that for $i,j = 1,2$,
    \begin{align*}
        &\xi^{\top} A(\omega,x,y,t,u)\xi \geq \alpha | \xi |^2, \qquad |a_{ij}(\omega,x,y,t,u)|^2 \leq \Lambda, \\
        &|a_{ij}(\omega,x,y,t,u)-a_{ij}(\omega,x,y,t',u')| \leq \alpha_1|u-u'| + \alpha_2|t-t'|^{\frac{1}{2}}(1+|u|+|u'|), 
    \end{align*}
    for any $\xi \in \mathbb{R}^2$, any $(\omega,x,y,t,t',u,u') \in \Omega \times \mathcal{D} \times [0,T]^2 \times \mathbb{R}^2$.

    \item (Flux function) The nonlinear flux functions $f_1,f_2$ are locally Lipschitz continuous and may have polynomial growth, i.e., there exists $p \geq 1$, such that, for $i=1,2$, and all $y,z \in \mathbb{R}$, 
    \begin{align*}
        &|f_i'(y)| \leq C( 1 + |y|^p), \\
        &|f_i'(y) - f_i'(z)| \leq C( 1+|y|^{p-1}+ |z|^{p-1}) |y-z|, \\
        &|f_i(y) - f_i(z)| \leq C( 1+|y|^p+ |z|^p) |y-z|.
    \end{align*}
    
    \item (Lower-order drift) We consider $\psi$ of the form $\psi = \psi_1 - Cu^{\lambda}$, where $\lambda \geq 3$ is an odd integer, $C>0$, and $\psi_1$ satisfies
    \begin{align*}
        &|\psi_1(\omega,x,y,t,u,v_1,v_2)|^2 \leq B_2^2(1+|u|^2) + B_3^2 (|v_1|^2+|v_2|^2), \\
        &|\psi_1(\omega,x,y,t,u,v_1,v_2)-\psi_1(\omega,x,y,t,u',v_1',v_2')| \leq B_1(|u-u'| + |v_1-v_1'| + |v_2-v_2'|), \\
        &|\psi_1(\omega,x,y,t,u,v_1,v_2)-\psi_1(\omega,x,y,t',u,v_1,v_2)| \leq B_4|t-t'|^{\frac{1}{2}}(1 + |u| + |v_1| + |v_2|), 
    \end{align*}
    for some nonnegative constants $B_1,B_2,B_3,B_4$, and for any $(\omega,x,y,t,t',u,u',v_1,v_1',v_2,v_2') \in \Omega \times \mathcal{D} \times [0,T]^2 \times \mathbb{R}^6$.
    
    \item (Noise Coefficient) There exist nonnegative constants $D_1,\ldots,D_5$ such that
    \begin{align*}
        &|g(\omega,x,y,t,u,v_1,v_2)|^2 \leq D_3^2(1+|u|^2) + D_4^2(|v_1|^2+|v_2|^2), \\
        &|g(\omega,x,y,t,u,v_1,v_2)-g(\omega,x,y,t,u',v_1',v_2')| \leq D_1|u-u'| + D_2(|v_1-v_1'|+|v_2-v_2'|), \\
        &|g(\omega,x,y,t,u,v_1,v_2)-g(\omega,x,y,t',u,v_1,v_2)| \leq D_5|t-t'|^{\frac{1}{2}}(1 + |u| + |v_1|+|v_2|),
    \end{align*}
    for any $(\omega,x,y,t,t',u,u',v_1,v_1',v_2,v_2') \in \Omega \times \mathcal{D} \times [0,T]^2 \times \mathbb{R}^6$.
\end{enumerate}
The Lipschitz continuity and linear growth assumptions on $\psi_1$ and
$g$ in (iv) and (v) are standard conditions used to guarantee the existence and uniqueness of solutions of the semi-discrete DG scheme. Compared with \cite{chen_semi-linear_2d,LiShuTang2021_ESAIM}, we include nonlinear convection terms $f_1,f_2$ and source term $-Cu^{\lambda}$, so that \eqref{spde:conv-diff-2d} includes nonlinear stochastic convection-diffusion equations and Allen--Cahn-type equations.

\begin{subsection}{Fully-Discrete Numerical Scheme}

Consider a quasi-uniform rectangular partition of the domain $\mathcal{D}$, denoted by 
$\mathcal{T}_h = \bigl\{ I_i \times J_j = [x_{i-1/2}, x_{i+1/2}] \times [y_{j-1/2}, y_{j+1/2}],\, i = 1,2,..N_x;\, \, j = 1,2,...N_y \bigr\}$. We define the cell center $(x_i,y_j)$ by $x_i = (x_{i-1/2}+ x_{i+1/2})/2$ and $y_j = (y_{j-1/2} + y_{j+1/2})/2$. The local element widths in the $x$- and $y$-directions are given as $h_{x,i} = x_{i+1/2}- x_{i-1/2}$, $h_{y,j} = y_{j+1/2} - y_{j-1/2}$ with $h_x = \max_i h_{x,i}$, $h_y = \max_j h_{y,j}$, and $h = \max(h_x,h_y)$ being the maximum mesh size. By the quasi-uniformity assumption, there exists a positive constant bounding the ratio $\min(h_{x,i},h_{y,j})/h$ from below for all $i,j$ as $h \to 0$. 

Let $H^1(\mathcal{T}_h)$ be the mesh-dependent broken Sobolev space. For a nonnegative integer $r$, let $P^r(I_i)$ be the space of one-dimensional polynomials of degree at most $r$ on $I_i$, and let $ Q^r(I_i \times J_j):= P^r(I_i) \otimes P^r(J_j)$ be the two-dimensional tensor product polynomial space on the cell $I_i \times J_j$. The discontinuous finite element space of tensor-product polynomials of degree at most $r$ in each variable is defined as 
\[ \mathbb{V}_h = \{ v(x,y): v|_{I_i \times J_j} \in Q^r(I_i \times J_j), \quad i = 1,2,..N_x;\, j = 1,2,...N_y\}.  \]
For an arbitrary function $v \in \mathbb{V}_h$, we write $v_{i+1/2,y}^{\pm}=v(x_{i+1/2}^\pm,y)$ and $v_{x,j+1/2}^{\pm}=v(x,y_{j+1/2}^\pm)$ to denote its trace values at the cell boundaries along the $x$- and $y$-axes, respectively. The discontinuous jumps across these interfaces are then expressed as $[v]_{i+1/2, y} = v_{i+1/2,y}^+ - v_{i+1/2,y}^-$ for $y \in J_j$, and $[v]_{x,j+1/2} = v_{x,j+1/2}^+ - v_{x,j+1/2}^-$ for $x \in I_i$. We capture these discontinuities globally using the jump semi-norm, defined by $\| [u]\|_{\Gamma_h}^2 := \sum_{i,j} \left(\int_{J_j} [u]_{i+1/2,y}^2 \, \mathrm{d}y + \int_{I_i} [u]_{x,j+1/2}^2\, \mathrm{d}x \right)$. 

For the time discretization, we partition the interval $[0,T]$ using a sequence of points $0=t_0<t_1<\cdots<t_{N_T}=T$, with the local time step $k_n = t_{n}-t_{n-1}$. For simplicity of presentation, we will proceed under the assumption of a uniform time step $k_n \equiv k = T/N_T$, though all analysis remain valid for variable time steps. 
We also denote the Wiener increment by $\Delta W_n := W(t_{n+1})-W(t_n)$.

To present the LDG method for approximating the SPDE \eqref{spde:conv-diff-2d}, we first rewrite the model into the following first order system: 
\begin{gather}
\label{eq:auxiliary}
    \begin{cases}
        \mathrm{d}u = \left(w_{1,x} + w_{2,y} - f_1(u)_x - f_2(u)_y \right)\mathrm{d}t \\
        \qquad + \psi(\cdot,x,y,t,u,u_x,u_y) \mathrm{d}t + g(\cdot,x,y,t,u,u_x,u_y) \mathrm{d}W_t,  \\
        v_1(x,y,t) = u_x(x,y,t),    \\
        v_2(x,y,t) = u_y(x,y,t),    \\
        w_1(x,y,t) = a_{11}(x,y,t,u) v_1(x,y,t) + a_{12}(x,y,t,u) v_2(x,y,t),   \\
        w_2(x,y,t) = a_{21}(x,y,t,u) v_1(x,y,t) + a_{22}(x,y,t,u) v_2(x,y,t),  \\
        u(x,y,0) = u_0(x,y).
    \end{cases}
\end{gather}
We will omit the explicit dependence on $\omega$ whenever it is clear from the context. 
The fully-discrete scheme, with the LDG spatial discretization and the IMEX-Euler time discretization, for the above system is given by: for any $\omega \in \Omega$, given $u_h^n(\omega,\cdot) \in \mathbb{V}_h$, find $(u_h^{n+1}, v_{1,h}^{n+1}, v_{2,h}^{n+1}, w_{1,h}^{n+1},w_{2,h}^{n+1})(\omega,\cdot) \in (\mathbb{V}_h)^5$, such that for any test functions $(r_h,z_h, p_h,q_h,\phi_h) \in (\mathbb{V}_h)^5$, we have
\begin{align}
    (u_h^{n+1} - u_h^n,r_h)_{ij} &= kH_{i,j}^+(w_{1,h}^{n+1},w_{2,h}^{n+1},r_h) + k\tilde{H}_{i,j}(f_1,f_2,u_h^n,r_h) \notag \\
    &+ k(\psi(x,y,t_n,u_h^n,v_{1,h}^n,v_{2,h}^n),r_h)_{ij} + (g(x,y,t_n,u_h^n,v_{1,h}^n,v_{2,h}^n) \Delta W_n, r_h)_{ij} ,  \label{eq:fully-discrete-1-2d} \\
    (v_{1,h}^{n+1},p_h)_{ij} &= L_{i,j}^{x-}(u_h^{n+1},p_h), \quad (v_{2,h}^{n+1},q_h)_{ij} = L_{i,j}^{y-}(u_h^{n+1},q_h),  \label{eq:fully-discrete-2-2d} \\
    (w_{1,h}^{n+1},z_h)_{ij} &= (a_{11}(x,y,t_{n+1},u_h^{n+1})v_{1,h}^{n+1}, z_h)_{ij} + (a_{12}(x,y,t_{n+1},u_h^{n+1})v_{2,h}^{n+1}, z_h)_{ij},  \label{eq:fully-discrete-3-2d} \\
    (w_{2,h}^{n+1},\phi_h)_{ij} &= (a_{21}(x,y,t_{n+1},u_h^{n+1})v_{1,h}^{n+1}, \phi_h)_{ij} + (a_{22}(x,y,t_{n+1},u_h^{n+1})v_{2,h}^{n+1}, \phi_h)_{ij},  \label{eq:fully-discrete-4-2d}
\end{align}
where $(\cdot, \cdot)_{ij}$ denotes the $L^2$ inner product on $I_i \times J_j$, and  
\begin{align*}
    H_{i,j}^+(w_{1,h},w_{2,h},r_h) &= -\iint_{I_i \times J_j} w_{1,h}(r_h)_x + w_{2,h}(r_h)_y \, \mathrm{d}x \mathrm{d}y + \int_{J_j} ({w}_{1,h}^+ r_h^-)_{i+\frac{1}{2},y} - ({w}_{1,h}^+ r_h^+)_{i-\frac{1}{2},y} \, \mathrm{d}y \\
    &+ \int_{I_i} ({w}_{2,h}^+ r_h^-)_{x,j+\frac{1}{2}} - ({w}_{2,h}^+ r_h^+)_{x,j-\frac{1}{2}} \, \mathrm{d}x, \\
    \tilde{H}_{i,j}(f_1,f_2,u_h^n,r_h) &= \iint_{I_i \times J_j} f_1(u_h^n)(r_h)_x + f_2(u_h^n)(r_h)_y \, \mathrm{d}x \mathrm{d}y
    - \int_{J_j} (\hat{f}_{1}^n \,r_h^-)_{i+\frac{1}{2},y} - (\hat{f}_{1}^n \,r_h^+)_{i-\frac{1}{2},y} \, \mathrm{d}y \\
    &- \int_{I_i} (\hat{f}_{2}^n \,r_h^-)_{x,j+\frac{1}{2}} - (\hat{f}_{2}^n \,r_h^+)_{x,j-\frac{1}{2}} \, \mathrm{d}x, \\
    L_{i,j}^{x-}(u_h,p_h) &= -\iint_{I_i \times J_j} u_h(p_h)_x \, \mathrm{d}x \mathrm{d}y + \int_{J_j} ({u}_{h}^- p_h^-)_{i+\frac{1}{2},y} - ({u}_{h}^- p_h^+)_{i-\frac{1}{2},y} \, \mathrm{d}y, \\
    L_{i,j}^{y-}(u_h,q_h) &= -\iint_{I_i \times J_j} u_h(q_h)_y \, \mathrm{d}x \mathrm{d}y + \int_{I_i} ({u}_{h}^- q_h^-)_{x,j+\frac{1}{2}} - ({u}_{h}^- q_h^+)_{x,j-\frac{1}{2}} \, \mathrm{d}x.
\end{align*}
The numerical flux for the convection term takes the form $(\hat{f}_1^n)_{i+\frac{1}{2},y} = \hat{f}_1\left( u_h^n(x_{i+\frac{1}{2}}^-,y),u_h^n(x_{i+\frac{1}{2}}^+,y) \right)$, and $(\hat{f}_2^n)_{x,j+\frac{1}{2}} = \hat{f}_2 \left(u_h^n(x,y_{j+\frac{1}{2}}^-),u_h^n(x,y_{j+\frac{1}{2}}^+) \right)$, where $\hat{f}_1$ and $\hat{f}_2$ are monotone numerical fluxes associated with the physical fluxes $f_1$ and $f_2$. In this paper, we choose $\hat{f}$ to be the local Lax-Friedrich flux:
\begin{equation}
\label{eq:LF-flux}
    \hat{f}(u^-,u^+) = \frac{1}{2}\left(f(u^-) + f(u^+) \right) - \frac{1}{2} \left( \max |f'(u)| \right) [u].
\end{equation}
To discretize the diffusion term, we employ the alternating flux realized through the pair $H_{i,j}^+$ in \eqref{eq:fully-discrete-1-2d} and $L_{i,j}^{x-}, L_{i,j}^{y-}$ in \eqref{eq:fully-discrete-2-2d}. Other choices of numerical fluxes such as the generalized numerical fluxes studied in \cite{SunShuXing_JCP2022, SunXing_Mathcomp2021,CX2024} can also be applied.

For the initial condition $u_h^0=\mathcal{P}u_0$, we employ a sufficiently accurate approximation of the exact initial datum $u_0$, for example, the local Gauss-Radau projections $\mathcal{P}^-$ defined in Section \ref{subsec:prelim-2d}. The global form of the fully-discrete scheme can be obtained by summing \eqref{eq:fully-discrete-1-2d}-\eqref{eq:fully-discrete-4-2d} over all  $i,j$. For simplicity, we set $L_{i,j}^-(u_h,p_h,q_h) := L_{i,j}^{x-}(u_h,p_h) + L_{i,j}^{y-}(u_h,q_h)$, $H^+ := \sum_{i,j} H_{i,j}^+$, $\tilde{H} := \sum_{i,j} \tilde{H}_{i,j}$, $L^- := \sum_{i,j} L_{i,j}^-$. 

\end{subsection}

\begin{subsection}{Preliminaries}
\label{subsec:prelim-2d}    
In this subsection, we propose a few lemmas to be used in the stability and error analysis of the fully discrete scheme \eqref{eq:fully-discrete-1-2d}-\eqref{eq:fully-discrete-4-2d} in Sections \ref{sec:stability-2d} and \ref{sec:error-estimate-2d}. 

The following three elementary inequalities will be utilized in our numerical analysis, where \eqref{ineq:convex-2} follows from the mean value theorem, and \eqref{ineq:convex-1}, \eqref{ineq:convex-3} can be derived by applying the weighted Jensen's convexity inequality on $x^q$, $q \geq 1$. 
\begin{lemma}
\label{lem:generalized-discrete-Holder}
Let $a,b,c$ be three nonnegative real numbers. Given $q > 1$, for any $\epsilon >0$, the following inequalities hold
\begin{align}
    &(a+b)^q \leq C_1(\epsilon)a^q + (1+\epsilon)b^q,  \label{ineq:convex-1} \\
    & \left|a^q - b^q \right| \leq q|a-b|\max(a^{q-1}, \,b^{q-1}), \label{ineq:convex-2} \\
    &(a+b+c)^q \leq C_2(\epsilon)a^q + (2^{q-1}+\epsilon)(b^q + c^q),   \label{ineq:convex-3}
\end{align}
where $C_1(\epsilon) = \left(1-(1+\epsilon)^{\frac{1}{1-q}} \right)^{1-q}$ and $C_2(\epsilon) = \left(1-2 \left(2^{q-1}+\epsilon \right)^{\frac{1}{1-q}} \right)^{1-q}$. For $q=1$, the inequalities hold with $C_1(\epsilon)=C_2(\epsilon)=1$.
\end{lemma}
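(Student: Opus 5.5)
For \eqref{ineq:convex-1} I would use the weighted Jensen (convexity) inequality for $x\mapsto x^q$. For any $\theta\in(0,1)$,
\[
(a+b)^q=\Bigl(\theta\,\tfrac{a}{\theta}+(1-\theta)\,\tfrac{b}{1-\theta}\Bigr)^q\le \theta^{1-q}a^q+(1-\theta)^{1-q}b^q .
\]
I then choose $\theta$ so that the coefficient of $b^q$ equals $1+\epsilon$, that is, $(1-\theta)^{1-q}=1+\epsilon$, or $1-\theta=(1+\epsilon)^{\frac{1}{1-q}}$. Since $q>1$ and $\epsilon>0$, this quantity lies in $(0,1)$, so $\theta\in(0,1)$ is admissible. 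The coefficient of $a^q$ becomes $\theta^{1-q}=\bigl(1-(1+\epsilon)^{\frac{1}{1-q}}\bigr)^{1-q}=C_1(\epsilon)$, which is exactly the constant in the statement.

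For \eqref{ineq:convex-3} I would use the same idea with three weights $\theta,\eta,\eta$, where $\theta+2\eta=1$:
\[
(a+b+c)^q\le \theta^{1-q}a^q+\eta^{1-q}(b^q+c^q).
\]
I choose $\eta^{1-q}=2^{q-1}+\epsilon$, so $\eta=(2^{q-1}+\epsilon)^{\frac{1}{1-q}}$. The only point needing care, and it is mild, is admissibility: $\theta=1-2\eta$ must be positive. Since $2^{q-1}+\epsilon>2^{q-1}$ and $1-q<0$, we get $\eta<(2^{q-1})^{\frac{1}{1-q}}=\tfrac12$, hence $\theta>0$. Then $\theta^{1-q}=C_2(\epsilon)$.

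For \eqref{ineq:convex-2}, assume without loss of generality $a\ge b$. The mean value theorem gives $a^q-b^q=q\xi^{q-1}(a-b)$ for some $\xi\in[b,a]$. Since $q-1>0$, the map $\xi\mapsto\xi^{q-1}$ is nondecreasing, so $\xi^{q-1}\le a^{q-1}=\max(a^{q-1},b^{q-1})$. The degenerate case $a=b$ is trivial.

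Finally, for $q=1$ the first and third inequalities are just equalities of sums, which hold with $C_1=C_2=1$ since $1\le 1+\epsilon$ and $1\le 2^0+\epsilon$. The second reads $|a-b|\le|a-b|$. I expect no genuine obstacle; the only thing to check is that the chosen weights lie in $(0,1)$, which was verified above.
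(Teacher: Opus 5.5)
Your proposal is correct and follows the same route the paper indicates: weighted Jensen's inequality for $x\mapsto x^q$, with the weights chosen to reproduce $C_1(\epsilon)$ and $C_2(\epsilon)$ exactly, for the first and third inequalities, and the mean value theorem for the second. Your check that the weights are admissible, in particular that $\eta=(2^{q-1}+\epsilon)^{\frac{1}{1-q}}<\tfrac12$, is precisely the detail the paper leaves implicit.
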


Let $\mathcal{P}: L^2(\mathcal{T}_h) \to \mathbb{V}_h$ be the standard $L^2$ projection onto the piecewise polynomial space. Additionally, we introduce two Gauss-Radau projections $\mathcal{P}^{\pm}: L^2(\mathcal{T}_h) \to \mathbb{V}_h$, defined as tensor products of the one-dimensional Gauss-Radau projections: $\mathcal{P}^{\pm} = \mathcal{P}_x^{\pm} \otimes \mathcal{P}_y^{\pm}$.
The subscripts indicate the variables with respect to which the one-dimensional projections are applied. 
The one-dimensional projections $\mathcal{P}_x^{\pm}$ are given as 
\begin{align}  
    &\int_{I_i} (\mathcal{P}_x^{-}u - u) v \, \mathrm{d}x = 0, \quad \forall v \in P^{r-1}(I_i), \quad \text{and} \quad (\mathcal{P}_x^{-}u)_{i+\frac{1}{2}}^- = u_{i+\frac{1}{2}}^- \,, \label{def:gauss-radau-minus-1d} \\
    &\int_{I_i} (\mathcal{P}_x^{+}u - u) v \, \mathrm{d}x = 0, \quad \forall v \in P^{r-1}(I_i), \quad \text{and} \quad (\mathcal{P}_x^{+}u)_{i-\frac{1}{2}}^+ = u_{i-\frac{1}{2}}^+, \label{def:gauss-radau-plus-1d} 
\end{align}
for $i=1,2,...,N_x$. The projections $\mathcal{P}_y^\pm$ are defined similarly.  
We refer the reader to \cite{chen_semi-linear_2d,MengShuWu_MathComp2015} for the explicit definitions of $\mathcal{P}^{\pm}$. 
The approximation result for the projection error is given by the following Lemma. 
\begin{lemma}[{Lemma 3.2 in \cite{Cockburn_SIAMNu2001}}]
\label{lem:proj-property-2d}
Let $K = I_i \times J_j$, $\Pi = \mathcal{P}$ or $\mathcal{P}^{\pm}$. For any $u \in H^{r+1}(K)$ with $\eta = \Pi u - u$,
\begin{align}
\label{ineq:proj-property-1-2d}
\| \eta\|_{L^2(K)} + h^{\frac{1}{2}}\| \eta\|_{L^2( \partial K)} + h\| \eta\|_{H^1(K)} \leq Ch^{r+1}\|u\|_{H^{r+1}(K)},
\end{align}
where $\| v\|_{L^2(\partial K)}^2 := \| v(\cdot,y_{j-\frac{1}{2}}^+)\|_{I_i}^2 + \| v(\cdot,y_{j+\frac{1}{2}}^-)\|_{I_i}^2 + \| v(x_{i-\frac{1}{2}}^+, \cdot)\|_{J_j}^2 + \| v(x_{i+\frac{1}{2}}^-, \cdot)\|_{J_j}^2$ is the $L^2$ norm on the boundary.
Moreover, when $r \geq 1$, we have $\| \eta\|_{L^{\infty}(K)} \leq Ch^{r}\|u\|_{H^{r+1}(K)}$. 
\end{lemma}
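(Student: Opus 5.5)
The plan is the standard scaling argument combined with the Bramble--Hilbert lemma, carried out on a single rectangle $K=I_i\times J_j$ and then mapped back. Let $\hat K=[-1,1]^2$ and let $F_K:\hat K\to K$ be the affine map $(\hat x,\hat y)\mapsto (x_i+\tfrac{h_{x,i}}{2}\hat x,\ y_j+\tfrac{h_{y,j}}{2}\hat y)$. For a function $v$ on $K$ write $\hat v=v\circ F_K$. The first step is to check that each of $\Pi=\mathcal P,\mathcal P^{\pm}$ commutes with this pull-back. That is, $\widehat{\Pi u}=\hat\Pi\hat u$, where $\hat\Pi$ is the same projection defined on $\hat K$. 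This holds because $F_K$ is a tensor-product affine map. It therefore preserves $Q^r$ and $P^{r-1}$ in each variable, it maps the $L^2$ orthogonality conditions in \eqref{def:gauss-radau-minus-1d}--\eqref{def:gauss-radau-plus-1d} to the same conditions up to a positive Jacobian factor, and it maps the endpoint interpolation conditions to endpoint conditions. Each $\hat\Pi$ also reproduces $Q^r(\hat K)$, and hence $P^r(\hat K)$.

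The second step, which I expect to be the main obstacle, is to show that $\hat\Pi$ is bounded on the reference cell. Concretely, I want
\[
\|\hat\Pi\hat u\|_{L^\infty(\hat K)}\le C\|\hat u\|_{H^{r+1}(\hat K)}.
\]
For the $L^2$ projection this is immediate. For $\mathcal P^\pm=\mathcal P_x^\pm\otimes\mathcal P_y^\pm$ I would use the explicit one-dimensional representation: $\mathcal P_x^-\hat u$ equals the $L^2$ projection onto $P^{r-1}$ plus a multiple of the Legendre polynomial $L_r$, with the coefficient chosen to match the endpoint trace. This bounds $\hat\Pi \hat u$ by the $L^2$ norm of $\hat u$, its traces on edges, and its values at corners. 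For $r\ge1$ these are all controlled by $\|\hat u\|_{H^{r+1}(\hat K)}\supset H^2(\hat K)\hookrightarrow C(\hat K)$. For $r=0$ the tensor Gauss--Radau projection involves a corner value, so this step needs the additional regularity implicit in the cited result, or a reformulation through edge averages. I would treat that case separately and not dwell on it.

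With stability in hand, the third step applies Bramble--Hilbert. For any $p\in P^r(\hat K)$ we have $\hat\eta=\hat\Pi(\hat u-p)-(\hat u-p)$. Choosing $p$ as the averaged Taylor polynomial gives
\[
\|\hat\eta\|_{L^2(\hat K)}+\|\hat\eta\|_{L^2(\partial\hat K)}+\|\hat\eta\|_{H^1(\hat K)}+\|\hat\eta\|_{L^\infty(\hat K)}\le C|\hat u|_{H^{r+1}(\hat K)},
\]
where the trace bound comes from the trace inequality on $\hat K$.

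Finally, I scale back to $K$. Quasi-uniformity gives $h_{x,i},h_{y,j}\sim h$, so that $|K|\sim h^2$ and edge lengths are $\sim h$. This yields
\[
|\hat u|_{H^{r+1}(\hat K)}\le Ch^{r}|u|_{H^{r+1}(K)},\qquad \|\eta\|_{L^2(K)}\le Ch\|\hat\eta\|_{L^2(\hat K)},
\]
together with $\|\eta\|_{L^2(\partial K)}\le Ch^{1/2}\|\hat\eta\|_{L^2(\partial\hat K)}$, $|\eta|_{H^1(K)}\le C|\hat\eta|_{H^1(\hat K)}$ and $\|\eta\|_{L^\infty(K)}=\|\hat\eta\|_{L^\infty(\hat K)}$. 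Combining these with the reference bound gives the three terms in \eqref{ineq:proj-property-1-2d} with the stated powers $h^{r+1}$, $h^{r+1/2}$ and $h^{r}$. It also gives the $L^\infty$ bound $Ch^{r}\|u\|_{H^{r+1}(K)}$ for $r\ge1$; this uses that the dimension is $d=2$, so $h^{r+1-d/2}=h^r$.
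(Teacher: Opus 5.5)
Your argument (pull-back to $\hat K$, commutation of $\mathcal P,\mathcal P^{\pm}$ with the tensor-product affine map, stability of $\hat\Pi$ through volume moments, edge traces and the corner value, Bramble--Hilbert, then scaling with $h_{x,i}\sim h_{y,j}\sim h$) is the standard proof behind the cited Lemma 3.2 of Cockburn et al., which the paper quotes without proof. It is correct for $r\ge1$, and also for $\Pi=\mathcal P$ at $r=0$ once the $L^\infty$ term is dropped.

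Your caveat for $r=0$ with $\Pi=\mathcal P^{\pm}$ is justified, but a separate treatment cannot fix it. There $\mathcal P^{\pm}u$ is the constant corner value of $u$, which is unbounded on $H^1(K)$ in two dimensions: truncated $\log\log$ spikes have bounded $H^1$ norm and arbitrarily large corner value. So the inequality as stated is false in that case and needs $r\ge1$ or extra regularity. Switching to edge averages would change the projection rather than prove the lemma.
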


For spatial dimensions above one, the terms involving the projection error in \eqref{ineq:superconvergence-1}, \eqref{ineq:superconvergence-2} do not vanish in general. We thus make use of the following super-convergence property given below. For details, we refer to \cite{ChengMengZhang_MathComp2016,MengShuWu_MathComp2015}. 
\begin{lemma}
\label{lem:superconvergence}
Let $(w,w_1,w_2) \in \mathbb{V}_h^3$, 
for any $u,v \in H^{r+2}(\mathcal{D})$, we have
\begin{align}
    &|L^-(\mathcal{P}^-u - u, w_1,w_2)| \leq Ch^{r+1} \| u\|_{{r+2}}(\| w_1\|+\| w_2\|), \label{ineq:superconvergence-1} \\
    &|H^+(\mathcal{P}^+ u - u, \mathcal{P}^+ v - v, w)| \leq Ch^{r+1} (\| u\|_{{r+2}} + \| v\|_{{r+2}}) \| w\|.  \label{ineq:superconvergence-2}
\end{align}
\end{lemma}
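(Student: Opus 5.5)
The plan is to split each Gauss--Radau projection error into a one-directional part and a cross part. The one-directional part is annihilated exactly by the LDG operators. The cross part is handled by exchanging the projection for an integration by parts, which moves the derivative onto the exact solution. I describe the argument for \eqref{ineq:superconvergence-1}. The estimate \eqref{ineq:superconvergence-2} follows by the same computation with $\mathcal{P}^+$, the downwind traces $(\cdot)^+$, and the pair $(\mathcal{P}^+u-u,\mathcal{P}^+v-v)$ playing the roles of $w_{1,h},w_{2,h}$. Since $L^-=\sum_{i,j}\bigl(L^{x-}_{i,j}+L^{y-}_{i,j}\bigr)$, it suffices to bound $L^{x-}:=\sum_{i,j}L^{x-}_{i,j}(\mathcal{P}^-u-u,w_1)$ by $Ch^{r+1}\|u\|_{r+2}\|w_1\|$; the $y$-part is symmetric.

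\textbf{Step 1 (splitting).} I will use the tensor structure $\mathcal{P}^-=\mathcal{P}^-_x\otimes\mathcal{P}^-_y$ to write
\[
\mathcal{P}^-u-u=(\mathcal{P}^-_x-I)u+\mathcal{P}^-_x\xi,\qquad \xi:=(\mathcal{P}^-_y-I)u .
\]
Here $\mathcal{P}^-_x$ and $\mathcal{P}^-_y$ act only in their own variable, so the two operators commute.

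\textbf{Step 2 (the $x$-directional part vanishes).} For $p\in Q^r(I_i\times J_j)$ we have $p_x\in P^{r-1}(I_i)\otimes P^r(J_j)$. Hence $\iint (\mathcal{P}^-_xu-u)p_x=0$ by the orthogonality in \eqref{def:gauss-radau-minus-1d}, applied for each fixed $y$. Moreover $(\mathcal{P}^-_xu)^-_{i+1/2,y}=u_{i+1/2,y}$, so the flux terms of $L^{x-}_{i,j}$, which only involve the traces $(\cdot)^-$, also vanish. Therefore $L^{x-}_{i,j}\bigl((\mathcal{P}^-_x-I)u,p\bigr)=0$ exactly.

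\textbf{Step 3 (the cross part).} The same two properties of $\mathcal{P}^-_x$, applied to $\xi$ in place of $u$, give $L^{x-}_{i,j}(\mathcal{P}^-_x\xi,p)=L^{x-}_{i,j}(\xi,p)$. The key observation is that $\xi$ is obtained by projecting only in $y$. It is therefore continuous across the vertical interfaces $x=x_{i+1/2}$ and, for each fixed $y$, lies in $H^1$ in $x$ with $\xi_x=(\mathcal{P}^-_y-I)u_x$. Summing over $i$ and integrating by parts in $x$, the interface terms telescope by periodicity, leaving
\[
\sum_{i,j}L^{x-}_{i,j}(\xi,w_1)=\sum_{i,j}\iint_{I_i\times J_j}(\mathcal{P}^-_y-I)u_x\,w_1\,\mathrm{d}x\mathrm{d}y .
\]
The one-dimensional approximation property (the analogue of Lemma \ref{lem:proj-property-2d} in $y$) then gives $\|(\mathcal{P}^-_y-I)u_x\|\le Ch^{r+1}\|u_x\|_{r+1}\le Ch^{r+1}\|u\|_{r+2}$, and Cauchy--Schwarz completes the bound.

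\textbf{Main obstacle.} The delicate point is Step 3: justifying the trace identity and the telescoping of interface terms for the semi-discrete object $\xi$, and getting the regularity bookkeeping right. One must check that $\xi(\cdot,y)$ is genuinely continuous in $x$, which uses $u\in H^{r+2}$ and the fact that $\mathcal{P}^-_y$ commutes with $x$-differentiation and $x$-traces. For \eqref{ineq:superconvergence-2} the same argument is run twice, once per direction. The convection-type term $-\iint w_{1,h}r_x$ together with its $(\cdot)^+$ flux terms reduces, after Steps 2--3 with $\mathcal{P}^+$, to $\iint(\mathcal{P}^+_y-I)u_x\,w$, and the $y$-direction gives $\iint(\mathcal{P}^+_x-I)v_y\,w$. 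This yields $Ch^{r+1}(\|u\|_{r+2}+\|v\|_{r+2})\|w\|$.
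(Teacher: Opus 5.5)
Your proof is correct and is essentially the standard argument from the references the paper cites for this lemma; the paper itself gives no proof. The tensor-product Gauss--Radau error splits into a one-directional error that $L^{x-}$ (resp.\ the $x$-part of $H^+$) annihilates exactly, plus a semi-projected error that is continuous in $x$. Integration by parts then leaves only $\bigl((\mathcal{P}^{-}_y-I)u_x,w_1\bigr)$ (resp.\ $\bigl((\mathcal{P}^{+}_y-I)u_x,w\bigr)$), which is $O(h^{r+1}\|u\|_{r+2})$ times the $L^2$ norm of the test function.

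One small simplification: since your $\xi$ is continuous across $x=x_{i\pm1/2}$, the interface terms already cancel cell by cell after integrating by parts on $I_i\times J_j$, so periodicity is not needed in Step 3.
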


For the operators $H^+$, $L^-$ and $\tilde{H}$, we summarize the following properties of the LDG spatial discretization, the alternating flux and monotone flux. The proof directly follows from the one-dimensional case in \cite{WangShuZhang_SIAMNu2015, WangShuZhang2016AMC} and is omitted. 
\begin{lemma}
\label{lem:num-flux-2D}
For any $(v,w_1,w_2) \in \mathbb{V}_h^3$, we have 
\begin{align}
    & |L^-(v,w_1,w_2)| \leq \left(\| \nabla v\| + \mu h^{-\frac{1}{2}} \| [v]\|_{\Gamma_h} \right) \left(\| w_1\| + \| w_2\| \right), \label{ineq:alternating-flux-2d} \\
    & H^+(w_1,w_2,v) + L^-(v,w_1,w_2) = 0,  \quad  \tilde{H}(f_1,f_2,v,v) \leq 0, \notag
\end{align}
where $\mu$ is the inverse inequality constant. 
\end{lemma}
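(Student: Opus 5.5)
The plan is to prove the three assertions of Lemma \ref{lem:num-flux-2D} one at a time, working cell by cell and then summing over all $(i,j)$ using periodicity.

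\textbf{The bound \eqref{ineq:alternating-flux-2d}.} Fix $(v,w_1,w_2)\in\mathbb{V}_h^3$. On each cell, integrate the volume term of $L_{i,j}^{x-}$ by parts:
\[
-\iint_{I_i\times J_j} v\,(w_1)_x = \iint_{I_i\times J_j} v_x w_1 - \int_{J_j}\bigl(v^- w_1^-\bigr)_{i+\frac12,y} + \int_{J_j}\bigl(v^+ w_1^+\bigr)_{i-\frac12,y}.
\]
Combining this with the boundary terms of $L_{i,j}^{x-}$ leaves, after summation over $i$ with periodic indexing, only the volume term and an interface term:
\[
L^{x-}(v,w_1)=\sum_{i,j}\Bigl(\iint v_x w_1 + \int_{J_j} [v]_{i-\frac12,y}\, (w_1^+)_{i-\frac12,y}\,\mathrm{d}y\Bigr).
\]
The $y$-direction is handled identically. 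By Cauchy--Schwarz, the volume parts are bounded by $\|\nabla v\|(\|w_1\|+\|w_2\|)$, with $\nabla$ taken elementwise. For the interface parts, the trace inverse inequality $\|w^+\|_{L^2(\partial K)}\le \mu h^{-1/2}\|w\|_{L^2(K)}$ on quasi-uniform meshes gives a bound $\mu h^{-1/2}\|[v]\|_{\Gamma_h}(\|w_1\|+\|w_2\|)$. Adding the two bounds yields \eqref{ineq:alternating-flux-2d}.

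\textbf{The identity $H^++L^-=0$.} Write $H^+(w_1,w_2,v)$ in the same way. Its volume terms are $-\iint (w_1 v_x + w_2 v_y)$, which cancel against the volume terms of $L^-$ once $L^-$ is written in its un-integrated-by-parts form $-\iint v\,(w_1)_x$ after integrating by parts. Concretely, $-\iint w_1 v_x - \iint v\,(w_1)_x$ is a pure boundary term $-\int_{\partial}(w_1 v)$ on each cell. It then remains to check that the boundary contributions telescope. At the interface $x_{i+1/2}$, the contributions are:
\begin{itemize}
\item from $H^+$: $w_1^+v^- - w_1^+v^+$ (collected from cells $i$ and $i+1$);
\item from $L^-$: $v^-w_1^- - v^-w_1^+$;
\item from the integration by parts: $-(w_1^-v^-) + w_1^+v^+$.
\end{itemize}
These sum to zero. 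The $y$-direction is identical.

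\textbf{The inequality $\tilde H(f_1,f_2,v,v)\le 0$.} This is the standard entropy argument for monotone fluxes. Let $F_1(s)=\int_0^s f_1$. Then $\iint f_1(v) v_x = \int_{\partial} F_1(v)$ cellwise, so after summation
\[
\tilde H = \sum_{\text{interfaces}}\int\Bigl( F_1(v^+)-F_1(v^-) - \hat f_1\,[v]\Bigr),
\]
plus the analogous $y$-terms. Each integrand equals $\int_{v^-}^{v^+}\bigl(f_1(s)-\hat f_1(v^-,v^+)\bigr)\,\mathrm{d}s$, and monotonicity of the local Lax--Friedrichs flux \eqref{eq:LF-flux} makes it nonpositive.

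\textbf{Main obstacle.} The only delicate step is the sign bookkeeping in the telescoping for $H^++L^-=0$. The trace inverse constant and the monotone-flux argument are routine.
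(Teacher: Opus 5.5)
Your proofs of \eqref{ineq:alternating-flux-2d} and of $H^++L^-=0$ are correct. They are the standard argument that the paper omits, deferring to the one-dimensional references. Your identity $L_{i,j}^{x-}(v,w_1)=(v_x,w_1)_{ij}+\int_{J_j}([v]\,w_1^+)_{i-\frac12,y}\,\mathrm{d}y$ in fact holds on each cell with no summation needed, and it is the one the paper uses in Appendix \ref{appendix-num-sol-2d}. Your interface bookkeeping for $H^++L^-=0$ also checks out.

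The argument for $\tilde H(f_1,f_2,v,v)\le 0$, however, contains two sign errors that cancel. First, the interface formula has the wrong sign. Cellwise, $\iint f_1(v)v_x=\int_{J_j}\bigl(F_1(v^-)_{i+\frac12,y}-F_1(v^+)_{i-\frac12,y}\bigr)\mathrm{d}y$, and the flux terms contribute $-(\hat f_1v^-)_{i+\frac12,y}+(\hat f_1v^+)_{i-\frac12,y}$. Collecting everything at $x_{i+\frac12}$ gives
\[
F_1(v^-)-F_1(v^+)+\hat f_1\,[v]=\int_{v^-}^{v^+}\bigl(\hat f_1(v^-,v^+)-f_1(s)\bigr)\,\mathrm{d}s ,
\]
which is the negative of your displayed expression. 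Second, monotonicity gives the opposite sign from the one you claim for your integrand. Monotonicity plus consistency yield the E-flux property: $\hat f(a,b)\le\hat f(s,b)\le\hat f(s,s)=f(s)$ for $a\le s\le b$, and the reverse inequality for $b\le s\le a$. Hence $\int_{v^-}^{v^+}(f_1(s)-\hat f_1)\,\mathrm{d}s\ge 0$, not $\le 0$.

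A quick check confirms this: with $f_1(u)=u$ and $f_2=0$, the flux \eqref{eq:LF-flux} is $\hat f_1=v^-$. The correct formula gives $\tilde H(v,v)=-\frac12\sum_{i,j}\int_{J_j}[v]^2_{i+\frac12,y}\,\mathrm{d}y$, while yours gives the positive value.

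With the sign corrected, the E-flux property gives $\int_{v^-}^{v^+}(\hat f_1-f_1(s))\,\mathrm{d}s\le0$ at every $x$-interface, and likewise in $y$, which proves the claim. For the local Lax--Friedrichs flux you can also verify the E-flux property directly. Take $a\le s\le b$ and $\alpha=\max_{[a,b]}|f'|$. Then $|f(a)-f(s)|\le\alpha(s-a)$ and $|f(b)-f(s)|\le\alpha(b-s)$ give $\hat f(a,b)-f(s)\le\frac12\alpha(s-a)+\frac12\alpha(b-s)-\frac12\alpha(b-a)=0$, and symmetrically when $b\le s\le a$.
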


Next, we present a lemma that establishes an important relationship between the gradient of the numerical solution, and the LDG approximation of the gradient. It was first proved in \cite{WangShuZhang_SIAMNu2015} for the one-dimensional case and extended to $P^r$ rectangular and triangular elements in \cite{WangWangZhangShu_ESAIM2016}. We provide a proof in Appendix \ref{appendix-num-sol-2d} for $Q^r$ elements considered in this paper. Note that the result in Lemma \ref{lem:num-sol-2D} is also true for solutions of the semi-discrete LDG scheme. 
\begin{lemma}
\label{lem:num-sol-2D}
    Suppose $(u_h, v_{1,h}, v_{2,h}) \in \mathbb{V}_h^3$ are solutions of the fully-discrete scheme \eqref{eq:fully-discrete-2-2d}, then 
    \begin{align*}
        &\| \nabla u_h\| + \mu h^{-\frac{1}{2}} \| [u_h]\|_{\Gamma_h} \leq C_{\mu} (\| v_{1,h}\| + \| v_{2,h}\|), \\
        &\| \nabla \xi_u\| + \mu h^{-\frac{1}{2}}\|[\xi_u]\|_{\Gamma_h} \leq C_{\mu} (\| \xi_{v_1}\| + \| \xi_{v_2}\| +h^{r+1}\|u\|_{r+2}),
    \end{align*}
    where $C_{\mu}$ only depends on the inverse constant $\mu$. Here, $\xi_u=\mathcal{P}^-u-u_h$ and  $\xi_{v_i}=\mathcal{P}v_i-v_{i,h}$
    , $i=1,2$ are the errors to be studied in Section \ref{sec:error-estimate-2d}.
\end{lemma}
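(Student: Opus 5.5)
The plan is to prove the first inequality by constructing, for the given $u_h$, a suitable test pair $(p_h,q_h)\in\mathbb{V}_h^2$ in \eqref{eq:fully-discrete-2-2d} that recovers both $\nabla u_h$ and the jumps. Then the second inequality follows by applying the same construction to the error equation.

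\textbf{Step 1: rewrite the operator.} Integrate by parts cellwise in $L^{x-}_{i,j}(u_h,p_h)$ and rewrite it as
\[
\iint_{I_i\times J_j}(u_h)_x p_h\,\mathrm{d}x\mathrm{d}y+\int_{J_j}[u_h]_{i-\frac12,y}\,(p_h^+)_{i-\frac12,y}\,\mathrm{d}y .
\]
The $y$-direction is treated analogously. Thus \eqref{eq:fully-discrete-2-2d} states that $v_{1,h}$ is the $L^2$ projection onto $\mathbb{V}_h$ of $(u_h)_x$ plus a lifting of the jump $[u_h]$ at left edges.

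\textbf{Step 2: choose the test function.} I would take $p_h$ cellwise as $(u_h)_x+\beta\,\phi$. Here $\phi\in Q^r$ is built from the one-dimensional Radau-type polynomial $L_r-L_{r-1}$ in $x$ (the same construction as in \cite{WangShuZhang_SIAMNu2015}), tensorized with $y$-polynomials. The intent is that $\phi$ is $L^2$-orthogonal to $P^{r-1}$ in $x$, while its trace at $x_{i-1/2}^+$ reproduces the jump $[u_h]_{i-1/2,y}$ as a function of $y$. This is exactly where the tensor-product structure of $Q^r$ is used: the jump on a vertical edge lies in $P^r(J_j)$, so it can be extended in $x$ with the $y$-dependence unchanged.

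\textbf{Step 3: estimate.} Using $(u_h)_x\in P^{r-1}\otimes P^r$, the cross term $((u_h)_x,\phi)$ vanishes. The trace term $\int (u_h)_x^+[u_h]$ is bounded by an inverse inequality and Young's inequality. With the local scaling $\|\phi\|_{K}\le Ch^{1/2}\|[u_h]\|_{\partial K}$, this gives
\[
\|(u_h)_x\|^2+c\,h^{-1}\|[u_h]_x\|^2_{\Gamma_h}\le (v_{1,h},p_h)\le \|v_{1,h}\|\,\|p_h\|\le C\|v_{1,h}\|\bigl(\|(u_h)_x\|+h^{-1/2}\|[u_h]_x\|_{\Gamma_h}\bigr).
\]
Dividing through yields the bound for the $x$-parts. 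The $y$-direction is symmetric, and summing the two directions gives the first inequality, with a constant depending only on $\mu$ through the inverse and trace constants.

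\textbf{Step 4: error version.} Subtract \eqref{eq:fully-discrete-2-2d} from the identity satisfied by the exact $v_i=\partial u$ (consistency), which gives
\[
(\xi_{v_1},p_h)=L^{x-}(\xi_u,p_h)+L^{x-}(\mathcal{P}^-u-u,p_h)+(v_1-\mathcal{P}v_1,p_h).
\]
The last term vanishes by $L^2$ orthogonality. The projection term is bounded by \eqref{ineq:superconvergence-1} as $Ch^{r+1}\|u\|_{r+2}\|p_h\|$. Repeating Steps 2 and 3 with $u_h$ replaced by $\xi_u$ then gives the second inequality.

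\textbf{Main obstacle.} I expect the hardest step to be Step 2: constructing $\phi$ so that the trace and orthogonality properties hold simultaneously in $Q^r$, with uniform constants on quasi-uniform meshes. The other delicate point is keeping the cross trace term absorbable. If Young's inequality does not suffice, the fallback is to scale $\beta$ small and use the inverse inequality $\|(u_h)_x\|_{\partial K}\le Ch^{-1/2}\|(u_h)_x\|_K$.
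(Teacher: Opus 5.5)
Your route is genuinely different from the paper's, and it can be made to work, but two points in Steps 2--3 need correcting as written. The paper never combines gradient and jump in a single test function. It first tests with $r_h=(u_h)_x-(-1)^r(u_h)_x(x_{i-1/2}^+,y)L_r(\xi)$. This function has zero trace at $x_{i-1/2}^+$, and by orthogonality of $L_r$ to $P^{r-1}$ in $x$ it gives $(v_{1,h},r_h)_K=\|(u_h)_x\|_K^2$ exactly, so $\|(u_h)_x\|_K\le C_\mu\|v_{1,h}\|_K$ follows with no absorption. It then tests with $p_h=u_h-u_h(x_{i-1/2}^-,y)$, whose left trace is exactly $[u_h]_{i-1/2,y}$, and bounds the jump by Young's inequality using the gradient bound already in hand. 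Your single test function $(u_h)_x+\beta\phi$ produces both quantities at once. The price is a coercivity argument in which $\beta$ must be tuned against the trace-inverse constant, whereas the paper's two-stage version has no free parameter. Your Step 4 is what the paper does, apart from harmless sign slips: the $\mathcal{P}v_1-v_1$ term drops by $L^2$-orthogonality, and $L^{x-}(\mathcal P^-u-u,\cdot)$ is handled by \eqref{ineq:superconvergence-1} with $w_2=0$. Since that bound is global, the test functions must be assembled globally, as you do.

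\textbf{First correction: the choice of polynomial.} $L_r-L_{r-1}$ is not orthogonal to $P^{r-1}$, because $\int_{-1}^1(L_r-L_{r-1})L_{r-1}\,\mathrm{d}\xi=-2/(2r-1)$. So $((u_h)_x,\phi)_K$ does not vanish; already for $r=1$, $(u_h)_x$ is constant in $x$ and $\int_{-1}^1(\xi-1)\,\mathrm{d}\xi=-2$. Since $\beta$ has to be of size $h^{-1}$, this extra term is of size $\beta\|(u_h)_x\|_K\|\phi\|_K$. It competes with both $\|(u_h)_x\|_K^2$ and the jump term, so coercivity is no longer automatic. Up to scaling, the only $x$-profile in $P^r$ orthogonal to $P^{r-1}$ is $L_r$. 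So take $\phi=(-1)^rL_r(\xi)[u_h]_{i-1/2,y}$, which has the trace you want because $L_r(-1)=(-1)^r$; this is exactly the Legendre function the paper uses.

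\textbf{Second correction: the direction of the scaling.} The absorption goes the opposite way from your fallback. Set $\beta=\gamma h_{x,i}^{-1}$, $a=\|(u_h)_x\|_K$ and $b=h_{x,i}^{-1/2}\|[u_h]_{i-1/2,\cdot}\|_{J_j}$. The identity then reads $a^2+\gamma b^2=(v_{1,h},p_h)_K-\int_{J_j}[u_h]_{i-1/2,y}(u_h)_x(x_{i-1/2}^+,y)\,\mathrm{d}y$, and the last term is bounded by $C_{\mathrm{tr}}ab$. You therefore need $\gamma>C_{\mathrm{tr}}^2/4$, i.e.\ $\beta$ large, not small. Alternatively, add $-(-1)^r(u_h)_x(x_{i-1/2}^+,y)L_r(\xi)$ to $p_h$. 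The trace cross term then disappears, and $(v_{1,h},p_h)_K=\|(u_h)_x\|_K^2+\beta\|[u_h]_{i-1/2,\cdot}\|_{J_j}^2$ holds exactly for any $\beta>0$. This is precisely the paper's two test functions merged into one.
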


\begin{lemma}
\label{lem:nonlinear-convec-2D}
    Let 
    $$D(f_1,f_2;u_h,z_h,r_h) := \tilde{H}(f_1,f_2,u_h,r_h) - \tilde{H}(f_1,f_2,z_h,r_h).$$ 
    We assume $f_1, f_2$ satisfy hypothesis (iii). For any $(u_h,z_h,r_h) \in \mathbb{V}_h^3$, the following inequalities hold
    \begin{align*}
        &|\tilde{H}(f_1,f_2,u_h,r_h)| \leq C(1+\| u_h\|_{\infty}^p) \left(\| \nabla u_h\| + {\mu} h^{-\frac{1}{2}}\|[u_h]\|_{\Gamma_h} \right) \| r_h\|, \\
        &|D(f_1,f_2;u_h,z_h,r_h)| \leq C_f \| u_h-z_h\| \left(\| \nabla r_h\| + {\mu} h^{-\frac{1}{2}}\|[r_h]\|_{\Gamma_h} \right),
    \end{align*}
    where $C_f = C(1 + \| u_h\|_{\infty}^p +\| z_h\|_{\infty}^p)$. 
\end{lemma}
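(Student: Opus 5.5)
We prove both inequalities of Lemma \ref{lem:nonlinear-convec-2D} by integrating by parts elementwise and then controlling the interface flux terms through the Lipschitz structure of the Lax--Friedrichs flux \eqref{eq:LF-flux}. The two coordinate directions decouple, so it suffices to treat the $x$-part involving $f_1$ and $\hat f_1$; the $y$-part is identical.

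\textbf{First inequality.} On each cell we use $f_1(u_h)_x = f_1'(u_h)(u_h)_x$ and integrate $\iint f_1(u_h)(r_h)_x$ by parts. Collecting the boundary contributions at $x_{i+\frac12}$ gives, after summation,
\[
\tilde H^x = -\sum_{i,j}\iint_{I_i\times J_j} f_1'(u_h)(u_h)_x\, r_h
+ \sum_{i,j}\int_{J_j}\Bigl[(f_1(u_h^-)-\hat f_1)r_h^- - (f_1(u_h^+)-\hat f_1)r_h^+\Bigr]_{i+\frac12,y}\,\mathrm{d}y .
\]
\begin{itemize}
\item The volume term is bounded by $C(1+\|u_h\|_\infty^p)\|\nabla u_h\|\,\|r_h\|$, using hypothesis (iii).
\item For the interface terms, $f_1(u^\pm)-\hat f_1$ equals one half of $\pm\bigl(f_1(u^+)-f_1(u^-)\bigr)$ plus the viscosity term. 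Hence $|f_1(u^\pm)-\hat f_1|\le C(1+\|u_h\|_\infty^p)|[u_h]|$, where the maximum of $|f'|$ over the interval between $u^-$ and $u^+$ is bounded by $C(1+\|u_h\|_\infty^p)$.
\item Cauchy--Schwarz on the faces, together with the trace inverse inequality $\|r_h\|_{\Gamma_h}\le \mu h^{-\frac12}\|r_h\|$, yields the bound $C(1+\|u_h\|_\infty^p)\,\mu h^{-\frac12}\|[u_h]\|_{\Gamma_h}\,\|r_h\|$.
\end{itemize}

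\textbf{Second inequality.} Here we leave $r_h$ undifferentiated on the other side. Write $\delta=u_h-z_h$. We form the difference directly:
\[
D^x=\sum_{i,j}\iint \bigl(f_1(u_h)-f_1(z_h)\bigr)(r_h)_x
-\sum_{i,j}\int_{J_j}\bigl(\hat f_1(u_h)-\hat f_1(z_h)\bigr)_{i+\frac12,y}\,[r_h]_{i+\frac12,y}\,\mathrm{d}y,
\]
where periodicity was used to rewrite $\hat f r^- - \hat f r^+$ as $-\hat f\,[r]$ in the sum.
\begin{itemize}
\item The volume term is bounded by $C_f\|\delta\|\,\|\nabla r_h\|$, by the third condition in (iii).
\item For the flux term we need $|\hat f_1(a^-,a^+)-\hat f_1(b^-,b^+)|\le C_f(|a^--b^-|+|a^+-b^+|)$.
\item The averaged part of $\hat f_1$ is handled by (iii).
\item The viscosity part requires care, because $\alpha(a):=\max|f'|$ over $[a^-,a^+]$ is only locally Lipschitz. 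We split
\[
\alpha(a)[a]-\alpha(b)[b]=\alpha(a)([a]-[b])+(\alpha(a)-\alpha(b))[b].
\]
The first piece is $\le C_f|[\delta]|$.
\item For the second piece we use that $\alpha$ is Lipschitz in its endpoints with constant $C(1+\|\cdot\|_\infty^{p-1})$, by the second condition in (iii), and that $|[b]|\le 2\|z_h\|_\infty$. Their product is bounded by $C(1+\|u_h\|_\infty^p+\|z_h\|_\infty^p)$ via Young's inequality. We expect this to be the main obstacle, since it is where the nonsmooth local maximum in the flux must be controlled.
\item The resulting face terms $\int |\delta^\pm|\,|[r_h]|$ are bounded by $\|\delta\|_{\Gamma_h}\|[r_h]\|_{\Gamma_h}\le \mu h^{-\frac12}\|\delta\|\,\|[r_h]\|_{\Gamma_h}$, again by the trace inverse inequality.
\end{itemize}

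Combining the volume and face estimates in both directions gives the stated bound with $C_f$, absorbing constants.
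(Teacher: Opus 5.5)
Your proposal is correct and follows essentially the same route as the paper. The paper also integrates by parts elementwise to split $\tilde H$ into volume and interface terms, bounds the volume terms by hypothesis (iii), and handles the interface terms with $|f_i(u^\pm)-\hat f_i|\le C(1+\|u_h\|_\infty^p)|[u_h]|$ and the local Lipschitz property of the Lax--Friedrichs flux, closing with the trace inverse inequality. You go slightly further by justifying the Lipschitz bound for the viscosity coefficient $\max|f'|$ explicitly, which the paper only asserts. The interface term in $D^x$ should read $+\sum\int(\hat f_1(u_h)-\hat f_1(z_h))[r_h]$, but the sign is irrelevant once absolute values are taken.
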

Lemma \ref{lem:nonlinear-convec-2D} estimates the nonlinear convection term in the numerical scheme, and its proof is provided in Appendix \ref{appendix-nonlinear-convec-2d}. The result is similar to \cite[Lemma 2.5]{WangShuZhang2016AMC}, while we do not assume that $f'$ is globally bounded here. As a result of Lemmas \ref{lem:num-sol-2D} and \ref{lem:nonlinear-convec-2D}, we get the following corollary. 
\begin{corollary}
\label{coro:nonlinear-convec-2D}
For fully discrete numerical solutions $(u_h^n, u_h^{n+1},v_{1,h}^n,v_{2,h}^n,v_{1,h}^{n+1},v_{2,h}^{n+1})$ given by the scheme \eqref{eq:fully-discrete-1-2d}-\eqref{eq:fully-discrete-4-2d}, we have
\begin{align*}
    &|\tilde{H}(f_1,f_2,u_h^{n},u_h^{n+1}-u_h^n)| 
    \leq C_{\mu}^{*} (1+\| u_h^{n}\|_{\infty}^p) \left( \| v_{1,h}^{n}\|+\| v_{2,h}^{n}\| \right) \| u_h^{n+1}-u_h^n\|, \\
    &|D(f_1,f_2;u(t),u_h^{n},\xi_u^{n+1})| 
    \leq C_{f_h}\| u(t)-u_h^{n}\| \left(\| \xi_{v_1}^{n+1}\| + \| \xi_{v_2}^{n+1}\| +h^{r+1}\|u^{n+1}\|_{r+2} \right),
\end{align*}
where $C_{\mu}^{*}$ is a fixed constant that only depends on $\mu$, and $C_{f_h} = C\left(1 +\| u(t)\|_{\infty}^p +\| u_h^n\|_{\infty}^p \right)$. 
\end{corollary}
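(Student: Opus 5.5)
Both bounds in Corollary \ref{coro:nonlinear-convec-2D} come from applying Lemma \ref{lem:nonlinear-convec-2D} with a suitable choice of arguments, and then replacing the broken gradient and jump quantities by the LDG auxiliary variables via Lemma \ref{lem:num-sol-2D}.

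\textbf{First inequality.} Apply the first estimate of Lemma \ref{lem:nonlinear-convec-2D} with $u_h = u_h^n$ and $r_h = u_h^{n+1}-u_h^n \in \mathbb{V}_h$. This gives
\[
|\tilde H(f_1,f_2,u_h^n,u_h^{n+1}-u_h^n)| \le C(1+\|u_h^n\|_\infty^p)\bigl(\|\nabla u_h^n\| + \mu h^{-\frac12}\|[u_h^n]\|_{\Gamma_h}\bigr)\|u_h^{n+1}-u_h^n\|.
\]
Since $(u_h^n,v_{1,h}^n,v_{2,h}^n)$ satisfies \eqref{eq:fully-discrete-2-2d} at level $n$, the first bound of Lemma \ref{lem:num-sol-2D} controls the bracket by $C_\mu(\|v_{1,h}^n\|+\|v_{2,h}^n\|)$. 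Setting $C_\mu^* = C\,C_\mu$ gives the claim.

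At $n=0$ the relation \eqref{eq:fully-discrete-2-2d} must also hold. I would therefore define $v_{i,h}^0$ through $L^{x-},L^{y-}$ applied to $u_h^0$, which is the natural convention for the explicit evaluation of $g$ and $\psi$ at step $0$.

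\textbf{Second inequality.} The first argument $u(t)$ is not in $\mathbb{V}_h$, so the second estimate of Lemma \ref{lem:nonlinear-convec-2D} does not apply verbatim. I would rerun its proof with the first argument taken in $H^1$. The difference $\tilde H(u(t),r_h)-\tilde H(u_h^n,r_h)$ consists of:
\begin{itemize}
\item cell integrals $\bigl(f_i(u(t))-f_i(u_h^n), (r_h)_{x\text{ or }y}\bigr)$;
\item interface terms $\bigl(\hat f(u(t)^-,u(t)^+)-\hat f(u_h^{n,-},u_h^{n,+})\bigr)[r_h]$.
\end{itemize}
For a continuous $u(t)$ we have $u(t)^\pm = u(t)$, so $\hat f(u(t),u(t)) = f(u(t))$. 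The Lax--Friedrichs flux is locally Lipschitz in each argument, with constant $C(1+\|u(t)\|_\infty^p+\|u_h^n\|_\infty^p)$ by hypothesis (iii). This bounds both groups of terms by $C_{f_h}$ times the corresponding cell or trace norms of $u(t)-u_h^n$.

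The trace norms on the $u_h^n$ side and of $r_h$ are handled exactly as in the appendix proof of Lemma \ref{lem:nonlinear-convec-2D}: by inverse and trace inequalities, which produce the factor $\mu h^{-1/2}$ on the jumps of $r_h$. The trace of $u(t)$ is harmless because $u(t)$ is continuous. This yields
\[
|D(f_1,f_2;u(t),u_h^n,r_h)| \le C_{f_h}\|u(t)-u_h^n\|\bigl(\|\nabla r_h\|+\mu h^{-\frac12}\|[r_h]\|_{\Gamma_h}\bigr).
\]
Finally take $r_h = \xi_u^{n+1} = \mathcal P^-u^{n+1}-u_h^{n+1}$. The second bound of Lemma \ref{lem:num-sol-2D} converts the bracket into $C_\mu(\|\xi_{v_1}^{n+1}\|+\|\xi_{v_2}^{n+1}\|+h^{r+1}\|u^{n+1}\|_{r+2})$, and $C_\mu$ is absorbed into $C_{f_h}$.

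\textbf{Main obstacle.} The one step I expect to need care is the trace estimate of $u(t)-u_h^n$ in the second inequality: $u(t)$ is not piecewise polynomial, so the inverse trace inequality cannot be applied to the difference directly. I would split $u(t)-u_h^n = (u(t)-\mathcal P u(t)) + (\mathcal P u(t)-u_h^n)$. The projection part is controlled by Lemma \ref{lem:proj-property-2d}. If this creates an extra projection term, I would interpret $\|u(t)-u_h^n\|$ as including it or absorb it into the $h^{r+1}\|u^{n+1}\|_{r+2}$ contribution. Alternatively, one can integrate by parts on the cell terms so that only $r_h$ appears on element boundaries, exactly as in the form stated in Lemma \ref{lem:nonlinear-convec-2D}.
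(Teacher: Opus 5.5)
Your route is the paper's. The corollary is stated there as a direct consequence of Lemmas \ref{lem:nonlinear-convec-2D} and \ref{lem:num-sol-2D}, and your first inequality is exactly that combination. Your convention for $n=0$ also matches Lemma \ref{lem:v_h^0-2d}. For the second inequality the paper simply applies Lemma \ref{lem:nonlinear-convec-2D} with $(u_h,z_h,r_h)=(u(t),u_h^n,\xi_u^{n+1})$. You are right that this lies outside the lemma's hypotheses. In its proof the factor $\mu h^{-1/2}$ comes from the trace inverse inequality applied to $u_h-z_h$, not to $r_h$, so that difference must lie in $\mathbb{V}_h$.

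Your treatment of this step needs correcting in three places.

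\textbf{Continuity.} Continuity of $u(t)$ only gives $\hat f(u(t)^-,u(t)^+)=f(u(t))$. It gives no bound $\|u(t)-u_h^n\|_{L^2(\Gamma_h)}\lesssim h^{-1/2}\|u(t)-u_h^n\|$. So your displayed intermediate estimate for $D(f_1,f_2;u(t),u_h^n,r_h)$ is false with a constant depending on $u(t)$ only through $\|u(t)\|_\infty$. For a counterexample, take $f_1(u)=u$, $f_2=0$, $u_h^n=0$, and $r_h$ piecewise constant. Let $u(t)$ be a bump of height $\epsilon$ and width $\delta\ll h$ centred on an interface where $r_h$ jumps. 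Then $|D|\sim\epsilon$, while the right-hand side is $O(\epsilon(\delta/h)^{1/2})$.

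\textbf{Integration by parts.} Integrating by parts does not help. The interface terms $\bigl(f(u(t))-\hat f(u_h^{n,-},u_h^{n,+})\bigr)[r_h]$ survive and still require traces of $u(t)-u_h^n$.

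\textbf{Absorption.} Your splitting with the $L^2$ projection is the right repair. Use $\|\mathcal P u(t)-u_h^n\|=\|\mathcal P(u(t)-u_h^n)\|\le\|u(t)-u_h^n\|$ together with $\|(I-\mathcal P)u(t)\|_{L^2(\partial K)}\le Ch^{r+1/2}\|u(t)\|_{H^{r+1}(K)}$. This gives
\[
|D(f_1,f_2;u(t),u_h^n,\xi_u^{n+1})|\le C_{f_h}\bigl(\|u(t)-u_h^n\|+h^{r+1}\|u(t)\|_{r+1}\bigr)\bigl(\|\xi_{v_1}^{n+1}\|+\|\xi_{v_2}^{n+1}\|+h^{r+1}\|u^{n+1}\|_{r+2}\bigr).
\]
The extra term sits in the first factor. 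It therefore cannot be absorbed into the $h^{r+1}\|u^{n+1}\|_{r+2}$ contribution of the second factor as you suggest. What you actually prove is this slightly weaker inequality, not the literal statement.

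It suffices for the paper's purposes. In Lemma \ref{lem:error2D-term5} the factor $\|u(t)-u_h^n\|$ is immediately split into $\|u-u^n\|+\|\xi_u^n\|+\|\eta_u^n\|$. The new term $h^{r+1}\|u(t)\|_{r+1}$ is of the same type as $\|\eta_u^n\|$ and is controlled under $(\mathcal A_q)$.
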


\medskip
In what follows, we introduce three lemmas tailored to the stochastic components of our numerical analysis. The discrete Kolmogorov lemma below helps to derive pathwise error estimates. It is a discrete analog to the classic Kolmogorov continuity theorem \cite{DaPrato_Zabczyk_1992}, which provides the conditions ensuring pathwise regularity (H\"older continuity) of a continuous stochastic process. The proof is provided in \cite{chen_semi-linear_2d}. Let $\{ X(t)\}_{t \in [0,T]}$ be a stochastic process taking values in a separable Banach space with norm $\| \cdot\|$ and $0 = t_0 < t_1 < \cdots < t_N = T$ be a uniform partition of the interval $[0,T]$, with $k = {T}/{N}$.

\begin{lemma}[Discrete Kolmogorov Theorem]
\label{lem:Kolmogorov}
     Suppose there is a discrete stochastic process $\{X_n^N \}_{n=0}^N$ such that there exist constants $\nu,\beta >0$, $C>0$, and a positive integer $N_0$ satisfying for every $N \geq N_0$,
    \[ \mathbb{E}\left[ \max_{0 \leq n \leq N} \| X(t_n)-X_n^N\|^{\nu} \right] \leq C \left(\frac{1}{N} \right)^{1+\beta}.   \]
    Then, for any $0 \leq \gamma < {\beta}/{\nu}$, there exists a random variable $Z(\omega,\gamma)$ with $\mathbb{E}[|Z|^{\nu}] < \infty$, such that 
    \begin{align}
    \label{ineq:kolmogorov}
        \max_{0 \leq n \leq N} \|X(t_n,\omega)-X_n^N(\omega)\| \leq Z(\omega,\gamma) \left(\frac{1}{N} \right)^{\gamma}, \qquad \forall N \geq N_0.
    \end{align} 
\end{lemma}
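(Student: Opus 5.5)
We prove Lemma \ref{lem:Kolmogorov} by a Borel--Cantelli-type construction: encode all grid sizes into a single random variable with controlled $\nu$-th moment. Set
\[
E_N(\omega):=\max_{0\le n\le N}\|X(t_n,\omega)-X_n^N(\omega)\|, \qquad N\ge N_0,
\]
and fix $0\le\gamma<\beta/\nu$. The candidate random variable is
\[
Z(\omega,\gamma):=\sup_{N\ge N_0} N^{\gamma}E_N(\omega).
\]
With this choice, \eqref{ineq:kolmogorov} holds by construction for every $N\ge N_0$. Each $E_N$ is a maximum of finitely many measurable norms and is therefore measurable, and a countable supremum of measurable functions is measurable, so $Z$ is a well-defined $[0,\infty]$-valued random variable.

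It remains to show $\mathbb{E}[|Z|^\nu]<\infty$, which also forces $Z<\infty$ almost surely. Since $t\mapsto t^{\nu}$ is increasing on $[0,\infty)$, we have
\[
Z^\nu=\sup_{N\ge N_0}N^{\gamma\nu}E_N^\nu\le\sum_{N\ge N_0}N^{\gamma\nu}E_N^\nu .
\]
Taking expectations and applying monotone convergence together with the hypothesis $\mathbb{E}[E_N^\nu]\le C N^{-(1+\beta)}$ gives
\[
\mathbb{E}[Z^\nu]\le C\sum_{N\ge N_0}N^{\gamma\nu-1-\beta}.
\]
This series converges exactly when $\gamma\nu-\beta<0$, that is, when $\gamma<\beta/\nu$, which is the assumption on $\gamma$.

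No step here is a real obstacle; the only point needing care is that the sum dominates the supremum, which requires $\nu>0$ so that $t^\nu$ is increasing. If one wanted a sharper $Z$, for instance only along dyadic $N$, one would need to interpolate between consecutive grids. That would fail here because the grids for different $N$ are not nested, and the crude sum over all $N$ avoids the issue entirely.
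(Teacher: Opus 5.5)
Your proof is correct and complete. Setting $Z=\sup_{N\ge N_0}N^{\gamma}E_N$, dominating $Z^\nu$ by $\sum_{N\ge N_0}N^{\gamma\nu}E_N^\nu$ and applying Tonelli gives $\mathbb{E}[Z^\nu]\le C\sum_{N\ge N_0}N^{\gamma\nu-1-\beta}$. This series is finite precisely because $\gamma<\beta/\nu$, and that is exactly where the ``$1+$'' in the hypothesis is used.

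The paper does not reproduce a proof here; it defers to \cite{chen_semi-linear_2d}. Your argument, bounding the supremum by the series, is the standard one for this type of discrete Kolmogorov lemma. It is better than a bare Borel--Cantelli argument, because it gives the moment bound $\mathbb{E}[|Z|^\nu]<\infty$ directly, not just $Z<\infty$ almost surely.

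The only cosmetic point is that $Z$ may equal $+\infty$ on a null set. If you want $Z$ real-valued, redefine it there; then \eqref{ineq:kolmogorov} holds almost surely, which is how the lemma is used in Corollary~\ref{coro:pathwise-error-2d}.
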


The classical Burkholder-Davis-Gundy inequality bounds the supremum of a local martingale by its quadratic variation. Both the continuous and discrete versions are utilized in the numerical analysis in Sections \ref{sec:stability-2d} and \ref{sec:error-estimate-2d}. We provide a discrete version \cite{ondrejat_Prohl_Walkington_2023} below. 
\begin{lemma}[Discrete Burkholder-Davis-Gundy]
\label{lem:BDG}
    Let $(\Omega, \mathcal{F}, \mathbb{P})$ be a probability space with (discrete) filtration $\{\mathcal{F}^n\}_{n=0}^{N}$. Let $\{X^n\}_{n=0}^{N}$ with $X^0 \equiv 0$ be a $\{\mathcal{F}^n\}$-martingale taking values in a separable Hilbert space $H$. Then for each $p \geq 1$ there exist constants $0 < 1/C_b' < C_b$ such that
    \begin{align*}
        \frac{1}{C_b'} \mathbb{E} \left[ \left( \sum_{n=1}^{N} \| X^n - X^{n-1} \|_H^2 \right)^{p/2} \right] &\leq \mathbb{E} \left[ \max_{0 \leq n \leq N} \| X^n \|_H^p \right] 
        \leq C_b \mathbb{E} \left[ \left( \sum_{n=1}^{N} \| X^n - X^{n-1} \|_H^2 \right)^{p/2} \right]. 
    \end{align*}
\end{lemma}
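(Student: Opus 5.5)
This is the discrete Burkholder--Davis--Gundy inequality of \cite{ondrejat_Prohl_Walkington_2023}. I would prove it with Burkholder's good-$\lambda$ method combined with the Davis decomposition. The Hilbert space structure enters only through orthogonality of martingale increments.

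\textbf{Notation and the quadratic case.} Write $d_n := X^n - X^{n-1}$, $X^* := \max_{n}\|X^n\|_H$, $S := (\sum_{n=1}^N \|d_n\|_H^2)^{1/2}$ and $d^* := \max_n \|d_n\|_H$. Note that $d^* \le S$ and $d^* \le 2X^*$. For any stopping times $\sigma \le \tau$, the identity $\mathbb{E}[(d_n, d_m)_H] = 0$ for $n \ne m$ gives
\[
\mathbb{E}\|X^\tau - X^\sigma\|_H^2 = \mathbb{E}\sum_{\sigma < n \le \tau} \|d_n\|_H^2.
\]
Since $\|X^n\|_H$ is a nonnegative submartingale, Doob's maximal inequality turns this into the case $p=2$ of both inequalities, with constants $1$ and $4$.

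\textbf{Good-$\lambda$ inequalities for general $p$.} Fix $\beta > 1$ and $0 < \delta < \beta - 1$. Define $\mu := \inf\{n : \|X^n\| > \lambda\}$ and $\nu := \inf\{n : \|X^n\| > \beta\lambda\}$. Also let $\sigma$ be the first time at which the partial square function, or the next jump (which is controlled by $d^*$), exceeds $\delta\lambda$. Apply the $p=2$ identity to the martingale stopped between $\mu$ and $\nu \wedge \sigma$. Together with Chebyshev's inequality this yields
\[
\mathbb{P}\bigl(X^* > \beta\lambda,\ S \vee d^* \le \delta\lambda\bigr) \le \frac{C\delta^2}{(\beta - 1 - \delta)^2}\, \mathbb{P}(X^* > \lambda).
\]
The symmetric argument, with the roles of $X^*$ and $S$ exchanged, gives the corresponding good-$\lambda$ inequality for $S$ in terms of $X^* \vee d^*$. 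The standard good-$\lambda$ lemma for the moderate function $t^p$ then gives $\mathbb{E}(X^*)^p \le C\,\mathbb{E}(S \vee d^*)^p$ and $\mathbb{E}S^p \le C\,\mathbb{E}(X^* \vee d^*)^p$. To make this lemma applicable, first choose $\delta$ small so that $C\delta^2/(\beta-1-\delta)^2 < \beta^{-p}$, and assume a priori finiteness, which can be arranged by truncation since $N$ is finite. If $\mathbb{E}(d^*)^p$ is controlled by the right-hand side, both directions follow.

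\textbf{The jump term and the main obstacle.} The jump term $d^*$ is where I expect the difficulty, especially at $p = 1$. For $p > 1$ one can bound the jump term directly: $d^* \le S$, and $\mathbb{E}(d^*)^p \le \mathbb{E}\sum_n \|d_n\|^p \le C_p\,\mathbb{E}(X^*)^p$. For $p = 1$ this crude bound fails, and I would use the Davis decomposition. Write $X = G + B$, where $G$ has increments $d_n \mathbf{1}_{\{\|d_n\| \le 2d^*_{n-1}\}}$ minus their conditional expectations, and $B$ is the remainder. The martingale $G$ has jumps bounded by the predictable quantity $4d^*_{n-1}$, so the good-$\lambda$ argument applies to $G$ with $d^*$ replaced by $d^*_{n-1}$. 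For $B$, the large jumps satisfy $\sum_n \|d_n\|\mathbf{1}_{\{\|d_n\| > 2d^*_{n-1}\}} \le 2d^*$. After taking conditional expectations, both the total variation of $B$ and the maximal and square functions of $B$ are therefore bounded in $L^1$ by $4\,\mathbb{E}\,d^*$. Since $\mathbb{E}\,d^*$ is at most $\mathbb{E} S$ and at most $2\,\mathbb{E}X^*$, combining the estimates for $G$ and $B$ closes both inequalities for $p = 1$. Every step of this argument is dimension free, so it holds in any separable Hilbert space $H$.
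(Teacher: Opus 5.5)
The paper does not prove this lemma; it quotes it from \cite{ondrejat_Prohl_Walkington_2023}. Judged on its own, your argument has a genuine gap in the second paragraph: the good-$\lambda$ inequalities with $S\vee d^*$ and $X^*\vee d^*$ on the right are false for general discrete martingales. The time $\sigma$ defined through ``the next jump'' is not a stopping time, since $d_{n+1}$ is only $\mathcal F^{n+1}$-measurable, so you cannot stop the martingale there. With a genuine stopping time such as $\sigma=\inf\{n:S_n\vee d^*_n>\delta\lambda\}$, the quantity $\mathbb E\sum_{\mu<n\le\nu\wedge\sigma}\|d_n\|_H^2$ contains the jump $\|d_\sigma\|_H^2$ at the stopping time. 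Nothing controls that jump in expectation: knowing $d^*\le\delta\lambda$ on the event being estimated does not help, because the $L^2$ identity averages over all paths. Here is a concrete counterexample. Take $H=\mathbb R$, $N=n$, $\epsilon=n^{-2}$, and increments $d_j=1$ with probability $1-\epsilon$ and $d_j=-(1-\epsilon)/\epsilon$ with probability $\epsilon$, frozen at $0$ after the first negative jump. With probability $(1-n^{-2})^n\to1$ one has $X^*=n$, $S=\sqrt n$ and $d^*=1$. So for $\beta=2$, $\lambda=n/4$ and $n\ge16/\delta^2$, the left side of your first inequality tends to $1$, while the right side is at most $C\delta^2/(1-\delta)^2$. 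A quick sanity check also shows something must be wrong. Nothing in your second paragraph uses $p\ge1$, so it would give $\mathbb E(X^*)^p\le C\,\mathbb E S^p$ for every $p>0$. In this example, however, $\mathbb E(X^*)^p\ge(1-n^{-1})n^p$ and $\mathbb E S^p\le n^{p/2}+2^{p/2}n^{2p-1}$, which is incompatible for $p<1$. The reverse inequality fails the same way: take the martingale that goes $0\to\pm1$ and then returns to $0$ with probability $1-\epsilon$, otherwise jumping to $\pm1/\epsilon$ and freezing.

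So the real obstruction is not bounding $\mathbb E(d^*)^p$. That is trivial for every $p$ from $d^*\le S\wedge 2X^*$, as you note at the start. Your alternative bound $\mathbb E\sum_n\|d_n\|_H^p\le C_p\mathbb E(X^*)^p$ is in fact false for $1<p<2$, e.g.\ for Rademacher sums. The real obstruction is that the good-$\lambda$ step needs a \emph{predictable} control of the jump at the stopping time, and this is needed for every $p\ge1$, not only $p=1$. Your $p=1$ argument is the classical Davis proof and is sound. $G$ has the predictable control $4d^*_{n-1}$, so with $\sigma=\inf\{n:S_n(G)\vee 4d^*_n>\delta\lambda\}$ you get $S_\sigma(G)^2\le 2\delta^2\lambda^2$, and the good-$\lambda$ argument is legitimate for $G$. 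To cover $p>1$, run the same decomposition and control $B$ in $L^p$. Set $y_n=d_n\mathbf 1_{\{\|d_n\|_H>2d^*_{n-1}\}}$, so that $\sum_n\|y_n\|_H\le 2d^*$. The Garsia (dual Doob) inequality $\mathbb E\bigl(\sum_n\mathbb E[a_n\mid\mathcal F^{n-1}]\bigr)^p\le C_p\,\mathbb E\bigl(\sum_n a_n\bigr)^p$, for nonnegative adapted $a_n$, then gives $\mathbb E(\mathrm{TV}\,B)^p\le C_p\mathbb E(d^*)^p$. From there the argument closes exactly as in your $p=1$ case. Alternatively, for $1<p<\infty$, Burkholder's square-function inequality for Hilbert-space-valued martingales combined with Doob's inequality gives the result directly.
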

\begin{remark}[Continuous and Discrete BDG Inequalities for Discrete Martingales as It\^o Integrals]
\label{rmk:BDG}
    In our fully-discrete numerical analysis, the relevant discrete martingales are It\^o integrals of the form $I_m = \sum_{n=0}^m G^n \Delta W_n$, where $G^n$ depends on the numerical solutions at time step $t_n$. Then as a result of the continuous and discrete BDG inequalities, for $p \geq 1$, $N \leq N_T$ ,
    \begin{align*}
        \frac{1}{C_b'}\mathbb{E}\left[ \left( \sum_{n=0}^{N} \| G^n \Delta W_n \|^2 \right)^{p/2} \right] \leq \mathbb{E} \left[ \max_{0 \leq m \leq N} \left\| \sum_{n=0}^m G^n \Delta W_n  \right\|^p \right] \leq C_b \mathbb{E}\left[ \left( kK\sum_{n=0}^{N} \| G^n \|^2 \right)^{p/2} \right],
    \end{align*}
    where the left inequality follows from Lemma \ref{lem:BDG} and the right inequality is derived from the continuous BDG inequality by viewing the discrete martingale $I_m$ as an It\^o integral using piecewise constant interpolation. 
\end{remark}

Lastly, we show a high-moment temporal H\"older continuity for the strong solution $u$ with respect to the spatial $H^{m}$ norm. The proof is similar to that in \cite{chen_semi-linear_2d} and is omitted here. Note that Lemma \ref{lem:Holder-High-Moments} and Corollary \ref{coro:Holder-High-Moments} remain valid in higher spatial dimensions.

\begin{lemma}[Higher Moment H\"older Continuity Estimate]
\label{lem:Holder-High-Moments}
    Let $u$ be the strong solution of \eqref{spde:conv-diff-2d}, and let $q \geq 1$. Suppose $u \in L^{2q}\left(\Omega,L^{\infty}[0,T; H^{m+1}] \right)$, $w_1,w_2,f_1(u), f_2(u) \in L^{2q}\left(\Omega \times [0,T]; H^{m+1} \right)$, $\psi(\cdot,u,\nabla u) \in L^{2q}\left(\Omega \times [0,T]; H^{m} \right)$, where $m$ is a nonnegative integer. In addition, we assume
    \begin{equation}\label{Holder-High-Moments-assumption}
        \| g(\omega,\cdot,t,u,\nabla u)\|_{\mathcal{L}_2^m}^2 \leq C\left(1 + \| u\|_m^{2} + \| \nabla u\|_m^{2} \right) \quad \forall (w,t) \in \Omega \times [0,T],
    \end{equation} 
    then 
    \[ \mathbb{E}\left[\| u(t)-u(s)\|_{m}^{2q} \right] \leq C(t-s)^q \quad \forall \, 0 \leq s < t \leq T. \]
\end{lemma}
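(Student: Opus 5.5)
The plan is to write the increment in mild/integral form and bound each piece separately in $L^{2q}(\Omega;H^m)$. Fix $0\le s<t\le T$ and integrate \eqref{spde:conv-diff-2d} (equivalently the first-order system \eqref{eq:auxiliary}) over $[s,t]$:
\begin{align*}
u(t)-u(s) = \int_s^t \bigl( w_{1,x}+w_{2,y} - f_1(u)_x - f_2(u)_y + \psi(\cdot,u,\nabla u)\bigr)\,\mathrm{d}\tau + \int_s^t g(\cdot,u,\nabla u)\,\mathrm{d}W_\tau .
\end{align*}
Taking the $H^m$ norm and using $(a+b)^{2q}\le 2^{2q-1}(a^{2q}+b^{2q})$ (or Lemma \ref{lem:generalized-discrete-Holder}), it suffices to show that the $2q$-th moment of the deterministic-integral part and that of the stochastic-integral part are each bounded by $C(t-s)^q$.

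For the drift part, I will use Minkowski's integral inequality and then H\"older's inequality in time:
\[
\Bigl\|\int_s^t G\,\mathrm{d}\tau\Bigr\|_m^{2q}\le (t-s)^{2q-1}\int_s^t\|G\|_m^{2q}\,\mathrm{d}\tau .
\]
Here $G$ is the full drift, and $\|G\|_m\le C(\|w_1\|_{m+1}+\|w_2\|_{m+1}+\|f_1(u)\|_{m+1}+\|f_2(u)\|_{m+1}+\|\psi\|_m)$, since each spatial derivative costs one Sobolev index. Taking expectations and invoking the assumed memberships in $L^{2q}(\Omega\times[0,T];H^{m+1})$ and $L^{2q}(\Omega\times[0,T];H^m)$ bounds this contribution by $C(t-s)^{2q-1}$. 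Since $2q-1\ge q$ for $q\ge1$ and $t-s\le T$, this is at most $C(t-s)^q$.

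For the stochastic part, I will apply the continuous BDG inequality in the Hilbert space $H^m$, viewing the integral as a $\mathcal L_2^m$-valued It\^o integral:
\[
\mathbb{E}\Bigl\|\int_s^t g\,\mathrm{d}W\Bigr\|_m^{2q}\le C_b\,\mathbb{E}\Bigl(\int_s^t\|g\|_{\mathcal L_2^m}^2\,\mathrm{d}\tau\Bigr)^{q}.
\]
Assumption \eqref{Holder-High-Moments-assumption} gives $\|g\|_{\mathcal L_2^m}^2\le C(1+\|u\|_{m+1}^2)$, since $\|u\|_m+\|\nabla u\|_m\le C\|u\|_{m+1}$. Bounding the time integrand by its supremum then yields
\[
C(t-s)^q\bigl(1+\mathbb{E}\sup_{\tau}\|u(\tau)\|_{m+1}^{2q}\bigr)\le C(t-s)^q,
\]
using $u\in L^{2q}(\Omega,L^\infty[0,T;H^{m+1}])$. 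This stochastic term is the one that produces the exact H\"older exponent $1/2$.

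I expect the main obstacle to be purely technical: justifying that the drift terms are Bochner integrable in $H^m$ and that the stochastic integral is well defined in $H^m$. The latter requires the predictability of $g(\cdot,u,\nabla u)$ as an $\mathcal L_2^m$-valued process, and this follows from the strong solution framework together with \eqref{Holder-High-Moments-assumption}. Once these are in place, combining the two bounds gives $\mathbb{E}\|u(t)-u(s)\|_m^{2q}\le C(t-s)^q$, with $C$ depending on $T$, $q$, $C_b$ and the stated moment bounds.
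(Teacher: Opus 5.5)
Your proposal is correct and follows essentially the standard argument the paper relies on; the paper omits the proof and defers to \cite{chen_semi-linear_2d}. The argument writes $u(t)-u(s)$ in integral form, bounds the drift by H\"older in time using the $L^{2q}(\Omega\times[0,T];H^{m+1})$ and $L^{2q}(\Omega\times[0,T];H^{m})$ hypotheses (giving $C(t-s)^{2q-1}\le CT^{q-1}(t-s)^q$), and bounds the stochastic integral via BDG in $H^m$ together with \eqref{Holder-High-Moments-assumption} and $u\in L^{2q}(\Omega,L^{\infty}[0,T;H^{m+1}])$, which yields the sharp factor $(t-s)^q$.
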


As an immediate consequence, we have the following corollary. 
\begin{corollary}
\label{coro:Holder-High-Moments}
    Let $u$ be the strong solution of \eqref{spde:conv-diff-2d}, let $q \geq 1$ and $m$ be a nonnegative integer. If $w_1,w_2,f_1(u), f_2(u) \in L^{2q}(\Omega \times [0,T]; H^{m+1})$, $\psi(\cdot,u,\nabla u) \in L^{2q}(\Omega \times [0,T]; H^{m})$, and if $g(\cdot,u,\nabla u) \in L^{2q}(\Omega \times [0,T]; \mathcal{L}_2^m)$ satisfies \eqref{Holder-High-Moments-assumption}, then for any uniform partition $t_n = nT/N_T$, $n = 0,1,...N_T$, there exists $C$ independent of $k$, such that
    \[ \frac{1}{k^{q-1}}\sum_{n=0}^{N_T-1} \mathbb{E}\left[ \| u(t_{n+1}) - u(t_n)\|_m^{2q} \right] \leq C. \]
\end{corollary}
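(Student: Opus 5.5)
This is a direct consequence of Lemma \ref{lem:Holder-High-Moments}, applied to consecutive grid points and then summed. First I check that the hypotheses of the corollary are exactly those of Lemma \ref{lem:Holder-High-Moments}. The integrability of $w_1,w_2,f_1(u),f_2(u)$ in $L^{2q}(\Omega\times[0,T];H^{m+1})$ is assumed. So is that of $\psi(\cdot,u,\nabla u)$ in $L^{2q}(\Omega\times[0,T];H^m)$, and the growth bound \eqref{Holder-High-Moments-assumption} on $g$.

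The one hypothesis not listed verbatim is $u\in L^{2q}(\Omega,L^\infty[0,T;H^{m+1}])$. Here I take the corollary as implicitly working in the same regularity setting as Lemma \ref{lem:Holder-High-Moments}, so that this bound for the strong solution is available. It could also be recovered from the equation: apply Itô's formula to $\|u\|_m^{2q}$ and use the BDG inequality together with the assumed integrability of the drift and noise. I regard this as the only point needing care, and I will not redo it.

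With the lemma in hand, take $s=t_n$ and $t=t_{n+1}$, so that $t-s=k$. This gives
\[
\mathbb{E}\bigl[\|u(t_{n+1})-u(t_n)\|_m^{2q}\bigr]\le C k^{q}
\qquad\text{for every } n=0,\dots,N_T-1 .
\]
The constant $C$ here is the one from Lemma \ref{lem:Holder-High-Moments}. It depends only on $T$, $q$, $m$ and the norms of the data, not on $s$, $t$ or $k$.

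Summing over the $N_T=T/k$ subintervals gives
\[
\sum_{n=0}^{N_T-1}\mathbb{E}\bigl[\|u(t_{n+1})-u(t_n)\|_m^{2q}\bigr]\le N_T\,C k^{q}=CT\,k^{q-1}.
\]
Dividing by $k^{q-1}$ yields the bound $CT$, which is independent of $k$. This is the claim.
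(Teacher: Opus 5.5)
Your computation after invoking Lemma~\ref{lem:Holder-High-Moments} is correct, and it is the ``immediate consequence'' the paper has in mind. If the hypothesis $u\in L^{2q}(\Omega,L^{\infty}[0,T;H^{m+1}])$ is added, as it holds in the paper's later use of the corollary under $(\mathcal{A}_q)$, nothing more is needed. As a proof of the corollary as stated, however, there is a gap.

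The corollary does not assume that bound; it assumes $g(\cdot,u,\nabla u)\in L^{2q}(\Omega\times[0,T];\mathcal{L}_2^m)$ instead. Your proposed way of recovering the bound does not work. It\^o's formula for $\|u\|_m^{2q}$ plus BDG, with the drift in $L^{2q}(\Omega\times[0,T];H^m)$ and the noise in $L^{2q}(\Omega\times[0,T];\mathcal{L}_2^m)$, only controls $\mathbb{E}\sup_t\|u(t)\|_m^{2q}$, which is one derivative short.

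The lemma's pointwise bound needs uniform-in-time control of the noise, e.g. $\sup_t\mathbb{E}\|g(t)\|_{\mathcal{L}_2^m}^{2q}<\infty$. Through \eqref{Holder-High-Moments-assumption}, this means control of $\sup_t\|\nabla u(t)\|_m$. That is an $H^{m+1}$ regularity statement requiring a genuine parabolic energy estimate, not just integrability of the data. Indeed, time-integrability of $g$ alone does not give the pointwise estimate $\mathbb{E}\|u(t)-u(s)\|_m^{2q}\le C(t-s)^q$: an integrable singularity of $\|g(t)\|_{\mathcal{L}_2^m}$ at some $t_0$ makes $\mathbb{E}\|u(t_0+h)-u(t_0)\|_m^{2q}$ decay slower than $h^q$. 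So the route through the lemma cannot be closed from the stated hypotheses.

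The missing idea is to sum before any uniform-in-time control is needed. Write $F=(w_1)_x+(w_2)_y-f_1(u)_x-f_2(u)_y+\psi(\cdot,u,\nabla u)$, so that $F\in L^{2q}(\Omega\times[0,T];H^m)$ by hypothesis. H\"older's inequality in time and the BDG inequality on each subinterval give
\begin{align*}
\mathbb{E}\bigl[\|u(t_{n+1})-u(t_n)\|_m^{2q}\bigr]
&\leq 2^{2q-1}\,\mathbb{E}\Bigl\|\int_{t_n}^{t_{n+1}} F\,\mathrm{d}t\Bigr\|_m^{2q}
+2^{2q-1}\,\mathbb{E}\Bigl\|\int_{t_n}^{t_{n+1}} g\,\mathrm{d}W_t\Bigr\|_m^{2q}\\
&\leq C k^{2q-1}\,\mathbb{E}\int_{t_n}^{t_{n+1}}\|F\|_m^{2q}\,\mathrm{d}t
+C k^{q-1}\,\mathbb{E}\int_{t_n}^{t_{n+1}}\|g\|_{\mathcal{L}_2^m}^{2q}\,\mathrm{d}t .
\end{align*}
Since the subintervals tile $[0,T]$, summing over $n$ yields $C\bigl(k^{2q-1}\|F\|_{L^{2q}(\Omega\times[0,T];H^m)}^{2q}+k^{q-1}\|g\|_{L^{2q}(\Omega\times[0,T];\mathcal{L}_2^m)}^{2q}\bigr)$. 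Dividing by $k^{q-1}$, with $k\le T$, gives a bound independent of $k$.

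This uses exactly the corollary's hypotheses. Neither the growth condition \eqref{Holder-High-Moments-assumption} nor the $L^\infty$-in-time bound on $u$ is needed, which is presumably why the corollary lists $g\in L^{2q}(\Omega\times[0,T];\mathcal{L}_2^m)$ in place of the lemma's assumption on $u$.
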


\end{subsection}

\end{section}

\begin{section}{High Moment Stability Estimate}
\label{sec:stability-2d}
In this section, we establish second- and higher-moment stability estimates for the numerical solution $u_h$ of the scheme \eqref{eq:fully-discrete-1-2d}-\eqref{eq:fully-discrete-4-2d} on subsets of the sample space. 

For numerical analysis of nonlinear stochastic PDEs, direct moment estimates over the full probability space are often obstructed by random growth factors arising from nonglobally Lipschitz nonlinearities. A common way to address this difficulty is through the localization approach. The main idea is to derive estimates on events where the exact or numerical solutions remain bounded and then prove that the probability of the complementary events goes to zero. This approach was studied in the convergence-in-probability analysis of locally Lipschitz stochastic parabolic equations in \cite{Printems_M2AN2001}, and later has been developed for stochastic Navier--Stokes equations \cite{Carelli_Prohl_SIAMNu2012, BessaihBrzezniakMillet_SPDAC2014, BreitDodgson_NumMath2021} and for nonlinear stochastic wave equations \cite{Li_Wu_Xing_2022}.

In the literature, two forms of localization methods are adopted to derive convergence in probability or localized moment estimations. 
The first class is stopping-time localization, where one defines a stopping time and analyzes the stopped processes. 
Discrete stopping times were developed in \cite{BreitProhl_FoCM2024} to preserve the adaptedness required by stochastic energy and martingale arguments.
The other class is known as the sample-space localization, where one defines a high-probability event $\Omega_R$ on which the discrete trajectory satisfies a prescribed bound on the exact solution or the numerical solution.
The sequence of events is constructed so that their probability converges to one under an appropriate relation between the discretization parameters.
Discretization-dependent events are utilized in \cite{Carelli_Prohl_SIAMNu2012, BessaihBrzezniakMillet_SPDAC2014, Vo_Boussinesq_Multiplicative2025} to allow discrete Gr\"onwall type arguments, and \cite{BessaihMillet_IMAJNA2019} used events depending on the exact solution and combined localization with exponential moment estimates to obtain convergence in $L^2(\Omega)$. 
For stochastic Navier--Stokes equations with additive noise, high-moment strong and pathwise error estimates have been established in \cite{FengVo_CiCP2024}. In the presence of multiplicative noise, its interaction with nonlinear terms may produce random growth factors that generally prevent a direct application of deterministic Gr\"onwall arguments. To the best of our knowledge, optimal high-moment strong error estimates over the full sample space have not been established for fully discrete high-order DG approximations of multidimensional equations that simultaneously involve solution-dependent diffusion, nonlinear convection and source terms, and gradient-dependent multiplicative noise.

In our IMEX-LDG scheme, the nonlinear convection and source terms are treated explicitly. 
To control the potentially trajectory-dependent unbounded growth of $f(u)$ and $u^{\lambda}$, we introduce the following recursively defined, decreasing family of localized events, parameterized by the time step index $\ell$ and a fixed positive number $\delta$: 
\begin{align}
\label{def:stability-subset}
    &\tilde{\Omega}_{\delta,-1} = \Omega  \notag \\
    &\tilde{\Omega}_{\delta,\ell} = \left\{ \omega \in \Omega: \max_{0 \leq n \leq \ell} \mathbf{1}_{\tilde{\Omega}_{\delta,n-1}} \left( 1 + \| u_h^{n}\|_{\infty}^{2p_0} \right) \leq \delta/k \right\},  \qquad  0 \leq \ell \leq N_T,
\end{align}
where $p_0:=\max \{p,\lambda-1\}$ with $p$ and $\lambda$ given in hypotheses (iii) and (iv), respectively. The recursive definition will play an important role in our proofs, as explained in Remark \ref{rmk:recursive-def-subset}. 
Clearly, it holds that 
$$\tilde{\Omega}_{\delta,N_T} \subset \tilde{\Omega}_{\delta,N_T-1} \subset \cdot \cdot \cdot \subset \tilde{\Omega}_{\delta,0} \subset \tilde{\Omega}_{\delta,-1}=\Omega.$$ 



The following lemma shows that the initial discrete gradients $v_{1,h}^0,v_{2,h}^0$ are well-defined and bounded. The proof is the same as \cite[Lemma 3.1]{chen_semi-linear_2d} and is omitted.
\begin{lemma}
\label{lem:v_h^0-2d}
    Assume hypothesis {\rm(i)}. Let $u_h^0$ be a projection of $u_0$ satisfying the standard approximation property \eqref{ineq:proj-property-1-2d}. Then $v_{1,h}^0,v_{2,h}^0$ computed from \eqref{eq:fully-discrete-2-2d} are deterministic, and satisfy $\| v_{1,h}^0\| + \| v_{2,h}^0\| < C\| u_0\|_{1}$. 
\end{lemma}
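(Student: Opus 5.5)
The plan is to read off $v_{1,h}^0,v_{2,h}^0$ from the local equations \eqref{eq:fully-discrete-2-2d} with $n+1$ replaced by $0$, and to bound them through the duality pairing with $u_h^0$.

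\emph{Determinism.} Since $u_0$ is a deterministic function in $H^1(\mathcal D)$ by hypothesis (i), its projection $u_h^0=\mathcal P u_0$ is deterministic. The equations
\begin{align*}
(v_{1,h}^0,p_h)_{ij}=L^{x-}_{i,j}(u_h^0,p_h), \qquad (v_{2,h}^0,q_h)_{ij}=L^{y-}_{i,j}(u_h^0,q_h)
\end{align*}
are local $L^2$-projection-type problems on each cell. Their mass matrix is symmetric positive definite, so $v_{1,h}^0,v_{2,h}^0$ exist, are unique, and depend only on $u_h^0$. Hence they are deterministic.

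\emph{Bound.} Sum over all cells and choose $p_h=v_{1,h}^0$, $q_h=v_{2,h}^0$. This gives
\begin{align*}
\| v_{1,h}^0\|^2+\| v_{2,h}^0\|^2 = L^-(u_h^0,v_{1,h}^0,v_{2,h}^0).
\end{align*}
Apply \eqref{ineq:alternating-flux-2d} from Lemma \ref{lem:num-flux-2D} to the right-hand side:
\begin{align*}
\| v_{1,h}^0\|^2+\| v_{2,h}^0\|^2 \le \bigl(\| \nabla u_h^0\|+\mu h^{-1/2}\|[u_h^0]\|_{\Gamma_h}\bigr)\bigl(\| v_{1,h}^0\|+\| v_{2,h}^0\|\bigr).
\end{align*}
Using $(a+b)^2\le 2(a^2+b^2)$ and dividing, it remains to show
\begin{align*}
\|\nabla u_h^0\|+h^{-1/2}\|[u_h^0]\|_{\Gamma_h}\le C\|u_0\|_1 .
\end{align*}

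\emph{Main obstacle: controlling the broken gradient and the jumps of the projection.} Write $u_h^0=u_0+\eta$. Because $u_0\in H^1$ has no jumps, $[u_h^0]=[\eta]$. Lemma \ref{lem:proj-property-2d} is stated for $H^{r+1}$ data, but the same scaling argument with $r=0$ applied elementwise yields
\begin{align*}
h\|\eta\|_{H^1(K)}+h^{1/2}\|\eta\|_{L^2(\partial K)}\le Ch\|u_0\|_{H^1(K)}.
\end{align*}
This follows from the Bramble–Hilbert lemma: projections onto $Q^r\supset Q^0$ reproduce constants, and the projections are stable under reference-element scaling. Summing over $K$ gives
\begin{align*}
\|\nabla u_h^0\|\le \|\nabla u_0\|+\|\nabla\eta\|\le C\|u_0\|_1,
\end{align*}
and each jump is bounded by the two adjacent traces of $\eta$, so
\begin{align*}
h^{-1/2}\|[\eta]\|_{\Gamma_h}\le Ch^{-1/2}\Bigl(\sum_K\|\eta\|^2_{L^2(\partial K)}\Bigr)^{1/2}\le C\|u_0\|_1 .
\end{align*}
Combining these two bounds with the previous step gives $\|v_{1,h}^0\|+\|v_{2,h}^0\|\le C\|u_0\|_1$, with $C$ depending only on $\mu$ and the mesh quasi-uniformity.
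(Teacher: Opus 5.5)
Your argument is correct and follows essentially the same route as the proof the paper omits by reference to its semilinear predecessor: you test with $v_{i,h}^0$, bound $L^-(u_h^0,\cdot,\cdot)$ by \eqref{ineq:alternating-flux-2d}, and control $\|\nabla u_h^0\|+h^{-1/2}\|[u_h^0]\|_{\Gamma_h}$ through the first-order ($H^1$-level) approximation property of the projection, using $[u_h^0]=[\eta]$.

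One caution on your Bramble--Hilbert justification of that property. It is valid for the $L^2$ projection $\mathcal{P}$. It fails for the two-dimensional tensor-product Gauss--Radau projections $\mathcal{P}^{\pm}$ (the paper's suggested initial projection), which prescribe the corner value $u(x_{i+\frac12}^-,y_{j+\frac12}^-)$ and so are not even defined on $H^1(K)$. For those projections the $H^1$-level estimate cannot be obtained by scaling; you must take it from the lemma's hypothesis, which is exactly why the statement assumes ``a projection satisfying the standard approximation property.''
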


The high-moment stability result for the fully-discrete scheme is as follows. For a fixed $q\in[1,\infty)$, suppose the parabolic coeffient satisfies that
$$\alpha>\alpha_0(q) := \left(2^{2q-1}C_b'+2^{5q-4}C_b\right)^{1/q} C_b^{1/q}D_4^2K.$$
\begin{theorem}
\label{thm:stability-estimate-2D}
Assume hypotheses {\rm(i)}-{\rm(v)}. Then there exists a constant $C$, independent of $h,k$ such that 
\begin{align}
\label{ineq:high-moment-stability-2D}
    \mathbb{E}\left[ \max_{0\leq n \leq N_T} \mathbf{1}_{\tilde{\Omega}_{\delta,n-1}}\| u_h^{n}\|^{2q} \right] &+ \mathbb{E}\left[\left(\sum_{n=0}^{N_T-1} \mathbf{1}_{\tilde{\Omega}_{\delta,n}} \| u_h^{n+1} - u_h^n\|^2 \right)^q \right] + \mathbb{E}\left[ \left(k\sum_{n=0}^{N_T}\mathbf{1}_{\tilde{\Omega}_{\delta,n-1}} (\| v_{1,h}^{n}\|^2 + \| v_{2,h}^{n}\|^2) \right)^q \right] \notag \\
    &\leq C\left(1+\| u_0\|^{2q} + k^q \|u_0\|_{1}^{2q} \right).
\end{align}
Moreover, if $\alpha > \max\{ \alpha_0(q), \alpha_0(p_0)\}$, there exists $\delta>0$ such that $\mathbb{P}(\tilde{\Omega}_{\delta,N_T}) \to 1$ as $k/h^{2p_0} \to 0$.
\end{theorem}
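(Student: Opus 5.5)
Test \eqref{eq:fully-discrete-1-2d} with $r_h=u_h^{n+1}$, sum over cells, and use $2(a-b,a)=\|a\|^2-\|b\|^2+\|a-b\|^2$. The diffusion term is rewritten with Lemma \ref{lem:num-flux-2D}:
\[
H^+(w_{1,h}^{n+1},w_{2,h}^{n+1},u_h^{n+1})=-L^-(u_h^{n+1},w_{1,h}^{n+1},w_{2,h}^{n+1}).
\]
By \eqref{eq:fully-discrete-2-2d}--\eqref{eq:fully-discrete-4-2d} this equals $-(A(u_h^{n+1})\mathbf v_h^{n+1},\mathbf v_h^{n+1})\le -\alpha(\|v_{1,h}^{n+1}\|^2+\|v_{2,h}^{n+1}\|^2)$. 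This gives a one-step identity
\[
\|u_h^{n+1}\|^2-\|u_h^n\|^2+\|u_h^{n+1}-u_h^n\|^2+2\alpha k\|\mathbf v_h^{n+1}\|^2\le 2k\tilde H(u_h^n,u_h^{n+1})+2k(\psi^n,u_h^{n+1})+2(g^n\Delta W_n,u_h^{n+1}).
\]
Each explicit term is split as $u_h^{n+1}=u_h^n+(u_h^{n+1}-u_h^n)$.

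The convection term is handled as follows. The piece $\tilde H(u_h^n,u_h^n)\le0$ is discarded. The remainder is bounded by Corollary \ref{coro:nonlinear-convec-2D} and Young's inequality:
\[
Ck^2(1+\|u_h^n\|_\infty^{2p})\|\mathbf v_h^n\|^2+\tfrac14\|u_h^{n+1}-u_h^n\|^2.
\]
This is exactly where the localization enters. We multiply the whole step-$n$ inequality by $\mathbf 1_{\tilde\Omega_{\delta,n}}$. On that set $k(1+\|u_h^n\|_\infty^{2p_0})\le\delta$, so the growth term becomes $C\delta k\|\mathbf v_h^n\|^2$. For $\delta$ small this is absorbed into the dissipation $\alpha k\|\mathbf v_h^{n}\|^2$ carried over from the previous step; the index shifts by one, and the initial term is controlled by Lemma \ref{lem:v_h^0-2d}, giving the $k^q\|u_0\|_1^{2q}$ term.

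The source term splits into two pieces. The piece $-Ck((u_h^n)^\lambda,u_h^n)\le0$, since $\lambda$ is odd. The piece $k((u_h^n)^\lambda,u_h^{n+1}-u_h^n)$ is bounded by $Ck^2\|u_h^n\|_\infty^{2(\lambda-1)}\|u_h^n\|^2$ plus a small multiple of $\|u_h^{n+1}-u_h^n\|^2$, and on the event this is $\le C\delta k\|u_h^n\|^2$, a Gr\"onwall term. The $\psi_1$ part gives $Ck(1+\|u_h^n\|^2)+\epsilon k\|\mathbf v_h^n\|^2$, using Lemma \ref{lem:num-sol-2D} to trade $\nabla u_h$ for $\mathbf v_h$.

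For the noise, $(g^n\Delta W_n,u_h^n)$ is a martingale increment. The term $(g^n\Delta W_n,u_h^{n+1}-u_h^n)$ is bounded by $\tfrac14\|u_h^{n+1}-u_h^n\|^2+\|g^n\Delta W_n\|^2$, and the latter has conditional expectation $\le kK(D_3^2(1+\|u_h^n\|^2)+D_4^2\|\mathbf v_h^n\|^2)$.

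The recursive definition makes $\mathbf 1_{\tilde\Omega_{\delta,n}}$ depend only on $u_h^0,\dots,u_h^n$, which is why it is set up this way. Hence $\mathbf 1_{\tilde\Omega_{\delta,n}}(g^n\Delta W_n,u_h^n)$ is still a martingale increment. Moreover, since the events are nested, $\mathbf 1_{\tilde\Omega_{\delta,n}}\|u_h^n\|^2\le\mathbf 1_{\tilde\Omega_{\delta,n-1}}\|u_h^n\|^2$, so the telescoping sum can be performed.

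Summing and taking the $q$-th power gives the $2q$ moments; use \eqref{ineq:convex-3} to separate terms with constant $2^{q-1}+\epsilon$ on the critical pieces. Take the max over $n$ and expectations. For the martingale use Remark \ref{rmk:BDG} (continuous BDG, constant $C_b$). For $(\sum\|g^n\Delta W_n\|^2)^q$ use the lower discrete BDG bound followed by the upper one, which gives the factor $C_b'C_b$. The gradient parts of both contributions come with $(D_4^2K)^q(k\sum\|\mathbf v_h^n\|^2)^q$ times constants. Absorbing them into $\alpha^q(k\sum\|\mathbf v_h\|^2)^q$ is precisely the requirement $\alpha>\alpha_0(q)$.

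This bookkeeping of constants is the main obstacle: the Young splittings of $(\sum\cdot)^{q}$ and the moment of $\sum\|g\Delta W\|^2$ must produce exactly the factor $2^{2q-1}C_b'+2^{5q-4}C_b$. The remaining $u$-terms are $\max$-terms handled by a discrete Gr\"onwall argument, which is valid since all coefficients are deterministic once $\delta$ is fixed.

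For the probability, note that on $\tilde\Omega_{\delta,n-1}$ the event $\tilde\Omega_{\delta,n}$ fails only when $1+\|u_h^n\|_\infty^{2p_0}>\delta/k$, and the events before failure are already covered. Therefore
\[
\mathbb P(\tilde\Omega_{\delta,N_T}^c)\le\mathbb P\Big(\max_n\mathbf 1_{\tilde\Omega_{\delta,n-1}}(1+\|u_h^n\|_\infty^{2p_0})>\delta/k\Big).
\]
The inverse inequality $\|u_h^n\|_\infty\le Ch^{-1}\|u_h^n\|$ in 2D then bounds the quantity inside by $1+Ch^{-2p_0}\max_n\mathbf 1\|u_h^n\|^{2p_0}$. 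Apply Markov's inequality with the stability bound at $q=p_0$, which is available since $\alpha>\alpha_0(p_0)$. This gives
\[
\mathbb P(\tilde\Omega^c_{\delta,N_T})\le \frac{Ck}{\delta h^{2p_0}}\bigl(1+\|u_0\|^{2p_0}+k^{p_0}\|u_0\|_1^{2p_0}\bigr)\to0
\]
as $k/h^{2p_0}\to0$.
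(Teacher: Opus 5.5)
Your proposal is essentially the paper's proof. It uses the same testing and the splitting $u_h^{n+1}=u_h^n+(u_h^{n+1}-u_h^n)$, localization by $\mathbf{1}_{\tilde{\Omega}_{\delta,n}}$ with the one-step index shift into the dissipation, and the lower-then-upper BDG bound for $\sum\|g_h^n\Delta W_n\|^2$ together with BDG plus Young for the martingale; it then closes with discrete Gr\"onwall and with Markov plus the inverse inequality applied to the $q=p_0$ estimate, which requires the same $\delta\le\min\{\delta(q),\delta(p_0)\}$. The constant bookkeeping you leave open is done in the paper (Appendix \ref{appendix-eps-choice}) by keeping every Young weight tunable rather than fixing some at $\tfrac14$: the convection and source pieces take vanishing shares of $\|u_h^{n+1}-u_h^n\|^2$ (the paper does not split the $\psi_1$ term at all), so the noise-increment weight $\epsilon_3\downarrow 2$ and the martingale weight $\epsilon_6\downarrow 2^{3q-3}$, which produces the threshold $\alpha>\alpha_0(q)$ with $\alpha_0(q)^q=(2^{2q-1}C_b'+2^{5q-4}C_b)C_bK^qD_4^{2q}$.
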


\begin{proof}
Since the proof is long, we divide the process into three steps.

\noindent {\it Step 1.} Let the test functions be $r_h = u_h^{n+1}$, $p_h = w_{1,h}^{n+1}$, $q_h = w_{2,h}^{n+1}$ and $z_h = v_{1,h}^{n+1}$, $\phi_h = v_{2,h}^{n+1}$ in the fully discrete scheme \eqref{eq:fully-discrete-1-2d}-\eqref{eq:fully-discrete-4-2d}. Let $A(\cdot,t_{n+1},u_h^{n+1}) = \{ a_{11}, a_{12}; a_{21}, a_{22}\}$ be the $ 2\times2$ coefficient matrix and $\mathbf{v}_h^{n+1} = (v_{1,h}^{n+1},v_{2,h}^{n+1})^{\top}$. After summing over all computational cells and applying Lemma \ref{lem:num-flux-2D}, we have
\begin{align}
\label{eq:fully-discrete-5-2d}
    \bigl(A(\cdot,t_{n+1},u_h^{n+1})\mathbf{v}_h^{n+1}, \mathbf{v}_h^{n+1} \bigr) = L^-(u_h^{n+1},w_{1,h}^{n+1},w_{2,h}^{n+1}) = -H^+(w_{1,h}^{n+1},w_{2,h}^{n+1},u_h^{n+1}).
\end{align}
Multiplying \eqref{eq:fully-discrete-5-2d} by $k$, adding it to \eqref{eq:fully-discrete-1-2d}, and multiplying the resulting identity by $\mathbf{1}_{\tilde{\Omega}_{\delta,n}}$, we obtain
\begin{align}
\label{eq:stability-main-2d}
    \mathbf{1}_{\tilde{\Omega}_{\delta,n}}(u_h^{n+1} - u_h^n,u_h^{n+1}) 
    &+k\mathbf{1}_{\tilde{\Omega}_{\delta,n}}\bigl(A(\cdot,t_{n+1},u_h^{n+1})\mathbf{v}_h^{n+1}, \mathbf{v}_h^{n+1} \bigr) \notag \\
    &= \mathbf{1}_{\tilde{\Omega}_{\delta,n}}k\tilde{H}(f_1,f_2,u_h^n,u_h^{n+1}) 
    + \mathbf{1}_{\tilde{\Omega}_{\delta,n}} k(\psi(\cdot,t,u_h^n,v_{1,h}^n,v_{2,h}^n),u_h^{n+1}) \notag \\
    &\quad + \mathbf{1}_{\tilde{\Omega}_{\delta,n}}(g(\cdot,t,u_h^n,v_{1,h}^n,v_{2,h}^n) \Delta W_n, u_h^{n+1}). 
\end{align}
By hypothesis (ii) and the nesting property of $\{ \tilde{\Omega}_{\delta,n}\}$, the left-hand side of \eqref{eq:stability-main-2d} satisfies
\begin{align*}
    LHS \geq \frac{1}{2}\mathbf{1}_{\tilde{\Omega}_{\delta,n}}\| u_h^{n+1}\|^2 - \frac{1}{2} \mathbf{1}_{\tilde{\Omega}_{\delta,n-1}} \|u_h^n\|^2 + \frac{1}{2} \mathbf{1}_{\tilde{\Omega}_{\delta,n}} \| u_h^{n+1} - u_h^n\|^2 + \alpha k \mathbf{1}_{\tilde{\Omega}_{\delta,n}} \left(\| v_{1,h}^{n+1}\|^2 +\| v_{2,h}^{n+1}\|^2 \right).
\end{align*}

Using Lemma \ref{lem:num-flux-2D} and the oddness of $\lambda$, we have
\[
    \tilde H(f_1,f_2,u_h^n,u_h^n)\leq0,
    \qquad
    (-(u_h^n)^\lambda,u_h^n)\leq0.
\]
Therefore, for every $0\leq l\leq m$, taking the sum $\sum_{n=0}^l$ on both sides of \eqref{eq:stability-main-2d} yields
\begin{align}
\label{ineq:stability-energy-l}
    &\frac12 \mathbf{1}_{\tilde{\Omega}_{\delta,l}} \|u_h^{l+1}\|^2
    + \frac12 \sum_{n=0}^l \mathbf{1}_{\tilde{\Omega}_{\delta,n}} \|u_h^{n+1}-u_h^n\|^2 
    + \alpha k \sum_{n=0}^l \mathbf{1}_{\tilde{\Omega}_{\delta,n}} \left( \|v_{1,h}^{n+1}\|^2+\|v_{2,h}^{n+1}\|^2 \right) \notag \\
    &\leq 
    \frac12\|u_h^0\|^2 + k \sum_{n=0}^{l} \mathbf{1}_{\tilde{\Omega}_{\delta,n}} \tilde{H}(f_1,f_2,u_h^{n},u_h^{n+1}-u_h^n) 
    + k \sum_{n=0}^{l} \mathbf{1}_{\tilde{\Omega}_{\delta,n}} \left(-(u_h^n)^{\lambda}, u_h^{n+1}-u_h^n \right) \notag\\
    &+ k \sum_{n=0}^{l} \mathbf{1}_{\tilde{\Omega}_{\delta,n}} (\psi_1(\cdot,t_n,u_h^n,v_{1,h}^n,v_{2,h}^n), u_h^{n+1}) 
     + \sum_{n=0}^{l} \mathbf{1}_{\tilde{\Omega}_{\delta,n}} (g_h^n \Delta W_n,u_h^{n+1}-u_h^n) 
     + \sum_{n=0}^{l} \mathbf{1}_{\tilde{\Omega}_{\delta,n}} (g_h^n \Delta W_n,u_h^{n}) \notag \\    
    &:= \frac12\|u_h^0\|^2+I_1(l)+I_2(l)+I_3(l)+I_4(l)+M_l,
\end{align}
with 
$$g_h^n:=g(\cdot,t_n,u_h^n,v_{1,h}^n,v_{2,h}^n).$$
By Corollary \ref{coro:nonlinear-convec-2D}, hypothesis (iv), Young's inequality and Cauchy-Schwarz inequality, we have
\begin{align*}
    \max_{0\leq l\leq m}|I_1(l)| & 
    \leq kC_{\mu}^{*} \sum_{n=0}^{m} \mathbf{1}_{\tilde{\Omega}_{\delta,n}} (1 + \| u_h^{n}\|_{\infty}^p) (\| v_{1,h}^{n}\| + \| v_{2,h}^{n}\|) \| u_h^{n+1}-u_h^n\| \\
    &\leq \frac{1}{2\epsilon_0} \sum_{n=0}^{m} \mathbf{1}_{\tilde{\Omega}_{\delta,n}} \| u_h^{n+1}-u_h^n\|^2 + Ck(\| v_{1,h}^{0}\|^2 + \| v_{2,h}^{0}\|^2)  \\
    & \qquad + 2(C_{\mu}^{*})^2 \epsilon_0 \delta k\sum_{n=0}^{m} \mathbf{1}_{\tilde{\Omega}_{\delta,n}} (\| v_{1,h}^{n+1}\|^2 + \| v_{2,h}^{n+1}\|^2), \\
    \max_{0\leq l\leq m}|I_2(l)| & 
    \leq \frac{\epsilon_1}{2} \sum_{n=0}^{m} \mathbf{1}_{\tilde{\Omega}_{\delta,n}} \| u_h^{n+1} - u_h^n\|^2 
    + \frac{k^2}{2\epsilon_1} \sum_{n=0}^{m} \mathbf{1}_{\tilde{\Omega}_{\delta,n}} \| (u_h^n)^{\lambda}\|^2 \\
    &\leq \frac{\epsilon_1}{2} \sum_{n=0}^{m} \mathbf{1}_{\tilde{\Omega}_{\delta,n}} \| u_h^{n+1} - u_h^n\|^2 
    + \frac{k \delta}{2\epsilon_1} \sum_{n=0}^{m} \mathbf{1}_{\tilde{\Omega}_{\delta,n}} \|u_h^n\|^2, \\
    \max_{0\leq l\leq m}|I_3(l)| &\leq \frac{k}{2\epsilon_2}\sum_{n=0}^{m} \mathbf{1}_{\tilde{\Omega}_{\delta,n}} \| u_h^{n+1}\|^2 
    + \frac{k\epsilon_2}{2} \sum_{n=0}^{m} \mathbf{1}_{\tilde{\Omega}_{\delta,n}} \| \psi_1(\cdot,t_n,u_h^n,v_{1,h}^n,v_{2,h}^n)\|^2 \\
    &\leq Ck \sum_{n=0}^{m} \mathbf{1}_{\tilde{\Omega}_{\delta,n}} \| u_h^{n+1}\|^2 + Ck(1+\| u_h^{0}\|^2 + \| v_{1,h}^{0}\|^2 + \| v_{2,h}^{0}\|^2)  \\
    & \qquad +  \frac{k\epsilon_2}{2} B_3^2\sum_{n=0}^{m} \mathbf{1}_{\tilde{\Omega}_{\delta,n}} (\| v_{1,h}^{n+1}\|^2 + \| v_{2,h}^{n+1}\|^2), \\
    \max_{0\leq l\leq m}|I_4(l)| &\leq \frac{1}{2\epsilon_3} \sum_{n=0}^{m} \mathbf{1}_{\tilde{\Omega}_{\delta,n}} \| u_h^{n+1} - u_h^n\|^2 
    + \frac{\epsilon_3}{2}\sum_{n=0}^{m} \mathbf{1}_{\tilde{\Omega}_{\delta,n}} \| g_h^n \Delta W_n\|^2,
\end{align*}
where the last inequality in the estimation of $I_1$ and $I_2$ follows from the definition \eqref{def:stability-subset} and the nesting property is used to shift the terms involving $v_{i,h}^n$ to terms involving $v_{i,h}^{n+1}$.

Taking $\max_{0 \leq l \leq m}$ on both sides of \eqref{ineq:stability-energy-l} leads to
\begin{align}
\label{ineq:stability-energy-l2}
    &\max \left\{ \frac12 \max_{0 \leq l \leq m} \mathbf{1}_{\tilde{\Omega}_{\delta,l}} \|u_h^{l+1}\|^2, \,
    \frac{1}{2} \sum_{n=0}^{m} \mathbf{1}_{\tilde{\Omega}_{\delta,n}} \| u_h^{n+1} - u_h^n\|^2 
    + \alpha k \sum_{n=0}^{m} \mathbf{1}_{\tilde{\Omega}_{\delta,n}} \left(\| v_{1,h}^{n+1}\|^2 +\| v_{2,h}^{n+1}\|^2 \right) \right\} \notag \\
    &\qquad \leq \frac12\|u_h^0\|^2+\max_{0\leq l\leq m}I_1(l)+\max_{0\leq l\leq m}I_2(l)+\max_{0\leq l\leq m}I_3(l)+\max_{0\leq l\leq m}I_4(l)+\max_{0\leq l\leq m}M_l,
\end{align}
Combining this with the estimates of $I_1$ to $I_4$ leads to 
\begin{align} 
    \frac{1}{2} &\max_{0 \leq n \leq m} \mathbf{1}_{\tilde{\Omega}_{\delta,n}}\| u_h^{n+1}\|^2 + \left(\frac{1}{2}-\frac{1}{\epsilon_0} - \frac{1}{\epsilon_3} -\epsilon_1 \right) \sum_{n=0}^{m} \mathbf{1}_{\tilde{\Omega}_{\delta,n}} \| u_h^{n+1} - u_h^n\|^2 \\
    &\,\, + (\alpha - 4(C_{\mu}^{*})^2 \epsilon_0 \delta - \epsilon_2 B_3^2)k \sum_{n=0}^{m} \mathbf{1}_{\tilde{\Omega}_{\delta,n}} \left(\| v_{1,h}^{n+1}\|^2 +\| v_{2,h}^{n+1}\|^2 \right) \notag \\
    &\leq C( 1+ \| u_h^{0}\|^2 + k\| v_{1,h}^{0}\|^2 + k\| v_{2,h}^{0}\|^2) \notag \\
    &\quad + Ck \sum_{n=0}^{m} \mathbf{1}_{\tilde{\Omega}_{\delta,n}} \| u_h^{n+1}\|^2 + \epsilon_3\sum_{n=0}^{m} \mathbf{1}_{\tilde{\Omega}_{\delta,n}} \| g_h^n \Delta W_n\|^2
    + 2\max_{0 \leq l \leq m} |M_l|.  \label{ineq:stability-main-2D}
\end{align}
Taking the $q$-th power of both sides, and then taking the expectation yield
\begin{align}
\label{ineq:stability-main-expect-2D}
    \frac{1}{2^q} & \mathbb{E}\left[\max_{0 \leq l \leq m} \mathbf{1}_{\tilde{\Omega}_{\delta,l}} \| u_h^{l+1}\|^{2q} \right] + \left(\frac{1}{2}-\frac{1}{\epsilon_0}-\frac{1}{\epsilon_3} - \epsilon_1\right)^q \mathbb{E}\left[ \left(\sum_{n=0}^{m} \mathbf{1}_{\tilde{\Omega}_{\delta,n}} \| u_h^{n+1} - u_h^n\|^2 \right)^q \right] \notag \\
    & \quad + \left(\alpha - 4(C_{\mu}^{*})^2\epsilon_0 \delta - \epsilon_2 B_3^2 \right)^q \mathbb{E}\left[ \left(k\sum_{n=0}^{m} \mathbf{1}_{\tilde{\Omega}_{\delta,n}} \left(\| v_{1,h}^{n+1}\|^2 +\| v_{2,h}^{n+1}\|^2 \right) \right)^q \right] \notag \\
    & \leq C(1 + \| u_h^{0}\|^{2q} + k^q \| v_{1,h}^{0}\|^{2q} + k^q \| v_{2,h}^{0}\|^{2q}) + Ck\sum_{n=0}^{m} \mathbb{E}\left[ \max_{0 \leq l \leq n}\mathbf{1}_{\tilde{\Omega}_{\delta,l}} \| u_h^{l+1}\|^{2q} \right]  \ \\
    &\quad + (2^{q-1}+\epsilon_4)\epsilon_3^q \mathbb{E}\left[ \left(\sum_{n=0}^{m} \mathbf{1}_{\tilde{\Omega}_{\delta,n}} \| g_h^n \Delta W_n\|^2 \right)^q \right] 
     + (2^{q-1}+\epsilon_4) 2^q\mathbb{E}\left[ \max_{0 \leq l \leq m} \left| \sum_{n=0}^{l} \mathbf{1}_{\tilde{\Omega}_{\delta,n}} (g_h^n \Delta W_n,u_h^{n}) \right|^q \right], \notag
\end{align}
where we use \eqref{ineq:convex-3} with $\epsilon = \epsilon_4$, and the discrete H\"older inequality.

\noindent {\it Step 2.} Next, we proceed to estimate the last two terms of the right-hand-side of \eqref{ineq:stability-main-expect-2D}. 
We first note that, by the recursive definition of $\tilde{\Omega}_{\delta,n}$, the indicator $\mathbf{1}_{\tilde{\Omega}_{\delta,n}}$ is $\mathcal{F}_{t_n}$-measurable. 
Since $u_h^n$, $v_{1,h}^n$, and $v_{2,h}^n$ are also $\mathcal{F}_{t_n}$-measurable, the stochastic sums
    $$\sum_{n=0}^{l}
    \mathbf{1}_{\tilde{\Omega}_{\delta,n}}g_h^n\Delta W_n,
    \qquad
    \sum_{n=0}^{l}
    \mathbf{1}_{\tilde{\Omega}_{\delta,n}}
    (g_h^n\Delta W_n,u_h^n),
    \qquad 0\leq l\leq m,$$
are discrete martingales.
By the discrete and continuous BDG inequality, and hypothesis (v), we have
\begin{align}
\label{ineq:stability-I1-2D}
    \mathbb{E}&\left[\left(\sum_{n=0}^{m}\mathbf{1}_{\tilde{\Omega}_{\delta,n}} \| g_h^n \Delta W_n\|^2 \right)^q \right] 
    \leq C_{b}' \mathbb{E}\left[ \max_{0 \leq l \leq m} \left\| \sum_{n=0}^{l}\mathbf{1}_{\tilde{\Omega}_{\delta,n}} g_h^n \Delta W_n \right\|^{2q} \right]  
    \leq {C_b'}{C_b} K^q \mathbb{E}\left[\left( k\sum_{n=0}^{m}\mathbf{1}_{\tilde{\Omega}_{\delta,n}} \| g_h^n\|^2 \right)^q \right] \notag \\
    &\hspace{1cm}
    \leq C_b'C_b K^q \mathbb{E}\left[ \left( C+ k\sum_{n=0}^{m}\mathbf{1}_{\tilde{\Omega}_{\delta,n}} D_3^2\| u_h^n\|^2 + kD_4^2 \sum_{n=0}^{m}\mathbf{1}_{\tilde{\Omega}_{\delta,n}} (\| v_{1,h}^n\|^2 + \| v_{2,h}^n\|^2) \right)^q \right] \notag \\
    &\hspace{1cm}
    \leq C(1 + \| u_h^{0}\|^{2q} + k^q \| v_{1,h}^{0}\|^{2q} + k^q \| v_{2,h}^{0}\|^{2q}) 
    + Ck\sum_{n=0}^{m}\mathbb{E}\left[\mathbf{1}_{\tilde{\Omega}_{\delta,n}} \| u_h^{n+1}\|^{2q} \right] \notag \\
    &\hspace{1.5cm}
    + C_b'C_b K^qD_4^{2q}(1+\epsilon_5) \mathbb{E}\left[\left(k\sum_{n=0}^{m} \mathbf{1}_{\tilde{\Omega}_{\delta,n}} \left(\| v_{1,h}^{n+1}\|^2 +\| v_{2,h}^{n+1}\|^2 \right) \right)^q \right], 
\end{align}
where the last inequality follows from the discrete H\"older inequality and \eqref{ineq:convex-1} in Lemma \ref{lem:generalized-discrete-Holder}. By the BDG inequality, the Cauchy-Schwarz inequality, discrete H\"older inequality, hypothesis (v) and inequality \eqref{ineq:convex-1},
\begin{align}
\label{ineq:stability-I2-2D}
    \mathbb{E}&\left[\max_{0 \leq l \leq m} \left| \sum_{n=0}^{l} \mathbf{1}_{\tilde{\Omega}_{\delta,n}} (g_h^n \Delta W_n,u_h^{n}) \right|^q \right] \leq C_b \mathbb{E}\left[ \left( kK\sum_{n=0}^{m} \mathbf{1}_{\tilde{\Omega}_{\delta,n}} \|g_h^n \|^2 \|u_h^{n}\|^2 \right)^{\frac{q}{2}} \right] \notag \\
    &\leq C_b \mathbb{E}\left[ \max_{0 \leq n \leq m} \mathbf{1}_{\tilde{\Omega}_{\delta,n}} \|u_h^{n}\|^{q} \left( kK\sum_{n=0}^{m} \mathbf{1}_{\tilde{\Omega}_{\delta,n}} \|g_h^n \|^2 \right)^{\frac{q}{2}} \right] \notag \\
    &\leq \frac{1}{4\epsilon_6} \mathbb{E}\left[ \max_{0 \leq n \leq m} \mathbf{1}_{\tilde{\Omega}_{\delta,n}} \|u_h^{n}\|^{2q} \right] + \epsilon_6C_b^2 K^q \mathbb{E}\left[ \left( k\sum_{n=0}^{m} \mathbf{1}_{\tilde{\Omega}_{\delta,n}} \|g_h^n \|^2 \right)^q \right] \notag \\
    & \leq \frac{1}{4\epsilon_6} \mathbb{E}\left[ \max_{0 \leq n \leq m} \mathbf{1}_{\tilde{\Omega}_{\delta,n}} \|u_h^{n}\|^{2q} \right] + \epsilon_6C_b^2 K^q \mathbb{E}\left[ \left( C+ kD_3^2\sum_{n=0}^{m}\mathbf{1}_{\tilde{\Omega}_{\delta,n}} \| u_h^n\|^2 + kD_4^2\sum_{n=0}^{m}\mathbf{1}_{\tilde{\Omega}_{\delta,n}} (\| v_{1,h}^n\|^2 +\| v_{2,h}^n\|^2) \right)^q \right] \notag \\
    &\leq C(1+\|u_h^{0}\|^{2q}+k^q \| v_{1,h}^{0}\|^{2q} + k^q \| v_{2,h}^{0}\|^{2q}) + Ck\sum_{n=0}^{m} \mathbb{E}\left[\mathbf{1}_{\tilde{\Omega}_{\delta,n}} \|u_h^{n+1}\|^{2q} \right]  \notag \\
    &+ \frac{1}{4\epsilon_6} \mathbb{E}\left[ \max_{0 \leq n \leq m} \mathbf{1}_{\tilde{\Omega}_{\delta,n}} \|u_h^{n+1}\|^{2q} \right]
    + \epsilon_6C_b^2 K^q D_4^{2q} (1+\epsilon_7) \mathbb{E}\left[\left(k\sum_{n=0}^{m} \mathbf{1}_{\tilde{\Omega}_{\delta,n}} \left(\| v_{1,h}^{n+1}\|^2 +\| v_{2,h}^{n+1}\|^2 \right) \right)^q \right].
\end{align}

Plugging \eqref{ineq:stability-I1-2D} and \eqref{ineq:stability-I2-2D} into \eqref{ineq:stability-main-expect-2D}, we reach
\begin{align*}
    \mathcal{C}_1 &\mathbb{E}\left[\max_{0 \leq l \leq m} \mathbf{1}_{\tilde{\Omega}_{\delta,l}} \| u_h^{l+1}\|^{2q} \right] + \mathcal{C}_2 \mathbb{E}\left[\left(\sum_{n=0}^{m} \mathbf{1}_{\tilde{\Omega}_{\delta,n}} \| u_h^{n+1} - u_h^n\|^2 \right)^q \right] + \mathcal{C}_3 \mathbb{E}\left[\left(k\sum_{n=0}^{m} \mathbf{1}_{\tilde{\Omega}_{\delta,n}} \left(\| v_{1,h}^{n+1}\|^2 +\| v_{2,h}^{n+1}\|^2 \right) \right)^q \right] \\
    &\leq C(1 + \| u_h^{0}\|^{2q} + k^q \| v_{1,h}^{0}\|^{2q} + k^q \| v_{2,h}^{0}\|^{2q}) + Ck\sum_{n=0}^{m}\mathbb{E}\left[ \max_{0 \leq l \leq n} \mathbf{1}_{\tilde{\Omega}_{\delta,l}} \| u_h^{l+1}\|^{2q} \right],
\end{align*}
where the three coefficients are given by
\begin{align*}
    \mathcal{C}_1 &= \frac{1}{2^q}-\frac{2^q(2^{q-1}+\epsilon_4)}{4\epsilon_6}, \quad \mathcal{C}_2 = \left(\frac{1}{2}-\frac{1}{\epsilon_0}-\frac{1}{\epsilon_3} - \epsilon_1 \right)^q, \\
    \mathcal{C}_3 &= \left(\alpha - 4(C_{\mu}^{*})^2\epsilon_0 \delta - \epsilon_2 B_3^2 \right)^q -  (2^{q-1}+\epsilon_4)\epsilon_3^q C_b'C_b K^qD_4^{2q}(1+\epsilon_5) - (2^{q-1}+\epsilon_4) 2^q \epsilon_6C_b^2 K^q D_4^{2q} (1+\epsilon_7).
\end{align*}
In Appendix \ref{appendix-eps-choice}, we show that we can find fixed $\delta >0$ and $\{\epsilon_i >0: i=0,...,7\}$ such that $\mathcal{C}_1, \mathcal{C}_2, \mathcal{C}_3$ are all positive, if and only if $\alpha > \alpha_0(q) := \left(2^{2q-1}C_b' + 2^{5q-4}C_b \right)^{1/q} C_b^{1/q} D_4^2K$. Applying the discrete Gr\"onwall inequality, Lemma \ref{lem:v_h^0-2d}, and rearranging the coefficients, we obtain \eqref{ineq:high-moment-stability-2D}. 

\noindent {\it Step 3.} Lastly, we show that the subsets $\{ \tilde{\Omega}_{\delta,n}\}_{n=0}^{N_T}$ defined in \eqref{def:stability-subset} satisfy $\mathbb{P}(\tilde{\Omega}_{\delta,N_T}) \to 1$ as $k/h^{2p_0} \to 0$. Actually, we will prove this result for any $\delta \in(0, \min\{\delta(q), \delta(p_0)\}]$, where $\delta(q)$ is defined in Appendix \ref{appendix-eps-choice}.
By Markov's inequality and the inverse inequality, we have
\begin{align}
\label{ineq:stability-markov-2D}
    \mathbb{P}(\tilde{\Omega}_{\delta,N_T}) &\geq 1 - \frac{k}{\delta} \mathbb{E}\left[ \max_{0 \leq n \leq N_T} \mathbf{1}_{\tilde{\Omega}_{\delta,n-1}}\left( 1 + \| u_h^{n}\|_{\infty}^{2p_0} \right) \right] \notag \\
    &\geq 1 - Ck \mathbb{E}\left[ \max_{0 \leq n \leq N_T} \mathbf{1}_{\tilde{\Omega}_{\delta,n-1}} \left( 1 + h^{-2p_0}\| u_h^{n}\|^{2p_0} \right) \right] \geq 1 - Ck\left(1 + h^{-2p_0} \right),
\end{align}
where the last inequality follows from the stability estimate \eqref{ineq:high-moment-stability-2D} applied with exponent $q=p_0$, namely,
\begin{align*}
    \mathbb{E}\left[\max_{0 \leq n \leq N_T-1} \mathbf{1}_{\tilde{\Omega}_{\delta,n}} \| u_h^{n+1}\|^{2p_0} \right] 
    & \leq C.
\end{align*}
Hence, we require $\delta \in (0, \min\{\delta(q), \delta(p_0)\}]$, and $\alpha > \max\{ \alpha(q), \alpha(p_0)\}$ so that \eqref{ineq:high-moment-stability-2D} holds with $q=p_0$. Therefore, $\mathbb{P}(\tilde{\Omega}_{\delta,N_T}) \to 1$ as $k\to 0$ and $k/h^{2p_0} \to 0$. 
\end{proof}


As a special case, the lower bound on $\alpha$ can be further relaxed for the second moment estimate. 
\begin{remark}[Range of $\alpha$ for Second Moment Stability Estimate]
\label{rmk:stability-second-moment-2d}
Consider the case $q = 1$ in Theorem \ref{thm:stability-estimate-2D}. The stochastic parabolic condition can be relaxed to $\alpha > KD_4^2/2$, which is sharp. To achieve this, we can first show the following weaker stability estimate: 
\begin{align}
\label{ineq:stability-estimate-weaker-2d}
    \max_{0\leq n \leq N_T} \mathbb{E}\left[ \mathbf{1}_{\tilde{\Omega}_{\delta,n-1}}\| u_h^{n}\|^2 \right] &+ \sum_{n=0}^{N_T-1} \mathbb{E}\left[ \mathbf{1}_{\tilde{\Omega}_{\delta,n}} \| u_h^{n+1} - u_h^n\|^2 \right] + k\sum_{n=0}^{N_T} \mathbb{E}\left[ \mathbf{1}_{\tilde{\Omega}_{\delta,n-1}} (\| v_{1,h}^{n}\|^2 + \| v_{2,h}^{n}\|^2) \right] \notag \\
    &\leq C\left(1+\| u_0\|^{2} + k \|u_0\|_{1}^{2} \right). 
\end{align}
The key point is that, by taking expectation of the discrete energy identity before applying $\max_{0\leq n \leq N_T}$ over time, the martingale term has zero expectation. This avoids estimating the last term in \eqref{ineq:stability-main-2D} which leads to the stronger bound on $\alpha$ in the high-moment argument. 
The maximal-in-time estimate \eqref{ineq:high-moment-stability-2D} for $q=1$ can be recovered under the same relaxed condition by following the proof of Theorem \ref{thm:stability-estimate-2D}. For a more detailed proof, we refer to \cite{chen_semi-linear_2d}. 

Moreover, as mentioned in \cite{LiShuTang2021_ESAIM}, when \eqref{spde:conv-diff-2d} is driven by the standard Brownian motion ($K=1$), the $L^2$ stability estimate still holds for the degenerate case $\alpha =D_4^2/2$, provided $f_1=f_2=0$ and $B_3=0$. 
\end{remark}

The condition $k/h^{2p_0}\to0$ in Theorem \ref{thm:stability-estimate-2D}, which ensures that the subsets $\{ \tilde{\Omega}_{\delta,n}\}_{n=0}^{N_T}$ converge to the full sample space, can be restrictive.
However, this stability condition can be relaxed by using the high-moment error estimate result in Section \ref{sec:error-estimate-2d}. 
\begin{corollary}[Improved Stability Condition]
\label{coro:relaxed-stability-2d}
For any fixed $q \geq 1$, under the hypothesis of Theorems \ref{thm:stability-estimate-2D} and \ref{thm:high-moment-error-estimate-2D}, the high moment stability estimate holds with $\mathbb{P}(\tilde{\Omega}_{\delta,N_T}) \to 1$ as $h \to 0$ provided $k \leq Ch^{2+2C_q^*\beta}$, where $\beta >0$ can be arbitrarily small and $C_q^*$ is the constant in Theorem \ref{thm:high-moment-error-estimate-2D}.  
\end{corollary}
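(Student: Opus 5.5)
The moment bound in Theorem \ref{thm:stability-estimate-2D} is untouched by the choice of $k$ and $h$. Its proof (Steps 1--2) only uses the fixed $\delta$ and the coercivity condition $\alpha>\alpha_0(q)$. So the only thing to re-prove is the last part: $\mathbb{P}(\tilde{\Omega}_{\delta,N_T})\to 1$. In Step 3 the crude estimate $\|u_h^n\|_\infty\le Ch^{-1}\|u_h^n\|$ forced $k/h^{2p_0}\to 0$. I will replace it with a splitting through the exact solution. On $\tilde\Omega_{\delta,n-1}$, write $u_h^n=(u_h^n-\mathcal{P}^-u(t_n))+\mathcal{P}^-u(t_n)$ and use
\begin{align*}
\|u_h^n\|_\infty \le \|\mathcal{P}^-u(t_n)\|_\infty + Ch^{-1}\|\xi_u^n\|,
\end{align*}
with $\xi_u^n=\mathcal{P}^-u(t_n)-u_h^n$. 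The projection term is bounded by $C\|u(t_n)\|_{H^2}$ (Sobolev embedding plus $L^\infty$-stability of $\mathcal{P}^-$), and the regularity assumed in Theorem \ref{thm:high-moment-error-estimate-2D} controls its moments uniformly.

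For the error term I will invoke the high-moment error estimate of Theorem \ref{thm:high-moment-error-estimate-2D}. It should give a bound of the form
\begin{align*}
\mathbb{E}\Big[\max_n \mathbf{1}_{\tilde\Omega_{\delta,n-1}}\|\xi_u^n\|^{2q}\Big]\le C\,(h^{r+1-\beta'}+k^{1/2})^{2q},
\end{align*}
where the loss $\beta'$, or equivalently the exponent $C_q^*\beta$, comes from the localized Gr\"onwall factor. The intent is that this Gr\"onwall factor costs only $h^{-C_q^*\beta}$ when the growth factors are truncated at level $\beta|\log h|$ rather than $\delta/k$.

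Next I apply Markov's inequality exactly as in \eqref{ineq:stability-markov-2D}, using the moment of order $2p_0$:
\begin{align*}
\mathbb{P}(\tilde\Omega_{\delta,N_T}^c)\le \frac{k}{\delta}\,\mathbb{E}\Big[\max_n\mathbf{1}_{\tilde\Omega_{\delta,n-1}}(1+\|u_h^n\|_\infty^{2p_0})\Big]
\le Ck\Big(1+\mathbb{E}\sup_t\|u\|_{2}^{2p_0}+h^{-2p_0}\,\mathbb{E}\max_n\mathbf{1}\|\xi_u^n\|^{2p_0}\Big).
\end{align*}
Substituting the error bound (applied with $q=p_0$, which needs $\alpha>\alpha_0(p_0)$), the last term becomes roughly $k\,h^{-2p_0}(h^{r+1}+k^{1/2})^{2p_0}h^{-2C_q^*\beta}$. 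For $r\ge1$ and $k\le Ch^{2+2C_q^*\beta}$, both $kh^{-2p_0}h^{2p_0(r+1)}h^{-2C_q^*\beta}$ and $k^{1+p_0}h^{-2p_0-2C_q^*\beta}$ tend to zero. The second needs $k^{1/2}h^{-1}\lesssim h^{C_q^*\beta}$, which is exactly the stated scaling. This is where $k\le Ch^{2+2C_q^*\beta}$ enters. Since the other terms are $O(k)$, $\mathbb{P}(\tilde\Omega_{\delta,N_T})\to1$ as $h\to0$.

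The main obstacle is circularity. The error estimate of Theorem \ref{thm:high-moment-error-estimate-2D} is itself proved on localized events, possibly built from $\tilde\Omega_{\delta,n}$ together with events controlling $\|u(t_n)\|_\infty$ and logarithmic growth factors. So I must check that the indicator in the error bound matches (or is contained in) $\mathbf{1}_{\tilde\Omega_{\delta,n-1}}$, and that the complementary events from the error analysis have vanishing probability. I will first try an induction on $n$ using the recursive structure \eqref{def:stability-subset}. On $\tilde\Omega_{\delta,n-1}$, the error bound up to step $n$ uses only information at times $\le t_n$, and this bounds the probability of exiting at step $n$. Summing the increments of exit probability then reproduces the Markov argument above with the maximum inside.
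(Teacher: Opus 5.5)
\textbf{There is a genuine gap.} Your reduction is correct as far as it goes. The localized moment bound of Theorem \ref{thm:stability-estimate-2D} holds for all $h,k$, so only $\mathbb{P}(\tilde{\Omega}_{\delta,N_T})\to 1$ needs a new argument.

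The gap is the step where you invoke Theorem \ref{thm:high-moment-error-estimate-2D} to control $\mathbb{E}\bigl[\max_n \mathbf{1}_{\tilde{\Omega}_{\delta,n-1}}\|\xi_u^n\|^{2p_0}\bigr]$. That theorem only controls $\xi_u^n$ weighted by $\mathbf{1}_{\Omega_{\kappa,n-1}}$. For $\kappa k\le\delta$ one has $\Omega_{\kappa,n-1}\subset\tilde{\Omega}_{\delta,n-1}$, so the indicator you need is the \emph{larger} one, and the available bound does not transfer. On $\tilde{\Omega}_{\delta,n-1}\setminus\Omega_{\kappa,n-1}$ there is no usable error estimate. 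The $\tilde{\Omega}$-sets bound $\|u_h^n\|_\infty^{2p_0}$ only by $\delta/k$ and do not control $\|u\|_{W^{2,\infty}}$ at all. The localized Gr\"onwall argument would then produce a factor of order $\exp\bigl(C(\delta/k)^q\bigr)$ rather than $h^{-C_q\beta}$. The induction on exit probabilities you sketch does not resolve this: bounding $\mathbb{P}(\tilde{\Omega}_{\delta,n-1}\setminus\tilde{\Omega}_{\delta,n})$ on that difference set requires exactly the missing estimate.

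The missing idea is a pure set inclusion, which makes a new Markov computation unnecessary. Suppose $\omega\in\Omega_{\kappa,\ell}$. By nesting, $\omega\in\Omega_{\kappa,m-1}$ for every $m\le\ell$, so \eqref{def:error-subset-2d} gives $1+\|u_h^m\|_\infty^{2p_0}\le\kappa$ for all $m\le\ell$. If $\kappa k\le\delta$, this yields $\mathbf{1}_{\tilde{\Omega}_{\delta,m-1}}\bigl(1+\|u_h^m\|_\infty^{2p_0}\bigr)\le\delta/k$ for all $m\le\ell$, i.e.\ $\omega\in\tilde{\Omega}_{\delta,\ell}$. With $\kappa=\ln(\ln(h^{-\beta}))$ and $k\le Ch^{2+2C_q^*\beta}$ we have $\kappa k\to0$. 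Hence $\Omega_{\kappa,N_T}\subset\tilde{\Omega}_{\delta,N_T}$ for small $h$, and $\mathbb{P}(\tilde{\Omega}_{\delta,N_T})\ge\mathbb{P}(\Omega_{\kappa,N_T})\to1$ by Step 3 of Theorem \ref{thm:high-moment-error-estimate-2D}. That Step 3 already performs essentially the computation you wrote: the splitting $u_h^n=u^n-\xi_u^n+\eta_u^n$, the inverse inequality, and Markov. It does so for the sets $\Omega_{\kappa}$, with the indicators for which the error bound is actually available.

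A smaller point: the truncation level is $\kappa=\ln(\ln(h^{-\beta}))$, not $\beta|\log h|$. With the latter, the Gr\"onwall factor $e^{C\kappa^q}$ is not bounded by a small negative power of $h$ when $q>1$.
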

\begin{proof}
Theorem \ref{thm:high-moment-error-estimate-2D} leads to the conclusion that $\mathbb{P}(\Omega_{\kappa,N_T}) \to 1$ as $h \to 0$ provided $k \leq Ch^{2+2C_q^*\beta}$, with the set $\Omega_{\kappa,N_T}$ defined in \eqref{def:error-subset-2d} and $\kappa=\ln\bigl(\ln(h^{-\beta})\bigr)$. 
Next, we show that $\Omega_{\kappa,n}\subset \tilde{\Omega}_{\delta,n}$ for each $n$ and small $h$.
This follows from $\kappa k\leq \delta$ which is satisfied by our choice of $\kappa$, and the requirement that $k \leq Ch^{2+2C_q^*\beta}$ when $h$ is small enough. Therefore, $\mathbb{P}(\tilde{\Omega}_{\delta,N_T}) \to 1$ as $h \to 0$ under the relaxed condition $k \leq Ch^{2+2C_q^*\beta}$.
\end{proof}

\begin{remark}[Stability Condition]
\label{rmk:stability-condition}
The condition $k \leq Ch^{2+2C_q^*\beta}$ in Corollary \ref{coro:relaxed-stability-2d} is not a CFL stability restriction in the deterministic setting.  
Note that the localized stability estimate \eqref{ineq:high-moment-stability-2D} and error estimates \eqref{ineq:high-moment-error-estimate-2D} hold for arbitrary $h$ and $k$. This coupling condition is only used to ensure that the localized subsets converge to the full sample space in probability. 
Localization is needed because, in the nonlinear stochastic setting, one cannot assume pathwise boundedness of the exact or numerical solution.  
In particular, the localization treatment is unnecessary for the globally Lipschitz semilinear problem \cite{chen_semi-linear_2d}. 

In addition, we would like to point out that the power of $h$ in the condition is closely related to the inverse inequality. 
In general, the same argument in $d$ spatial dimensions leads to a condition of the form $k\leq C h^{d+2C_{q}^*\beta}$.
\end{remark}

\begin{remark}[Recursive Definition of Subsets]
\label{rmk:recursive-def-subset}
Unlike the subsets used in \cite{Carelli_Prohl_SIAMNu2012, Li_Wu_Xing_2022}, the subsets in \eqref{def:stability-subset} and \eqref{def:error-subset-2d} are defined recursively.
This recursive structure allows the estimates to be carried out by a discrete induction argument: once a path exits the localized set, the corresponding indicator switches off the later contributions. 
This is equivalent to a mathematical induction type argument, which is needed if the subsets are not defined recursively or only the weaker estimate \eqref{ineq:stability-estimate-weaker-2d} is proved.
For the semi-discrete LDG scheme, the analogous argument would require a continuous-in-time induction argument.
\end{remark}

The localization required by the explicit nonlinear convection and source terms can be avoided if these deterministic nonlinearities are evaluated implicitly,
i.e., approximated by the implicit backward-Euler method. 
Furthermore, the conclusions in Remark \ref{rmk:stability-fully-implicit-2d} also hold when there is no nonlinear convection or source term, that is, when $f=0$ and $\psi = \psi_1$ \cite{chen_semi-linear_2d}. 
\begin{remark}[Stability Estimate for an Implicit Treatment of the Nonlinear Terms]
\label{rmk:stability-fully-implicit-2d}
If we consider an implicit Euler scheme in which $f_1$, $f_2$, and $-u^\lambda$ are evaluated at $u_h^{n+1}$, we have $\left(-(u_h^{n+1})^{\lambda}, u_h^{n+1} \right) \leq 0$ and $\tilde{H}(f_1,f_2,u_h^{n+1},u_h^{n+1}) \leq 0$ by Lemma \ref{lem:num-flux-2D}. Therefore, no localization is needed in the high moment stability estimate. By the same argument, for any $q \geq 1$, 
\begin{align*}
    \mathbb{E}\left[ \max_{0\leq n \leq N_T} \| u_h^{n}\|^{2q} \right] &+ \mathbb{E}\left[\left(\sum_{n=0}^{N_T-1} \| u_h^{n+1} - u_h^n\|^2 \right)^q \right] + \mathbb{E}\left[ \left(k\sum_{n=0}^{N_T} (\| v_{1,h}^{n}\|^2 + \| v_{2,h}^{n}\|^2) \right)^q \right] \\
    &\leq C\left(1+\| u_0\|^{2q} + k^q \|u_0\|_{1}^{2q} \right),
\end{align*}
where the convection fluxes $f_1$ and $f_2$ can be any differentiable function that is locally Lipschitz, and the lower bound on $\alpha$ stays the same. 
In the case $q=1$, the bound is simplified to $\alpha > KD_4^2/2$. 
Moreover, when \eqref{spde:conv-diff-2d} is driven by the standard Brownian motion ($K=1$), the stability estimate still holds for the degenerate case $\alpha = D_4^2/2$, provided $B_3=0$, where the implicit convection term remains nonpositive and no assumption $f_1=f_2=0$ is required. 
\end{remark}

\end{section}

\begin{section}{High Moment Error Estimate}
\label{sec:error-estimate-2d}
In this section, we show the high-moment convergence of the numerical scheme \eqref{eq:fully-discrete-1-2d}-\eqref{eq:fully-discrete-4-2d} for the strong solution with sufficient regularity. 
High moment strong order of convergence in space and time is proved on recursively defined subsets of the sample space whose probabilities converge to one. 
The resulting convergence rates are arbitrarily close to the optimal rates $\mathcal{O}(h^{r+1})$ in space and $\mathcal{O}(k^{1/2})$ in time.

Suppose $u$ represents the strong solution to \eqref{spde:conv-diff-2d}, with the auxiliary variables $v_1, v_2, w_1,$ and $w_2$ specified according to \eqref{eq:auxiliary}, and $u^n:=u(t_n)$, $v_i^n:=v_i(t_n)$, $w_i^n:=w_i(t_n)$, $i=1,2$. 
Let $(u_h^n,v_{1,h}^n,v_{2,h}^n,w_{1,h}^n,w_{2,h}^n)$ be the numerical solution of the LDG-IMEX-Euler method \eqref{eq:fully-discrete-1-2d}--\eqref{eq:fully-discrete-4-2d}. 
We define
\[
e_u^n=u^n-u_h^n,\qquad
e_{v_i}^n=v_i^n-v_{i,h}^n,\qquad
e_{w_i}^n=w_i^n-w_{i,h}^n,\qquad i=1,2.
\]
Using the projections introduced in Section \ref{subsec:prelim-2d}, we split the errors as
\[
e_u^n=\xi_u^n-\eta_u^n, \qquad
e_{v_i}^n=\xi_{v_i}^n-\eta_{v_i}^n, \qquad
e_{w_i}^n=\xi_{w_i}^n-\eta_{w_i}^n, \qquad i=1,2.
\]
where $\xi_u^n = \mathcal{P}^-u^n - u_h^n$, $\eta_u^n = \mathcal{P}^-u^n - u^n$, and, for $i=1,2$, $\xi_{v_i}^n = \mathcal{P} v_i^n - v_{i,h}^n$, $\eta_{v_i}^n = \mathcal{P} v_i^n - v_i^n$ and $\xi_{w_i}^n = \mathcal{P}^+ w_i^n - w_{i,h}^n$, $\eta_{w_i}^n = \mathcal{P}^+ w_i^n - w_i^n$. 

In addition to hypotheses (i)--(v), we impose an extra condition concerning the leading matrix $A=\{a_{ij}\}$ and the stochastic diffusion term $g$. 
\begin{enumerate}[(vi)]
    \item For any $(\omega,t) \in \Omega \times [0,T]$,
    \begin{align*}
       \| g(\omega,\cdot,t,u,v_1,v_2)\|_{\mathcal{L}_2^2} \leq C\left(1 + \| u\|_{2} + \| v_1\|_{2} + \| v_2\|_{2} \right).
    \end{align*}

    \item For any $(\omega,x,y,t,s) \in \Omega \times \mathcal{D} \times [0,T]^2$, and $j=1,2$, 
    \begin{align*}
        &|(a_{1,j})_x(x,y,t,u)| \leq C(1+ |u| + |v_1|), \quad |(a_{2,j})_y(x,y,t,u)| \leq C(1+ |u| + |v_2|), \\
        &\left|(a_{1,j})_x(x,y,t,u(\cdot,t)) - (a_{1,j})_x(x,y,s,u(\cdot,s)) \right| \\
        &\hspace{1cm} 
        \leq C|t-s|^{\frac{1}{2}}(1+ |u(t)| + |v_1(t)|) 
        + C(|u(t)-u(s)| + |v_1(t)-v_1(s)|), \\
        &\left|(a_{2,j})_y(x,y,t,u(\cdot,t)) - (a_{2,j})_y(x,y,s,u(\cdot,s)) \right| \\
        &\hspace{1cm} 
        \leq C|t-s|^{\frac{1}{2}}(1 + |u(t)| + |v_2(t)|) 
        + C(|u(t)-u(s)| + |v_2(t)-v_2(s)|).
    \end{align*}
    
\end{enumerate}
Assumption (vi) follows from the growth condition in Lemma \ref{lem:Holder-High-Moments}, while assumption (vii) specifies the linear growth and Lipschitz condition of the spatial derivatives of the diffusion matrix coefficients. We note that unlike the semilinear setting considered in \cite{chen_semi-linear_2d}, the matrix $A = \{ a_{ij}\}$ is allowed to depend on the solution $u$ for error estimate. In the one-dimensional case, $\{ a_{ij}\}$ can even grow linearly in $u$ instead of being uniformly bounded as in assumption (ii). 

For the error estimate, we introduce a recursively defined family of subsets $\{ \Omega_{\kappa,m}\}_{m=0}^{N_T}$ parameterized by $\kappa$: 
\begin{align}
\label{def:error-subset-2d}
    &\Omega_{\kappa,-1} =\Omega,  \notag \\
    &\Omega_{\kappa,\ell} = \{ \omega \in \Omega: \sup_{0 \leq t \leq t_{\ell+1}} \left(1+\| u(t)\|_{W^{2,\infty}}^2 + \| u(t)\|_{\infty}^{2p_0} \right) + \max_{0 \leq n \leq \ell} \mathbf{1}_{\Omega_{\kappa,n-1}}\left( 1+\| u_h^{n}\|_{\infty}^{2p_0} \right) \leq \kappa \}, \,0 \leq \ell \leq N_T-1, \notag \\
    &\Omega_{\kappa,N_T} = \{ \omega \in \Omega: \sup_{0 \leq t \leq T} \left(1+\| u(t)\|_{W^{2,\infty}}^2 + \| u(t)\|_{\infty}^{2p_0} \right) + \max_{0 \leq n \leq N_T} \mathbf{1}_{\Omega_{\kappa,n-1}}\left( 1+\| u_h^{n}\|_{\infty}^{2p_0} \right) \leq \kappa \}.
\end{align}
Clearly, these events are decreasing: $\Omega_{\kappa,N_T} \subset \Omega_{\kappa,N_T-1} \subset \cdot \cdot \cdot \subset \Omega_{\kappa,0} \subset \Omega_{\kappa,-1} = \Omega$. 

Note that in our definition, $\Omega_{\kappa,\ell}$ is not $\mathcal{F}_{t_{\ell}}$-measurable. To convert the discrete process $\sum_{n=0}^m \mathbf{1}_{\Omega_{\kappa,n}} X_n$ into a martingale, we make use of the following lemma to enable the application of BDG inequality to the terms $\mathcal{T}_1$, $\mathcal{T}_3$ in Theorem \ref{thm:high-moment-error-estimate-2D}. 

\begin{lemma}
\label{lem:subset-property}
    Let $\{ X_n\}_{n=0}^{\infty}$ be a discrete stochastic process. For any fixed positive integer $m$, we have
    \[ \max_{0 \leq l \leq m} \left|\sum_{n=0}^l \mathbf{1}_{\Omega_{\kappa,n}} X_n  \right| \leq \max_{0 \leq l \leq m} \left|\sum_{n=0}^l \mathbf{1}_{\Omega_{\kappa,n-1}} X_n  \right|. \]
\end{lemma}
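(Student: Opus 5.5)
The proof is a pathwise, purely combinatorial argument. It uses only the monotonicity $\Omega_{\kappa,N_T}\subset\cdots\subset\Omega_{\kappa,0}\subset\Omega_{\kappa,-1}=\Omega$ noted right after \eqref{def:error-subset-2d}. No measurability or martingale structure enters. I fix $\omega\in\Omega$ and compare the two families of partial sums
\[
S_l(\omega):=\sum_{n=0}^{l}\mathbf{1}_{\Omega_{\kappa,n}}(\omega)X_n(\omega),\qquad
T_l(\omega):=\sum_{n=0}^{l}\mathbf{1}_{\Omega_{\kappa,n-1}}(\omega)X_n(\omega),\qquad 0\le l\le m.
\]

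\emph{Step 1 (exit index).} Define the first exit index
\[
n^*(\omega):=\min\{n\in\{0,\dots,m\}:\ \omega\notin\Omega_{\kappa,n}\},
\]
with the convention $n^*(\omega)=m+1$ if the set is empty. Because the sets are nested and decreasing, $\mathbf{1}_{\Omega_{\kappa,n}}(\omega)=1$ for $n<n^*$ and $=0$ for $n\ge n^*$. Shifting the index and using $\Omega_{\kappa,-1}=\Omega$, we likewise get $\mathbf{1}_{\Omega_{\kappa,n-1}}(\omega)=1$ for $n\le n^*$ and $=0$ for $n>n^*$. Hence
\[
S_l=\sum_{n=0}^{\min(l,\,n^*-1)}X_n,\qquad T_l=\sum_{n=0}^{\min(l,\,n^*)}X_n,
\]
where an empty sum equals zero. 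In words, the second family simply keeps the single extra term $X_{n^*}$ at the exit step and freezes afterwards.

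\emph{Step 2 (comparison).} If $n^*=0$, then every $S_l=0$, and the inequality is trivial since the right-hand side is nonnegative. If $n^*\ge1$, then for each $0\le l\le m$ set $j:=\min(l,n^*-1)$. This satisfies $0\le j\le m$, and $j<n^*$ gives $\min(j,n^*)=j$. Therefore
\[
S_l=\sum_{n=0}^{j}X_n=T_j .
\]
Consequently $|S_l|=|T_j|\le\max_{0\le j'\le m}|T_{j'}|$ for every $l$. Taking the maximum over $l$ gives the claim pointwise in $\omega$.

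\emph{Main obstacle.} The only point needing care is the nesting property itself, since the claim fails without it. This should be verified directly from \eqref{def:error-subset-2d}. Passing from $\ell-1$ to $\ell$ enlarges the supremum interval $[0,t_\ell]$ to $[0,t_{\ell+1}]$, and it adds one further nonnegative term $\mathbf{1}_{\Omega_{\kappa,\ell-1}}(1+\|u_h^\ell\|_\infty^{2p_0})$ to the maximum. So the quantity constrained by $\kappa$ can only increase, which gives $\Omega_{\kappa,\ell}\subset\Omega_{\kappa,\ell-1}$. The same reasoning covers the final set $\Omega_{\kappa,N_T}$ with $[0,T]$.

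I expect the lemma to be applied with $X_n$ being the increments $(\cdot\,\Delta W_n,\cdot)$. There $\mathbf{1}_{\Omega_{\kappa,n-1}}$ is $\mathcal F_{t_n}$-measurable, so the right-hand side is a genuine martingale maximum. The inequality is deterministic and therefore holds for any process $X_n$.
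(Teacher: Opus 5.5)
Your proof is correct and follows essentially the same pathwise argument as the paper. The paper splits into two cases. If $\omega\in\Omega_{\kappa,m}$, the two sides are identical. If $\omega\in\Omega_{\kappa,s-1}\setminus\Omega_{\kappa,s}$, the left side is a maximum of partial sums of $X_n$ up to $s-1$ and the right side up to $s$, which is exactly your exit-index decomposition with $s=n^*$. Your explicit index map $l\mapsto\min(l,n^*-1)$, your separate treatment of $n^*=0$, and your direct check of the nesting from \eqref{def:error-subset-2d} spell out details the paper leaves implicit.
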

\begin{proof}
    Fix $\omega \in \Omega$. If $\omega \in \Omega_{\kappa,m}$, then by the nesting property of $\{\Omega_{\kappa,n}\}$, all indicators on both sides are equal to one for $0\leq n\leq m$, and the two sides are identical.  
    If $\omega \notin \Omega_{\kappa,m}$, then there exists an integer $0 \leq s \leq m$ such that $\omega \in \Omega_{\kappa,s-1}\backslash \Omega_{\kappa,s}$. In this case, 
    \begin{align*}
        \text{LHS} = \max_{0 \leq l \leq s-1} \left|\sum_{n=0}^l \mathbf{1}_{\Omega_{\kappa,n}} X_n  \right| = \max_{0 \leq l \leq s-1} \left|\sum_{n=0}^l X_n  \right| \leq \max_{0 \leq l \leq s} \left|\sum_{n=0}^l X_n  \right| = \text{RHS}, 
    \end{align*}
    which proves the claim.
\end{proof}

The following lemmas estimate the individual terms $\mathcal{I}_1$--$\mathcal{I}_6$ appearing in the proof of Theorem \ref{thm:high-moment-error-estimate-2D}. Their proofs are provided in Appendix \ref{appendixA}.

\begin{lemma}
\label{lem:error2D-term1-2}
Assume hypotheses {\rm(iv)} and {\rm(v)} hold. For any $0\le m\le N_T-1$, any $\epsilon_0, \epsilon_1 > 0$, we have
\begin{align}
    &\sum_{n=0}^{m} \mathbf{1}_{\Omega_{\kappa,n}} \int_{t_n}^{t_{n+1}} \| \psi_1(\cdot,t,u,v_1,v_2) - \psi_1(\cdot,t_{n},u_h^{n},v_{1,h}^n,v_{2,h}^n) \|^2 \, \mathrm{d}t  \notag \\
    &\leq C\sum_{n=0}^{m} \int_{t_n}^{t_{n+1}} \mathbf{1}_{\Omega_{\kappa,n}}\|u-u^n\|_{1}^2 \, \mathrm{d}t + Ck^2\sum_{n=0}^{m}(1+\| u^n\|_{1}^2) + (2+\epsilon_0) B_1^2 k\sum_{n=0}^{m} \mathbf{1}_{\Omega_{\kappa,n}} (\| \xi_{v_1}^{n}\|^2 +  \| \xi_{v_2}^{n}\|^2)  \notag \\
    &\quad + Ck \sum_{n=0}^{m} \mathbf{1}_{\Omega_{\kappa,n}} \| \xi_u^{n}\|^2 + Ch^{2r+2} k \sum_{n=0}^{m} (\| u^{n}\|_{{r+1}}^2 + \| v_1^{n}\|_{{r+1}}^2 + \| v_2^{n}\|_{{r+1}}^2), \label{ineq:error2D-psi} \\
    &\sum_{n=0}^{m} \mathbf{1}_{\Omega_{\kappa,n}} \int_{t_n}^{t_{n+1}} \|g(\cdot,t,u,v_1,v_2) - g(\cdot,t_{n},u_h^{n},v_{1,h}^n,v_{2,h}^n) \|^2 \, \mathrm{d}t  \notag \\
    &\leq C\sum_{n=0}^{m} \int_{t_n}^{t_{n+1}} \mathbf{1}_{\Omega_{\kappa,n}}\|u-u^n\|_{1}^2 \, \mathrm{d}t + Ck^2\sum_{n=0}^{m}(1+\| u^n\|_{1}^2) + (2+\epsilon_1) D_2^2 k\sum_{n=0}^{m} \mathbf{1}_{\Omega_{\kappa,n}} (\| \xi_{v_1}^{n}\|^2 +  \| \xi_{v_2}^{n}\|^2) \notag \\
    &\quad + Ck \sum_{n=0}^{m} \mathbf{1}_{\Omega_{\kappa,n}} \| \xi_u^{n}\|^2 + Ch^{2r+2} k \sum_{n=0}^{m} (\| u^{n}\|_{{r+1}}^2 + \| v_1^{n}\|_{{r+1}}^2 + \| v_2^{n}\|_{{r+1}}^2).  \label{ineq:error2D-g}
\end{align}
\end{lemma}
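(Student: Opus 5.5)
The two estimates have the same structure, so I describe the argument for \eqref{ineq:error2D-psi}; \eqref{ineq:error2D-g} follows verbatim with $(B_1,B_4)$ replaced by $(D_1,D_2,D_5)$ and hypothesis (iv) replaced by (v). The only point where the constants differ is that in $g$ the gradient Lipschitz constant is $D_2$, whereas for $\psi_1$ it is $B_1$.

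Fix $n$ and $t\in[t_n,t_{n+1}]$. Using the triangle inequality, split the pointwise difference into three pieces:
\begin{align*}
\psi_1(t,u,v_1,v_2)-\psi_1(t_n,u_h^n,v_{1,h}^n,v_{2,h}^n)
&= \bigl[\psi_1(t,u,v_1,v_2)-\psi_1(t_n,u,v_1,v_2)\bigr]
+ \bigl[\psi_1(t_n,u,v_1,v_2)-\psi_1(t_n,u^n,v_1^n,v_2^n)\bigr] \\
&\quad + \bigl[\psi_1(t_n,u^n,v_1^n,v_2^n)-\psi_1(t_n,u_h^n,v_{1,h}^n,v_{2,h}^n)\bigr].
\end{align*}
The pieces are estimated as follows.

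\emph{First piece (time regularity).} By the time-H\"older condition in (iv), its squared $L^2$ norm is at most $Ck(1+\|u\|_1^2)$. Integrating over $[t_n,t_{n+1}]$ gives $Ck^2\sup_t(1+\|u\|_1^2)$. I would then replace $\|u(t)\|_1^2$ by $2\|u^n\|_1^2+2\|u-u^n\|_1^2$; the second part is absorbed into the $\int\|u-u^n\|_1^2$ term since $k\le T$.

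\emph{Second piece (temporal increment).} By the Lipschitz bound it is at most $C\|u-u^n\|_1$, which produces the first term on the right-hand side. The indicator can be kept or dropped freely, since it is bounded by one.

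\emph{Third piece (the delicate one).} Write $u^n-u_h^n=\xi_u^n-\eta_u^n$ and $v_i^n-v_{i,h}^n=\xi_{v_i}^n-\eta_{v_i}^n$. By the Lipschitz bound, the piece is bounded pointwise by
\[
B_1\bigl(|\xi_{v_1}^n|+|\xi_{v_2}^n|\bigr)+B_1|\xi_u^n|+B_1\bigl(|\eta_u^n|+|\eta_{v_1}^n|+|\eta_{v_2}^n|\bigr).
\]
Squaring with \eqref{ineq:convex-1} for $q=2$ isolates the $\xi_v$ part with coefficient $(1+\epsilon')$. Then $(|a|+|b|)^2\le 2(a^2+b^2)$ yields the sharp constant $(2+\epsilon_0)B_1^2$ in front of $\|\xi_{v_1}^n\|^2+\|\xi_{v_2}^n\|^2$. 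The remaining terms get constants $C(\epsilon_0)$: the $\xi_u$ term is controlled directly, and the $\eta$ terms are bounded by Lemma \ref{lem:proj-property-2d}, giving $Ch^{2r+2}(\|u^n\|_{r+1}^2+\|v_1^n\|_{r+1}^2+\|v_2^n\|_{r+1}^2)$ (drop the indicator there). Integrating in time contributes the factor $k$.

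Summing over $n$ yields the claim. The main care point is bookkeeping the constant $(2+\epsilon_0)B_1^2$ sharply, since it must later be absorbed by $\alpha$; everything else only needs generic constants.
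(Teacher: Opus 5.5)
Your proof is correct and follows the paper's argument in its essentials: a three-term telescoping split, the time-H\"older bound, the Lipschitz bound, and a $(1+\epsilon)$-weighted Young inequality that isolates the $\xi_v$ part to get $(2+\epsilon_0)B_1^2$. The one real difference is the intermediate point in the split.

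The paper inserts $(t,u^n,v_1^n,v_2^n)$. It applies the Lipschitz bound at the current time $t$ and the time-H\"older bound at the frozen state $u^n$, so its time piece integrates directly to $4B_4^2k^2(1+\|u^n\|_1^2)$. You insert $(t_n,u(t),v_1(t),v_2(t))$ instead. Your time piece therefore carries $\|u(t)\|_1^2$, and you need the extra splitting $\|u(t)\|_1^2\le 2\|u^n\|_1^2+2\|u(t)-u^n\|_1^2$ together with $k\le T$.

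That works only if you split inside $Ck\int_{t_n}^{t_{n+1}}(1+\|u(t)\|_1^2)\,\mathrm{d}t$. If you first pass to $Ck^2\sup_t(1+\|u(t)\|_1^2)$, as you wrote, you are left with $k^2\sup_{t\in[t_n,t_{n+1}]}\|u(t)-u^n\|_1^2$. That term is not in general bounded by $C\int_{t_n}^{t_{n+1}}\|u-u^n\|_1^2\,\mathrm{d}t$. The paper's ordering avoids this detour.

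One bookkeeping point: the $(1+\epsilon)$ weighting must also be used when the three pieces are recombined. That is, use $\|a+b+c\|^2\le C(\|a\|^2+\|b\|^2)+(1+\epsilon)\|c\|^2$ with $c$ the third piece. Alternatively, bound the whole difference pointwise by a single sum and apply \eqref{ineq:convex-1} once. The paper does it in two stages, obtains $2(1+\epsilon_0)^2B_1^2$, and relabels this as $(2+\epsilon_0)B_1^2$. Combining separately squared pieces with a factor $3$ would destroy the sharp constant you rightly emphasize.
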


\begin{lemma}
\label{lem:error2D-term3}
Assume hypothesis {\rm(ii)} holds. Let $i,j = 1,2$, for any $0\le m\le N_T-1$ and $\epsilon_2 > 0$,
\begin{align}
\label{ineq:lem-error-term3-2D}
    &\max_{0 \leq l \leq m} k\sum_{n=0}^l \mathbf{1}_{\Omega_{\kappa,n}} \big(a_{ij}^{n+1} - a_{h,ij}^{n+1},\, -v_j^{n+1} \xi_{v_i}^{n+1} \big) + \max_{0 \leq l \leq m} k\sum_{n=0}^l \mathbf{1}_{\Omega_{\kappa,n}} \big(a_{h,ij}^{n+1}\,\eta_{v_j}^{n+1},\, \xi_{v_i}^{n+1} \big) \notag \\
    &\leq \epsilon_2 k\sum_{n=0}^{m} \mathbf{1}_{\Omega_{\kappa,n}} \| \xi_{v_i}^{n+1}\|^2 
    + Ch^{2r+2} k \sum_{n=0}^{m} \left( \| v_j^{n+1}\|_{{r+1}}^2 + \kappa \mathbf{1}_{\Omega_{\kappa,n}} \| u^{n+1}\|_{{r+1}}^2\right)
    + C\kappa k \sum_{n=0}^{m} \mathbf{1}_{\Omega_{\kappa,n}} \| \xi_u^{n+1}\|^2 . 
\end{align}

\end{lemma}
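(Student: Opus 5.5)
We treat the two sums separately and estimate term by term, pulling everything down to $\|\xi_{v_i}^{n+1}\|$, $\|\xi_u^{n+1}\|$ and projection errors. Notation: $a_{ij}^{n+1}=a_{ij}(\cdot,t_{n+1},u^{n+1})$ and $a_{h,ij}^{n+1}=a_{ij}(\cdot,t_{n+1},u_h^{n+1})$, both at the same time level. The $\max_{0\le l\le m}$ is harmless: each summand will be bounded in absolute value, so we bound $\max_l|\sum_{n\le l}\cdot|$ by $\sum_{n=0}^m|\cdot|$ and there is no martingale structure to exploit.

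\emph{First sum.} By hypothesis (ii), with $t=t'$ the time-Hölder part vanishes, so pointwise
\[
|a_{ij}^{n+1}-a_{h,ij}^{n+1}|\le \alpha_1|e_u^{n+1}|\le \alpha_1\bigl(|\xi_u^{n+1}|+|\eta_u^{n+1}|\bigr).
\]
Hence
\[
\bigl|(a_{ij}^{n+1}-a_{h,ij}^{n+1},-v_j^{n+1}\xi_{v_i}^{n+1})\bigr|\le \alpha_1\|v_j^{n+1}\|_{\infty}\bigl(\|\xi_u^{n+1}\|+\|\eta_u^{n+1}\|\bigr)\|\xi_{v_i}^{n+1}\|.
\]
On $\Omega_{\kappa,n}$ the definition \eqref{def:error-subset-2d} gives $\sup_{t\le t_{n+1}}\|u(t)\|_{W^{2,\infty}}^2\le\kappa$, so $\|v_j^{n+1}\|_\infty^2\le\|u^{n+1}\|_{W^{1,\infty}}^2\le\kappa$. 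The indicator must be kept attached to this factor, which is why the estimate carries $\mathbf{1}_{\Omega_{\kappa,n}}$ and not a global bound. Young's inequality with weight $\epsilon_2/2$ then gives
\[
\frac{\epsilon_2}{2}\mathbf{1}_{\Omega_{\kappa,n}}\|\xi_{v_i}^{n+1}\|^2+C\kappa\,\mathbf{1}_{\Omega_{\kappa,n}}\bigl(\|\xi_u^{n+1}\|^2+\|\eta_u^{n+1}\|^2\bigr).
\]
Lemma \ref{lem:proj-property-2d} gives $\|\eta_u^{n+1}\|^2\le Ch^{2r+2}\|u^{n+1}\|_{r+1}^2$, which yields the term $\kappa\mathbf{1}_{\Omega_{\kappa,n}}h^{2r+2}\|u^{n+1}\|_{r+1}^2$.

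\emph{Second sum.} Hypothesis (ii) gives $|a_{h,ij}^{n+1}|\le\Lambda^{1/2}$ uniformly, whatever $u_h^{n+1}$ is. Cauchy–Schwarz and Young then give
\[
\bigl|(a_{h,ij}^{n+1}\eta_{v_j}^{n+1},\xi_{v_i}^{n+1})\bigr|\le\frac{\epsilon_2}{2}\|\xi_{v_i}^{n+1}\|^2+C\|\eta_{v_j}^{n+1}\|^2.
\]
Since $\eta_{v_j}=\mathcal{P}v_j-v_j$ is an $L^2$-projection error, $\|\eta_{v_j}^{n+1}\|^2\le Ch^{2r+2}\|v_j^{n+1}\|_{r+1}^2$. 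The indicator can be dropped on the projection-error term because it is at most one, which accounts for the unweighted $\|v_j^{n+1}\|_{r+1}^2$ on the right.

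Multiplying by $k$, summing over $n\le m$ and adding the two pieces gives \eqref{ineq:lem-error-term3-2D}, with the two $\epsilon_2/2$ contributions combining to $\epsilon_2$. The only step requiring care is the first sum. There the $L^\infty$ bound on $v_j^{n+1}=\partial u(t_{n+1})$ has to come from the localizing set $\Omega_{\kappa,n}$, and the indices must match: $\Omega_{\kappa,n}$ controls $u$ up to time $t_{n+1}$, which covers the $t_{n+1}$ quantities in the summand. We also need $\alpha_1$ to be the Lipschitz constant at a fixed time, which hypothesis (ii) provides. No inverse inequality or superconvergence result is needed.
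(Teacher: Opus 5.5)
Your proposal is correct and follows essentially the same argument as the paper. The first sum is handled with the equal-time Lipschitz bound from (ii), the $L^\infty$ control of $v_j^{n+1}$ supplied by $\Omega_{\kappa,n}$, Young's inequality and the projection estimate for $\eta_u^{n+1}$; the second sum uses the uniform bound $|a_{h,ij}^{n+1}|\le\Lambda^{1/2}$ together with the $L^2$-projection estimate for $\eta_{v_j}^{n+1}$.
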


\begin{lemma}
\label{lem:error2D-term4}
Assume hypotheses {\rm(ii)}, {\rm(vii)} hold. For any $0\le m\le N_T-1$,
\begin{align}
\label{ineq:error-estimate-term4-2D}
    \mathcal{I}_4 
    &:= \max_{0 \leq l \leq m} \Bigg| \sum_{n=0}^l \int_{t_n}^{t_{n+1}} \mathbf{1}_{\Omega_{\kappa,n}} H^+(w_1(t)-w_{1}^{n+1},w_2(t)-w_{2}^{n+1},\xi_u^{n+1}) \, \mathrm{d}t \Bigg| \notag \\
    &\leq k\sum_{n=0}^{m} \mathbf{1}_{\Omega_{\kappa,n}} \|\xi_u^{n+1}\|^2 + C\kappa \sum_{n=0}^{m} \int_{t_n}^{t_{n+1}} \mathbf{1}_{\Omega_{\kappa,n}}\|u(t)-u^{n+1}\|_{2}^2 \, \mathrm{d}t + C\kappa k^2\sum_{n=0}^{m} (\|v_1^{n+1}\|_{1}^2 + \|v_2^{n+1}\|_{1}^2).
\end{align}
\end{lemma}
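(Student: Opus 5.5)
The plan is to split the time difference of the auxiliary flux and then use the superconvergence and integration-by-parts structure of $H^+$ cell by cell. For each $n$ and $t\in[t_n,t_{n+1}]$ set $\rho_i(t):=w_i(t)-w_i^{n+1}$, $i=1,2$. Since $\rho_i(t)$ is a smooth function (no discontinuities), the interface values in $H^+$ are single-valued, and integrating by parts on each cell $I_i\times J_j$ gives
\[
H^+(\rho_1,\rho_2,\xi_u^{n+1}) = \bigl((\rho_1)_x+(\rho_2)_y,\ \xi_u^{n+1}\bigr),
\]
with boundary terms cancelling by periodicity. This is the key reduction: the jump terms of $\xi_u^{n+1}$ vanish against continuous data, so no inverse inequality or factor $h^{-1/2}$ appears. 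Cauchy--Schwarz and Young then give
\[
\Bigl|\int_{t_n}^{t_{n+1}}\mathbf{1}_{\Omega_{\kappa,n}}H^+(\rho_1,\rho_2,\xi_u^{n+1})\,\mathrm{d}t\Bigr|
\le \frac{k}{2}\mathbf{1}_{\Omega_{\kappa,n}}\|\xi_u^{n+1}\|^2+\int_{t_n}^{t_{n+1}}\mathbf{1}_{\Omega_{\kappa,n}}\|(\rho_1)_x+(\rho_2)_y\|^2\,\mathrm{d}t .
\]
Summing over $n\le m$ bounds the maximum over $l$ by the full sum. This produces the first term on the right of \eqref{ineq:error-estimate-term4-2D}.

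It remains to bound $\|(w_1(t)-w_1^{n+1})_x\|$, and symmetrically the $y$-term. Write $w_1=a_{11}(t,u)v_1+a_{12}(t,u)v_2$ and expand $(a_{1j}v_j)_x=(a_{1j})_x v_j+a_{1j}(v_j)_x$. Here $(a_{1j})_x$ means the total $x$-derivative as controlled in (vii). Taking the difference between times $t$ and $t_{n+1}$ gives two kinds of term:
\begin{enumerate}[(a)]
\item $a_{1j}(t)(v_j(t)-v_j^{n+1})_x$ together with $(a_{1j}(t)-a_{1j}^{n+1})(v_j^{n+1})_x$;
\item $(a_{1j})_x(t)\,(v_j(t)-v_j^{n+1})$ together with $\bigl((a_{1j})_x(t)-(a_{1j})_x^{n+1}\bigr)v_j^{n+1}$.
\end{enumerate}

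For (a), the first piece is bounded by $\sqrt\Lambda\|u(t)-u^{n+1}\|_2$. For the second piece, hypothesis (ii) gives $\alpha_1|u(t)-u^{n+1}|+\alpha_2k^{1/2}(1+|u|+|u^{n+1}|)$. On $\Omega_{\kappa,n}$ we have $\sup_{s\le t_{n+1}}\|u(s)\|_{W^{2,\infty}}^2\le\kappa$, so $\|(v_j^{n+1})_x\|_\infty$ and $\|u\|_\infty$ are controlled by $\kappa^{1/2}$. After squaring, this contributes $C\kappa\|u(t)-u^{n+1}\|^2+C\kappa k\,(\cdots)$. The $k$-term must appear in the form $k^2\|v_j^{n+1}\|_1^2$ after integrating in $t$. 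To arrange this, I keep the $L^2$-type factor on the unbounded side and use the $L^\infty$ bound on $\Omega_{\kappa,n}$ for the other factor.

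For (b), use the growth bound in (vii), $|(a_{1j})_x|\le C(1+|u|+|v_1|)\le C\kappa^{1/2}$ on $\Omega_{\kappa,n}$. Then the first piece is at most $C\kappa\|u(t)-u^{n+1}\|_1^2$. The second piece uses the Lipschitz/Hölder part of (vii), giving
\[
C\bigl(k^{1/2}(1+|u|+|v_1|)+|u(t)-u^{n+1}|+|v_1(t)-v_1^{n+1}|\bigr)|v_j^{n+1}|.
\]
Putting $|v_j^{n+1}|\le\kappa^{1/2}$ in $L^\infty$ on the difference parts yields $C\kappa\|u(t)-u^{n+1}\|_1^2$. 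On the $k^{1/2}$ part, placing the bounded factor $(1+|u|+|v_1|)\le C\kappa^{1/2}$ in $L^\infty$ and $v_j^{n+1}$ in $L^2$ gives $C\kappa k\|v_j^{n+1}\|^2$. Integrating over $[t_n,t_{n+1}]$ gives $C\kappa k^2\|v_j^{n+1}\|_1^2$.

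Collecting all pieces gives the three terms of \eqref{ineq:error-estimate-term4-2D}; the indicator may be dropped on the last sum since it is nonnegative.

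The main obstacle is the bookkeeping in the second step. Every nonlinear product must be split with exactly one factor placed in $L^\infty$ and controlled by $\kappa$ through the localization set, which yields the single factor $\kappa$ rather than higher powers. This is legitimate because $\mathbf{1}_{\Omega_{\kappa,n}}$ controls $\sup_{s\le t_{n+1}}\|u(s)\|_{W^{2,\infty}}$ over the whole interval $[t_n,t_{n+1}]$. The integration-by-parts identity for $H^+$ with continuous arguments is routine.
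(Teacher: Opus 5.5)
Your proposal is correct and follows the paper's proof essentially step by step. The argument is the same throughout: cellwise integration by parts reducing $H^+$ to $\bigl((w_1(t)-w_1^{n+1})_x+(w_2(t)-w_2^{n+1})_y,\xi_u^{n+1}\bigr)$, Young's inequality, and the same four-term product-rule splitting of $(a_{1j}v_j)_x$, controlled by (ii), (vii) and the $\sup_{0\le t\le t_{n+1}}$ bound built into $\Omega_{\kappa,n}$. (Your opening mention of superconvergence is not needed, since only integration by parts is used, and the pieces carrying no $\kappa$ are absorbed using $\kappa\ge 1$ on $\Omega_{\kappa,n}$.)
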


\begin{lemma}
\label{lem:error2D-term5}
Assume hypothesis {\rm(iii)} holds. For any $0\le m\le N_T-1$ and $\epsilon_3 > 0$,
\begin{align}
\label{ineq:error-estimate-term5-2D}
    \mathcal{I}_5 
    &:= \max_{0 \leq l \leq m} \Bigg| \sum_{n=0}^l \int_{t_n}^{t_{n+1}} \mathbf{1}_{\Omega_{\kappa,n}} D(f_1,f_2;u(t),u_h^n,\xi_u^{n+1}) \, \mathrm{d}t \Bigg|  \notag \\
    &\leq Ch^{2r+2} k\sum_{n=0}^{m}(\| v_1^{n+1}\|_{r+1}^2 + \| v_2^{n+1}\|_{r+1}^2) + \epsilon_3 k \sum_{n=0}^{m}\mathbf{1}_{\Omega_{\kappa,n}} (\| \xi_{v_1}^{n+1}\|^2 + \| \xi_{v_2}^{n+1}\|^2) \notag \\
    &\quad + C\kappa \sum_{n=0}^{m} \int_{t_n}^{t_{n+1}} \|u(t)-u^{n}\|^2 \, \mathrm{d}t 
    + \kappa k\sum_{n=0}^{m} \mathbf{1}_{\Omega_{\kappa,n}} \|\xi_u^{n}\|^2 + C\kappa h^{2r+2} k\sum_{n=0}^{m}\| u^{n+1}\|_{r+1}^2.
\end{align}
\end{lemma}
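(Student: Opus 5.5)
We will prove Lemma \ref{lem:error2D-term5} by estimating each summand pathwise with Corollary \ref{coro:nonlinear-convec-2D} and then applying Young's inequality. The maximum over $l$ is harmless: we bound each partial sum by the sum of absolute values over $0\le n\le m$. For fixed $n$ and $t\in[t_n,t_{n+1}]$, Corollary \ref{coro:nonlinear-convec-2D} gives
\[
|D(f_1,f_2;u(t),u_h^n,\xi_u^{n+1})|\le C_{f_h}\|u(t)-u_h^n\|\bigl(\|\xi_{v_1}^{n+1}\|+\|\xi_{v_2}^{n+1}\|+h^{r+1}\|u^{n+1}\|_{r+2}\bigr),
\]
with $C_{f_h}=C(1+\|u(t)\|_\infty^p+\|u_h^n\|_\infty^p)$.

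The first task is to control $C_{f_h}$ on the event $\Omega_{\kappa,n}$. By the definition \eqref{def:error-subset-2d}, on $\Omega_{\kappa,n}$ we have $\sup_{t\le t_{n+1}}(1+\|u(t)\|_\infty^{2p_0})\le\kappa$. By nesting, $\mathbf 1_{\Omega_{\kappa,n}}=\mathbf 1_{\Omega_{\kappa,n}}\mathbf 1_{\Omega_{\kappa,n-1}}$, so also $1+\|u_h^n\|_\infty^{2p_0}\le\kappa$ there. Since $p\le p_0$, this yields $\mathbf 1_{\Omega_{\kappa,n}}C_{f_h}^2\le C\kappa$ for $t\in[t_n,t_{n+1}]$.

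Next we split the error as
\[
u(t)-u_h^n=(u(t)-u^n)+e_u^n=(u(t)-u^n)+\xi_u^n-\eta_u^n,
\]
and use $\|\eta_u^n\|\le Ch^{r+1}\|u^n\|_{r+1}$ from Lemma \ref{lem:proj-property-2d}. Applying Young's inequality to the product, with weight $\epsilon_3$ on $\|\xi_{v_1}^{n+1}\|^2+\|\xi_{v_2}^{n+1}\|^2$ and weight $1$ on the projection term $h^{2r+2}\|u^{n+1}\|_{r+2}^2$, gives
\[
\mathbf 1_{\Omega_{\kappa,n}}|D|\le \epsilon_3\bigl(\|\xi_{v_1}^{n+1}\|^2+\|\xi_{v_2}^{n+1}\|^2\bigr)+h^{2r+2}\|u^{n+1}\|_{r+2}^2+C\kappa\,\mathbf 1_{\Omega_{\kappa,n}}\bigl(\|u(t)-u^n\|^2+\|\xi_u^n\|^2+h^{2r+2}\|u^n\|_{r+1}^2\bigr).
\]
Integrating over $[t_n,t_{n+1}]$ and summing over $n$ produces the terms of \eqref{ineq:error-estimate-term5-2D}. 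We use $\|u^{n+1}\|_{r+2}\le C(\|u^{n+1}\|+\|v_1^{n+1}\|_{r+1}+\|v_2^{n+1}\|_{r+1})$, which holds because $v_i=\partial u$ on the torus. We also accept the harmless index shift $u^n$ versus $u^{n+1}$ in the $\kappa h^{2r+2}$ term, or else bound it through the time continuity of $u$.

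The main obstacle is the bookkeeping of constants rather than any analytical difficulty. The stated coefficient in front of $\kappa k\sum\|\xi_u^n\|^2$ is exactly $1$, whereas Young's inequality naturally produces $C\kappa$. To obtain the stated form we will split the weights so that the $\xi_u^n$ factor carries weight $\kappa$ with a unit coefficient, pushing the remaining constant onto the $\xi_v$ and $u(t)-u^n$ terms. Alternatively we accept a generic constant there, since that term is later absorbed by Gr\"onwall's inequality. The indicator is dropped in the $\|u(t)-u^n\|^2$ term since it is bounded by one.
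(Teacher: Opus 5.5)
Your proposal is correct and follows the paper's proof. Both use Corollary~\ref{coro:nonlinear-convec-2D}, the bound $\mathbf 1_{\Omega_{\kappa,n}}C_{f_h}^2\le C\kappa$ read off from \eqref{def:error-subset-2d} (via $\Omega_{\kappa,n}\subset\Omega_{\kappa,n-1}$ and $p\le p_0$), the splitting $u(t)-u_h^n=(u(t)-u^n)+\xi_u^n-\eta_u^n$, and Young's inequality. The paper writes $\|\eta_{v_i}^{n+1}\|$ where you keep $h^{r+1}\|u^{n+1}\|_{r+2}$. It also makes the same silent shift $u^n\to u^{n+1}$. Done carefully, that shift costs only the deterministic initial term $C\kappa h^{2r+2}k\|u_0\|_{r+1}^2$, so no time-continuity argument is needed.

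One correction: your first option for a unit coefficient on $\kappa k\sum_n\mathbf 1_{\Omega_{\kappa,n}}\|\xi_u^n\|^2$ cannot work. The $\xi_u^n$ factor is paired only with $\|\xi_{v_1}^{n+1}\|+\|\xi_{v_2}^{n+1}\|+h^{r+1}\|u^{n+1}\|_{r+2}$, not with $u(t)-u^n$. Young's inequality fixes the product of the two weights, so a unit weight on $\kappa\|\xi_u^n\|^2$ forces a fixed, not arbitrarily small, weight on $\|\xi_{v_i}^{n+1}\|^2$. That contradicts the arbitrary $\epsilon_3$.

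The correct outcome is a coefficient $C(\epsilon_3)\kappa$. This is exactly what the paper's own computation produces, through its factor $3(\frac{1}{2\epsilon_3}+\frac12)C$, before that constant is dropped in the last line. The discrepancy is harmless, since this term is absorbed by Gr\"onwall's inequality. Also keep the indicator on the $\epsilon_3$ term in your pointwise bound, which you may since $\mathbf 1_{\Omega_{\kappa,n}}^2=\mathbf 1_{\Omega_{\kappa,n}}$. Otherwise that term cannot be absorbed into the localized left-hand side of the main error inequality.
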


\begin{lemma}
Assume hypothesis {\rm(iv)} holds. For any $\epsilon_4 >0$, any $0\le m\le N_T-1$, we have 
\label{lem:error2D-term6}
\begin{align}
\label{ineq:error-estimate-term6-2d}
    \mathcal{I}_6 &:= \max_{0 \leq l \leq m} \sum_{n=0}^{l} \int_{t_n}^{t_{n+1}} \mathbf{1}_{\Omega_{\kappa,n}} \left((u_h^n)^{\lambda} - u(t)^{\lambda}, \xi_u^{n+1} \right) \, \mathrm{d}t
    \leq \frac{\lambda^2}{2} \kappa \sum_{n=0}^{m} \int_{t_n}^{t_{n+1}}\mathbf{1}_{\Omega_{\kappa,n}} \left\| u^n - u(t) \right\|^2 \, \mathrm{d}t  \notag \\
    &\quad + \epsilon_4 \sum_{n=0}^{m} \mathbf{1}_{\Omega_{\kappa,n}} \| \xi_u^{n+1}-\xi_u^{n}\|^2 + C\kappa k \sum_{n=0}^{m+1} \mathbf{1}_{\Omega_{\kappa,n}} \left\| \xi_u^n \right\|^2 + Ch^{2r+2}\kappa k\sum_{n=0}^{m} \mathbf{1}_{\Omega_{\kappa,n}} \| u^{n}\|_{r+1}^2.
\end{align}

\end{lemma}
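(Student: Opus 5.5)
We need to bound $\mathcal I_6$, the sum of $\mathbf{1}_{\Omega_{\kappa,n}}\int_{t_n}^{t_{n+1}}((u_h^n)^\lambda-u(t)^\lambda,\xi_u^{n+1})\,\mathrm{d}t$. The plan is a three-term splitting of the difference of powers:
\[
(u_h^n)^\lambda-u(t)^\lambda=\bigl[(u_h^n)^\lambda-(u^n)^\lambda\bigr]+\bigl[(u^n)^\lambda-u(t)^\lambda\bigr].
\]
The first bracket is then handled through the monotonicity of $s\mapsto s^\lambda$, since $\lambda$ is odd, together with the localization. We also write $\xi_u^{n+1}=\xi_u^n+(\xi_u^{n+1}-\xi_u^n)$.

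\emph{Time-increment term.} For $((u^n)^\lambda-u(t)^\lambda,\xi_u^{n+1})$, use the mean value inequality \eqref{ineq:convex-2} pointwise:
\[
|(u^n)^\lambda-u(t)^\lambda|\le\lambda|u^n-u(t)|\max(|u^n|^{\lambda-1},|u(t)|^{\lambda-1}).
\]
On $\Omega_{\kappa,n}$ we have $\sup_{t\le t_{n+1}}\|u(t)\|_\infty^{2p_0}\le\kappa$ with $2p_0\ge2(\lambda-1)$, so $\max(\cdot)^2\le\kappa$ (using $\kappa\ge1$). Young's inequality with weights $\tfrac12$ then gives
\[
\tfrac{\lambda^2}{2}\kappa\int_{t_n}^{t_{n+1}}\mathbf{1}_{\Omega_{\kappa,n}}\|u^n-u(t)\|^2\,\mathrm{d}t+\tfrac k2\,\mathbf{1}_{\Omega_{\kappa,n}}\|\xi_u^{n+1}\|^2 .
\]
After the index shift below, the last term is absorbed into $C\kappa k\sum_{n=0}^{m+1}\mathbf{1}_{\Omega_{\kappa,n}}\|\xi_u^n\|^2$. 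Since every term is now nonnegative, the $\max_l$ is bounded by the $l=m$ sum.

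\emph{Numerical-versus-exact term.} Write $u_h^n=u^n-\xi_u^n+\eta_u^n$, so $(u_h^n)^\lambda-(u^n)^\lambda=(u_h^n)^\lambda-(\mathcal P^-u^n)^\lambda+(\mathcal P^-u^n)^\lambda-(u^n)^\lambda$.

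For the pairing with $\xi_u^n$: since $\mathcal P^-u^n-u_h^n=\xi_u^n$, monotonicity gives $((u_h^n)^\lambda-(\mathcal P^-u^n)^\lambda,\xi_u^n)\le0$, and this piece is dropped. Dropping it is legitimate because the term enters $\mathcal I_6$ with a plus sign and we only need an upper bound; the statement has no absolute value, so the sign matters. The projection piece $(\mathcal P^-u^n)^\lambda-(u^n)^\lambda$ is bounded by $\lambda|\eta_u^n|\,(\|\mathcal P^-u^n\|_\infty+\|u^n\|_\infty)^{\lambda-1}$. On $\Omega_{\kappa,n}$ this weight is $\le C\kappa^{1/2}$, using the $L^\infty$ stability of $\mathcal P^-$ (Lemma~\ref{lem:proj-property-2d} with $\|u\|_{W^{2,\infty}}\le\kappa^{1/2}$). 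Young's inequality and Lemma~\ref{lem:proj-property-2d} then give
\[
Ch^{2r+2}\kappa k\,\|u^n\|_{r+1}^2+C\kappa k\,\|\xi_u^n\|^2 .
\]

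For the pairing with $\xi_u^{n+1}-\xi_u^n$: bound $|(u_h^n)^\lambda-(u^n)^\lambda|\le\lambda|e_u^n|\max(|u_h^n|,|u^n|)^{\lambda-1}$. The localization gives $\mathbf{1}_{\Omega_{\kappa,n-1}}\|u_h^n\|_\infty^{2(\lambda-1)}\le\kappa$, and since $\Omega_{\kappa,n}\subset\Omega_{\kappa,n-1}$ this holds on $\Omega_{\kappa,n}$. Young's inequality with $\epsilon_4$ yields
\[
\epsilon_4\|\xi_u^{n+1}-\xi_u^n\|^2+\tfrac{C}{\epsilon_4}k^2\kappa\|e_u^n\|^2 ,
\]
and $k^2\le k$ together with $\|e_u^n\|\le\|\xi_u^n\|+\|\eta_u^n\|$ reduces this to the allowed terms.

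\emph{Main obstacle.} The delicate point is the sign/monotonicity step: it must be organized so that the only pure nonlinearity left, $((u_h^n)^\lambda-(\mathcal P^-u^n)^\lambda,\xi_u^n)$, is exactly of the nonpositive form. The remaining bookkeeping concerns $\kappa$ versus $\kappa^{1/2}$ powers. Because $\kappa\ge1$, bounding everything by $\kappa$ is harmless. The sum up to $m+1$ on the right accommodates the $\|\xi_u^{n+1}\|^2$ terms, using $\mathbf{1}_{\Omega_{\kappa,n}}\ge\mathbf{1}_{\Omega_{\kappa,n+1}}$. Care is still needed there: the stated sum carries $\mathbf{1}_{\Omega_{\kappa,n}}$ at index $n$, whereas we have $\mathbf{1}_{\Omega_{\kappa,n-1}}$ at index $n$. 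This appears to be a convention of the statement; if it is not, the shift should instead be absorbed into the left-hand $\xi_u$ terms of the error energy in Theorem~\ref{thm:high-moment-error-estimate-2D}.
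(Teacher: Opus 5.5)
Your proof is correct and gives the same bound as the paper. The difference is where you apply monotonicity. The paper keeps the nonlinearity as $(u_h^n)^\lambda-(u^n)^\lambda$ and splits the test function instead, writing $\xi_u^{n+1}=(\xi_u^{n+1}-\xi_u^n)+\eta_u^n+e_u^n$. It drops $\bigl((u_h^n)^\lambda-(u^n)^\lambda,e_u^n\bigr)\le 0$ and bounds the $\eta_u^n$ pairing by $k\|\eta_u^n\|^2+k\|(u_h^n)^\lambda-(u^n)^\lambda\|^2$. As a result only $\|u_h^n\|_\infty$ and $\|u^n\|_\infty$ appear, and $\Omega_{\kappa,n}$ controls both directly.

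You instead insert $\mathcal{P}^-u^n$ into the nonlinearity and use monotonicity against $\xi_u^n=\mathcal{P}^-u^n-u_h^n$. This is equally valid, but it requires an $L^\infty$ bound on $\mathcal{P}^-u^n$. That bound does not follow from Lemma \ref{lem:proj-property-2d} as stated: its $L^\infty$ estimate involves $\|u\|_{H^{r+1}(K)}$, which the localization does not control, and the lemma says nothing about $W^{2,\infty}$. The correct justification is the $L^\infty$-stability $\|\mathcal{P}^-v\|_{L^\infty(K)}\le C\|v\|_{L^\infty(K)}$. This follows by scaling, since $\mathcal{P}^-$ on a cell is determined by a point value and edge and cell moments. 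With it, the $\|u\|_\infty^{2p_0}\le\kappa$ part of $\Omega_{\kappa,n}$ suffices. So the paper's route is slightly more economical, while yours needs one standard extra fact about the projection.

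The index concern you raised is legitimate, and the paper's proof has exactly the same feature. Its bound for the time-increment piece also produces $k\sum_{n=0}^m\mathbf{1}_{\Omega_{\kappa,n}}\|\xi_u^{n+1}\|^2$. This is not literally dominated by $k\sum_{n=0}^{m+1}\mathbf{1}_{\Omega_{\kappa,n}}\|\xi_u^n\|^2$, because the indicators decrease. In the proof of Theorem \ref{thm:high-moment-error-estimate-2D}, this contribution is actually carried as $C\kappa k\sum_{n=0}^{m}\mathbf{1}_{\Omega_{\kappa,n}}\|\xi_u^{n+1}\|^2$ and removed by the discrete Gr\"onwall argument. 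So your reading, with $\mathbf{1}_{\Omega_{\kappa,n-1}}$ at index $n$, is exactly what is used downstream, and nothing needs to be absorbed into the energy.
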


Next, we propose a Lemma which establishes a relationship between $\| \xi_{v}^n\|$ and $\| \xi_{w}^n\|$. 
This result will be used to estimate the term $\mathcal{I}_7$ in Theorem \ref{thm:high-moment-error-estimate-2D}.  
The proof is provided in Appendix \ref{appendix-w_h-v_h-2d}. 
\begin{lemma}
\label{lem:w_h-v_h-2D}
Assuming hypothesis {\rm(ii)} holds. For any $0\le m\le N_T-1$, we have 
\begin{align}
\label{ineq:w_h-v_h-2D}
    k\sum_{n=0}^m &\mathbf{1}_{\Omega_{\kappa,n}} (\| \xi_{w_1}^{n+1}\|^2 + \| \xi_{w_2}^{n+1}\|^2) \leq 14\Lambda k\sum_{n=0}^m \mathbf{1}_{\Omega_{\kappa,n}} (\| \xi_{v_1}^{n+1}\|^2 + \| \xi_{v_2}^{n+1}\|^2) + C \kappa k\sum_{n=0}^m \mathbf{1}_{\Omega_{\kappa,n}} \| \xi_{u}^{n+1}\|^2  \\
    &+ Ch^{2r+2} k\sum_{n=0}^m (\| w_1^{n+1}\|_{r+1}^2 + \| w_2^{n+1}\|_{r+1}^2 + \| v_1^{n+1}\|_{r+1}^2 + \| v_2^{n+1}\|_{r+1}^2) + C\kappa h^{2r+2} k\sum_{n=0}^m \| u^{n+1}\|_{r+1}^2. \notag 
\end{align}
\end{lemma}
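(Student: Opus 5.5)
The plan is to start from the defining relations \eqref{eq:fully-discrete-3-2d}--\eqref{eq:fully-discrete-4-2d} for $w_{i,h}^{n+1}$ and the exact relation $w_i = a_{i1}(u)v_1 + a_{i2}(u)v_2$. Write $a_{ij}^{n+1}=a_{ij}(\cdot,t_{n+1},u^{n+1})$ and $a_{h,ij}^{n+1}=a_{ij}(\cdot,t_{n+1},u_h^{n+1})$. Since $\mathcal P$ is the $L^2$ projection and $\mathcal P^+$ agrees with it against polynomials only up to boundary moments, split
\[
\xi_{w_i}^{n+1} = (\mathcal P^+ w_i^{n+1} - w_i^{n+1}) + \bigl(w_i^{n+1} - \mathcal P w_i^{n+1}\bigr) + \bigl(\mathcal P w_i^{n+1} - w_{i,h}^{n+1}\bigr).
\]
The first two pieces are bounded in $L^2$ by $Ch^{r+1}\|w_i^{n+1}\|_{r+1}$ using Lemma \ref{lem:proj-property-2d}. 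For the last piece, test \eqref{eq:fully-discrete-3-2d} with $z_h=\mathcal P w_1^{n+1}-w_{1,h}^{n+1}$ and use $(\mathcal P w_1^{n+1},z_h)=(w_1^{n+1},z_h)$. This gives
\[
\|\mathcal P w_1^{n+1}-w_{1,h}^{n+1}\|^2=\sum_j\bigl(a_{1j}^{n+1}v_j^{n+1}-a_{h,1j}^{n+1}v_{j,h}^{n+1},\,z_h\bigr).
\]

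Next, decompose $a_{1j}^{n+1}v_j^{n+1}-a_{h,1j}^{n+1}v_{j,h}^{n+1}=(a_{1j}^{n+1}-a_{h,1j}^{n+1})v_j^{n+1}+a_{h,1j}^{n+1}(\xi_{v_j}^{n+1}-\eta_{v_j}^{n+1})$. By Cauchy--Schwarz, $\|\mathcal P w_1^{n+1}-w_{1,h}^{n+1}\|$ is then bounded by
\[
\sum_j\Bigl(\|(a_{1j}^{n+1}-a_{h,1j}^{n+1})v_j^{n+1}\|+\sqrt\Lambda\,\|\xi_{v_j}^{n+1}\|+\sqrt\Lambda\,\|\eta_{v_j}^{n+1}\|\Bigr),
\]
where hypothesis (ii) gives $|a_{ij}|\le\sqrt\Lambda$. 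For the coefficient difference, hypothesis (ii) yields
\[
|a_{1j}^{n+1}-a_{h,1j}^{n+1}|\le\alpha_1|e_u^{n+1}|\le\alpha_1(|\xi_u^{n+1}|+|\eta_u^{n+1}|),
\]
so the term is at most $\alpha_1\|v_j^{n+1}\|_\infty(\|\xi_u^{n+1}\|+\|\eta_u^{n+1}\|)$. On $\Omega_{\kappa,n}$ the definition \eqref{def:error-subset-2d} controls $\sup_{t\le t_{n+1}}\|u(t)\|_{W^{2,\infty}}^2\le\kappa$, hence $\|v_j^{n+1}\|_\infty^2\le\kappa$. This is precisely why the indicator $\mathbf 1_{\Omega_{\kappa,n}}$, and not $\mathbf 1_{\Omega_{\kappa,n-1}}$, is the right one here: it covers time $t_{n+1}$. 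The resulting contributions are $C\kappa\|\xi_u^{n+1}\|^2$ and $C\kappa h^{2r+2}\|u^{n+1}\|_{r+1}^2$.

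Finally, square everything and collect the constants. Using $(a+b+c)^2\le(1+\epsilon)a^2+C(b^2+c^2)$ with the leading piece $a=\sqrt\Lambda(\|\xi_{v_1}^{n+1}\|+\|\xi_{v_2}^{n+1}\|)$, one gets $a^2\le2\Lambda(\|\xi_{v_1}^{n+1}\|^2+\|\xi_{v_2}^{n+1}\|^2)$ for each $i$. Summing over $i=1,2$ gives a factor around $4\Lambda$ up to the $\epsilon$ slack. A cruder application of $(a_1+\dots+a_7)^2\le7\sum a_l^2$ instead produces the stated constant $14\Lambda$ ($7\cdot2\Lambda$), which is safely enough. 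Then multiply by $k\,\mathbf 1_{\Omega_{\kappa,n}}$, sum over $n\le m$, and drop the indicator on the pure projection terms. This gives \eqref{ineq:w_h-v_h-2D}.

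The main obstacle is the nonlinear coefficient term. It needs a pathwise $L^\infty$ bound on $v_j^{n+1}=\partial u(t_{n+1})$, and that is exactly what the localization supplies, at the cost of the factor $\kappa$. The other point needing care is the mismatch between $\mathcal P^+$ and $\mathcal P$ in $\xi_w$. It is handled by the triangle inequality above rather than by any orthogonality property of $\mathcal P^+$, at only an $O(h^{r+1})$ cost.
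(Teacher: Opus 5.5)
Your proposal is correct and follows essentially the paper's argument: you test the $w$-equation against (a version of) $\xi_{w_1}^{n+1}$ and split $a_{1j}^{n+1}v_j^{n+1}-a_{h,1j}^{n+1}v_{j,h}^{n+1}$ into a coefficient-difference term and bounded-coefficient terms. The difference term is controlled by the Lipschitz bound in (ii), with $\|v_j^{n+1}\|_\infty^2\le\kappa$ supplied by $\mathbf{1}_{\Omega_{\kappa,n}}$, and Young's inequality finishes; your $(1+\epsilon)$ weighting even gives the sharper constant $4(1+\epsilon)\Lambda\le 14\Lambda$. The only cosmetic difference is that the paper tests with $\xi_{w_1}^{n+1}$ directly and uses the identity $\xi_{w_1}^{n+1}=e_{w_1}^{n+1}+\eta_{w_1}^{n+1}$ to move $(\eta_{w_1}^{n+1},\xi_{w_1}^{n+1})$ to the right-hand side, so your detour through the $L^2$ projection $\mathcal{P}$ is harmless but not needed.
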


Lastly, we show the high-order approximation to the initial condition $u_0$ and its gradient, provided $u_0$ is sufficiently smooth. The proof is the same as in \cite{chen_semi-linear_2d}.  
\begin{lemma}
\label{lem:xi_v^0-2d}
Assume $u_0 \in H^{r+2}(\mathcal{D})$ and set $u_h^0=\mathcal P^-u_0$.
Let $\xi_{u}^0 := \mathcal{P}^- u_0 - u_{h}^0$, $\xi_{v_i}^0 := \mathcal{P} v_i^0 - v_{i,h}^0$ for $i=1,2$, where $v_i^0=\partial_i u_0$, and where $v_{i,h}^0$ is computed from
\eqref{eq:fully-discrete-2-2d}. Then
\begin{align*}
    \| \xi_{u}^0\|^2 + \| \xi_{v_1}^0\|^2 + \| \xi_{v_2}^0\|^2 \leq Ch^{2r+2}. 
\end{align*}
\end{lemma}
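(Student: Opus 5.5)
Since $u_h^0=\mathcal P^-u_0$, the first claim is immediate: $\xi_u^0=\mathcal P^-u_0-u_h^0=0$. It therefore remains to bound $\xi_{v_i}^0=\mathcal P v_i^0-v_{i,h}^0$. To do so, I will derive an error equation for the discrete gradient at $n=0$ by comparing \eqref{eq:fully-discrete-2-2d} with the exact relation $v_i^0=\partial_i u_0$.

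Because $u_0$ is smooth, integrating by parts cell by cell gives, for every $p_h,q_h\in\mathbb V_h$,
\[
(v_1^0,p_h)+(v_2^0,q_h)=L^-(u_0,p_h,q_h).
\]
Here the single-valued traces of $u_0$ coincide with $u_0^-$. Subtracting the scheme \eqref{eq:fully-discrete-2-2d} at $n=0$ and using $u_h^0=\mathcal P^-u_0$ yields
\[
(e_{v_1}^0,p_h)+(e_{v_2}^0,q_h)=L^-(u_0-\mathcal P^-u_0,p_h,q_h)=-L^-(\eta_u^0,p_h,q_h).
\]
Next I split $e_{v_i}^0=\xi_{v_i}^0-\eta_{v_i}^0$. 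Since $\eta_{v_i}^0=\mathcal Pv_i^0-v_i^0$ is $L^2$-orthogonal to $\mathbb V_h$, the $\eta_{v_i}$ terms drop out, leaving
\[
(\xi_{v_1}^0,p_h)+(\xi_{v_2}^0,q_h)=-L^-(\eta_u^0,p_h,q_h).
\]

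Now take $p_h=\xi_{v_1}^0$ and $q_h=\xi_{v_2}^0$. The superconvergence estimate \eqref{ineq:superconvergence-1} of Lemma \ref{lem:superconvergence} applies because $u_0\in H^{r+2}$, and it gives
\[
\|\xi_{v_1}^0\|^2+\|\xi_{v_2}^0\|^2\le Ch^{r+1}\|u_0\|_{r+2}\bigl(\|\xi_{v_1}^0\|+\|\xi_{v_2}^0\|\bigr).
\]
Dividing through (or applying Young's inequality) gives $\|\xi_{v_1}^0\|+\|\xi_{v_2}^0\|\le Ch^{r+1}\|u_0\|_{r+2}$. Squaring and adding $\|\xi_u^0\|^2=0$ proves the lemma, with the constant absorbing $\|u_0\|_{r+2}^2$.

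The only delicate step is the consistency identity for $L^-$ applied to the exact solution, which is needed to reduce the problem to $L^-(\eta_u^0,\cdot,\cdot)$. One has to check that the boundary terms in $L^{x-}_{i,j}$ and $L^{y-}_{i,j}$ match those produced by integrating by parts with continuous traces. On the periodic mesh this is a routine verification. With that identity in hand, Lemma \ref{lem:superconvergence} carries the rest of the argument. Without it, the naive estimate \eqref{ineq:alternating-flux-2d} would lose a factor of $h$.
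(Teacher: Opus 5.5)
Your proof is correct and follows essentially the argument the paper defers to its earlier semilinear work. The choice $u_h^0=\mathcal P^-u_0$ gives $\xi_u^0=0$; consistency of $L^-$ together with the $L^2$-orthogonality of $\eta_{v_i}^0$ reduces the initial gradient error equation to $(\xi_{v_1}^0,p_h)+(\xi_{v_2}^0,q_h)=-L^-(\eta_u^0,p_h,q_h)$; and the superconvergence bound \eqref{ineq:superconvergence-1}, applied with $p_h=\xi_{v_1}^0$ and $q_h=\xi_{v_2}^0$, closes the estimate.
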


Next, we impose the following regularity assumption $(\mathcal{A}_q)$ on the exact solution. 

\begin{enumerate}
    \item[($\mathcal{A}_q$)] For a fixed $q\in[1,\infty)$, suppose $u_0 \in H^{r+2}(\mathcal{D})$ and 
    \begin{align*}
        &u \in L^2(\Omega,L^{\infty}[0,T; W^{2,\infty}(\mathcal{D})]) \cap L^{2p_0}\left(\Omega,L^{\infty}([0,T] \times \mathcal D) \right) \cap L^{2q}(\Omega, L^{\infty}[0,T; H^{r+3}(\mathcal{D})]),  \\
        &w_1,w_2,f_1(u),f_2(u) \in L^{2q}(\Omega \times[0,T], H^{r+2}(\mathcal{D})), \quad \{\psi,g\}(\cdot,u,\nabla u) \in L^{2q}(\Omega \times[0,T]; H^{r+1}(\mathcal{D})).
    \end{align*}
\end{enumerate}

In the following, we give the result of error estimate. For arbitrarily fixed $q\in[1,\infty)$, suppose the parabolic coefficient satisfies that
$$
\alpha > \widetilde{\alpha_0}:= \left(2^{3q-1}C_b' + 2^{6q-4}C_b \right)^{1/q}C_b^{1/q}D_2^2K.
$$
\begin{theorem}[Fully Discrete Error Estimate]
\label{thm:high-moment-error-estimate-2D}
    Assume hypotheses {\rm(i)}-{\rm(vii)} and ($\mathcal{A}_q$). Then there exist positive constants $C$, $C_q^{*}$, independent of $h$,$k$, such that for every arbitrarily small $\beta>0$ and all sufficiently small $h$,
\begin{align}
\label{ineq:high-moment-error-estimate-2D}
    \mathbb{E}&\left[\max_{0 \leq n \leq N_T} \mathbf{1}_{\Omega_{\kappa,n-1}}\| e_u^{n}\|^{2q} \right]^{\frac{1}{2q}} + \mathbb{E}\left[ \left(\sum_{n=0}^{N_T-1} \mathbf{1}_{\Omega_{\kappa,n}}\| e_u^{n+1} - e_u^{n}\|^2 \right)^q \right]^{\frac{1}{2q}} 
    \notag \\ 
    &\qquad + \mathbb{E}\left[ \left(k\sum_{n=0}^{N_T}\mathbf{1}_{\Omega_{\kappa,n-1}} (\|e_{v_1}^{n}\|^2 + \|e_{v_2}^{n}\|^2) \right)^q \right]^{\frac{1}{2q}} 
    \leq C h^{-C_q^*\beta}(k^{\frac{1}{2}} + h^{r+1}).
\end{align}
where $\kappa = \ln \bigl(\ln(h^{-\beta}) \bigr)$. Moreover, it holds that $\mathbb{P}(\Omega_{\kappa,N_T}) \to 1$ as $h \to 0$ provided $k \leq Ch^{2+2C_q^*\beta}$. 
\end{theorem}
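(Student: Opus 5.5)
The plan is an energy argument for the projected errors $\xi_u^n,\xi_{v_i}^n,\xi_{w_i}^n$, localized on the recursive events $\Omega_{\kappa,n}$, followed by a discrete Gr\"onwall argument whose growth constant is proportional to $\kappa$.

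First, I integrate the exact equation \eqref{eq:auxiliary} over $[t_n,t_{n+1}]$ and test with $r_h\in\mathbb V_h$. Subtracting the scheme \eqref{eq:fully-discrete-1-2d}--\eqref{eq:fully-discrete-4-2d} gives an error equation. In it I take $r_h=\xi_u^{n+1}$ in the first relation, $p_h,q_h=\xi_{w_i}^{n+1}$ in the second, and $z_h,\phi_h=\xi_{v_i}^{n+1}$ in the third and fourth. Using $H^++L^-=0$ (Lemma \ref{lem:num-flux-2D}), the leading terms combine as in the stability proof into $(A(\cdot,t_{n+1},u_h^{n+1})\boldsymbol\xi_v^{n+1},\boldsymbol\xi_v^{n+1})\ge\alpha(\|\xi_{v_1}^{n+1}\|^2+\|\xi_{v_2}^{n+1}\|^2)$. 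The remainders are of three kinds:
\begin{itemize}
\item the coefficient differences $a_{ij}^{n+1}-a_{h,ij}^{n+1}$ together with the $\eta_v$ terms, controlled by Lemma \ref{lem:error2D-term3};
\item the projection terms $L^-(\eta_u,\cdot)$ and $H^+(\eta_w,\cdot)$, controlled by the superconvergence Lemma \ref{lem:superconvergence};
\item the time-consistency term $\mathcal I_4$, controlled by Lemma \ref{lem:error2D-term4}.
\end{itemize}
Lemma \ref{lem:w_h-v_h-2D} turns the $\xi_w$ pairings back into $\xi_v$ quantities. Lemma \ref{lem:w_h-v_h-2D} and the interpolation terms also need the bound $\|u_h^{n+1}\|_\infty^{2p_0}\le\kappa$. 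This bound holds on $\Omega_{\kappa,n}$ by the recursive definition, because $\mathbf 1_{\Omega_{\kappa,n}}\le\mathbf 1_{\Omega_{\kappa,n-1}}$ and the defining condition already involves $u_h^{n+1}$ through $\ell=n+1$. For this reason the whole identity is multiplied by $\mathbf 1_{\Omega_{\kappa,n}}$, and the nesting gives $\tfrac12\mathbf 1_{\Omega_{\kappa,n}}\|\xi_u^{n+1}\|^2-\tfrac12\mathbf 1_{\Omega_{\kappa,n-1}}\|\xi_u^n\|^2$ on the left, exactly as in Theorem \ref{thm:stability-estimate-2D}.

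Next comes the drift and noise. The convection difference is handled by Lemma \ref{lem:error2D-term5}, with $C_{f_h}\le C\kappa^{1/2}$ on $\Omega_{\kappa,n}$. The reaction term is handled by Lemma \ref{lem:error2D-term6}, and the $\psi_1$ difference by \eqref{ineq:error2D-psi} combined with Young's inequality. I split the noise term $\bigl(\int_{t_n}^{t_{n+1}}(g(t)-g_h^n)\,dW,\xi_u^{n+1}\bigr)$ into a part paired with $\xi_u^{n+1}-\xi_u^n$ and a part paired with $\xi_u^n$. The first part is absorbed by the numerical dissipation $\sum\|\xi_u^{n+1}-\xi_u^n\|^2$ via Young's inequality. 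The second part is a martingale term. Since $\Omega_{\kappa,n}$ is not $\mathcal F_{t_n}$-measurable (it involves $\sup_{t\le t_{n+1}}$ of $u$), I first use Lemma \ref{lem:subset-property} to replace $\mathbf 1_{\Omega_{\kappa,n}}$ by $\mathbf 1_{\Omega_{\kappa,n-1}}$. I then apply the BDG inequalities of Remark \ref{rmk:BDG} together with \eqref{ineq:error2D-g}. This produces the constant $(2+\epsilon_1)D_2^2K$ multiplying $k\sum\|\xi_v\|^2$. After taking the maximum over $l$, raising to the $q$-th power via \eqref{ineq:convex-3}, and taking expectations, I choose the $\epsilon_i$ as in Appendix \ref{appendix-eps-choice}. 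The assumption $\alpha>\widetilde{\alpha_0}$ is exactly what keeps the $\xi_v$ coefficient on the left positive, with the extra factor $2^q$ relative to $\alpha_0(q)$ coming from the factor $2$ in \eqref{ineq:error2D-g}. The temporal terms $\int\|u(t)-u^n\|_2^2$ are bounded by $Ck^q$ in $q$-th moment using Corollary \ref{coro:Holder-High-Moments} with $m=2$, assumptions (vi) and $(\mathcal A_q)$. The projection terms give $h^{2q(r+1)}$, and Lemma \ref{lem:xi_v^0-2d} handles the initial data.

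The resulting inequality has the form $Y_m\le C\kappa^{q}(k^q+h^{2q(r+1)})+C\kappa k\sum_{n\le m}Y_n$. I expect the main obstacle to be that the Gr\"onwall constant then grows like $e^{C\kappa T}$, with the polynomial factors in $\kappa$ coming from the $\kappa$-weighted projection terms. The remedy is the choice of $\kappa$: with $\kappa=\ln\ln h^{-\beta}$ one has $e^{C\kappa T}=(\ln h^{-\beta})^{CT}\le Ch^{-C\beta}$ for small $h$, and the polynomial $\kappa$-factors are likewise absorbed. This yields the factor $h^{-C_q^*\beta}$. The error is then converted from $\xi$ to $e=\xi-\eta$ by the triangle inequality and Lemma \ref{lem:proj-property-2d}.

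Finally, for $\mathbb P(\Omega_{\kappa,N_T})\to1$, I bound the complement of the event on the exact-solution part by Markov's inequality using $(\mathcal A_q)$; this part is $O(1/\kappa)\to0$. For the numerical part, on $\Omega_{\kappa,n-1}$ I write $\|u_h^n\|_\infty\le\|u^n\|_\infty+\|\eta_u^n\|_\infty+Ch^{-1}\|\xi_u^n\|$ by the inverse inequality. The just-proven estimate gives $h^{-1}\|\xi_u^n\|\le Z h^{-1-C_q^*\beta}(k^{1/2}+h^{r+1})$ in $L^{2q}$, which tends to zero in probability when $k\le Ch^{2+2C_q^*\beta}$ (after a slight adjustment of $\beta$). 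The recursive structure of $\Omega_{\kappa,n}$ ensures this bound is available at each step where the indicator is on, so the numerical part exceeds $\kappa/2$ only with vanishing probability, completing the proof.
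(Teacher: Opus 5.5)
Your overall route coincides with the paper's: the same splitting $e=\xi-\eta$ with $\mathcal P^-,\mathcal P,\mathcal P^+$, the same test functions and use of $H^++L^-=0$, the same term-by-term lemmas, Lemma~\ref{lem:subset-property} plus discrete and continuous BDG leading to $\widetilde{\alpha_0}$, Gr\"onwall with $\kappa=\ln(\ln(h^{-\beta}))$, and Markov plus the inverse inequality for $\mathbb P(\Omega_{\kappa,N_T})\to1$. However, your justification of the localization is false. By \eqref{def:error-subset-2d}, $\Omega_{\kappa,n}$ constrains the exact solution on $[0,t_{n+1}]$ and only $u_h^0,\dots,u_h^n$. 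The condition with $\ell=n+1$ defines the \emph{smaller} set $\Omega_{\kappa,n+1}$, so no bound on $\|u_h^{n+1}\|_\infty$ holds on $\Omega_{\kappa,n}$. Fortunately none is needed. In Lemmas~\ref{lem:error2D-term3} and \ref{lem:w_h-v_h-2D}, $a_{h,ij}^{n+1}$ is bounded uniformly by (ii), and $|a_{ij}^{n+1}-a_{h,ij}^{n+1}|\le\alpha_1|e_u^{n+1}|$ multiplies the \emph{exact} $v_j^{n+1}$, whose sup-norm is at most $\kappa^{1/2}$ on $\Omega_{\kappa,n}$. This is why the exact-solution part of $\Omega_{\kappa,\ell}$ runs up to $t_{\ell+1}$. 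The only numerical quantity that must be localized is $u_h^n$, through the explicit $f_i(u_h^n)$ and $(u_h^n)^\lambda$. The distinction matters. If the argument really needed $u_h^{n+1}$, the localized error bound would only control $\xi_u^{n+1}$ on events that already bound $u_h^{n+1}$. It could then no longer be used to prove that these events have probability tending to one, which is the obstruction the paper notes for the fully implicit scheme.

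Three further corrections.

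First, after raising to the $q$-th power the Gr\"onwall coefficient is $C\kappa^q$, not $C\kappa$. The amplification factor is therefore $e^{C\kappa^q}$ rather than $e^{C\kappa T}$. With $\kappa=\ln(\ln(h^{-\beta}))$ this is still $O(h^{-C\beta})$, so your conclusion survives.

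Second, and more substantively, your list of remainders omits $(\eta_u^{n+1}-\eta_u^n,\xi_u^{n+1})$, which does not vanish because $\mathcal P^-$ is not the $L^2$ projection. The part paired with $\xi_u^{n+1}-\xi_u^n$ is absorbed by the dissipation. The part paired with $\xi_u^n$ cannot be handled by Cauchy--Schwarz: $\|\eta_u^{n+1}-\eta_u^n\|$ is only of size $h^{r+1}k^{1/2}$, and summing $N_T$ such terms loses a factor $k^{-1/2}$. Instead, write $\eta_u^{n+1}-\eta_u^n=(\mathcal P^--I)(u^{n+1}-u^n)$ and insert the equation. Bound the $\mathrm{d}t$ part by Young's inequality. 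Treat $\sum_n\mathbf 1_{\Omega_{\kappa,n}}\int_{t_n}^{t_{n+1}}(\xi_u^n,(\mathcal P^--I)(g\,\mathrm{d}W_t))$ as a martingale via Lemma~\ref{lem:subset-property} and BDG, exactly as you do for the $g-g_h^n$ noise term; this is the paper's $\mathcal T_1$.

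Third, in the probability step you need not make $h^{-1}\|\xi_u^n\|$ tend to zero or adjust $\beta$. Its boundedness in $L^{2q}$ under $k\le Ch^{2+2C_q^*\beta}$ suffices, since the Markov threshold $c\kappa$ tends to infinity.
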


\begin{proof}
By Lemma \ref{lem:xi_v^0-2d}, we have $\| \xi_u^0\| + \| \xi_{v_1}^0\| + \| \xi_{v_2}^0\| \leq Ch^{r+1}$. Thus all initial-error terms appearing below are bounded by $Ch^{2r+2}$, and can be absorbed into the final error bound. For simplicity, we denote
\begin{alignat*}{2}
    a_{ij}^{n+1} &:= a_{ij}(x,y,t_{n+1},u^{n+1}),
    &\qquad
    a_{h,ij}^{n+1} &:= a_{ij}(x,y,t_{n+1},u_h^{n+1}), \\
    g(t)&:=g(\cdot,t,u(t),v_1(t),v_2(t)),    
    &\qquad
    g_h^n &:=g(\cdot,t_n,u_h^n,v_{1,h}^n,v_{2,h}^n), \\
    \psi(t)&:=\psi(\cdot,t,u(t),v_1(t),v_2(t)),
    &\qquad
    \psi_h^n &:=\psi(\cdot,t_n,u_h^n,v_{1,h}^n,v_{2,h}^n),\\
    \psi_1(t)&:=\psi_1(\cdot,t,u(t),v_1(t),v_2(t)),
    &\qquad
    \psi_{1,h}^n &:=\psi_1(\cdot,t_n,u_h^n,v_{1,h}^n,v_{2,h}^n),
\end{alignat*}
in this proof. We also define, for vector-valued functions $\boldsymbol{\zeta}=(\zeta_1,\zeta_2)$ and $\boldsymbol{\chi}=(\chi_1,\chi_2)$,
\[
    \mathcal{A}_h^{n+1}(\boldsymbol{\zeta},\boldsymbol{\chi})
    :=
    \sum_{i,j=1}^2
    (a_{h,ij}^{n+1}\zeta_j,\chi_i).
\]
We divide the long proof into three steps. 

\noindent {\it Step 1.}     
Subtracting the fully discrete scheme \eqref{eq:fully-discrete-1-2d}-\eqref{eq:fully-discrete-4-2d} from the time-integrated equations satisfied by the exact solutions $(u,v_1,v_2,w_1,w_2)$, and summing over all cells $I_i \times J_j$, we obtain
\begin{align}
    &(e_u^{n+1},r_h) = (e_u^{n},r_h) + \int_{t_n}^{t_{n+1}} H^+(w_1-w_{1,h}^{n+1},w_2-w_{2,h}^{n+1},r_h) \, \mathrm{d}t \notag \\
    &\qquad 
    + \int_{t_n}^{t_{n+1}} D(f_1,f_2;u(t),u_h^n, r_h) \, \mathrm{d}t 
    + \left( \int_{t_n}^{t_{n+1}} \psi(t) - \psi_h^n \, \mathrm{d}t + \int_{t_n}^{t_{n+1}} g(t) - g_h^n \, \mathrm{d}W_t, r_h \right), \label{eq:error-1-2d} \\
    &(e_{v_1}^{n+1},p_h) + (e_{v_2}^{n+1},q_h) = L^-(e_u^{n+1},p_h,q_h), \label{eq:error-2-2d} \\
    &(e_{w_i}^{n+1},z_{i,h}) = \sum_{j=1}^2 \left( a_{ij}^{n+1}v_j^{n+1} -a_{h,ij}^{n+1}v_{j,h}^{n+1},z_{i,h} \right), \quad i=1,2. \label{eq:error-3-4-2d}
\end{align}
Taking $p_h = \xi_{w_1}^{n+1}$, $q_h = \xi_{w_2}^{n+1}$, and $z_{i,h}=\xi_{v_i}^{n+1}$ for $i=1,2$, by the definition of $L^2$ projection, we get
\begin{align*}
    (\xi_{v_1}^{n+1},\xi_{w_1}^{n+1}) +(\xi_{v_2}^{n+1},\xi_{w_2}^{n+1})
    &=  L^-(e_u^{n+1},\xi_{w_1}^{n+1},\xi_{w_2}^{n+1})\\
    &= \sum_{i=1}^2 (\eta_{w_i}^{n+1},\xi_{v_i}^{n+1})
    +\sum_{i,j=1}^2 \left(a_{ij}^{n+1}v_j^{n+1} -a_{h,ij}^{n+1}v_{j,h}^{n+1}, \xi_{v_i}^{n+1}\right).
\end{align*}
For $i,j = 1,2$, by splitting each term into three terms
$$(a_{ij}^{n+1}v_j^{n+1} - a_{h,ij}^{n+1}v_{j,h}^{n+1}) = (a_{ij}^{n+1} - a_{h,ij}^{n+1})v_j^{n+1} + a_{h,ij}^{n+1}\xi_{v_j}^{n+1} - a_{h,ij}^{n+1}\eta_{v_j}^{n+1}, $$ 
we can rewrite the above equation as
\begin{align*}
    \mathcal{A}_h^{n+1}(\boldsymbol{\xi}_v^{n+1},\boldsymbol{\xi}_v^{n+1})
    &=
    L^-(\xi_u^{n+1},\xi_{w_1}^{n+1},\xi_{w_2}^{n+1}) -L^-(\eta_u^{n+1},\xi_{w_1}^{n+1},\xi_{w_2}^{n+1})\\
    &\quad
    -\sum_{i=1}^2 (\eta_{w_i}^{n+1},\xi_{v_i}^{n+1})
    -\sum_{i,j=1}^2 (a_{ij}^{n+1}-a_{h,ij}^{n+1}, v_j^{n+1}\xi_{v_i}^{n+1})
    +\sum_{i,j=1}^2 (a_{h,ij}^{n+1}\eta_{v_j}^{n+1}, \xi_{v_i}^{n+1}),
\end{align*}
where $\boldsymbol{\xi}_v^{n+1} =(\xi_{v_1}^{n+1},\xi_{v_2}^{n+1})^{\top}$.
Multiplying the above equation by $k$, adding it to \eqref{eq:error-1-2d} with $r_h = \xi_u^{n+1}$, multiplying by $\mathbf{1}_{\Omega_{\kappa,n}}$, and summing from $n=0$ to $l$, we get
\begin{align*}
    &\sum_{n=0}^l \mathbf{1}_{\Omega_{\kappa,n}} (\xi_u^{n+1}-\xi_u^n,\xi_u^{n+1}) + k\sum_{n=0}^l \mathbf{1}_{\Omega_{\kappa,n}}  \mathcal{A}_h^{n+1} (\boldsymbol{\xi}_v^{n+1},\boldsymbol{\xi}_v^{n+1}) \\
    &=
    \sum_{n=0}^l \mathbf{1}_{\Omega_{\kappa,n}} (\eta_u^{n+1}-\eta_u^n,\xi_u^{n+1}-\xi_u^n) 
    + \sum_{n=0}^l \mathbf{1}_{\Omega_{\kappa,n}} (\eta_u^{n+1}-\eta_u^n,\xi_u^n) 
    - k\sum_{n=0}^l \mathbf{1}_{\Omega_{\kappa,n}} L^-(\eta_u^{n+1},\xi_{w_1}^{n+1},\xi_{w_2}^{n+1})     \\
    &\quad
    - k\sum_{n=0}^l \mathbf{1}_{\Omega_{\kappa,n}} H^+(\eta_{w_1}^{n+1},\eta_{w_2}^{n+1},\xi_u^{n+1}) 
    - k\sum_{n=0}^l \mathbf{1}_{\Omega_{\kappa,n}} \sum_{i=1}^2 (\eta_{w_i}^{n+1},\xi_{v_i}^{n+1}) \\
    &\quad    
    - k\sum_{n=0}^l \mathbf{1}_{\Omega_{\kappa,n}} \sum_{i,j=1}^2 (a_{ij}^{n+1}-a_{h,ij}^{n+1}, v_j^{n+1}\xi_{v_i}^{n+1}) 
    + k\sum_{n=0}^l \mathbf{1}_{\Omega_{\kappa,n}} \sum_{i,j=1}^2 (a_{h,ij}^{n+1}\eta_{v_j}^{n+1}, \xi_{v_i}^{n+1}) \\    
    &\quad
    + \sum_{n=0}^l \int_{t_n}^{t_{n+1}} \mathbf{1}_{\Omega_{\kappa,n}} H^+(w_1(t)-w_1^{n+1},w_2(t)-w_2^{n+1},\xi_u^{n+1}) \,\mathrm{d}t
    + \sum_{n=0}^l \int_{t_n}^{t_{n+1}} \mathbf{1}_{\Omega_{\kappa,n}} D(f_1,f_2;u(t),u_h^n,\xi_u^{n+1}) \,\mathrm{d}t      \\
    &\quad
    + \sum_{n=0}^l \int_{t_n}^{t_{n+1}} \mathbf{1}_{\Omega_{\kappa,n}} \left((u_h^n)^\lambda-u(t)^\lambda,\xi_u^{n+1}\right) \,\mathrm{d}t 
    + \sum_{n=0}^l \mathbf{1}_{\Omega_{\kappa,n}} \left( \int_{t_n}^{t_{n+1}} (\psi_1(t)-\psi_{1,h}^n)\,\mathrm{d}t, \xi_u^{n+1} \right)     \\
    &\quad
    + \sum_{n=0}^l \mathbf{1}_{\Omega_{\kappa,n}} \left( \int_{t_n}^{t_{n+1}} (g(t)-g_h^n)\,\mathrm{d}W_t, \xi_u^{n+1}-\xi_u^n \right) 
    + \sum_{n=0}^l \mathbf{1}_{\Omega_{\kappa,n}} \left( \int_{t_n}^{t_{n+1}} (g(t)-g_h^n)\,\mathrm{d}W_t, \xi_u^n \right),
\end{align*}
where we use the result $L^-(\xi_u^{n+1},\xi_{w_1}^{n+1},\xi_{w_2}^{n+1}) + H^+(\xi_{w_1}^{n+1},\xi_{w_2}^{n+1},\xi_u^{n+1}) = 0$ by Lemma \ref{lem:num-flux-2D}. 

Using the nesting property of $\{\Omega_{\kappa,n} \}$ and hypothesis (ii), the left-hand side of the above equation satisfies
\begin{align*}
    \text{LHS} 
    &= \frac{1}{2}\sum_{n=0}^l \mathbf{1}_{\Omega_{\kappa,n}}(\| \xi_u^{n+1}\|^2 - \| \xi_u^{n}\|^2) 
    + \frac{1}{2} \sum_{n=0}^l \mathbf{1}_{\Omega_{\kappa,n}}\| \xi_u^{n+1} - \xi_u^{n}\|^2 
    + k\sum_{n=0}^l \mathbf{1}_{\Omega_{\kappa,n}} \mathcal{A}_h^{n+1} (\boldsymbol{\xi}_v^{n+1},\boldsymbol{\xi}_v^{n+1})  \\
    &\geq 
    \frac{1}{2}\mathbf{1}_{\Omega_{\kappa,l}}\| \xi_u^{l+1}\|^2 
    - \frac{1}{2}\| \xi_u^{0}\|^2 + \frac{1}{2} \sum_{n=0}^l \mathbf{1}_{\Omega_{\kappa,n}}\| \xi_u^{n+1} - \xi_u^{n}\|^2 
    + \alpha k\sum_{n=0}^l \mathbf{1}_{\Omega_{\kappa,n}} (\| \xi_{v_1}^{n+1}\|^2 + \| \xi_{v_2}^{n+1}\|^2).
\end{align*}
For an arbitrary $0\leq m \leq N_T-1$, we estimate the maximum energy and the full dissipation separately, and then add the two estimates. 
Using Lemma \ref{lem:xi_v^0-2d}, we obtain
\begin{align}
\label{ineq:error-main-2D}
    &\frac{1}{2} \max_{0 \leq n \leq m} \mathbf{1}_{\Omega_{\kappa,n}}\| \xi_u^{n+1}\|^2 + \frac{1}{2} \sum_{n=0}^{m} \mathbf{1}_{\Omega_{\kappa,n}}\| \xi_u^{n+1} - \xi_u^{n}\|^2 + \alpha k \sum_{n=0}^{m}\mathbf{1}_{\Omega_{\kappa,n}} (\| \xi_{v_1}^{n+1}\|^2 + \| \xi_{v_2}^{n+1}\|^2) \notag \\
    &\qquad
    \leq \|\xi_u^0\|^2 + 2(\mathcal{I}_1 + \mathcal{I}_2 + \mathcal{I}_3 + \mathcal{I}_4 + \mathcal{I}_5 + \mathcal{I}_6 + \mathcal{I}_7),
\end{align}
where
\begin{align*}
    \mathcal{I}_1 
    &:= \max_{0 \leq l \leq m} \Bigg| \sum_{n=0}^l\mathbf{1}_{\Omega_{\kappa,n}}(\eta_u^{n+1}-\eta_u^{n},\xi_u^{n+1}-\xi_u^n) 
     - k\sum_{n=0}^l \mathbf{1}_{\Omega_{\kappa,n}} \sum_{i=1}^2(\eta_{w_i}^{n+1},\xi_{v_i}^{n+1}) \\
    &\qquad  \qquad
     + \sum_{n=0}^l \mathbf{1}_{\Omega_{\kappa,n}} \left( \int_{t_n}^{t_{n+1}} \psi_1(t) - \psi_{1,h}^n \, \mathrm{d}t, \xi_u^{n+1} \right) 
     + \sum_{n=0}^l \mathbf{1}_{\Omega_{\kappa,n}} \left( \int_{t_n}^{t_{n+1}} g(t) - g_h^n \, \mathrm{d}W_t, \xi_u^{n+1}-\xi_u^n \right) \Bigg|, \\
    \mathcal{I}_2 
    &:= \max_{0 \leq l \leq m} \Bigg| \sum_{n=0}^l\mathbf{1}_{\Omega_{\kappa,n}}(\eta_u^{n+1}-\eta_u^{n},\xi_u^n) 
    +  \sum_{n=0}^l \mathbf{1}_{\Omega_{\kappa,n}} \left( \int_{t_n}^{t_{n+1}} g(t) - g_h^n \, \mathrm{d}W_t, \xi_u^n \right) \Bigg|, \\
    \mathcal{I}_3 
    &:= \max_{0\leq l\leq m} \Bigg|
    -k\sum_{n=0}^l \mathbf{1}_{\Omega_{\kappa,n}} \sum_{i,j=1}^2 (a_{ij}^{n+1}-a_{h,ij}^{n+1}, v_j^{n+1}\xi_{v_i}^{n+1})     
    + k\sum_{n=0}^l \mathbf{1}_{\Omega_{\kappa,n}} \sum_{i,j=1}^2 (a_{h,ij}^{n+1}\eta_{v_j}^{n+1}, \xi_{v_i}^{n+1}) \Bigg|,\\    
    \mathcal{I}_4 
    &:= \max_{0 \leq l \leq m} \Bigg| \sum_{n=0}^l \int_{t_n}^{t_{n+1}} \mathbf{1}_{\Omega_{\kappa,n}} H^+(w_1(t)-w_{1}^{n+1},w_2(t)-w_{2}^{n+1},\xi_u^{n+1}) \, \mathrm{d}t \Bigg|, \\
    \mathcal{I}_5 
    &:= \max_{0 \leq l \leq m} \Bigg| \sum_{n=0}^l \int_{t_n}^{t_{n+1}} \mathbf{1}_{\Omega_{\kappa,n}} D(f_1,f_2;u(t),u_h^n,\xi_u^{n+1}) \, \mathrm{d}t \Bigg|, \\
    \mathcal{I}_6 
    &:= \max_{0 \leq l \leq m} \sum_{n=0}^{l} \int_{t_n}^{t_{n+1}} \mathbf{1}_{\Omega_{\kappa,n}} \left((u_h^n)^{\lambda} - u(t)^{\lambda}, \xi_u^{n+1} \right) \, \mathrm{d}t , \\
    \mathcal{I}_7 
    &:= \max_{0 \leq l \leq m} \Bigg| 
    - k\sum_{n=0}^l \mathbf{1}_{\Omega_{\kappa,n}} L^-(\eta_u^{n+1},\xi_{w_1}^{n+1},\xi_{w_2}^{n+1}) 
    - k\sum_{n=0}^l \mathbf{1}_{\Omega_{\kappa,n}} H^+(\eta_{w_1}^{n+1},\eta_{w_2}^{n+1},\xi_u^{n+1}) \Bigg|. 
\end{align*}
Let $ \{ \epsilon_i >0: i=0,1,\ldots,12 \}$ be arbitrary constants. By the Cauchy-Schwarz inequality, H\"older inequality and Lemma \ref{lem:proj-property-2d}, we have
\begin{align}
\label{ineq:error-estimate-term1-2D}
    2\mathcal{I}_1 
    &\leq Ch^{2r+2}\sum_{n=0}^{m} \mathbf{1}_{\Omega_{\kappa,n}} \|u^{n+1}-u^n\|_{{r+1}}^2 
    + \epsilon_5 \sum_{n=0}^{m} \mathbf{1}_{\Omega_{\kappa,n}} \|\xi_u^{n+1}-\xi_u^n\|^2 
    + Ch^{2r+2} k\sum_{n=0}^{m} (\| w_1^{n+1} \|_{{r+1}}^2 + \| w_2^{n+1} \|_{{r+1}}^2) \notag \\
    &+ \epsilon_6 k \sum_{n=0}^{m} \mathbf{1}_{\Omega_{\kappa,n}} (\| \xi_{v_1}^{n+1}\|^2 + \| \xi_{v_2}^{n+1}\|^2) 
    + \frac{k}{\epsilon_7}\sum_{n=0}^{m} \mathbf{1}_{\Omega_{\kappa,n}} \| \xi_u^{n+1}\|^2 
    + \frac{1}{\epsilon_8} \sum_{n=0}^{m} \mathbf{1}_{\Omega_{\kappa,n}} \|\xi_u^{n+1}-\xi_u^n\|^2 \notag \\
    &+ \epsilon_7 \sum_{n=0}^{m} \mathbf{1}_{\Omega_{\kappa,n}} \int_{t_n}^{t_{n+1}} \| \psi_1(t) - \psi_{1,h}^n \|^2 \, \mathrm{d}t 
     + \epsilon_8 \sum_{n=0}^{m} \mathbf{1}_{\Omega_{\kappa,n}} \left \| \int_{t_n}^{t_{n+1}} g(t) - g_h^n \, \mathrm{d}W_t \right \|^2.
\end{align}
For $\mathcal{I}_2$, note that 
\[
\eta_u^{n+1}-\eta_u^n =
(\mathcal P^- - I)\left[
\int_{t_n}^{t_{n+1}} \bigl((w_1)_x+(w_2)_y - f_1(u)_x - f_2(u)_y + \psi \bigr)\,\mathrm{d}t
+ \int_{t_n}^{t_{n+1}} g\,\mathrm{d}W_t  \right].
\]

By H\"older inequality and Lemma \ref{lem:proj-property-2d}, we obtain
{\begin{align}
\label{ineq:error-estimate-term2-2D}
    &2\mathcal{I}_{2} 
    \leq Ch^{2r+2}\int_0^T \| (w_1)_x\|_{r+1}^2 + \| (w_2)_y\|_{r+1}^2 + \| f_1(u)_x\|_{r+1}^2 + \| f_2(u)_y\|_{r+1}^2 + \| \psi\|_{r+1}^2\, \mathrm{d}t + k\sum_{n=0}^{m} \mathbf{1}_{\Omega_{\kappa,n}} \| \xi_u^{n}\|^2 \notag \\
    &
    + \!2\!\max_{0 \leq l \leq m} \left| \sum_{n=0}^{l} \mathbf{1}_{\Omega_{\kappa,n}}\!\int_{t_n}^{t_{n+1}} \left(\xi_u^{n},(\mathcal{P}^- -I)(g \, \mathrm{d}W_t) \right) \right| 
    + \!2 \! \max_{0 \leq l \leq m} \left| \sum_{n=0}^{l} \mathbf{1}_{\Omega_{\kappa,n}}\!\left( \int_{t_n}^{t_{n+1}} g(t) - g_h^n \, \mathrm{d}W_t, \xi_u^{n} \right)\right|.
\end{align}
}
By \eqref{ineq:lem-error-term3-2D} of Lemma \ref{lem:error2D-term3},
\begin{align}
\label{ineq:error-estimate-term3-2D}
    2\mathcal{I}_{3} 
    &\leq \epsilon_2 k\sum_{n=0}^{m} \mathbf{1}_{\Omega_{\kappa,n}} (\| \xi_{v_1}^{n+1}\|^2 + \| \xi_{v_2}^{n+1}\|^2) 
    + Ch^{2r+2} k \sum_{n=0}^{m} (\| v_1^{n+1}\|_{{r+1}}^2 + \| v_2^{n+1}\|_{{r+1}}^2)  \notag \\
    &+ C\kappa k \sum_{n=0}^{m} \mathbf{1}_{\Omega_{\kappa,n}} \| \xi_u^{n+1}\|^2 
    + Ch^{2r+2} \kappa k \sum_{n=0}^{m} \mathbf{1}_{\Omega_{\kappa,n}} \| u^{n+1}\|_{{r+1}}^2. 
\end{align}
The estimates for $ \mathcal{I}_4, \mathcal{I}_5, \mathcal{I}_6$ are given in \eqref{ineq:error-estimate-term4-2D}, \eqref{ineq:error-estimate-term5-2D} and \eqref{ineq:error-estimate-term6-2d}. 
By Lemma \ref{lem:superconvergence}, for any $\epsilon_9 >0$,
\begin{align*}
    \mathcal{I}_7 
    &\leq k\sum_{n=0}^m \mathbf{1}_{\Omega_{\kappa,n}} |H^+(\eta_{w_1}^{n+1},\eta_{w_2}^{n+1},\xi_u^{n+1})| + k\sum_{n=0}^m \mathbf{1}_{\Omega_{\kappa,n}} |L^-(\eta_u^{n+1},\xi_{w_1}^{n+1},\xi_{w_2}^{n+1})| \\
    &\leq Ckh^{r+1}\sum_{n=0}^m \mathbf{1}_{\Omega_{\kappa,n}} (\| w_1^{n+1}\|_{r+2} + \| w_2^{n+1}\|_{r+2})\| \xi_u^{n+1}\| + Ckh^{r+1} \sum_{n=0}^m \mathbf{1}_{\Omega_{\kappa,n}} \| u^{n+1}\|_{r+2}(\| \xi_{w_1}^{n+1}\| + \| \xi_{w_2}^{n+1}\|) \\
    &\leq Ch^{2r+2}k \sum_{n=0}^m (\| w_1^{n+1}\|_{r+2}^2 + \| w_2^{n+1}\|_{r+2}^2+) 
    + k\sum_{n=0}^m \mathbf{1}_{\Omega_{\kappa,n}} \| \xi_u^{n+1}\|^2 \\ 
    & \qquad + Ch^{2r+2} k\sum_{n=0}^m \| u^{n+1}\|_{r+2}^2 
    + \epsilon_9 k\sum_{n=0}^m \mathbf{1}_{\Omega_{\kappa,n}} (\| \xi_{w_1}^{n+1}\|^2 + \| \xi_{w_2}^{n+1}\|^2).
\end{align*}
By Lemma \ref{lem:w_h-v_h-2D}, we obtain
\begin{align}
\label{ineq:error-estimate-term7-2D}
    \mathcal{I}_7 
    &\leq 14\Lambda \epsilon_9 k\sum_{n=0}^m \mathbf{1}_{\Omega_{\kappa,n}} (\| \xi_{v_1}^{n+1}\|^2 + \| \xi_{v_2}^{n+1}\|^2) 
    + C\kappa k\sum_{n=0}^m \mathbf{1}_{\Omega_{\kappa,n}} \| \xi_{u}^{n+1}\|^2 
    + C\kappa h^{2r+2} k\sum_{n=0}^m \| u^{n+1}\|_{r+2}^2 \notag \\
    &+ Ch^{2r+2} k\sum_{n=0}^m (\| w_1^{n+1}\|_{r+2}^2 + \| w_2^{n+1}\|_{r+2}^2 + \| v_1^{n+1}\|_{r+1}^2 + \| v_2^{n+1}\|_{r+1}^2).
\end{align}
Substituting \eqref{ineq:error-estimate-term1-2D}, \eqref{ineq:error-estimate-term2-2D}, \eqref{ineq:error-estimate-term3-2D}, \eqref{ineq:error-estimate-term4-2D}, \eqref{ineq:error-estimate-term5-2D}, \eqref{ineq:error-estimate-term6-2d}, \eqref{ineq:error-estimate-term7-2D} into \eqref{ineq:error-main-2D}, we get
\begin{align*}
    \frac{1}{2} & \max_{0 \leq n \leq m} \mathbf{1}_{\Omega_{\kappa,n}}\| \xi_u^{n+1}\|^2 
    + \left(\frac{1}{2} - \epsilon_4 - \epsilon_5 - \frac{1}{\epsilon_8} \right) \sum_{n=0}^{m} \mathbf{1}_{\Omega_{\kappa,n}}\| \xi_u^{n+1} - \xi_u^{n}\|^2 \\
    & \qquad
    + \left(\alpha - \epsilon_2 - \epsilon_3  - \epsilon_6 - 28\Lambda \epsilon_9 \right) k \sum_{n=0}^{m}\mathbf{1}_{\Omega_{\kappa,n}} \left(\| \xi_{v_1}^{n+1}\|^2 + \| \xi_{v_2}^{n+1}\|^2 \right) \\
    & \quad  
    \leq 
    C\kappa k \sum_{n=0}^{m} \mathbf{1}_{\Omega_{\kappa,n}} \| \xi_u^{n+1}\|^2 + Ch^{2r+2}\sum_{n=0}^{m} \mathbf{1}_{\Omega_{\kappa,n}} \|u^{n+1}-u^n\|_{{r+1}}^2 \\
    & \quad 
    + \epsilon_7 \sum_{n=0}^{m} \mathbf{1}_{\Omega_{\kappa,n}} \int_{t_n}^{t_{n+1}} \| \psi_1(t) - \psi_{1,h}^n \|^2 \, \mathrm{d}t 
    + \epsilon_8 \sum_{n=0}^{m} \mathbf{1}_{\Omega_{\kappa,n}} \left \| \int_{t_n}^{t_{n+1}} g(t) - g_h^n \, \mathrm{d}W_t \right \|^2 \\
    & \quad 
    + C\kappa \sum_{n=0}^{m} \int_{t_n}^{t_{n+1}} \mathbf{1}_{\Omega_{\kappa,n}} \bigl(\|u(t)-u^{n}\|^2 + \|u(t)-u^{n+1}\|_{2}^2 \bigr) \, dt 
    + C\kappa k^2\sum_{n=0}^{m} \bigl(\|v_1^{n+1}\|_{1}^2 + \|v_2^{n+1}\|_{1}^2 \bigr)\\
    & \quad 
    + 2\max_{0 \leq l \leq m} \sum_{n=0}^{l} \mathbf{1}_{\Omega_{\kappa,n}}\left( \int_{t_n}^{t_{n+1}} g(t) - g_h^n \, \mathrm{d}W_t, \xi_u^{n} \right) 
    + 2\max_{0 \leq l \leq m} \sum_{n=0}^{l} \mathbf{1}_{\Omega_{\kappa,n}}\int_{t_n}^{t_{n+1}} \left(\xi_u^{n},(\mathcal{P}^- -I)(g \, \mathrm{d}W_t) \right)  \\
    & \quad 
    + Ch^{2r+2}\int_0^T \| (w_1)_x\|_{r+1}^2 + \| (w_2)_y\|_{r+1}^2 + \| f_1(u)_x\|_{r+1}^2 + \| f_2(u)_y\|_{r+1}^2 + \| \psi\|_{r+1}^2\, \mathrm{d}t \\
    & \quad 
    + Ch^{2r+2} k\sum_{n=0}^m \bigl(\| w_1^{n+1}\|_{r+2}^2 + \| w_2^{n+1}\|_{r+2}^2 + \| v_1^{n+1}\|_{r+1}^2 + \| v_2^{n+1}\|_{r+1}^2 + \kappa \| u^{n+1}\|_{r+2}^2 \bigr).
\end{align*}
Substituting \eqref{ineq:error2D-psi} in Lemma \ref{lem:error2D-term1-2}, and taking the $q$-th power of both sides, then taking expectation, and applying the discrete H\"older inequality, Corollary \ref{coro:Holder-High-Moments}, Lemma \ref{lem:Holder-High-Moments} and \eqref{ineq:convex-3}, we get 
\begin{align}
\label{ineq:error-main-expect-2d}
    \frac{1}{2^q}&\mathbb{E}\left[\max_{0 \leq n \leq m} \mathbf{1}_{\Omega_{\kappa,n}}\| \xi_u^{n+1}\|^{2q} \right] 
    + \left(\frac{1}{2}-\epsilon_4-\epsilon_5-\frac{1}{\epsilon_8} \right)^q \mathbb{E}\left[ \left(\sum_{n=0}^{m} \mathbf{1}_{\Omega_{\kappa,n}}\| \xi_u^{n+1} - \xi_u^{n}\|^2 \right)^q \right] \notag \\
    & \quad 
    + \left(\alpha-\epsilon_2-\epsilon_3-\epsilon_6-28\Lambda \epsilon_9-(2+\epsilon_0) \epsilon_7 B_1^2 \right)^q \mathbb{E}\left[ \left(k\sum_{n=0}^{m}\mathbf{1}_{\Omega_{\kappa,n}} (\| \xi_{v_1}^{n+1}\|^2 + \| \xi_{v_2}^{n+1}\|^2) \right)^q \right] \notag \\
    &\leq C( \kappa h^{2r+2})^q + C\kappa^q \mathbb{E}\left[ \left(\sum_{n=0}^{m} \int_{t_n}^{t_{n+1}} \|u(t)-u^{n+1}\|_{2}^2 + \|u-u^n\|_{1}^2 \, \mathrm{d}t \right)^q \right] \notag \\
    & \quad 
    + C\kappa^q k\sum_{n=0}^{m} \mathbb{E}\left[\max_{0 \leq l \leq n}\mathbf{1}_{\Omega_{\kappa,l}} \|\xi_u^{l+1} \|^{2q} \right] 
    + C(\kappa k^{2})^q \mathbb{E}\left[ \left(\sum_{n=0}^{m}(1+\| u^n\|_{1}^2 + \| v_1^{n+1}\|_{1}^2 + \| v_2^{n+1}\|_{1}^2) \right)^q \right] \notag \\
    & \quad 
    + C_{\#}(\epsilon_{10})2^q\mathcal{T}_1 + \epsilon_8^q (2^{q-1}+\epsilon_{10})\mathcal{T}_2 + (2^{q-1}+\epsilon_{10}) 2^q\mathcal{T}_3 \notag \\
    &\leq C(\kappa k)^q + C(\kappa h^{2r+2})^q 
    + C\kappa^q k\sum_{n=0}^{m} \mathbb{E}\left[\max_{0 \leq l \leq n}\mathbf{1}_{\Omega_{\kappa,l}} \|\xi_u^{l+1} \|^{2q} \right] \notag \\
    & \quad 
    + C_{\#}(\epsilon_{10})2^q\mathcal{T}_1 + \epsilon_8^q (2^{q-1}+\epsilon_{10})\mathcal{T}_2 + (2^{q-1}+\epsilon_{10}) 2^q\mathcal{T}_3, 
\end{align}
where $C_{\#}(\epsilon_{10}) = (1+\epsilon_{10}) \left(1-2 \left(2^{q-1}+\epsilon_{10} \right)^{\frac{1}{1-q}} \right)^{1-q}$ is derived by \eqref{ineq:convex-1}, \eqref{ineq:convex-3}, and 
\begin{align*}
    \mathcal{T}_1 &:= \mathbb{E}\left[\max_{0 \leq l \leq m} \left|\sum_{n=0}^{l} \mathbf{1}_{\Omega_{\kappa,n}}\int_{t_n}^{t_{n+1}} \left(\xi_u^{n},(\mathcal{P}^- -I)(g \, \mathrm{d}W_t) \right) \right|^q \right], \\
    \mathcal{T}_2 &:= \mathbb{E}\left[ \left(\sum_{n=0}^{m} \mathbf{1}_{\Omega_{\kappa,n}} \left \| \int_{t_n}^{t_{n+1}} g(t) - g_h^n \, \mathrm{d}W_t \right \|^2 \right)^q \right], \\
    \mathcal{T}_3 &:= \mathbb{E}\left[\max_{0 \leq l \leq m} \left|\sum_{n=0}^{l} \mathbf{1}_{\Omega_{\kappa,n}}\left( \int_{t_n}^{t_{n+1}} g(t) - g_h^n \, \mathrm{d}W_t, \xi_u^{n} \right) \right|^q \right].
\end{align*}

\noindent {\it Step 2.} Next, we estimate the three stochastic terms $ \mathcal{T}_1,  \mathcal{T}_2,  \mathcal{T}_3$. By the BDG inequality and Lemma \ref{lem:subset-property}, we have 
\begin{align}
\label{ineq:error-estimate-t1-2D}
    \mathcal{T}_1 
    &\leq \mathbb{E}\left[\max_{0 \leq l \leq m} \left|\sum_{n=0}^{l} \mathbf{1}_{\Omega_{\kappa,n-1}}\int_{t_n}^{t_{n+1}} \left(\xi_u^{n},(\mathcal{P}^- -I)(g \, \mathrm{d}W_t) \right) \right|^q \right] \notag \\
    &\leq C_b \mathbb{E}\left[ \left( \sum_{n=0}^{m} \int_{t_n}^{t_{n+1}} \mathbf{1}_{\Omega_{\kappa,n-1}} \|\xi_u^{n} \|^2 \sum_{j=1}^{\infty} \gamma_j \|(\mathcal P^- - I)(g e_j)\|^2\, \mathrm{d}t \right)^\frac{q}{2} \right] \notag \\
    &\leq \epsilon_{11} \mathbb{E}\left[ \max_{0 \leq n \leq m} \mathbf{1}_{\Omega_{\kappa,n-1}} \|\xi_u^{n} \|^{2q} \right] 
    + \frac{C_b^2}{4\epsilon_{11}} \mathbb{E}\left[ \left( \sum_{n=0}^{m} \int_{t_n}^{t_{n+1}} \mathbf{1}_{\Omega_{\kappa,n-1}} \sum_{j=1}^{\infty} \gamma_j \|(\mathcal P^- - I)(g e_j)\|^2 \, \mathrm{d}t \right)^q \right] \notag \\
    &\leq \epsilon_{11} \mathbb{E}\left[ \max_{0 \leq n \leq m} \mathbf{1}_{\Omega_{\kappa,n}} \|\xi_u^{n+1} \|^{2q} \right] + C(h^{2r+2})^q,
\end{align}
where, in the last inequality, the initial term generated by the index shift is absorbed into $C(h^{2r+2})^q$.
By the same argument as in \eqref{ineq:stability-I1-2D}, utilizing the nesting property $\Omega_{\kappa,n} \subset \Omega_{\kappa,n-1}$, and applying the continuous and discrete BDG inequalities yield
\begin{align*}
    \mathcal{T}_{2} 
    &\leq C_b'C_b K^q \mathbb{E}\left[ \left( \sum_{n=0}^{m} \int_{t_n}^{t_{n+1}} \mathbf{1}_{\Omega_{\kappa,n-1}} \|g(t) - g_h^n \|^2 \, \mathrm{d}t \right)^{q} \right]. 
\end{align*}
By \eqref{ineq:error2D-g} of lemma \ref{lem:error2D-term1-2}, and applying \eqref{ineq:convex-1} with $\epsilon = \epsilon_1$, we get
\begin{align}
\label{ineq:error-estimate-t2-2D}
    \mathcal{T}_{2} &\leq C(k^q + (h^{2r+2})^q) + C k\sum_{n=0}^{m} \mathbb{E}\left[ \max_{0 \leq l \leq n}\mathbf{1}_{\Omega_{\kappa,l}}\| \xi_u^{l+1}\|^{2q} \right] \notag \\
    &\quad 
    + C_b'C_b K^q D_2^{2q}(2+\epsilon_{1})^q (1+\epsilon_{1}) \mathbb{E}\left[ \left(k\sum_{n=0}^{m}\mathbf{1}_{\Omega_{\kappa,n}} (\| \xi_{v_1}^{n+1}\|^2 + \| \xi_{v_2}^{n+1}\|^2) \right)^q \right].
\end{align}
Lastly, by the BDG inequality, Lemma \ref{lem:subset-property} and \eqref{ineq:convex-1}, we have
\begin{align}
\label{ineq:error-estimate-t3-2D}
    \mathcal{T}_{3} 
    &\leq \mathbb{E}\left[\max_{0 \leq l \leq m} \left|\sum_{n=0}^{l} \mathbf{1}_{\Omega_{\kappa,n-1}}\left( \int_{t_n}^{t_{n+1}} g(t) - g_h^n \, \mathrm{d}W_t, \xi_u^{n} \right) \right|^q \right]  \notag \\ 
    &\leq C_b \mathbb{E}\left[ \left( K\sum_{n=0}^{m} \int_{t_n}^{t_{n+1}} \mathbf{1}_{\Omega_{\kappa,n-1}}\|g(t) - g_h^n \|^2 \| \xi_u^{n}\|^2 \, \mathrm{d}t \right)^\frac{q}{2} \right] \notag \\
    &\leq C_b^2 K^q \epsilon_{12}\mathbb{E}\left[ \left(\sum_{n=0}^{m} \int_{t_n}^{t_{n+1}} \mathbf{1}_{\Omega_{\kappa,n-1}}\|g(t) - g_h^n\|^2 \, \mathrm{d}t  \right)^q \right] 
    + \frac{1}{4\epsilon_{12}}\mathbb{E}\left[ \max_{0 \leq n \leq m} \mathbf{1}_{\Omega_{\kappa,n-1}}\| \xi_u^{n}\|^{2q} \right] \notag \\
    & \leq \frac{1}{4\epsilon_{12}}\mathbb{E}\left[ \max_{0 \leq n \leq m} \mathbf{1}_{\Omega_{\kappa,n}}\| \xi_u^{n+1}\|^{2q} \right] 
    + Ck\sum_{n=0}^{m} \mathbb{E}\left[ \max_{0 \leq l \leq n}\mathbf{1}_{\Omega_{\kappa,l}}\| \xi_u^{l+1}\|^{2q} \right] 
    + C(k^q + (h^{2r+2})^q) \notag \\
    &\quad + C_b^2 K^q \epsilon_{12} D_2^{2q}(2+\epsilon_{1})^q (1+\epsilon_{1}) \mathbb{E}\left[ \left(k\sum_{n=0}^{m}\mathbf{1}_{\Omega_{\kappa,n}} (\| \xi_{v_1}^{n+1}\|^2 + \| \xi_{v_2}^{n+1}\|^2) \right)^q \right] .
\end{align}
Plugging the estimates \eqref{ineq:error-estimate-t1-2D}, \eqref{ineq:error-estimate-t2-2D}, \eqref{ineq:error-estimate-t3-2D} into \eqref{ineq:error-main-expect-2d}, we get
\begin{align}
\label{ineq:high-moment-error-strong-final-2d}
    \mathcal{C}_1 &\mathbb{E}\left[\max_{0 \leq n \leq m} \mathbf{1}_{\Omega_{\kappa,n}}\| \xi_u^{n+1}\|^{2q} \right]  
    + \mathcal{C}_2 \mathbb{E}\left[ \left(\sum_{n=0}^{m} \mathbf{1}_{\Omega_{\kappa,n}}\| \xi_u^{n+1} - \xi_u^{n}\|^2 \right)^q \right] 
    + \mathcal{C}_3 \mathbb{E}\left[ \left(k\sum_{n=0}^{m}\mathbf{1}_{\Omega_{\kappa,n}} (\| \xi_{v_1}^{n+1}\|^2 + \| \xi_{v_2}^{n+1}\|^2) \right)^q \right] \notag \\
    &\leq C(\kappa h^{2r+2})^q + C(\kappa k)^q + C\kappa^q k\sum_{n=0}^{m} \mathbb{E}\left[ \max_{0 \leq l \leq n}\mathbf{1}_{\Omega_{\kappa,l}}\| \xi_u^{l+1}\|^{2q} \right],
\end{align}
where the three coefficients of the left-hand side terms are given by 
\begin{align*}
    \mathcal{C}_1 &= \frac{1}{2^q}-C_{\#}(\epsilon_{10})2^q \epsilon_{11}-\frac{2^q(2^{q-1}+\epsilon_{10})}{4\epsilon_{12}}, \quad  \mathcal{C}_2 = \left(\frac{1}{2}-\epsilon_4- \epsilon_5-\frac{1}{\epsilon_8} \right)^q,  \\
    \mathcal{C}_3 &= \left(\alpha-\epsilon_2-\epsilon_3-\epsilon_6-28\Lambda \epsilon_9-(2+\epsilon_0) \epsilon_7 B_1^2 \right)^q - \epsilon_8^q (2^{q-1}+\epsilon_{10}) C_b'C_b K^q D_2^{2q}(1+\epsilon_1)(2+\epsilon_{1})^{q} \\
    &- (2^{q-1}+\epsilon_{10}) 2^qC_b^2 K^q \epsilon_{12} D_2^{2q}(1+\epsilon_1)(2+\epsilon_{1})^{q}. 
\end{align*}
By the same argument as in the proof of Theorem \ref{thm:stability-estimate-2D} (see Appendix \ref{appendix-eps-choice}) for choosing the $\epsilon_i$'s, we can find fixed constants $\{ \epsilon_i >0: i=0,1,...,12\}$, such that $\mathcal{C}_1, \mathcal{C}_2, \mathcal{C}_3>0$ if and only if $\alpha > \left(2^{3q-1}C_b' + 2^{6q-4}C_b\right)^{1/q}C_b^{1/q}D_2^2K$. Applying the discrete Gr\"onwall's inequality, and rearranging the coefficients, we get
\begin{align*}
    \mathbb{E}\left[\max_{0 \leq n \leq m} \mathbf{1}_{\Omega_{\kappa,n}}\| \xi_u^{n+1}\|^{2q} \right] 
    &+ \mathbb{E}\left[ \left(\sum_{n=0}^{m} \mathbf{1}_{\Omega_{\kappa,n}}\| \xi_u^{n+1} - \xi_u^{n}\|^2 \right)^q \right] + \mathbb{E}\left[ \left(k\sum_{n=0}^{m}\mathbf{1}_{\Omega_{\kappa,n}} (\| \xi_{v_1}^{n+1}\|^2 + \| \xi_{v_2}^{n+1}\|^2) \right)^q \right] \\
    &\leq Ce^{C\kappa^q}\left(k^q + h^{q(2r+2)}  \right) 
    \leq Ch^{-C_q\beta} \left(k^{\frac{1}{2}} + h^{r+1} \right)^{2q},
\end{align*}
where $C_q$ is a fixed constant depending on $q$. 
In the last inequality, we used the choice $\kappa=\ln\bigl(\ln(h^{-\beta})\bigr)$ with arbitrary $\beta>0$, and the fact that logarithmic growth is dominated by any negative power of $h$ as $h\to0$.
Taking the $2q$-th root of the above inequality with $m = N_T-1$ gives
\begin{align*}
    \mathbb{E}&\left[\max_{0 \leq n \leq N_T-1} \mathbf{1}_{\Omega_{\kappa,n}}\| \xi_u^{n+1}\|^{2q} \right]^{\frac{1}{2q}} 
    + \mathbb{E}\left[ \left(\sum_{n=0}^{N_T-1} \mathbf{1}_{\Omega_{\kappa,n}}\| \xi_u^{n+1} - \xi_u^{n}\|^2 \right)^q \right]^{\frac{1}{2q}}  \\ 
    &+ \mathbb{E}\left[ \left(k\sum_{n=0}^{N_T-1}\mathbf{1}_{\Omega_{\kappa,n}} (\| \xi_{v_1}^{n+1}\|^2 + \| \xi_{v_2}^{n+1}\|^2) \right)^q \right]^{\frac{1}{2q}} \leq C h^{-C_q^*\beta} \left(k^{\frac{1}{2}} + h^{r+1} \right),
\end{align*}
where $C_q^* = C_q/(2q)$. 
Finally, \eqref{ineq:high-moment-error-estimate-2D} follows from the triangle inequality and the projection estimate \eqref{ineq:proj-property-1-2d}.

\noindent {\it Step 3.} Lastly, we prove that $\mathbb{P}(\Omega_{\kappa,N_T})\to1$ as $h\to0$. Take $q\geq p_0:=\max\{p,\lambda-1\}$ and assume $k\leq Ch^{2+2C_q^*\beta}$. By the definition of $\Omega_{\kappa,N_T}$,
\begin{align*}
    \mathbb{P}(\Omega_{\kappa,N_T}^c)
    &\leq
    \mathbb{P}\left( \sup_{0\leq t\leq T} \left(1+\|u(t)\|_{W^{2,\infty}}^2 +\|u(t)\|_\infty^{2p_0}\right) > \frac{\kappa}{2} \right) 
    + \mathbb{P}\left( \max_{0\leq n\leq N_T} \mathbf{1}_{\Omega_{\kappa,n-1}} \left(1+\|u_h^n\|_\infty^{2p_0}\right) > \frac{\kappa}{2} \right).
\end{align*}
By Markov's inequality and assumption $(\mathcal{A}_q)$, the first probability is bounded by $C/\kappa$.
For the second probability, by $u_h^n=u^n-\xi_u^n+\eta_u^n$, the triangle inequality, and the projection estimate, 
\begin{align*}
    &\mathbb{P}\left( \max_{0\leq n\leq N_T} \mathbf{1}_{\Omega_{\kappa,n-1}} \left(1+\|u_h^n\|_\infty^{2p_0}\right) > \frac{\kappa}{2} \right) \\
    &\qquad
    \leq \mathbb{P}\left( \sup_{0\leq t\leq T}\|u(t)\|_\infty^{2p_0} > c\kappa \right) 
    + \mathbb{P}\left( \max_{0\leq n\leq N_T} \mathbf{1}_{\Omega_{\kappa,n-1}} \|\xi_u^n\|_\infty^{2p_0} > c\kappa \right) 
    + \mathbb{P}\left( \max_{0\leq n\leq N_T} \|\eta_u^n\|_\infty^{2p_0} > c\kappa \right).
\end{align*}
The first term is bounded by $C/\kappa$. The $L^\infty$ projection estimate and $(\mathcal A_q)$ bound the third term by $Ch^{2rp_0}/\kappa$. For the second term, using $q\geq p_0$, the inverse inequality, and the result from Step 2, we have 
\begin{align*}
    &\mathbb{P}\left( \max_{0\leq n\leq N_T} \mathbf{1}_{\Omega_{\kappa,n-1}} \|\xi_u^n\|_\infty^{2p_0} > c\kappa \right)\leq
    \frac{C}{\kappa^{q/p_0}} \mathbb{E}\left[ \max_{0\leq n\leq N_T} \mathbf{1}_{\Omega_{\kappa,n-1}} \|\xi_u^n\|_\infty^{2q} \right] \\
    &\quad\leq
    \frac{C}{\kappa^{q/p_0}} h^{-2q} \mathbb{E}\left[ \|\xi_u^0\|^{2q} 
    + \max_{0\leq n\leq N_T-1} \mathbf{1}_{\Omega_{\kappa,n}} \|\xi_u^{n+1}\|^{2q} \right] 
    \leq
    \frac{C}{\kappa^{q/p_0}} h^{-2q-2qC_q^*\beta} \left(k^{1/2}+h^{r+1}\right)^{2q} + \frac{C}{\kappa^{q/p_0}}h^{2qr}.
\end{align*}
Under $k\leq Ch^{2+2C_q^*\beta}$, the right-hand side is bounded by $C/\kappa^{q/p_0}$. Therefore, 
\begin{align*}
     \mathbb{P}(\Omega_{\kappa,N_T}^c) \leq \frac{C}{\kappa} + \frac{C}{\kappa^{q/p_0}} \leq \frac{C}{\ln\bigl(\ln(h^{-\beta})\bigr)}, 
\end{align*}
which tends to zero as $h\to0$. Hence, provided $k\leq Ch^{2+2C_q^*\beta}$, we have
\[
    \mathbb{P}(\Omega_{\kappa,N_T}) \geq 1 - \frac{C}{\ln\bigl(\ln(h^{-\beta})\bigr)} \to1     \qquad\text{as } h\to0,
\]
which means that $\Omega_{\kappa,N_T}$ converges to $\Omega$ in probability with rate at least $\mathcal{O}\bigl(1/\ln\bigl(\ln(h^{-\beta})\bigr) \bigl)$.
As a consequence, the localized estimate implies convergence in probability on the full sample space. Let
\[ E_{h,k} = \max_{0 \leq n \leq N_T} \| e_u^{n}\|^{2q} + \left(\sum_{n=0}^{N_T-1} \| e_u^{n+1} - e_u^{n}\|^2 \right)^q + \left(k\sum_{n=0}^{N_T} (\|e_{v_1}^{n}\|^2 + \|e_{v_2}^{n}\|^2) \right)^q. \]
For any fixed $\gamma > 0$, by Markov inequality
\begin{align*}
     \mathbb{P}(E_{h,k} > k^{q-\gamma} + h^{2q(r+1)-\gamma}) &\leq \mathbb{P}(\Omega_{\kappa,N_T}^c) + 
        \mathbb{P}(\mathbf{1}_{\Omega_{\kappa,N_T}} \cdot ( E_{h,k} > k^{q-\gamma} + h^{2q(r+1)-\gamma})) \\
     &\leq \mathbb{P}(\Omega_{\kappa,N_T}^c) + C(k^{\gamma} + h^{\gamma})/h^{C_q \beta} \to 0. 
\end{align*}
Thus the numerical error converges in probability with temporal and spatial orders arbitrarily close to $1/2$ and $r+1$.
\end{proof}

As a result of Lemma \ref{lem:Kolmogorov} and Theorem \ref{thm:high-moment-error-estimate-2D}, we establish the pathwise error estimate on subsets. 
\begin{corollary}[Pathwise Error Estimate]
\label{coro:pathwise-error-2d}
Let $\{(h_\ell,k_\ell)\}_{\ell\geq N_0}$ be a refinement sequence satisfying $k_\ell+h_\ell^{2r+2}\leq C/\ell$.
Let $q > 1$ and $0 < \zeta < {1}/{2} - {1}/(2q)$, there exists a random variable $Z(\omega,\zeta)$ satisfying $\mathbb{E}[|Z|^{2q}] < \infty$ such that the following holds almost surely:
\begin{align*}
    \max_{0 \leq n \leq N_T} \mathbf{1}_{\Omega_{\kappa,n-1}}\| e_u^{n}\| &+ \left( \sum_{n=0}^{N_T-1} \mathbf{1}_{\Omega_{\kappa,n}}\| e_u^{n+1} - e_u^{n}\|^2\right)^{\frac{1}{2}} + \left(k\sum_{n=0}^{N_T}\mathbf{1}_{\Omega_{\kappa,n-1}} (\|e_{v_1}^{n}\|^2 + \|e_{v_2}^{n}\|^2) \right)^\frac{1}{2} \\
    &\leq Z h_\ell^{-C_q^*\beta}(k_\ell^{\zeta} + h_\ell^{(2r+2)\zeta}),
\end{align*}
for every $\ell\geq N_0$, where $\beta>0$ is an arbitrarily small constant.     
\end{corollary}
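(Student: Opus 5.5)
The plan is to apply the discrete Kolmogorov Lemma \ref{lem:Kolmogorov} to the localized error functional. The refinement index $\ell$ plays the role of $N$.

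For each $\ell\geq N_0$ define the random quantity
\[
X_\ell := \max_{0 \leq n \leq N_T} \mathbf{1}_{\Omega_{\kappa,n-1}}\| e_u^{n}\| + \Bigl( \sum_{n=0}^{N_T-1} \mathbf{1}_{\Omega_{\kappa,n}}\| e_u^{n+1} - e_u^{n}\|^2\Bigr)^{1/2} + \Bigl(k_\ell\sum_{n=0}^{N_T}\mathbf{1}_{\Omega_{\kappa,n-1}} (\|e_{v_1}^{n}\|^2 + \|e_{v_2}^{n}\|^2) \Bigr)^{1/2},
\]
computed on the mesh $(h_\ell,k_\ell)$. This is a nonnegative scalar random variable, to be compared with the zero process, so the Banach space is $\mathbb{R}$.

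\textbf{Moment bound.} Theorem \ref{thm:high-moment-error-estimate-2D} bounds each of the three pieces in $L^{2q}(\Omega)$. Raising to the $2q$-th power and using the elementary convexity inequality \eqref{ineq:convex-3} gives
\[
\mathbb{E}[X_\ell^{2q}] \leq C h_\ell^{-2qC_q^*\beta}\bigl(k_\ell^{1/2}+h_\ell^{r+1}\bigr)^{2q} \leq C h_\ell^{-2qC_q^*\beta}\bigl(k_\ell+h_\ell^{2r+2}\bigr)^{q} \leq C h_\ell^{-2qC_q^*\beta}\,\ell^{-q}.
\]
The last step uses the refinement hypothesis $k_\ell+h_\ell^{2r+2}\leq C/\ell$.

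\textbf{Normalization.} To remove the $h$-dependent prefactor, I would set $Y_\ell := h_\ell^{C_q^*\beta}X_\ell$, so that $\mathbb{E}[Y_\ell^{2q}]\leq C\ell^{-q} = C(1/\ell)^{1+(q-1)}$. This is exactly the hypothesis of Lemma \ref{lem:Kolmogorov} with $\nu=2q$ and $\beta_K=q-1>0$, which is why $q>1$ is assumed. One technical point needs care: that lemma is phrased with a maximum over time indices of a process indexed by $N$. Here I would use it in the form its proof actually yields, namely a Borel--Cantelli, or equivalently a summation-over-$\ell$, argument applied to the sequence $Y_\ell$. I would define
\[
Z := \sup_{\ell\geq N_0} \ell^{\zeta} Y_\ell .
\]
Then $\mathbb{E}[Z^{2q}]\leq \sum_\ell \ell^{2q\zeta}\mathbb{E}[Y_\ell^{2q}]\leq C\sum_\ell \ell^{2q\zeta-q}$. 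This sum is finite precisely when $2q\zeta-q<-1$, i.e.\ $\zeta<1/2-1/(2q)$, which matches the stated range of $\zeta$. Consequently $Y_\ell\leq Z\ell^{-\zeta}$ almost surely for every $\ell$.

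\textbf{Conversion back to mesh parameters.} This is the step I expect to be the main (though mild) obstacle. The Kolmogorov argument naturally produces $\ell^{-\zeta}$, whereas the statement is written as $k_\ell^{\zeta}+h_\ell^{(2r+2)\zeta}$. The hypothesis only gives $k_\ell+h_\ell^{2r+2}\lesssim 1/\ell$, so passing from $\ell^{-\zeta}$ to $(k_\ell+h_\ell^{2r+2})^\zeta$ requires the reverse comparison. I would handle this by running the summation with weights $(k_\ell+h_\ell^{2r+2})^{-\zeta}$ in place of $\ell^{\zeta}$, that is, by defining
\[
Z:=\sup_{\ell}(k_\ell+h_\ell^{2r+2})^{-\zeta}Y_\ell .
\]
Then
\[
\mathbb{E}[Z^{2q}]\leq \sum_\ell (k_\ell+h_\ell^{2r+2})^{q-2q\zeta},
\]
which is finite under the same bound $k_\ell+h_\ell^{2r+2}\leq C/\ell$ and the same condition $q-2q\zeta>1$. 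This gives
\[
X_\ell \leq Z h_\ell^{-C_q^*\beta}\bigl(k_\ell+h_\ell^{2r+2}\bigr)^{\zeta}\leq Z h_\ell^{-C_q^*\beta}\bigl(k_\ell^{\zeta}+h_\ell^{(2r+2)\zeta}\bigr),
\]
where the last inequality is subadditivity of $x\mapsto x^\zeta$ for $\zeta<1$. This holds almost surely for all $\ell\geq N_0$, as claimed, with $\beta$ the arbitrarily small parameter inherited from Theorem \ref{thm:high-moment-error-estimate-2D}.
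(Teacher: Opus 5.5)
Your argument is correct. The paper obtains the corollary in one line, by citing Lemma \ref{lem:Kolmogorov} together with Theorem \ref{thm:high-moment-error-estimate-2D}. You instead rerun the summation argument behind that lemma directly on the scalar variables $Y_\ell=h_\ell^{C_q^*\beta}X_\ell$, with weights $(k_\ell+h_\ell^{2r+2})^{-\zeta}$.

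This buys two things that a literal black-box citation does not provide.

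\begin{itemize}
\item The lemma requires a constant independent of $N$. The factor $h_\ell^{-2qC_q^*\beta}$ is not controlled by any power of $\ell$ under the one-sided refinement hypothesis, since $h_\ell$ is only bounded from above. It therefore has to be divided out, as you do.
\item Applying the lemma with $N=\ell$ yields only $Y_\ell\le Z\ell^{-\zeta}$. Converting this into $k_\ell^{\zeta}+h_\ell^{(2r+2)\zeta}$ would need the reverse comparison $1/\ell\le C(k_\ell+h_\ell^{2r+2})$, which $k_\ell+h_\ell^{2r+2}\le C/\ell$ does not give. Your weighted supremum sidesteps this. The resulting summability condition $q-2q\zeta>1$ is exactly the stated range $\zeta<1/2-1/(2q)$.
\end{itemize}

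The paper's route is shorter and reuses an existing lemma. However, it implicitly treats $k_\ell+h_\ell^{2r+2}$ as comparable to $1/\ell$.

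One small point you should add: Theorem \ref{thm:high-moment-error-estimate-2D} holds only for sufficiently small $h$, and $\kappa=\ln(\ln(h^{-\beta}))$ must be positive. You should therefore choose $N_0$ large enough that the moment bound holds, with a uniform constant, for every $\ell\ge N_0$ entering your supremum.
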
   

Two special cases of Theorem \ref{thm:high-moment-error-estimate-2D} are summarized in the following remarks. 
\begin{remark}[Simplified $g$]
\label{rem:simplified-g}    
    Suppose $g(\omega,x,t,u,v_1,v_2)$ does not depend on the last two variables, that is, $D_2 = D_4 = 0$ in hypothesis {\rm(v)}, then the stability and error estimates hold for any $\alpha > 0$. 
\end{remark}


\begin{remark}[No nonlinear convection term or polynomial source term]
\label{rem:error-no-nonlinear-term-2d}
    Suppose \eqref{spde:conv-diff-2d} has no nonlinear convection term and no polynomial source term, that is, $f=0$ and $\psi = \psi_1$. Then we can show the high-moment error estimate \eqref{ineq:high-moment-error-estimate-2D} without introducing the subsets \eqref{def:error-subset-2d}. More precisely, for $q \geq 1$,
    \begin{align*}
        \mathbb{E}\left[\max_{0 \leq n \leq N_T} \| e_u^{n}\|^{2q} \right]^{\frac{1}{2q}} +       \mathbb{E}\left[ \left(\sum_{n=0}^{N_T-1} \| e_u^{n+1} - e_u^{n}\|^2 \right)^q \right]^{\frac{1}{2q}} + \mathbb{E}\left[ \left(k\sum_{n=0}^{N_T} (\|e_{v_1}^{n}\|^2 + \|e_{v_2}^{n}\|^2) \right)^q \right]^{\frac{1}{2q}} 
        \leq C(k^{\frac{1}{2}} + h^{r+1}),
    \end{align*}
    with the same lower bound on $\alpha$ as in Theorem \ref{thm:high-moment-error-estimate-2D}. 
    For the second-moment case $q=1$, the bound is relaxed to $\alpha >KD_2^2$. We refer to \cite{chen_semi-linear_2d} for the detailed
two-dimensional semilinear analysis.
\end{remark}

\begin{remark}[{Comparison with one-dimensional error estimate}]
\label{rmk:compare-to-1d-error-estimate}
    The error estimate for the one-dimensional version of \eqref{spde:conv-diff} can be proved with relaxed hypotheses. We may assume that the leading coefficients $\{a_{ij}(\cdot,x,y,t,u)\}$ grow at most linearly in $u$, instead of being uniformly bounded in {\rm(ii)}. In two-dimensional case, the uniform boundedness condition is used to establish the relation between $\| \xi_{v}^n\|$ and $\| \xi_{w}^n\|$ in Lemma \ref{lem:w_h-v_h-2D}, which is proposed to estimate the non-vanishing term $\mathcal{I}_7$ in Theorem \ref{thm:high-moment-error-estimate-2D} due to the properties of the two-dimensional projection errors. 
\end{remark}

We close with a comment on the additional technical difficulties in proving Theorem \ref{thm:high-moment-error-estimate-2D} for a fully implicit LDG-Euler scheme.  
\begin{remark}[Error Estimate for an Implicit Treatment of the Nonlinear Terms]
    The current proof does not directly extend to the fully implicit treatment of the nonlinear convection and polynomial source terms.
    The subsets $\Omega_{\kappa,n}$ in \eqref{def:error-subset-2d} control $\|u_h^n\|_\infty^{2p_0}$, whereas a fully implicit discretization would produce terms involving $\|u_h^{n+1}\|_\infty^{2p_0}$. 
    Since the convergence in probability of these subsets is proved using the conclusion of Theorem \ref{thm:high-moment-error-estimate-2D}, this creates an issue if the same subsets are used. 
    One possible way to avoid this difficulty is to introduce a different family of subsets based on a stability estimate on the full sample space. We leave this issue for future work.
\end{remark}  

\end{section}

\begin{section}{Numerical Tests}
\label{sec:numerical-test}
In this section, we apply the proposed numerical method to some classic stochastic models. We use Monte Carlo method to approximate moments of the numerical error. Denoting by $M$ the number of samples, we have 
\begin{gather*}
     e_2^2 := \frac{1}{M} \sum_{i=1}^M \max_{0 \leq n \leq N_T} \| u_h^n(\omega_i, \cdot) - u(\omega_i, \cdot, t_n) \|^2 
     \approx \mathbb{E}\left[\max_{0 \leq n \leq N_T} \| u(\omega,\cdot,t_n) - u_h^n(\omega,\cdot)\|^2 \right].  
\end{gather*}
We thus use $e_2$ to approximate the root mean-square error in $L^2(\Omega;L^\infty(0,T;L^2(\mathcal{D})))$. To investigate the convergence of higher moments, we compute 
\[
     e_4 := \left( \frac{1}{M}\sum_{i=1}^M \max_{0\leq n\leq N_T} \|u_h^n(\omega_i,\cdot)-u(\omega_i,\cdot,t_n)\|^4 \right)^{1/4},
     \quad
     e_\infty := \max_{1\leq i\leq M} \max_{0\leq n\leq N_T} \|u_h^n(\omega_i,\cdot)-u(\omega_i,\cdot,t_n)\|,
\]
which approximates the error in $L^4(\Omega;L^\infty(0,T;L^2(\mathcal{D})))$ and $L(\Omega,L^{\infty}([0,T] \times L^2(\mathcal{D})))$, respectively. 

In all numerical simulations, $r$ denotes the degree of the piecewise polynomial space $\mathbb{V}_h$ and $T$ is the terminal time. We apply periodic boundary conditions, with $N$ being the number of spatial cells on each side of the rectangular domain, and $N_T$ being the number of time steps. We take $N_T \sim N$ to verify the temporal convergence rate, and $N_T \sim N^4$, $N_T \sim N^6$ to compute the spatial rate for $r = 1,2$ respectively. Unless otherwise specified, $W_t$ is the standard Brownian motion.

\begin{subsection}{One-dimensional Stochastic Viscous Burgers' Equation}
The first example that we consider is the stochastic viscous Burgers' equation 
\begin{gather}
    \begin{cases}
    \label{spde:sburgers-1d}
        \mathrm{d}u = \left( \dfrac{a^2}{2}u_{xx} - \dfrac{1}{2}(u^2)_x \right) \,\mathrm{d}t + (au_x +b) \mathrm{d}W_t, \quad & (\omega,x,t)\in \Omega \times [0,2\pi]\times (0,T], \\
        u(\omega,x,0) = \sin(x),  & (\omega,x) \in \Omega \times [0,2\pi]. 
    \end{cases}
\end{gather}
The exact solution of \eqref{spde:sburgers-1d} is given by
\[ u(\omega,x,t) = v\left(x + aW_t - b\int_0^t W_s\, \mathrm{d}s,t \right) + bW_t, \]
where $v$ is the solution of the deterministic inviscid Burgers' equation
\begin{gather}
    \begin{cases}
    \label{pde:burgers-1d}
        v_t + \frac{1}{2} (v^2)_{x} =0,  \quad & (x,t)\in [0,2\pi]\times (0,T], \\
        v(x,0) = \sin(x),  &  x \in   [0,2\pi].
    \end{cases}
\end{gather}
Note that the exact solution of \eqref{pde:burgers-1d} forms a shock at $T_0 = 1$, so the exact solution of \eqref{spde:sburgers-1d} also develops a shock at $T_0$. For $t < T_0$, $v$ is smooth and is implicitly given by $v(x,t) = \sin(x-tv)$. 

For the accuracy test, we fix $a = 1$, $b = 0$ and run the simulation up to $T = 0.9$ with $M=2000$ paths. From Tables \ref{table:sburgers-1d-timerate}-\ref{table:sburgers-1d-spacerate}, we can observe that, at $T=0.5$, both the errors $e_2$ and $e_4$  achieve the optimal convergence rate of $(r+1)$-th in space and $0.5$-th in time, which is consistent with the high-moment error estimate in Theorem \ref{thm:high-moment-error-estimate-2D} for the fully-discrete scheme. Moreover, Table \ref{table:sburgers-1d-timerate} compares the error and temporal convergence rate as time approaches $T_0$. We see that the error increases and the order of accuracy deteriorates, as $T$ increases. Finer choices of $N_T/N$ and a larger number of sample paths may be needed to clearly observe the theoretical half-order rate near $T_0$.

Next, to investigate the behavior of the numerical solution before and after shock formation, we plot the snapshots of the numerical solution along one fixed path at $T=0.6,0.9,1.2,1.5$. In Fig. \ref{fig:sburgers-snapshot-1d}, the three rows of solutions are plotted at different noise level $a=0,0.5,1$, with $a=0$ corresponding to the deterministic equation \eqref{pde:burgers-1d}. We observe the steepening of the wave from $T=0.9$ to $1.2$, suggesting shock formation. The plots after $T = 0.9$ show that the random shift effect driven by Brownian motion displaces the local extrema of the sine wave. The displacement is clearly more pronounced for higher noise intensity. Moreover, at $T=1.2$ and $1.5$, oscillations near the shock are increasingly smeared by numerical diffusion for larger $a$.
Nonlinear limiters could be incorporated to reduce these post-shock oscillations.

Lastly, even with the boundedness of exact solution, we cannot prove the numerical solution is almost surely bounded uniformly on the full sample space. Therefore, the analytic error estimate is still formulated on subsets of the sample space. Numerically, we have not observed any pathwise blow-up and we have taken $k\sim h$ in some simulations.

\begin{table}[htb]
\centering
\renewcommand\arraystretch{1.25}
\caption{Numerical error and temporal convergence rate for stochastic Burgers' Equation. \eqref{spde:sburgers-1d} with $r=1$.}
\begin{tabular}{c c c c c c c c}
\hline\hline
$T$ & $N$ & $e_2$ & rate & $e_4$ & rate & $e_{\infty}$ & rate \\
\hline\hline
\multirow{5}{*}{0.1}
& 10  & 2.18E-02 & --   & 2.58E-02 & --   & 3.78E-02 & -- \\
& 20  & 6.09E-03 & 1.84 & 6.25E-03 & 2.05 & 1.14E-02 & 1.73 \\
& 40  & 2.31E-03 & 1.40 & 2.68E-03 & 1.22 & 4.21E-03 & 1.44 \\
& 80  & 1.33E-03 & 0.80 & 1.71E-03 & 0.65 & 1.99E-03 & 1.08 \\
& 160 & 9.00E-04 & 0.56 & 1.18E-03 & 0.54 & 1.16E-03 & 0.78 \\
\hline
\multirow{5}{*}{0.5}
& 10  & 4.05E-02 & --   & 4.33E-02 & --   & 1.02E-01 & -- \\
& 20  & 1.87E-02 & 1.12 & 2.23E-02 & 0.95 & 4.68E-02 & 1.13 \\
& 40  & 1.17E-02 & 0.68 & 1.51E-02 & 0.57 & 2.47E-02 & 0.92 \\
& 80  & 8.01E-03 & 0.55 & 1.04E-02 & 0.53 & 1.47E-02 & 0.75 \\
& 160 & 5.62E-03 & 0.51 & 7.33E-03 & 0.51 & 9.63E-03 & 0.61 \\
\hline
\multirow{5}{*}{0.9}
& 10  & 1.26E-01 & --   & 1.33E-01 & --   & 3.69E-01 & --  \\
& 20  & 7.79E-02 & 0.70 & 8.52E-02 & 0.64 & 2.75E-01 & 0.42 \\
& 40  & 5.39E-02 & 0.53 & 6.14E-02 & 0.47 & 2.31E-01 & 0.25 \\
& 80  & 3.95E-02 & 0.45 & 4.74E-02 & 0.37 & 1.90E-01 & 0.28 \\
& 160 & 2.98E-02 & 0.41 & 3.65E-02 & 0.38 & 1.55E-01 & 0.30 \\
\hline\hline
\end{tabular}
 \label{table:sburgers-1d-timerate}
\end{table}

\begin{table}[htb]
\centering
\renewcommand\arraystretch{1.25}
\caption{Numerical error and spatial convergence rate for stochastic Burgers' Equation. \eqref{spde:sburgers-1d} with $T=0.5$.}
\begin{tabular}{c c c c c c c c}
\hline\hline
$r$ & $N$ & $e_2$ & rate & $e_4$ & rate & $e_{\infty}$ & rate \\
\hline\hline
\multirow{4}{*}{1}
& 10 & 1.08E-01 & --   & 1.37E-01 & --   & 2.25E-01 & -- \\
& 20 & 2.75E-02 & 1.97 & 3.35E-02 & 2.03 & 6.36E-02 & 1.82 \\
& 40 & 6.64E-03 & 2.05 & 8.32E-03 & 2.01 & 1.57E-02 & 2.02 \\
& 80 & 1.71E-03 & 1.96 & 2.14E-03 & 1.96 & 3.99E-03 & 1.97 \\
\hline
\multirow{3}{*}{2}
& 10 & 1.04E-01 & --   & 1.37E-01 & --   & 2.08E-01 & -- \\
& 20 & 1.29E-02 & 3.01 & 1.67E-02 & 3.03 & 2.36E-02 & 3.14 \\
& 40 & 1.60E-03 & 3.01 & 2.08E-03 & 3.01 & 2.83E-03 & 3.06 \\
\hline\hline
\end{tabular}
 \label{table:sburgers-1d-spacerate}
\end{table}


\begin{figure}[htb]
	\begin{center}
        \includegraphics[scale=0.6]{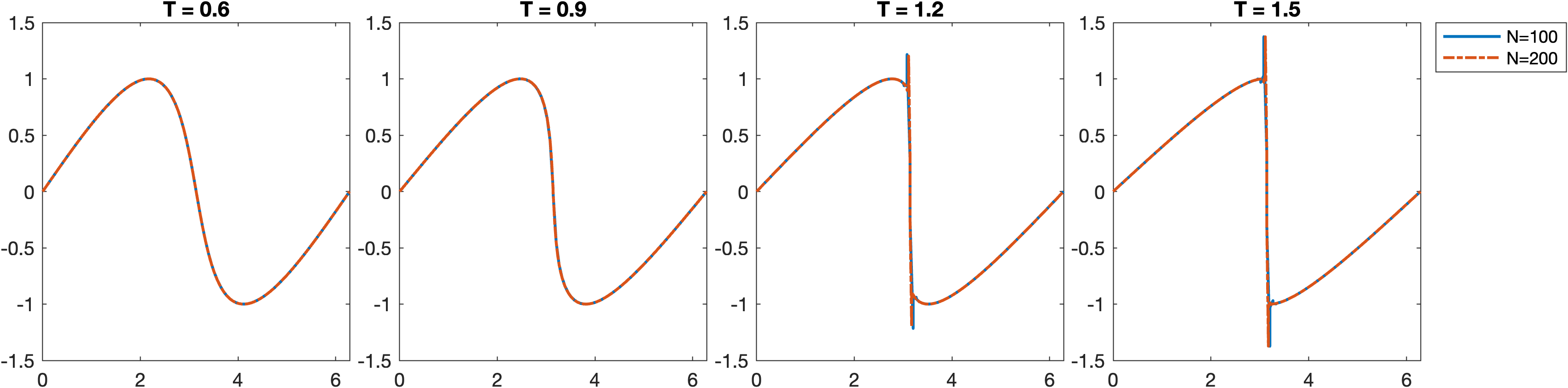}
        \includegraphics[scale=0.6]{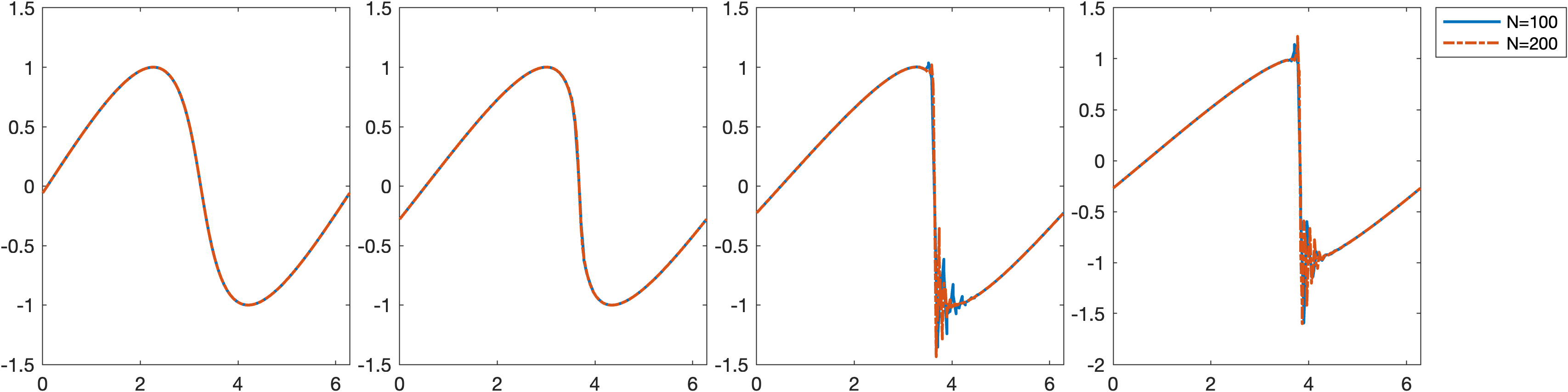}
        \includegraphics[scale=0.6]{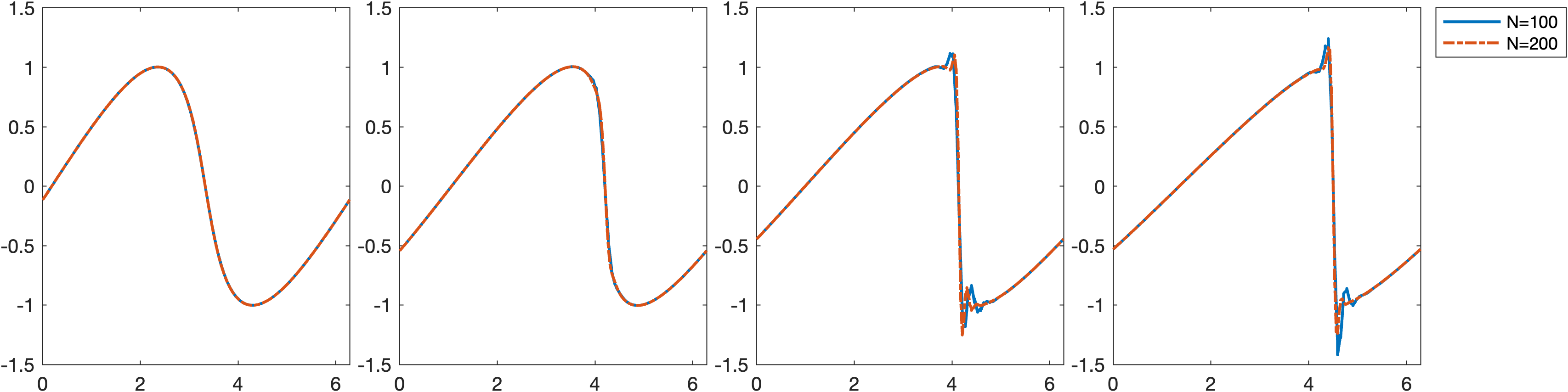}
		\caption{Solution snapshot of 1D stochastic viscous Burgers equation with $a=0$ (top), $0.5$ (middle) and $1$ (bottom) under one fixed path.}
        \label{fig:sburgers-snapshot-1d}
	\end{center}
\end{figure}

\end{subsection}

\begin{subsection}{One-dimensional Stochastic Nonlinear Convection-Diffusion Equation with cubic flux and Space-Time Colored Gradient Noise}
Next, we consider another nonlinear SPDE with cubic convection term and source term, driven by gradient-type colored noise:
\begin{gather}
\label{spde:cubicflux-1d}
    \begin{cases}
        \mathrm{d}u = \left(au_{xx} - \left(\dfrac{u^3}{3}\right)_x - u^3\right) \,\mathrm{d}t + bu_x \mathrm{d}W_t, \quad & (\omega,x,t)\in \Omega \times [0,2\pi]\times (0,T],  \\
        u(\omega,x,0) = \sin(x),  & (\omega,x) \in \Omega \times [0,2\pi],
    \end{cases}
\end{gather}
where the noise $W_t$ is a $\mathcal Q$-Wiener process given by 
\begin{align}
\label{def:Q-wiener-1d-1}
    W_t = \sum_{m=1}^{\infty} \sqrt{\gamma_m} \frac{\sin(mx)}{\sqrt{\pi}}\mathcal{B}_m, \qquad \gamma_m = \frac{1}{m^4},
\end{align}
where $\{\mathcal{B}_m(t)\}_{m\geq 1}$ are independent standard Brownian motions.

To study the stability and accuracy of the numerical scheme, we fix $a=1$, and truncate the noise term in \eqref{def:Q-wiener-1d-1} by retaining
the first $25$ modes. We use $M=1000$ Monte Carlo sample paths. Since an analytic solution is unavailable, we approximate the convergence rates by comparing the numerical solutions on two successive meshes, that is, by measuring the corresponding norms of $u_h-u_{h/2}$ on the coarse mesh and the refined mesh.

For temporal convergence rate, we fix $N_T/N = 10$ and run the simulation up to $T=1$. To study the spatial convergence rate, we fix $N_T = 10^4$ and take the final time $T=0.1$. For $b = 1$, the observed temporal and spatial convergence rates in Tables \ref{table:cubicflux-1d-timerate}-\ref{table:cubicflux-1d-spacerate} are close to the predicted order $1/2$ in time and order $(r+1)$ in space, as in Theorem \ref{thm:high-moment-error-estimate-2D}. When the noise level $b$ is increased, the error increases and the numerical convergence slows down. 

We also use this example to illustrate the influence of the stability condition.
The leading mode of the noise contributes the term $(b/\sqrt{\pi})\sin(x)u_x\,\mathrm{d}\mathcal{B}_1(t)$, which gives the benchmark condition $a>b^2/(2\pi)$. 
Thus, with $a=1$, $b=2.5$ satisfies this first-mode benchmark, whereas $b=3$ violates it.
Consistently, we observe temporal convergence for $b=2.5$ and numerical instability for $b=3$.

Lastly, we note that $W_t \notin L^2(\Omega, H^2(0,2\pi))$ and hence the regularity assumptions imposed on the exact solution in Theorem \ref{thm:high-moment-error-estimate-2D} may not be fully satisfied for the infinite-dimensional noise. 
In the computations, however, we use a finite-dimensional truncation of the noise, and the expected convergence rates are still observed. 
However, the $e_{\infty}$ error order may deteriorate due to the highly oscillatory modes. 
A rigorous analytic error estimate in this setting would require a more detailed study of how the spatial regularity of the noise affects the regularity of the exact solution.

\begin{table}[htb]
\centering
\renewcommand\arraystretch{1.25}
\caption{Numerical error and temporal convergence rate for \eqref{spde:cubicflux-1d} with $a=1, \,r=2$ at $T=1$.} 
\begin{tabular}{c c c c c c c c}
\hline\hline
$b$ & $N$ & $e_2$ & rate & $e_4$ & rate & $e_{\infty}$ & rate \\
\hline\hline
\multirow{5}{*}{1}
& 10  & 1.73E-03 & --   & 2.10E-03 & --   & 3.45E-03 & -- \\
& 20  & 1.11E-03 & 0.64 & 1.43E-03 & 0.55 & 1.81E-03 & 0.93 \\
& 40  & 7.46E-04 & 0.58 & 9.31E-04 & 0.62 & 1.19E-03 & 0.61 \\
& 80  & 5.30E-04 & 0.49 & 6.68E-04 & 0.48 & 8.40E-04 & 0.50 \\
& 160 & 3.84E-04 & 0.47 & 4.87E-04 & 0.45 & 6.02E-04 & 0.48 \\
\hline
\multirow{5}{*}{2.5}
& 10  & 2.03E-02 & --   & 2.81E-02 & --   & 4.06E-02 & -- \\
& 20  & 1.54E-02 & 0.40 & 2.17E-02 & 0.37 & 3.07E-02 & 0.41 \\
& 40  & 1.14E-02 & 0.43 & 1.67E-02 & 0.38 & 2.39E-02 & 0.36 \\
& 80  & 8.51E-03 & 0.42 & 1.25E-02 & 0.41 & 1.84E-02 & 0.38 \\
& 160 & 6.10E-03 & 0.48 & 8.39E-03 & 0.58 & 1.40E-02 & 0.39 \\
\hline
\multirow{2}{*}{3}
& 10 & 1.40E-01  & -- & 3.31E-01  & -- & 2.46E-01  & -- \\
& 20 &     NaN   & -- &   NaN     & -- &    NaN    & --  \\
\hline\hline
\end{tabular}
\label{table:cubicflux-1d-timerate}
\end{table}

\begin{table}[htb]
\centering
\renewcommand\arraystretch{1.25}
\caption{Numerical error and spatial convergence rate for \eqref{spde:cubicflux-1d} with $a=b=1$ at $T=0.1$.} 
\begin{tabular}{c c c c c c c c}
\hline\hline
$r$ & $N$ & $e_2$ & rate & $e_4$ & rate & $e_{\infty}$ & rate \\
\hline\hline
\multirow{4}{*}{1}
& 10 & 2.48E-02 & --   & 2.49E-02 & --   & 5.70E-02 & --   \\
& 20 & 6.36E-03 & 1.96 & 6.39E-03 & 1.96 & 1.56E-02 & 1.87 \\
& 40 & 1.60E-03 & 1.99 & 1.61E-03 & 1.99 & 4.04E-03 & 1.95 \\
& 80 & 4.01E-04 & 2.00 & 4.02E-04 & 2.00 & 1.02E-03 & 1.98 \\
\hline 
\multirow{4}{*}{2}
& 10 & 1.58E-03 & --   & 1.64E-03 & --   & 5.41E-03 & --   \\
& 20 & 2.47E-04 & 2.67 & 2.57E-04 & 2.68 & 9.91E-04 & 2.45 \\
& 40 & 2.68E-05 & 3.21 & 2.75E-05 & 3.22 & 1.60E-04 & 2.63 \\
& 80 & 3.32E-06 & 3.01 & 3.41E-06 & 3.01 & 2.54E-05 & 2.65 \\
\hline\hline
\end{tabular}
\label{table:cubicflux-1d-spacerate}
\end{table}

\end{subsection}

\begin{subsection}{Two-dimensional Stochastic Viscous Burgers' Equation}
As the first two-dimensional test, we consider the nonlinear stochastic viscous Burgers' equation
\begin{gather}
\label{spde:sburgers-2d}
    \begin{cases}
        \mathrm{d}u = \left[ a^2(u_{xx} + u_{yy}) - (\dfrac{u^2}{2})_x - (\dfrac{u^2}{2})_y \right] \,\mathrm{d}t + a(u_x +u_y) \mathrm{d}W_t, \quad & (\omega,x,y,t)\in \Omega \times \mathcal{D} \times (0,T], \\
        u(\omega,x,y,0) = \sin(x+y),  & (\omega,x,y) \in \Omega \times \mathcal{D}.
    \end{cases}
\end{gather}
This model admits the exact solution $u(\omega,x,y,t) = v\left(x + y + 2aW_t, t \right)$, where $v$ is the solution of the one-dimensional deterministic inviscid Burgers' equation:
\begin{gather}
\label{pde:burgers-2d}
    \begin{cases}
        v_t + (v^2)_x = 0,  \quad & (x,t)\in [0,2\pi] \times (0,T], \\
        v(x,0) = \sin(x),  &  x \in   [0,2\pi].
    \end{cases}
\end{gather}
The exact solution of \eqref{pde:burgers-2d} forms a shock at $T_0=0.5$.
Before shock formation, it is implicitly given by $ v(x,t) = \sin(x-2tv)$. 

We first perform the accuracy test by setting $a = 1$ and evolving the solution until $T = 0.4$ with $M=1000$ samples. From Tables \ref{table:sburgers-2d-timerate}-\ref{table:sburgers-2d-spacerate}, we observe the expected $(r+1)$-th and $0.5$-th order convergence rate in space and time for both the $L^2$- and $L^{\infty}$-error at $T=0.2$. As final time $T$ approaches $T_0$, the numerical errors increase and the observed temporal convergence rate drops below $0.5$. Finer choices of $N_T/N$ and a larger number of sample paths may be needed to clearly recover the theoretical half-order temporal rate near the shock time.

Next, to investigate the behavior of the numerical solution after shock formation, we plot snapshots of the numerical solution along one fixed path at $T=0.4,0.6,0.8,1$. Fig. \ref{fig:sburgers-snapshot-3dview} shows the steepening of the wave from $T=0.4$ to $0.6$, which indicates shock formation. The plots after $T = 0.6$ show that the random shift effect driven by Brownian motion displaces the local extrema of the sine wave. At $T=0.8$ and $T=1$ spurious oscillations appear near the discontinuity as the mesh is refined. 
Nonlinear limiters can be applied to control the oscillations near discontinuities.    

\begin{table}[htb]
\centering
\renewcommand\arraystretch{1.25}
\caption{Numerical error and temporal convergence rate for \eqref{spde:sburgers-2d} with $a=r=1$. } 
\begin{tabular}{c c c c c c c c}
\hline\hline
$T$ & $N$ & $e_2$ & rate & $e_4$ & rate & $e_{\infty}$ & rate \\
\hline\hline
\multirow{5}{*}{0.1}
& 10  & 4.40E-02 & --   & 4.81E-02 & --   & 1.02E-01 & --    \\
& 20  & 2.23E-02 & 0.98 & 2.76E-02 & 0.80 & 4.38E-02 & 1.22 \\
& 40  & 1.49E-02 & 0.58 & 1.91E-02 & 0.53 & 2.30E-02 & 0.93 \\
& 80  & 1.05E-02 & 0.50 & 1.37E-02 & 0.48 & 1.42E-02 & 0.70 \\
& 160 & 7.31E-03 & 0.52 & 9.32E-03 & 0.56 & 9.32E-03 & 0.61 \\
\hline
\multirow{5}{*}{0.2}
& 10  & 7.58E-02 & --   & 8.81E-02 & --   & 1.79E-01 & --    \\
& 20  & 4.67E-02 & 0.70 & 5.85E-02 & 0.59 & 9.69E-02 & 0.88 \\
& 40  & 3.26E-02 & 0.52 & 4.29E-02 & 0.45 & 5.82E-02 & 0.74 \\
& 80  & 2.31E-02 & 0.50 & 2.99E-02 & 0.52 & 3.77E-02 & 0.63 \\
& 160 & 1.66E-02 & 0.48 & 2.11E-02 & 0.50 & 2.58E-02 & 0.55 \\
\hline
\multirow{5}{*}{0.4}
& 10  & 1.89E-01 & --   & 2.34E-01 & --   & 4.75E-01 & --  \\
& 20  & 1.38E-01 & 0.46 & 1.62E-01 & 0.53 & 3.89E-01 & 0.29 \\
& 40  & 1.03E-01 & 0.42 & 1.21E-01 & 0.43 & 3.12E-01 & 0.32 \\
& 80  & 7.96E-02 & 0.38 & 9.93E-02 & 0.28 & 2.52E-01 & 0.31 \\
& 160 & 5.86E-02 & 0.44 & 6.98E-02 & 0.51 & 1.97E-01 & 0.36 \\
\hline\hline
\end{tabular}
\label{table:sburgers-2d-timerate}
\end{table}

\begin{table}[htb]
\centering
\renewcommand\arraystretch{1.25}
\caption{Numerical error and spatial convergence rate for \eqref{spde:sburgers-2d} with $a=1$ at $T=0.2$.}
\begin{tabular}{c c c c c c c c}
\hline\hline
$r$ & $N$ & $e_2$ & rate & $e_4$ & rate & $e_{\infty}$ & rate \\
\hline\hline
\multirow{4}{*}{1}
& 10 & 1.46E-01 & --   & 1.77E-01 & --   & 2.90E-01 & --    \\
& 20 & 3.86E-02 & 1.92 & 4.80E-02 & 1.88 & 8.31E-02 & 1.81 \\
& 40 & 9.75E-03 & 1.99 & 1.19E-02 & 2.02 & 2.13E-02 & 1.97 \\
& 80 & 2.47E-03 & 1.98 & 3.06E-03 & 1.95 & 5.30E-03 & 2.01 \\
\hline
\multirow{3}{*}{2}
& 10 & 1.42E-01 & --   & 1.76E-01 & --   & 2.48E-01 & --    \\
& 20 & 1.82E-02 & 2.96 & 2.28E-02 & 2.95 & 2.89E-02 & 3.10 \\
& 40 & 2.37E-03 & 2.94 & 3.01E-03 & 2.92 & 3.58E-03 & 3.01 \\
\hline\hline
\end{tabular}
 \label{table:sburgers-2d-spacerate}
\end{table}


\begin{figure}[htb]
	\begin{center}
		\includegraphics[scale=0.8]{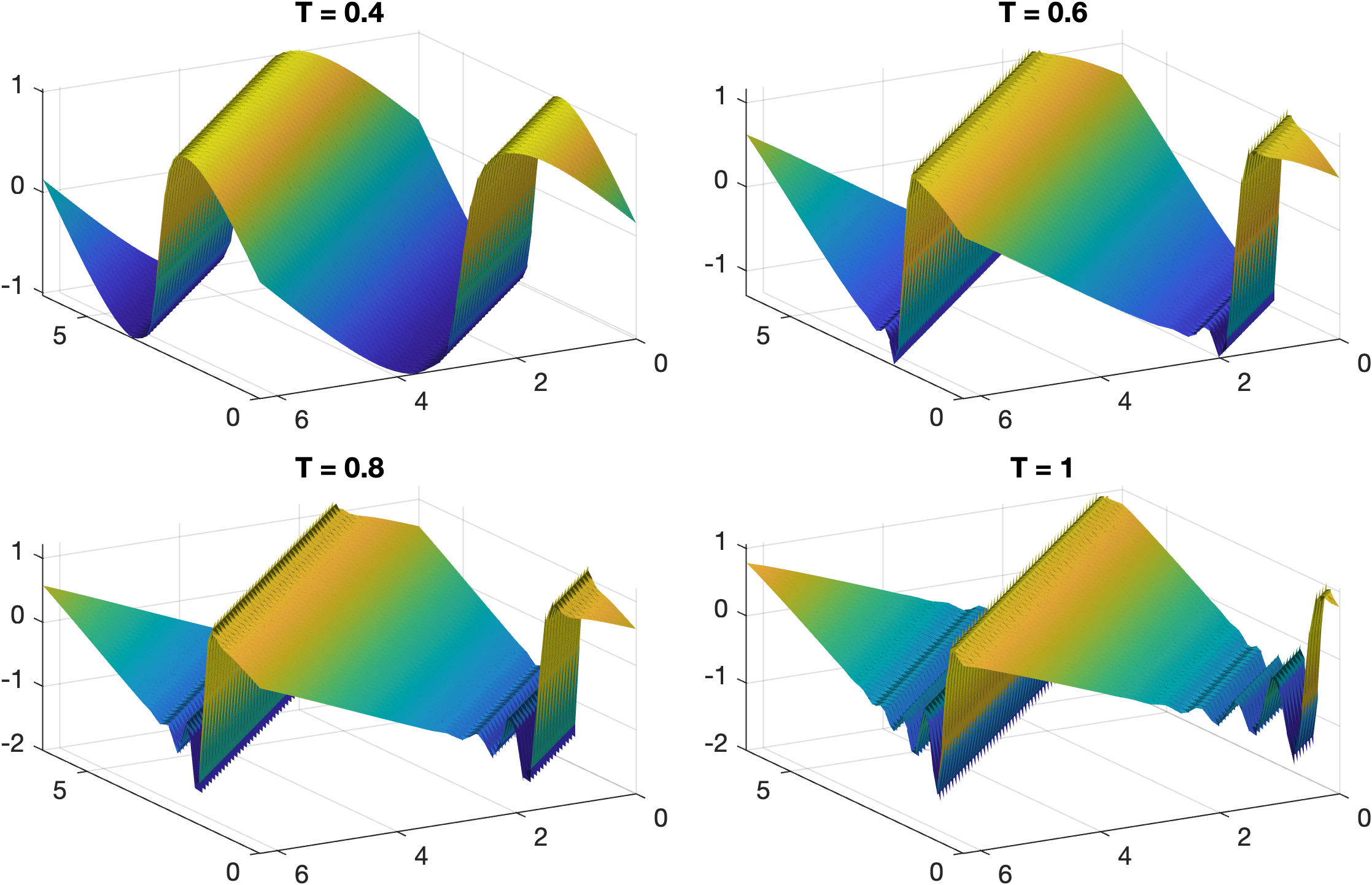}
		\caption{Solution snapshot of 2D stochastic viscous Burgers equation under one fixed path with $a=1$ and $N=50$.} 
        \label{fig:sburgers-snapshot-3dview}
	\end{center}
\end{figure}



\end{subsection}

\begin{subsection}{Two-dimensional Stochastic Allen-Cahn Equation with Multiplicative noise}
In the following, we apply our scheme to the two-dimensional stochastic Allen-Cahn equation with function-type multiplicative noise 
\begin{gather}
\label{spde:allen-cahn-2d}
    \begin{cases}
        \mathrm{d}u = (u_{xx} +u_{yy})\, \mathrm{d}t + \frac{1}{\epsilon^2}(u - u^3)\mathrm{d}t + \frac{b}{\epsilon^2}u \mathrm{d}W_t, \quad & (\omega,x,y,t)\in \Omega \times \mathcal{D}\times (0,T], \\
        u(\omega,x,y,0) = u_0(x,y),  & (\omega,x,y) \in \Omega \times\mathcal{D}.
    \end{cases}
\end{gather}
We conduct two numerical tests and investigate the effect of the multiplicative noise by varying the noise intensity $b$.

\begin{subsubsection}{Merging of Four Bubbles}
We simulate the merging of four bubbles with the following initial condition
\begin{align*}
u_0(x,y) &= -\tanh\left( ((x - 0.3)^2 + y^2 - 0.2^2) / \epsilon \right)
\tanh\left( ((x + 0.3)^2 + y^2 - 0.2^2) / \epsilon \right) \\
&\quad \times \tanh\left( (x^2 + (y - 0.3)^2 - 0.2^2) / \epsilon \right)
\tanh\left( (x^2 + (y + 0.3)^2 - 0.2^2) / \epsilon \right).
\end{align*}
We consider the interfacial width $\epsilon = 0.02$ and the computational domain $\mathcal{D} = [-1,1]^2$, which is uniformly divided into $10000$ square elements. The time step is $\Delta t = 5\times 10^{-5}$ and the sample mean is computed using $M=100$ sample paths. 
We plot the sample mean of the numerical solution at time $t = 0.0001, 0.007,0.009,0.025,0.04,0.05,0.08,0.2$, with different noise magnitude $b = 0,0.01,0.015,0.02$.
The dynamics before and after approximately $T=0.05$ correspond to the merging of four bubbles and the shrinking of the resulting single bubble,respectively.

The solution snapshots are shown in Figs. \ref{Fig:mergence-bubble-b=0}-\ref{Fig:mergence-bubble-b=0.02}. 
In all cases, the four initial bubbles merge into a single large bubble, which then gradually shrinks because the Allen-Cahn equation is not mass-conserving. 
In the deterministic case, the single bubble finally disappears and the solution converges to the stable steady state equilibrium $u=-1$. 
Under noisy perturbation, the sample mean of solution is observed to decay, while the overall shape of the bubble is largely preserved. Note that $u=0$ is the only constant equilibrium of the stochastic model \eqref{spde:allen-cahn-2d}, whereas the deterministic equilibria $u=\pm1$ are no longer pathwise stationary because of the multiplicative noise term.

The results show that stronger noise accelerates the decay toward the absorbing state, while the stochastic solution approaches the deterministic behavior for small values of $b$. 
Additional experiments with a larger noise intensity, for example $b=0.03$, show that the first and second moments of the numerical solution are of order $10^{-9}$ and $10^{-15}$, respectively, at $T=0.1$. 
This suggests that, for sufficiently large noise intensity, the solution is rapidly driven toward $u=0$ with very small sample variance.

\begin{figure}[htb]
	\begin{center}
		\includegraphics[scale=0.85]{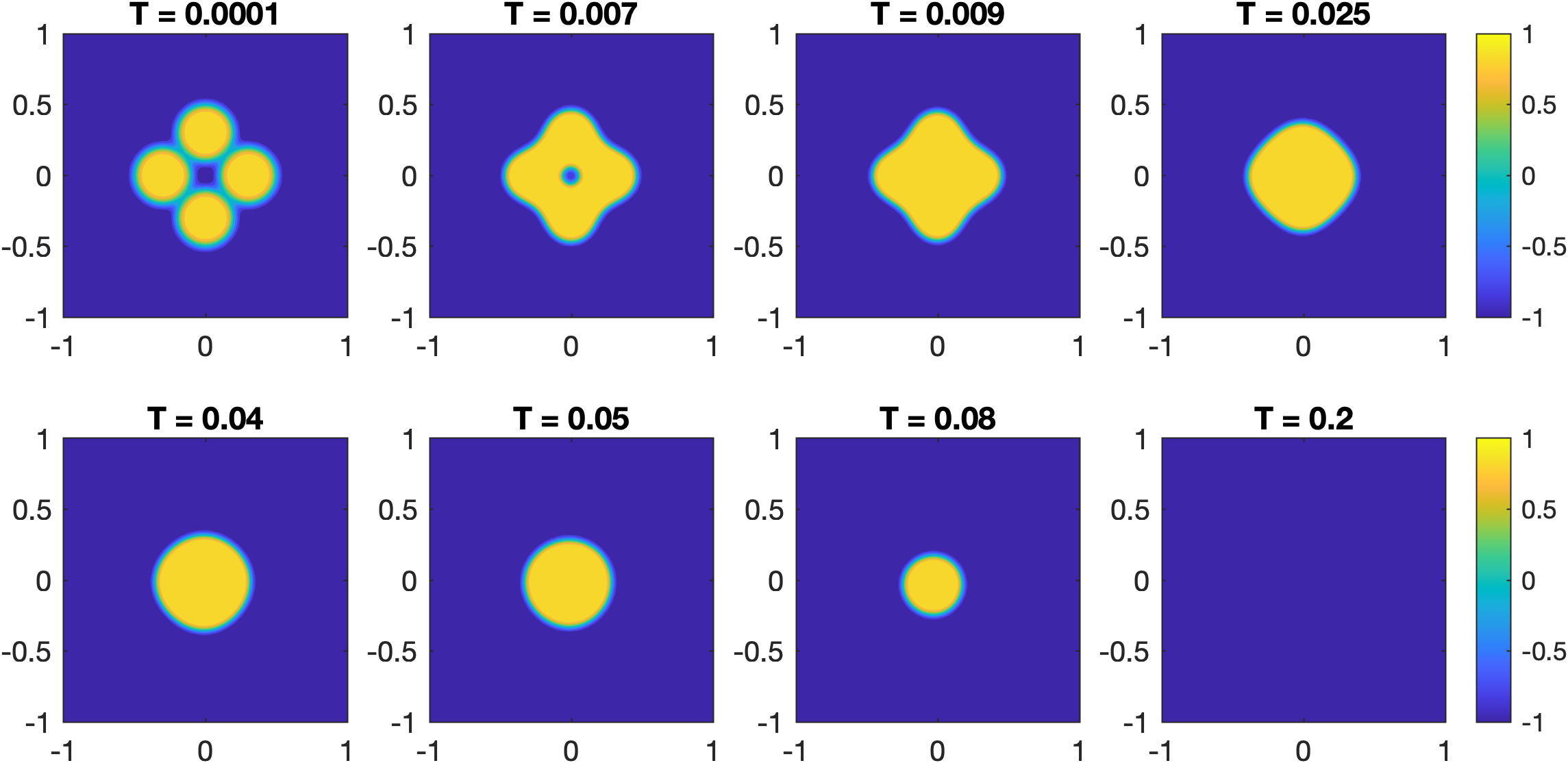}
		\caption{Merging of four bubbles: Solution of deterministic Allen-Cahn equation. 
        }
        \label{Fig:mergence-bubble-b=0}
	\end{center}
\end{figure}

\begin{figure}[htb]
	\begin{center}
		\includegraphics[scale=0.85]{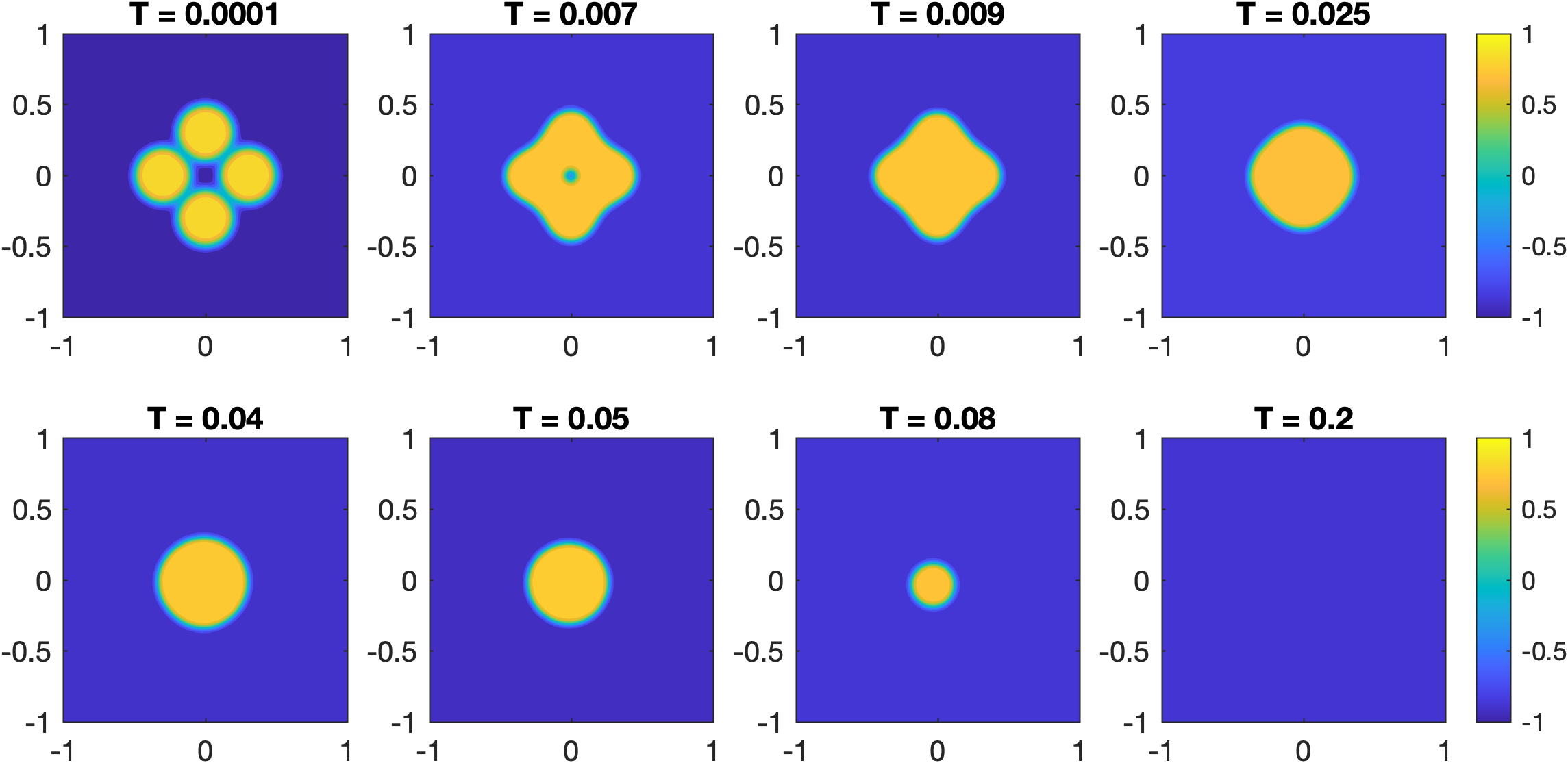}
		\caption{Merging of four bubbles: Averaged solution of stochastic Allen-Cahn equation with $b=0.01$.}
        \label{Fig:mergence-bubble-b=0.01}
	\end{center}
\end{figure}

\begin{figure}[htb]
	\begin{center}
		\includegraphics[scale=0.85]{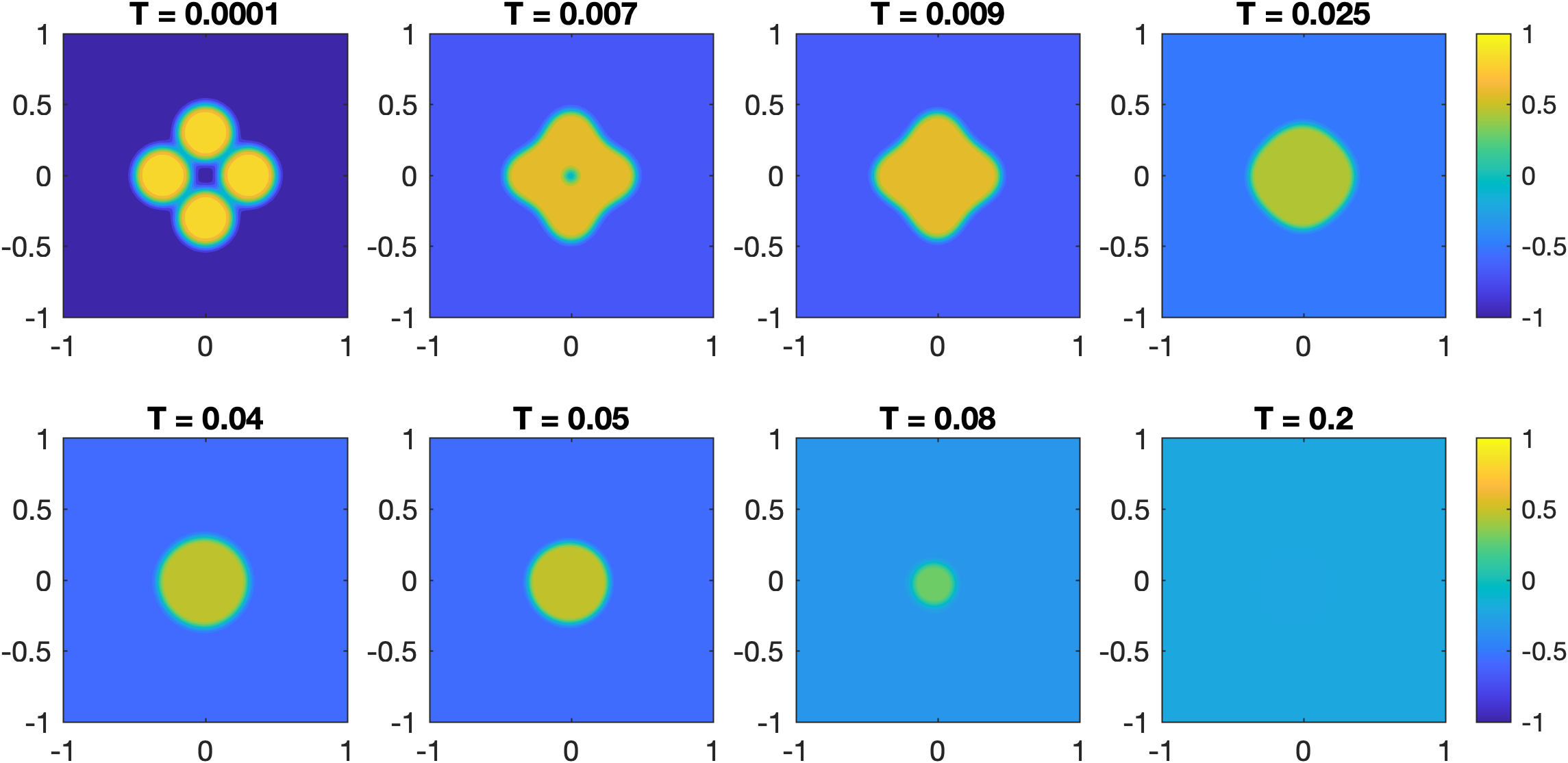}
		\caption{Merging of four bubbles: Averaged solution of stochastic Allen-Cahn equation with $b=0.015$.}
        \label{Fig:mergence-bubble-b=0.015}
	\end{center}
\end{figure}

\begin{figure}[htb]
	\begin{center}
		\includegraphics[scale=0.85]{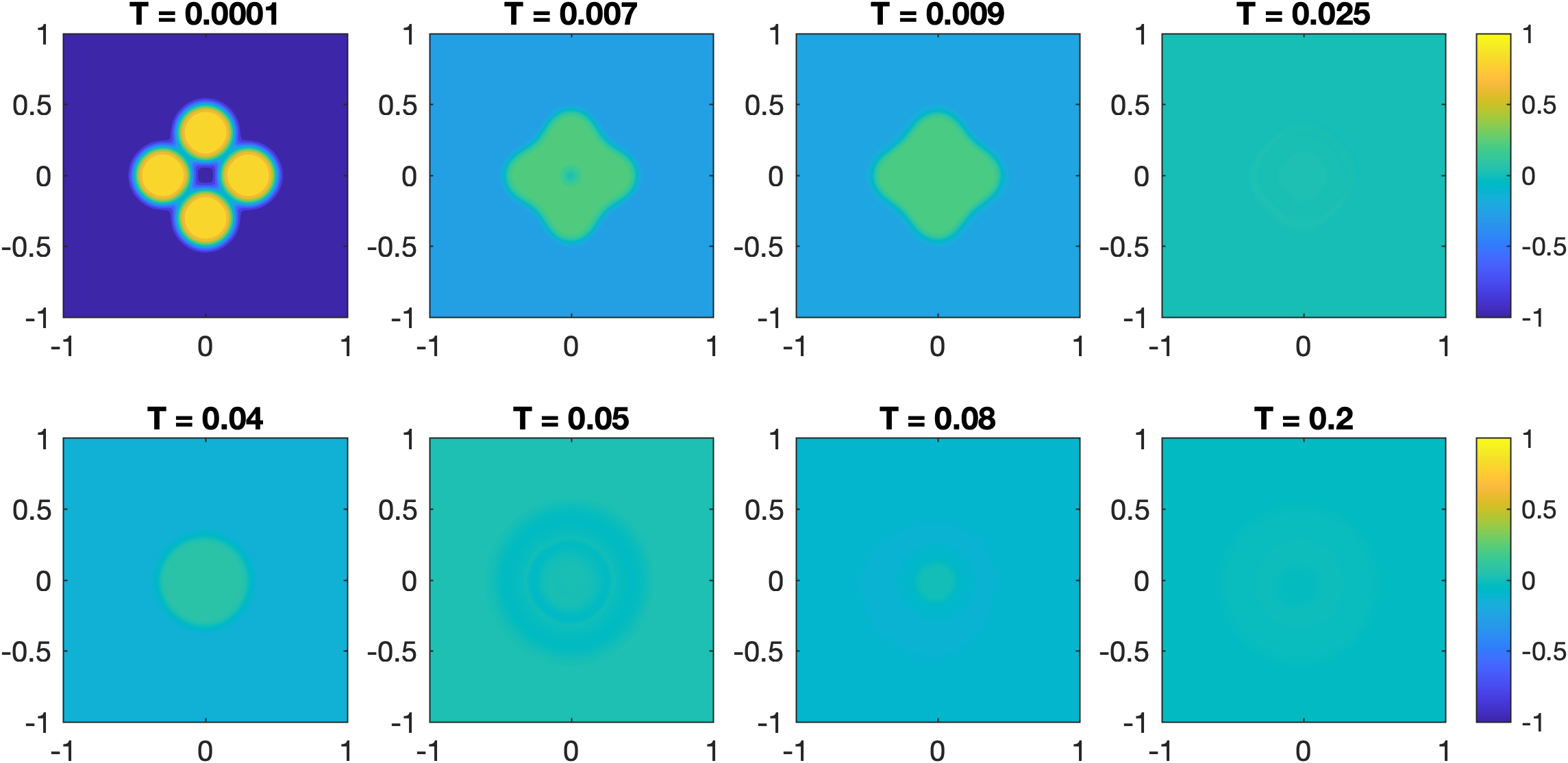}
		\caption{Merging of four bubbles: Averaged solution of stochastic Allen-Cahn equation with $b=0.02$.}
        \label{Fig:mergence-bubble-b=0.02}
	\end{center}
\end{figure}

\end{subsubsection}

\begin{subsubsection}{Phase Separation}
For the second test on the model \eqref{spde:allen-cahn-2d}, we investigate the coarsening dynamics of the Allen-Cahn equation with interfacial width $\epsilon = 0.01$. 
We choose the random initial condition $u(x,y,0) = 0.1\, \mathrm{rand}(x,y)$, where $\mathrm{rand}(x,y)$ is sampled uniformly from $[-1,1]$ at each grid point. 
The computational domain $\mathcal{D} = [0,1]^2$ is uniformly divided into $10000$ square elements. 
The initial condition is fixed for all sample paths and is shown in Fig \ref{Fig:Phase-sep-IC}. 
We run the simulation with $\Delta t = 10^{-5}$ until $T=0.01$ along $M=100$ paths, while varying the noise intensity $b$. 

The snapshots of the averaged evolution of the coarsening dynamics are plotted at $T=0.001$, $0.005$, $0.008$, $0.01$. From Figs. \ref{Fig:Phase-sep-b=0}-\ref{Fig:Phase-sep-b=0.001-0.01}, we observe that, in all cases, the initial fine-scale grains gradually coalesce into distinct phases, driven by energy dissipation mechanism
of the Allen-Cahn dynamics. 
When the noise level is small, the sample mean remains close to the deterministic phase separation process.
As the noise intensity increases, the numerical solution decays toward zero more rapidly.

Further tests show that, when $b=0.02$, the first and second moment of the numerical solution reach the order of $10^{-25}$ and $10^{-47}$, respectively, as the simulation time approaches $T=0.01$. 
This indicates that, for sufficiently large noise intensity, the multiplicative noise rapidly drives the solution toward the absorbing state $u=0$ with negligible sample variance.

\begin{figure}[htb]
	\begin{center}
		\includegraphics[scale=0.3]{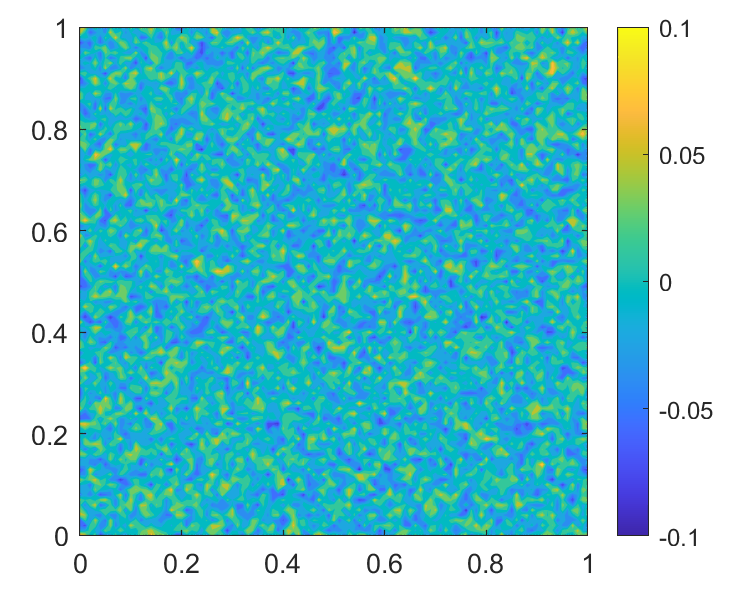}
		\caption{Random initial condition for phase separation.}
        \label{Fig:Phase-sep-IC}
	\end{center}
\end{figure}

\begin{figure}[htb]
	\begin{center}
		\includegraphics[scale=0.85]{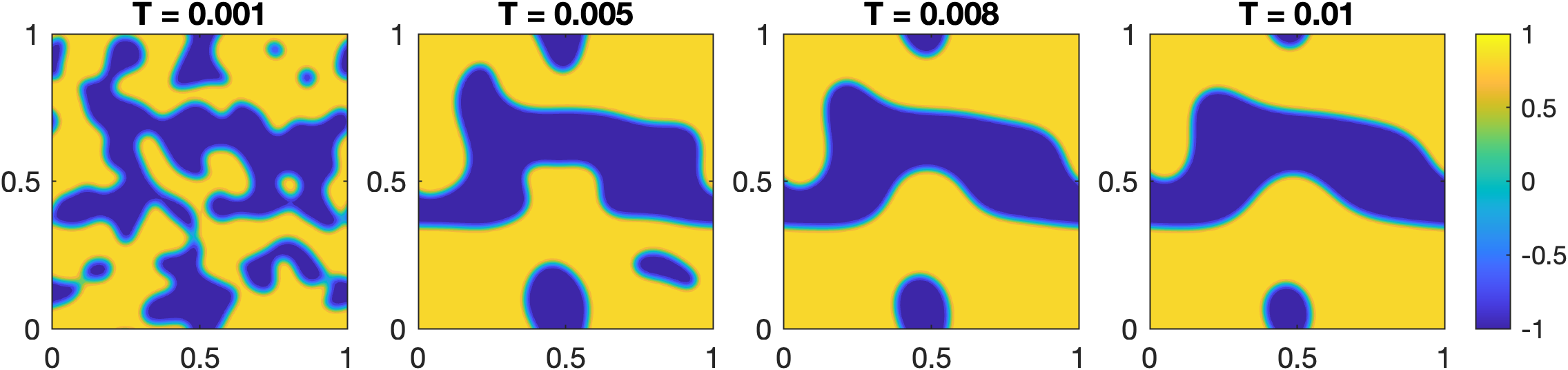}
		\caption{Evolution of 2D deterministic Allen-Cahn equation for phase separation. 
        }
         \label{Fig:Phase-sep-b=0}
	\end{center}
\end{figure}

\begin{figure}[htb]
	\begin{center}
		\includegraphics[scale=0.85]{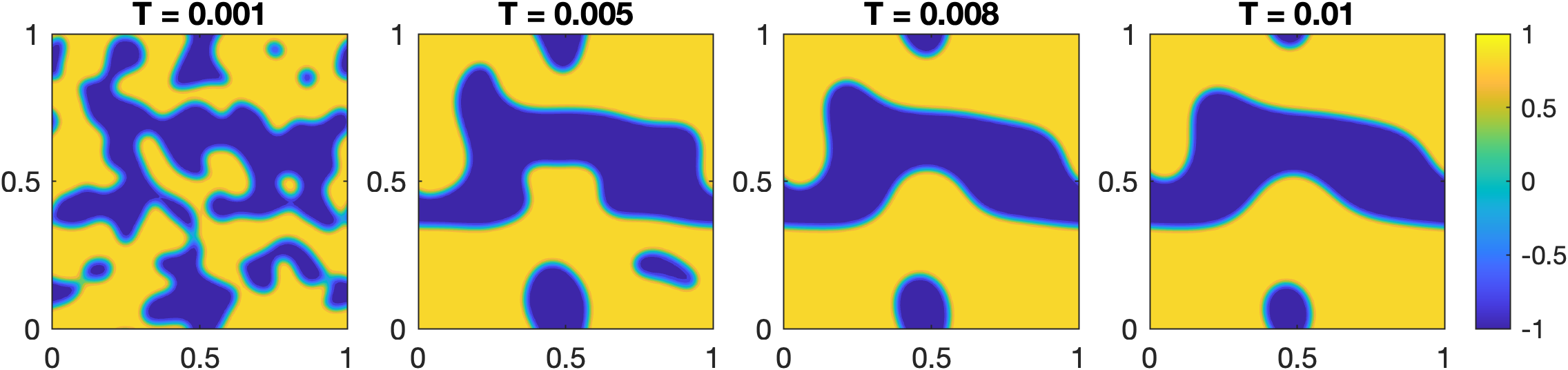}
        \includegraphics[scale=0.85]{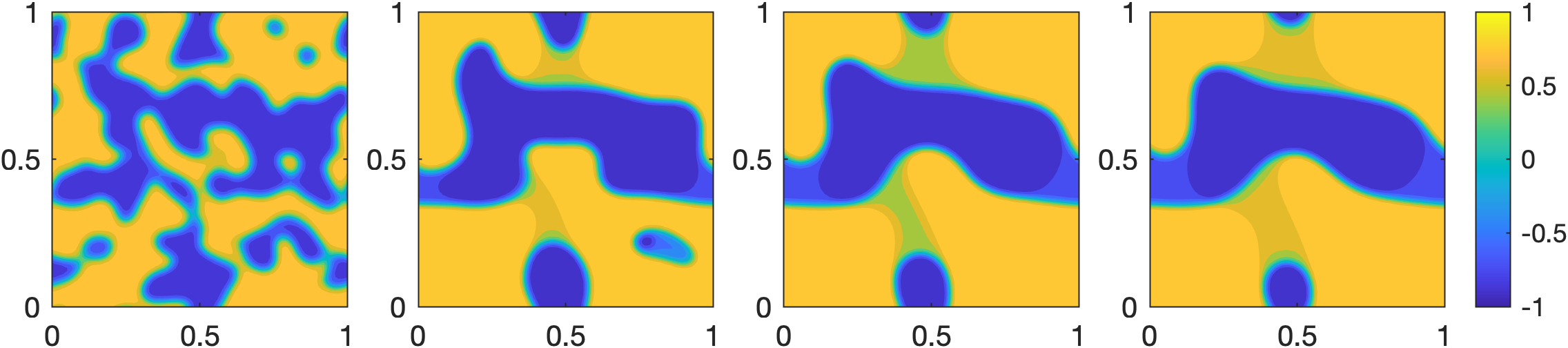}
        \includegraphics[scale=0.85]{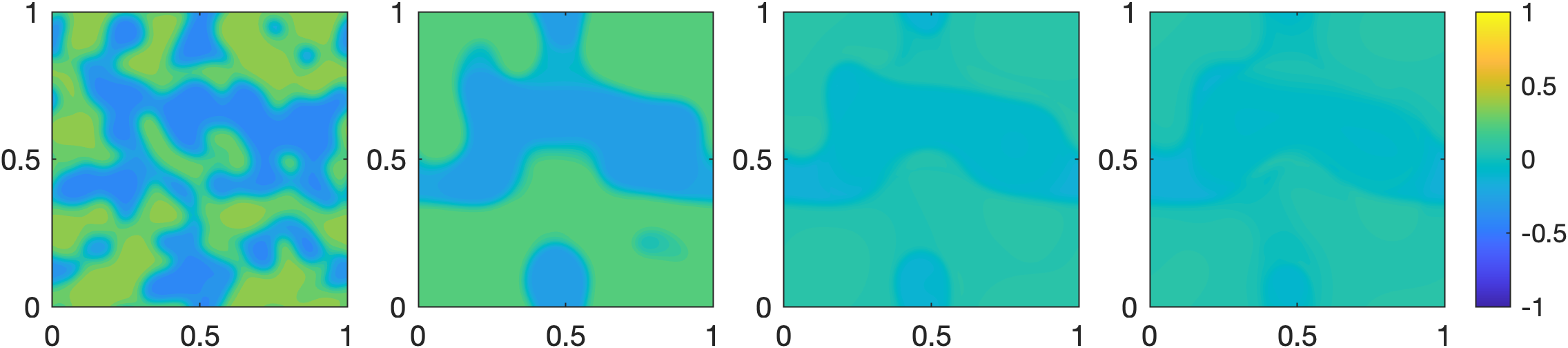}
		\caption{Averaged evolution of stochastic Allen-Cahn equation with $b=0.001$ (top), $0.005$ (middle) ,$0.01$ (bottom row) for phase separation.}
        \label{Fig:Phase-sep-b=0.001-0.01}
	\end{center}
\end{figure}

\end{subsubsection}

\end{subsection}

\end{section}

\begin{section}{Concluding Remarks}
\label{sec:conclusion}
In this paper, we proposed and analyzed a fully discrete LDG-IMEX-Euler scheme for general nonlinear stochastic convection-diffusion equations in two dimensions on Cartesian meshes. 
The diffusion term is treated implicitly by the LDG method, while the nonlinear convection, lower-order drift, and multiplicative noise terms are treated explicitly. 
Under suitable regularity and structural assumptions, we established high-moment stability and error estimates on subsets of the sample space whose probabilities converge to one as the discretization parameters vanish. 
The resulting convergence rate is arbitrarily close to the optimal order $r+1$ in space and order $1/2$ in time. 
A pathwise error estimate was further obtained from the high-moment error bound together with a discrete Kolmogorov lemma.

Several numerical experiments were presented to validate the theoretical rates and to illustrate the performance of the method for nonlinear stochastic Burgers-type equations and stochastic Allen-Cahn equations.
The results confirm the expected accuracy before shock formation and show the qualitative behavior of the proposed scheme for strongly nonlinear dynamics. 
The analysis developed here can be extended to higher-dimensional Cartesian meshes, and future work will consider more general meshes, fully implicit treatments of the nonlinear terms, and nonlinear stochastic hyperbolic problems.
\end{section}

\begin{appendices}

\section{Supplemental Proofs}

\label{appendixA}

\subsection{Proof of Lemma \ref{lem:num-sol-2D}}
\label{appendix-num-sol-2d}
\begin{proof}
    Let $K = I_i \times J_j$. We first prove the estimate in the $x$-direction. 
    By \eqref{eq:fully-discrete-2-2d} and integration by parts, for any $p_h\in Q^r(K)$, 
    \begin{align}
    \label{eq:num-sol-2d}
        (v_{1,h},p_h)_K &= -(u_h,(p_h)_x)_K + \int_{J_j} ({u}_{h}^- p_h^-)_{i+\frac{1}{2},y} - ({u}_{h}^- p_h^+)_{i-\frac{1}{2},y} \, \mathrm{d}y \notag \\
        &= ((u_h)_x,p_h)_K + \int_{J_j} ([u_h] p_h^+)_{i-\frac{1}{2},y} \, \mathrm{d}y. 
    \end{align}
    Define $r_h(x,y) \in Q^r(K)$ as 
    \[ r_h(x,y) = (u_h)_x - (-1)^r (u_h)_x(x_{i-\frac{1}{2}}^+,y)L_x^r(\xi), 
        \qquad \xi=\frac{2(x-x_i)}{h_{x,i}},\]
     where $L_x^r$ is the standard $r$-th order Legendre polynomial on $[-1,1]$. Clearly $r_h(x_{i-\frac{1}{2}}^+,y) = 0$ for all $y \in J_j$, since $L_x^r(-1) = (-1)^r$. 
    Taking $p_h = r_h$ in \eqref{eq:num-sol-2d}, and using the fact that $L_x^r$ is orthogonal to any polynomials with degrees at most $r-1$, we get 
    \[ (v_{1,h},r_h)_K = ((u_h)_x,r_h)_K = \| (u_h)_x\|_K^2 - (-1)^r \left((u_h)_x, (u_h)_x(x_{i-\frac{1}{2}}^+,y)L_x^r(\xi) \right)_K = \| (u_h)_x\|_K^2. \]
    Thus by Cauchy-Schwarz inequality, 
    \begin{align*}
        \| (u_h)_x\|_K^2 &\leq \| v_{1,h}\|_K \| r_h\|_K \leq \| v_{1,h}\|_K \left(\| (u_h)_x\|_K + \| (u_h)_x(x_{i-\frac{1}{2}}^+,y) L_x^r(\xi)\|_K \right) \\
        &\leq \| v_{1,h}\|_K \left(\| (u_h)_x\|_K + \| (u_h)_x(x_{i-\frac{1}{2}}^+,y) \|_{J_j}  Ch_{x,i}^{\frac{1}{2}} \right) 
        \leq C_{\mu} \| v_{1,h}\|_K \| (u_h)_x\|_K,
    \end{align*}
    where the last inequality is obtained by the inverse inequality. Therefore, $\| (u_h)_x\|_K \leq C_{\mu} \| v_{1,h}\|_K$. 
    
    It remains to estimate the jump in the $x$-direction. We define
    \begin{gather*}
        p_h = u_h(x,y) - u_h(x_{i-\frac{1}{2}}^-,y) = \int_{x_{i-\frac{1}{2}}}^x (u_h)_x(s,y) \, \mathrm{d}s + [u_h]_{i-\frac{1}{2},y}.
    \end{gather*}
    Then $p_h\in Q^r(K)$ and $p_h(x_{i-\frac{1}{2}}^+,y)=[u_h]_{i-\frac{1}{2},y}$.
    Plugging $p_h$ into \eqref{eq:num-sol-2d}, and applying triangle inequality with the result $\| (u_h)_x\|_K \leq C_{\mu} \| v_{1,h}\|_K$, we get
    \begin{align*}
        \int_{J_j} ([u_h])_{i-\frac{1}{2},y}^2 \, \mathrm{d}y &= (v_{1,h} - (u_h)_x,p_h)_K \leq h_{x,i}\| v_{1,h} - (u_h)_x\|_K^2 + \frac{1}{4h_{x,i}}\| p_h\|_K^2 \\
        &\leq C_{\mu} h_{x,i} \| v_{1,h}\|_K^2 + \frac{1}{2h_{x,i}} \left\| \int_{x_{i-\frac{1}{2}}}^x (u_h)_x(s,y) \, \mathrm{d}s \right\|_K^2 + \frac{1}{2h_{x,i}} \left\| [u_h]_{i-\frac{1}{2},y} \right\|_K^2 \\
        &\leq C_{\mu} h_x \| v_{1,h}\|_K^2 + \frac{h_x}{2} \| (u_h)_x\|_K^2 + \frac{1}{2} \int_{J_j} ([u_h])_{i-\frac{1}{2},y}^2 \, \mathrm{d}y.
    \end{align*}  
    Therefore, we reach
    \[ \int_{J_j} ([u_h])_{i-\frac{1}{2},y}^2 \, \mathrm{d}y \leq C_{\mu} h_x \| v_{1,h}\|_K^2. \]
    The estimate in the $y$-direction is proved in the same way. Combining all above completes the proof of the first inequality. The estimate for $\xi_u$ follows from the same argument with the usage of Lemma \ref{lem:superconvergence}.  
\end{proof}

\subsection{Proof of Lemma \ref{lem:nonlinear-convec-2D}}
\label{appendix-nonlinear-convec-2d}
\begin{proof}
Let $K = I_i \times J_j$ and $C_f = C(1+\| u_h\|_{\infty}^p + \| z_h\|_{\infty}^p)$. Throughout the proof, we denote by $C$ a generic constant independent of $u_h$, $z_h$ and $r_h$. Utilizing periodic boundary conditions, and integration by parts, we have
    \begin{align*}
        \tilde{H}(f_1,f_2,u_h,r_h) 
        &= \sum_{i,j} -\left( f_1(u_h)_x + f_2(u_h)_y, r_h \right)_K \\
        &+ \sum_{i,j} \int_{J_j} \left( \left(f_1(u_h^-) - f_1(u_h^+) \right) r_h^- + \left(\hat{f}_1-f_1(u_h^+) \right) [r_h] \right)_{i+\frac{1}{2},y} \, \mathrm{d}y \\
        &+ \sum_{i,j} \int_{I_i} \left( \left(f_2(u_h^-) - f_2(u_h^+) \right) r_h^- + \left(\hat{f}_2 -f_2(u_h^+) \right) [r_h] \right)_{x,j+\frac{1}{2}} \, \mathrm{d}x.  
    \end{align*}
    By hypothesis (iii) and Cauchy-Schwarz inequality, we get 
    $$\left|\sum_{i,j} -\left( f_1(u_h)_x + f_2(u_h)_y, r_h \right)_K \right| \leq C(1+\| u_h\|_{\infty}^p) \| \nabla u_h\| \| r_h\|.$$ 
    Moreover, the local Lax-Friedrichs flux and hypothesis (iii) imply
    $$|\hat f_i(u^-,u^+)-f_i(u^+)| + |f_i(u^-)-f_i(u^+)| \leq C(1+\|u_h\|_\infty^p)|[u_h]|, \qquad i=1,2 .$$
    Using this bound, the trace inverse inequality, and the discrete Cauchy--Schwarz inequality, the interface terms satisfy
    $$|\text{interface terms}| \leq C(1+\|u_h\|_\infty^p)  \mu h^{-\frac12}\|[u_h]\|_{\Gamma_h}\|r_h\|.$$
    Combining the volume and interface estimates gives
    \[ |\tilde{H}(f_1,f_2,u_h,r_h)| \leq C(1+\| u_h\|_{\infty}^p) \left(\| \nabla u_h\| + {\mu} h^{-\frac{1}{2}}\|[u_h]\|_{\Gamma_h} \right) \| r_h\|. \]
    
    To estimate $D(f_1,f_2;u_h,z_h,r_h)$, note that
    \begin{align*}
        D(f_1,f_2;u_h,z_h,r_h) &= \sum_{i,j} \left( \left( f_1(u_h)-f_1(z_h), (r_h)_x \right)_K + \left(f_2(u_h)-f_2(z_h), (r_h)_y \right)_K \right) \\
        &+ \sum_{i,j} \int_{J_j} \bigl( \bigl(\hat{f}_{1}(u_h) - \hat{f}_{1}(z_h)\bigr) [r_h] \bigr)_{i+\frac{1}{2},y} \, \mathrm{d}y + \sum_{i,j} \int_{I_i} \bigl( \bigl(\hat{f}_{2}(u_h) - \hat{f}_{2}(z_h)\bigr) [r_h] \bigr)_{x,j+\frac{1}{2}} \, \mathrm{d}x. 
    \end{align*}
    Similarly, the volume terms are bounded by hypothesis (iii):
    $$\left|\sum_{i,j} \left( \left( f_1(u_h)-f_1(z_h), (r_h)_x \right)_K + \left(f_2(u_h)-f_2(z_h), (r_h)_y \right)_K \right) \right| 
        \leq C_f \| (u_h-z_h)\| \| \nabla r_h\| .$$ 
    For the interface terms, the local Lax-Friedrichs flux is Lipschitz on bounded sets. Hence, for $i=1,2$,
    $$|\hat f_i(u_h^-,u_h^+)-\hat f_i(z_h^-,z_h^+)| \leq C_f\left( |u_h^- - z_h^-|+|u_h^+ - z_h^+| \right).$$
    Therefore, by the trace inverse inequality and the discrete Cauchy--Schwarz inequality,
    $$|\text{interface terms}| \leq C_f\,\mu h^{-\frac12} \|u_h-z_h\|\,\|[r_h]\|_{\Gamma_h}.$$
    Combining the volume and interface estimates yields
    \[ |D(f_1,f_2;u_h,z_h,r_h)| \leq C_f \| u_h-z_h\| \left(\| \nabla r_h\| + {\mu} h^{-\frac{1}{2}} \|[r_h]\|_{\Gamma_h} \right). \]
    This completes the proof.
\end{proof}

\subsection{The choice of $\{ \delta,\epsilon_i: i = 0,1,...,7 \}$ in Theorem \ref{thm:stability-estimate-2D}}
\label{appendix-eps-choice}
We claim that $\alpha > \alpha_0(q) := (2^{2q-1}C_b' + 2^{5q-4}C_b)^{1/q}C_b^{1/q}D_4^2K$ if and only if there exist $\delta,\epsilon_i >0$, $i=0,...,7$, such that $\mathcal{C}_1, \mathcal{C}_2, \mathcal{C}_3$ are positive. The three coefficients are given by 
\begin{align*}
    \mathcal{C}_1 &= \frac{1}{2^q}-\frac{2^q(2^{q-1}+\epsilon_4)}{4\epsilon_6}, \quad \mathcal{C}_2 = \left(\frac{1}{2}-\frac{1}{\epsilon_0}-\frac{1}{\epsilon_3} - \epsilon_1 \right)^q, \\
    \mathcal{C}_3 &= \left(\alpha - 4(C_{\mu}^*)^2\epsilon_0 \delta - \epsilon_2 B_3^2 \right)^q -  (2^{q-1}+\epsilon_4)\epsilon_3^q C_b'C_b K^qD_4^{2q}(1+\epsilon_5) - (2^{q-1}+\epsilon_4) 2^q \epsilon_6C_b^2 K^q D_4^{2q} (1+\epsilon_7).
\end{align*}

\begin{proof} 

We first prove the necessity. Suppose that the above coefficients are positive. The condition $\mathcal C_1>0$ implies $\epsilon_6 > 2^{2q-2}(2^{q-1}+\epsilon_4) > 2^{3q-3}$, while $\mathcal C_2>0$ implies $\epsilon_3>2$. Therefore,
$$
\begin{aligned}
& (2^{q-1}+\epsilon_4)  \epsilon_3^q C_b'C_bK^qD_4^{2q}(1+\epsilon_5) 
+  (2^{q-1}+\epsilon_4)2^q\epsilon_6 C_b^2K^qD_4^{2q}(1+\epsilon_7) \\
& \qquad
>2^{2q-1}C_b'C_bK^qD_4^{2q}+2^{5q-4}C_b^2K^qD_4^{2q}=\alpha_0^q(q).
\end{aligned}
$$
The condition $\mathcal C_3>0$ gives $\alpha^q >\alpha_0^q(q)$, hence $\alpha>\alpha_0(q)$.

To show the other direction, first choose $\epsilon_3>2$ and $\epsilon_6>2^{3q-3}$. Then for $1/\epsilon_0$ and $\epsilon_1$ small, we have $\mathcal{C}_2>0$. By taking $\epsilon_4>0$ sufficiently small we also ensure $\mathcal{C}_1>0$.

Next choose $\delta, \epsilon_2>0$ so that
\[  \delta \leq \delta(q) := \frac{\alpha - \alpha_0(q)}{16(C_{\mu}^*)^2 \epsilon_0}, \qquad
\epsilon_2 B_3^2 = \frac{\alpha-\alpha_0(q)}{4}
\qquad\text{if} \, B_3>0. 
\]
If $B_3=0$, we simply choose any $\epsilon_2>0$, since in that case $\epsilon_2B_3^2=0$. Then
\[
\alpha-4(C_{\mu}^*)^2\epsilon_0 \delta-\epsilon_2B_3^2 \ge \frac{\alpha+\alpha_0(q)}{2},
\]
and hence
\[
(\alpha-4(C_{\mu}^*)^2\epsilon_0 \delta-\epsilon_2B_3^2)^q
\ge \alpha_0^q(q) + \Bigl(\frac{\alpha-\alpha_0(q)}{2}\Bigr)^q.
\]
If $\epsilon_3$ and $\epsilon_6$ are chosen arbitrarily close to their lower bounds $2$ and $2^{3q-3}$, respectively, and if $\epsilon_4,\epsilon_5,\epsilon_7$ are taken sufficiently small, then
\begin{align*}
&(1+\epsilon_5)(2^{q-1}+\epsilon_4)\epsilon_3^q C_b'C_b K^qD_4^{2q}
+ (1+\epsilon_7)(2^{q-1}+\epsilon_4)2^q\epsilon_6 C_b^2K^qD_4^{2q}
\end{align*}
can be made arbitrarily close to
$$2^{2q-1} C_b'C_b K^qD_4^{2q} + 2^{5q-4} C_b^2K^qD_4^{2q} = \alpha_0^q(q),$$
which leads to
\begin{align*}
&(1+\epsilon_5)(2^{q-1}+\epsilon_4)\epsilon_3^q C_b'C_b K^qD_4^{2q}
+ (1+\epsilon_7)(2^{q-1}+\epsilon_4)2^q\epsilon_6 C_b^2K^qD_4^{2q} 
< \alpha_0^q(q) + \Bigl(\frac{\alpha-\alpha_0(q)}{2}\Bigr)^q.
\end{align*}
Consequently, we obtain $\mathcal{C}_3>0$.

Thus there exist $\delta \in (0,\delta(q)]$ and $\{\epsilon_i>0: i=0,\ldots,7\}$ such that $\mathcal{C}_1,\mathcal{C}_2,\mathcal{C}_3$ are all positive. 
\end{proof}

\subsection{Proof of Lemma \ref{lem:error2D-term1-2}}
\begin{proof}
Let $\epsilon_0, \epsilon_1 >0$ be arbitrary constants. We first estimate the term involving $\psi_1$. 
For each $t\in[t_n,t_{n+1}]$, we decompose
\begin{align*}
&\psi_1(\cdot,t,u,v_1,v_2)-\psi_1(\cdot,t_n,u_h^n,v_{1,h}^n,v_{2,h}^n) =
\bigl[\psi_1(\cdot,t,u,v_1,v_2) - \psi_1(\cdot,t,u^n,v_1^n,v_2^n)\bigr] \\
&\qquad\quad +
\bigl[\psi_1(\cdot,t,u^n,v_1^n,v_2^n)-\psi_1(\cdot,t_n,u^n,v_1^n,v_2^n)\bigr] +
\bigl[\psi_1(\cdot,t_n,u^n,v_1^n,v_2^n)-\psi_1(\cdot,t_n,u_h^n,v_{1,h}^n,v_{2,h}^n)\bigr].
\end{align*}
By the elementary inequality
$$\|a+b+c\|^2 \le C\bigl(\|a\|^2+\|b\|^2\bigr) + (1+\epsilon_0)\|c\|^2,$$
we obtain 
\begin{align*}
     &\left\| \psi_1(\cdot,t,u,v_1,v_2) - \psi_1(\cdot,t_{n},u_h^{n},v_{1,h}^{n},v_{2,h}^n) \right\|^2 
     \leq C\left\| \psi_1(\cdot,t,u,v_1,v_2) - \psi_1(\cdot,t,u^{n},v_{1}^{n},v_{2}^n) \right\|^2 \\
     &\hspace{2cm}
     + C\left\| \psi_1(\cdot,t,u^{n},v_1^{n},v_2^n) - \psi_1(\cdot,t_n,u^n,v_1^n, v_2^n) \right\|^2 \\
     &\hspace{2cm}
     + (1+\epsilon_0) \left\| \psi_1(\cdot,t_n,u^{n},v_1^{n},v_2^n) - \psi_1(\cdot,t_n,u_h^n,v_{1,h}^n, v_{2,h}^n) \right\|^2.
\end{align*}
As a result of hypothesis (iv),
\begin{align*}
    &\int_{t_n}^{t_{n+1}}\| \psi_1(\cdot,t,u,v_1,v_2) - \psi_1(\cdot,t,u^{n},v_{1}^{n},v_{2}^n) \|^2 \, \mathrm{d}t \leq 3B_1^2 \int_{t_n}^{t_{n+1}}\| u(t)-u^n \|_{1}^2 \, \mathrm{d}t, 
    \\
    &\int_{t_n}^{t_{n+1}}  \| \psi_1(\cdot,t,u^{n},v_1^{n},v_2^n) - \psi_1(\cdot,t_n,u^n,v_1^n, v_2^n)\|^2 \, \mathrm{d}t \leq 4B_4^2 k^2 \left(1 + \| u^n \|_{1}^2 \right), 
\end{align*}
Using $e_u^n=\xi_u^n-\eta_u^n$, $e_{v_i}^n=\xi_{v_i}^n-\eta_{v_i}^n$, together with Young's inequality and Lemma \ref{lem:proj-property-2d},
\begin{align*}
    &\| \psi_1(\cdot,t_n,u^{n},v_1^{n},v_2^n) - \psi_1(\cdot,t_n,u_h^n,v_{1,h}^n, v_{2,h}^n)\|^2 \leq B_1^2 \left\| |e_u^n| + |e_{v_1}^n| + |e_{v_2}^n| \right\|^2 \\
    &\leq C\| \xi_u^n\|^2 + Ch^{2r+2}(\| u^n\|_{r+1}^2 + \| {v}_1^n\|_{r+1}^2 + \| {v}_2^n\|_{r+1}^2) +  2B_1^2 (1+\epsilon_0) (\| \xi_{v_1}^n\|^2 + \| \xi_{v_2}^n\|^2).
\end{align*}
Combining all above estimates and summing from $n=0$ to $m$, we get
\begin{align*}
    &\sum_{n=0}^{m} \mathbf{1}_{\Omega_{\kappa,n}} \int_{t_n}^{t_{n+1}} \| \psi_1(\cdot,t,u,v_1,v_2) - \psi_1(\cdot,t_{n},u_h^{n},v_{1,h}^{n},v_{2,h}^n) \|^2 \, \mathrm{d}t \\
    &\leq C\sum_{n=0}^{m} \int_{t_n}^{t_{n+1}} \mathbf{1}_{\Omega_{\kappa,n}}\|u-u^n\|_{1}^2 \, \mathrm{d}t + Ck^2\sum_{n=0}^{m}(1+\| u^n\|_{1}^2 ) + Ck\sum_{n=0}^{m} \mathbf{1}_{\Omega_{\kappa,n}} \| \xi_u^{n}\|^2 \\
    &\quad 
    + 2B_1^2(1+\epsilon_0)^2 k\sum_{n=0}^{m} \mathbf{1}_{\Omega_{\kappa,n}} (\| \xi_{v_1}^n\|^2 + \| \xi_{v_2}^n\|^2) + Ch^{2r+2} k \sum_{n=0}^{m} (\| u^n\|_{r+1}^2 + \| {v}_1^n\|_{r+1}^2 + \| {v}_2^n\|_{r+1}^2). 
\end{align*}
Then the inequality \eqref{ineq:error2D-psi} follows from relabeling $2(1+\epsilon_0)^2$ with $2+\epsilon_0$. By the same argument, we can show the estimate for $g$ in \eqref{ineq:error2D-g} using hypothesis (v) in place of (iv). 
\end{proof}

\subsection{Proof of Lemma \ref{lem:error2D-term3}}

\begin{proof}
Let $\epsilon_2 >0$ be arbitrary. We first fix $i,j\in\{1,2\}$ and estimate the following two terms 
\begin{align*}
    T_1 = \max_{0 \leq l \leq m} k\sum_{n=0}^l -\mathbf{1}_{\Omega_{\kappa,n}} \big(a_{ij}^{n+1} - a_{h,ij}^{n+1},\, v_j^{n+1} \xi_{v_i}^{n+1} \big), \quad T_2 = \max_{0 \leq l \leq m} k\sum_{n=0}^l \mathbf{1}_{\Omega_{\kappa,n}} \big(a_{h,ij}^{n+1}\,\eta_{v_j}^{n+1},\, \xi_{v_i}^{n+1} \big). 
\end{align*}
For $T_1$, by Cauchy-Schwarz inequality, hypothesis (ii), Lemma \ref{lem:proj-property-2d} and the definition of subsets \eqref{def:error-subset-2d}
\begin{align*}
    |T_1| &\leq \alpha_1 k\sum_{n=0}^{m} \mathbf{1}_{\Omega_{\kappa,n}} \| v_j^{n+1}\|_{\infty} \| \xi_{v_i}^{n+1}\| \| e_u^{n+1}\| \leq \frac{\epsilon_2}{2} k\sum_{n=0}^{m} \mathbf{1}_{\Omega_{\kappa,n}} \| \xi_{v_i}^{n+1}\|^2 + C(1/\epsilon_2) k\sum_{n=0}^{m} \mathbf{1}_{\Omega_{\kappa,n}} \| v_j^{n+1}\|_{\infty}^2 \| e_u^{n+1}\|^2 \\
    &\leq \frac{\epsilon_2}{2} k\sum_{n=0}^{m} \mathbf{1}_{\Omega_{\kappa,n}} \| \xi_{v_i}^{n+1}\|^2 + C(1/\epsilon_2)\kappa k \sum_{n=0}^{m} \mathbf{1}_{\Omega_{\kappa,n}} \| \xi_u^{n+1}\|^2 + C(1/\epsilon_2)h^{2r+2} \kappa k \sum_{n=0}^{m} \mathbf{1}_{\Omega_{\kappa,n}} \| u^{n+1}\|_{{r+1}}^2. 
\end{align*}
Similarly, for $T_2$, by Young's inequality, hypothesis (ii), and Lemma \ref{lem:proj-property-2d},
\begin{align*}
    |T_2| &\leq \frac{\epsilon_2}{2} k\sum_{n=0}^{m} \mathbf{1}_{\Omega_{\kappa,n}} \| \xi_{v_i}^{n+1}\|^2 + \frac{1}{2\epsilon_2}k\sum_{n=0}^m \mathbf{1}_{\Omega_{\kappa,n}} \| a_{h,ij}^{n+1}\,\eta_{v_j}^{n+1}\|^2 \\
    &\leq \frac{\epsilon_2}{2} k\sum_{n=0}^{m} \mathbf{1}_{\Omega_{\kappa,n}} \| \xi_{v_i}^{n+1}\|^2 + C(1/\epsilon_2)h^{2r+2} k \sum_{n=0}^{m} \| v_j^{n+1}\|_{{r+1}}^2.
\end{align*}
Combining the above estimates of $T_1,T_2$ over $i,j=1,2$, we reach \eqref{ineq:lem-error-term3-2D}. 
\end{proof}

\subsection{Proof of Lemma \ref{lem:error2D-term4}}
\begin{proof}
Since $w_1,w_2$ are smooth exact solutions, applying integration by parts, and Cauchy-Schwarz inequality, we get
\begin{align*}
    \mathcal{I}_4 &\leq \sum_{n=0}^{m} \int_{t_n}^{t_{n+1}} \mathbf{1}_{\Omega_{\kappa,n}} \left|H^+(w_1(t)-w_1^{n+1}, w_2(t)-w_2^{n+1}, \xi_u^{n+1}) \right| \, \mathrm{d}t \\
    &= \sum_{n=0}^{m} \int_{t_n}^{t_{n+1}} \mathbf{1}_{\Omega_{\kappa,n}} \left|((w_1(t)-w_1^{n+1})_x + (w_2(t)-w_2^{n+1})_y, \xi_u^{n+1}) \right| \, \mathrm{d}t \\
    &\leq \sum_{n=0}^{m} \int_{t_n}^{t_{n+1}} \mathbf{1}_{\Omega_{\kappa,n}} \left\| (w_1(t)-w_1^{n+1})_x \right\|^2 + \left\| (w_2(t)-w_2^{n+1})_y \right\|^2 \, \mathrm{d}t + k\sum_{n=0}^{m} \mathbf{1}_{\Omega_{\kappa,n}} \|\xi_u^{n+1}\|^2. 
\end{align*}
We only provide a detailed estimate for $\| (w_1(t)-w_1^{n+1})_x \|^2$, as $\| (w_2(t)-w_2^{n+1})_y\|^2$ can be done similarly. 
Since $w_1(t)=a_{11}(t,u)v_1(t) + a_{12}(t,u)v_2(t)$, we write
\begin{align*}
    (w_1(t) - w_1^{n+1})_x 
    = T_{11} + T_{12}, 
    \qquad T_{1j} := \bigl(a_{1j}(t,u(t))v_j(t) -a_{1j}(t_{n+1},u^{n+1})v_j^{n+1}\bigr)_x,\, j=1,2.
\end{align*}
By hypothesis (vii), $T_{11}$ is bounded by 
\begin{align*}
    T_{11} &= a_{11}(v_1)_x - a_{11}^{n+1}(v_1)_x^{n+1} + (a_{11})_x v_1 - (a_{11}^{n+1})_x v_1^{n+1} \\
    &= a_{11}(v_1 -v_1^{n+1})_x + (a_{11}-a_{11}^{n+1})(v_1^{n+1})_x + (a_{11})_x(v_1 - v_1^{n+1}) + v_1^{n+1} (a_{11} - a_{11}^{n+1})_x \\
    &\leq C|(v_1 -v_1^{n+1})_x| + C|t-t_{n+1}|^{\frac{1}{2}}(1+|u|)|(v_1^{n+1})_x| + C|u-u^{n+1}||(v_1^{n+1})_x|  \\
    &+ C(1 + |u| + |v_1|)|v_1-v_1^{n+1}| + C\left(|u(t)-u^{n+1}| + |v_1-v_1^{n+1}| \right)|v_1^{n+1}| + C|t-t_{n+1}|^{\frac{1}{2}}(1+|u| + |v|)|v_1^{n+1}|. 
\end{align*}
Thus by the definition \eqref{def:error-subset-2d} of $\Omega_{\kappa,n}$, we have
\begin{align*}
    \sum_{n=0}^{m} \int_{t_n}^{t_{n+1}} \mathbf{1}_{\Omega_{\kappa,n}} \| T_{11} \|^2 \, \mathrm{d}t \leq C\kappa \sum_{n=0}^{m} \int_{t_n}^{t_{n+1}} \mathbf{1}_{\Omega_{\kappa,n}}\|u(t)-u^{n+1}\|_{2}^2 \, \mathrm{d}t + C\kappa k^2\sum_{n=0}^{m}\|v_1^{n+1}\|_{1}^2,
\end{align*}
where the $L^{\infty}$-norm of $u,v_1,(v_1)_x$ are absorbed by $\Omega_{\kappa,n}$. By the same argument, for $T_{12}$ we have
\begin{align*}
    \sum_{n=0}^{m} \int_{t_n}^{t_{n+1}} \mathbf{1}_{\Omega_{\kappa,n}} \| T_{12} \|^2 \, \mathrm{d}t \leq C\kappa \sum_{n=0}^{m} \int_{t_n}^{t_{n+1}} \mathbf{1}_{\Omega_{\kappa,n}}\|u(t)-u^{n+1}\|_{2}^2 \, \mathrm{d}t + C\kappa k^2\sum_{n=0}^{m}\|v_2^{n+1}\|_{1}^2.
\end{align*}
Similarly, the other term $\sum_{n=0}^{m} \int_{t_n}^{t_{n+1}} \mathbf{1}_{\Omega_{\kappa,n}} \| (w_2(t)-w_2^{n+1})_y \|^2 \, \mathrm{d}t$ can be bounded with the same bound. Combining these estimates, we get \eqref{ineq:error-estimate-term4-2D}. 
\begin{align*}
    \mathcal{I}_4 \leq k\sum_{n=0}^{m} \mathbf{1}_{\Omega_{\kappa,n}} \|\xi_u^{n+1}\|^2 + C\kappa \sum_{n=0}^{m} \int_{t_n}^{t_{n+1}} \mathbf{1}_{\Omega_{\kappa,n}}\|u(t)-u^{n+1}\|_{2}^2 \, \mathrm{d}t + C\kappa k^2\sum_{n=0}^{m} \bigl(\|v_1^{n+1}\|_{1}^2 + \|v_2^{n+1}\|_{1}^2 \bigr),
\end{align*}
which completes the proof. 
\end{proof}

\subsection{Proof of Lemma \ref{lem:error2D-term5}}
\begin{proof}
Let $\epsilon_3 >0$ be arbitrary. Since
\[ \| u - u_h^n\| = \bigl\| u - u^n + \xi_u^n - \eta_u^n \bigr\| \leq \| u-u^n\| + \| \xi_u^n\| + \| \eta_u^n\|, \]
by Corollary \ref{coro:nonlinear-convec-2D}, Young's inequality and triangle inequality, for $p \geq 1$, we have
\begin{align*}
    \mathcal{I}_5 
    &\leq C\sum_{n=0}^{m} \int_{t_n}^{t_{n+1}} \mathbf{1}_{\Omega_{\kappa,n}} \left(1+\|u(t)\|_{\infty}^p + \|u_h^{n}\|_{\infty}^p \right) \| u(t)-u_h^{n}\| \left(\| \xi_{v_1}^{n+1}\| + \| \eta_{v_1}^{n+1}\| + \| \xi_{v_2}^{n+1}\| + \| \eta_{v_2}^{n+1}\| \right)  \, \mathrm{d}t  \\ 
    &\leq C\sum_{n=0}^{m} \int_{t_n}^{t_{n+1}}  \| \eta_{v_1}^{n+1}\|^2 + \| \eta_{v_2}^{n+1}\|^2 + \mathbf{1}_{\Omega_{\kappa,n}} \epsilon_3 (\| \xi_{v_1}^{n+1}\|^2 + \| \xi_{v_2}^{n+1}\|^2) \, \mathrm{d}t \\
    &\quad + C\sum_{n=0}^{m} \int_{t_n}^{t_{n+1}} 3\left(\frac{1}{2\epsilon_3} +\frac{1}{2} \right) \mathbf{1}_{\Omega_{\kappa,n}} \left(1+\|u(t)\|_{\infty}^p + \|u_h^{n}\|_{\infty}^p \right)^2 \| u(t)-u^{n}\|^2 \, \mathrm{d}t \\
    &\quad + C\sum_{n=0}^{m} \int_{t_n}^{t_{n+1}} 3\left(\frac{1}{2\epsilon_3} +\frac{1}{2} \right) \mathbf{1}_{\Omega_{\kappa,n}} \left(1+\|u(t)\|_{\infty}^p + \|u_h^{n}\|_{\infty}^p \right)^2 \left(\|\xi_u^{n}\|^2 + \|\eta_u^{n}\|^2 \right) \, \mathrm{d}t \\
    &\leq Ch^{2r+2} k\sum_{n=0}^{m}(\| v_1^{n+1}\|_{r+1}^2 + \| v_2^{n+1}\|_{r+1}^2) + \epsilon_3 k \sum_{n=0}^{m}\mathbf{1}_{\Omega_{\kappa,n}} (\| \xi_{v_1}^{n+1}\|^2 + \| \xi_{v_2}^{n+1}\|^2) \notag \\
    &\quad + C\kappa \sum_{n=0}^{m} \int_{t_n}^{t_{n+1}} \|u(t)-u^{n}\|^2 \, \mathrm{d}t 
    + \kappa k\sum_{n=0}^{m} \mathbf{1}_{\Omega_{\kappa,n}} \|\xi_u^{n}\|^2 + C\kappa h^{2r+2} k\sum_{n=0}^{m}\| u^{n+1}\|_{r+1}^2,
\end{align*}
where the last inequality follows from the definition \eqref{def:error-subset-2d} of $\{ \Omega_{\kappa,n}\}$. 
\end{proof}

\subsection{Proof of Lemma \ref{lem:error2D-term6}}

\begin{proof}
We first decompose the source contribution. For each $0\leq l\leq m$, we write
\begin{align*}
&\sum_{n=0}^{l} \int_{t_n}^{t_{n+1}} \mathbf{1}_{\Omega_{\kappa,n}} \left((u_h^n)^\lambda-u(t)^\lambda,\xi_u^{n+1}\right) \,\mathrm{d}t \\
&\quad =
\sum_{n=0}^{l} \int_{t_n}^{t_{n+1}} \mathbf{1}_{\Omega_{\kappa,n}} \left((u^n)^\lambda-u(t)^\lambda,\xi_u^{n+1}\right) \,\mathrm{d}t 
+ k\sum_{n=0}^{l} \mathbf{1}_{\Omega_{\kappa,n}} \left((u_h^n)^\lambda-(u^n)^\lambda, \xi_u^{n+1}-\xi_u^n\right) \\
&\qquad
+ k\sum_{n=0}^{l} \mathbf{1}_{\Omega_{\kappa,n}} \left((u_h^n)^\lambda-(u^n)^\lambda,\eta_u^n\right) 
+ k\sum_{n=0}^{l} \mathbf{1}_{\Omega_{\kappa,n}} \left((u_h^n)^\lambda-(u^n)^\lambda,e_u^n\right).
\end{align*}
The last partial sum is nonpositive for every $l$. Indeed,
\[
    \left((u_h^n)^\lambda-(u^n)^\lambda,e_u^n\right)
    =
    \left((u_h^n)^\lambda-(u^n)^\lambda,u^n-u_h^n\right)
    \leq 0,
\]
because $\lambda$ is odd and $s\mapsto s^\lambda$ is monotone increasing. Therefore,
\begin{align*}
    \mathcal{I}_6 
    &\leq \max_{0 \leq l \leq m}  \sum_{n=0}^{l} \int_{t_n}^{t_{n+1}} \mathbf{1}_{\Omega_{\kappa,n}} \left((u^n)^{\lambda} - u(t)^{\lambda},\xi_u^{n+1} \right) \, \mathrm{d}t  
    + k\max_{0 \leq l \leq m} \sum_{n=0}^{l} \mathbf{1}_{\Omega_{\kappa,n}} \left((u_h^n)^{\lambda} - (u^n)^{\lambda},\xi_u^{n+1}-\xi_u^{n} \right)  \\
    &\qquad 
    + k\max_{0 \leq l \leq m} \sum_{n=0}^{l}  \mathbf{1}_{\Omega_{\kappa,n}} \left((u_h^n)^{\lambda} - (u^n)^{\lambda},\eta_u^{n} \right)   
    := T_1 + T_2 + T_3. 
\end{align*}
For $T_1$, by Cauchy-Schwarz inequality, \eqref{ineq:convex-2} and the definition of subsets \eqref{def:error-subset-2d}, we have
\begin{align*}
    T_1 &\leq k\sum_{n=0}^{m} \mathbf{1}_{\Omega_{\kappa,n}} \| \xi_u^{n+1}\|^2 + \frac{1}{4}\sum_{n=0}^{m} \int_{t_n}^{t_{n+1}}\mathbf{1}_{\Omega_{\kappa,n}} \left\| (u^n)^{\lambda} - u(t)^{\lambda} \right\|^2 \, \mathrm{d}t \\
    &\leq k\sum_{n=0}^{m} \mathbf{1}_{\Omega_{\kappa,n}} \| \xi_u^{n+1}\|^2 + \frac{\lambda^2}{2}\sum_{n=0}^{m} \int_{t_n}^{t_{n+1}}\mathbf{1}_{\Omega_{\kappa,n}} \left(\| u^n\|_{\infty}^{2\lambda-2} + \| u(t)\|_{\infty}^{2\lambda-2} \right) \left\| u^n - u(t) \right\|^2 \, \mathrm{d}t \\
    &\leq k\sum_{n=0}^{m} \mathbf{1}_{\Omega_{\kappa,n}} \| \xi_u^{n+1}\|^2 + \frac{\lambda^2}{2} \kappa \sum_{n=0}^{m} \int_{t_n}^{t_{n+1}}\mathbf{1}_{\Omega_{\kappa,n}} \left\| u^n - u(t) \right\|^2 \, \mathrm{d}t. 
\end{align*}
Similarly, for any $\epsilon_4 >0$ we get
\begin{align*}
    T_2 &\leq \epsilon_4 \sum_{n=0}^{m} \mathbf{1}_{\Omega_{\kappa,n}} \| \xi_u^{n+1}-\xi_u^{n}\|^2 + \frac{k^2}{4\epsilon_4}\sum_{n=0}^{m} \mathbf{1}_{\Omega_{\kappa,n}} \left\| (u_h^n)^{\lambda} - (u^n)^{\lambda} \right\|^2 \\
    &\leq \epsilon_4 \sum_{n=0}^{m} \mathbf{1}_{\Omega_{\kappa,n}} \| \xi_u^{n+1}-\xi_u^{n}\|^2 + Ck^2 \sum_{n=0}^{m} \mathbf{1}_{\Omega_{\kappa,n}} \left(\| u_h^n\|_{\infty}^{2\lambda-2} + \| u^n\|_{\infty}^{2\lambda-2} \right) \left\| e_u^n \right\|^2 \\
    &\leq \epsilon_4 \sum_{n=0}^{m} \mathbf{1}_{\Omega_{\kappa,n}} \| \xi_u^{n+1}-\xi_u^{n}\|^2 + C\kappa k^2 \sum_{n=0}^{m} \mathbf{1}_{\Omega_{\kappa,n}} \left\| \xi_u^n \right\|^2 + Ch^{2r+2} \kappa k^2 \sum_{n=0}^{m} \mathbf{1}_{\Omega_{\kappa,n}} \|u^n \|_{r+1}^2.
\end{align*}
and
\begin{align*}
    T_3 &\leq k\sum_{n=0}^{m} \mathbf{1}_{\Omega_{\kappa,n}} \| \eta_u^{n}\|^2 + k\sum_{n=0}^{m} \mathbf{1}_{\Omega_{\kappa,n}} \left\| (u_h^n)^{\lambda} - (u^n)^{\lambda} \right\|^2 \\
    &\leq Ch^{2r+2}k\sum_{n=0}^{m} \mathbf{1}_{\Omega_{\kappa,n}} \| u^{n}\|_{r+1}^2 + 2\lambda^2 k\sum_{n=0}^{m} \mathbf{1}_{\Omega_{\kappa,n}} \left(\| u_h^n\|_{\infty}^{2\lambda-2} + \| u^n\|_{\infty}^{2\lambda-2} \right) \left\| e_u^n \right\|^2 \\
    &\leq C\kappa k \sum_{n=0}^{m} \mathbf{1}_{\Omega_{\kappa,n}} \left\| \xi_u^n \right\|^2 + Ch^{2r+2}\kappa k\sum_{n=0}^{m} \mathbf{1}_{\Omega_{\kappa,n}} \| u^{n}\|_{r+1}^2.
\end{align*}
Combining the estimates of $T_1$--$T_3$ above, we get \eqref{ineq:error-estimate-term6-2d}. 
\end{proof}

\subsection{Proof of Lemma \ref{lem:w_h-v_h-2D}}
\label{appendix-w_h-v_h-2d}
\begin{proof}
We only prove the bound for $\| \xi_{w_1}^{n+1}\|^2$, as the bound for $\| \xi_{w_2}^{n+1}\|^2$ follows in the same way. By the error equation \eqref{eq:error-3-4-2d}, for any $z_h\in \mathbb V_h$,
\begin{gather*}
    (\xi_{w_1}^{n+1},z_h) = (\eta_{w_1}^{n+1},z_h) + \big(a_{11}^{n+1} - a_{h,11}^{n+1},\, v_1^{n+1} z_h \big) + \big(a_{h,11}^{n+1}\, \xi_{v_1}^{n+1},\, z_h \big) - \big(a_{h,11}^{n+1}\, \eta_{v_1}^{n+1},\, z_h \big) \\
    + \big(a_{12}^{n+1} - a_{h,12}^{n+1},\, v_2^{n+1} z_h \big) + \big(a_{h,12}^{n+1}\,\xi_{v_2}^{n+1},\, z_h \big) - \big(a_{h,12}^{n+1}\,\eta_{v_2}^{n+1},\, z_h \big).
\end{gather*}
Let $z_h = \xi_{w_1}^{n+1}$, by Young's inequality, Cauchy-Schwarz inequality, hypothesis (ii) and Lemma \ref{lem:proj-property-2d}, we have
\begin{align*}
    \| \xi_{w_1}^{n+1}\|^2 &\leq 7\| \eta_{w_1}^{n+1}\|^2 + C\| e_u^{n+1} v_1^{n+1}\|^2 + C\| e_u^{n+1} v_2^{n+1}\|^2  \\
    &\quad + 7\Lambda \| \xi_{v_1}^{n+1}\|^2 + 7\Lambda\|\eta_{v_1}^{n+1}\|^2 + 7\Lambda \| \xi_{v_2}^{n+1}\|^2 + 7\Lambda\|\eta_{v_2}^{n+1}\|^2 \\
    &\leq Ch^{2r+2}(\| w_1^{n+1}\|_{r+1}^2 + \| v_1^{n+1}\|_{r+1}^2 + \| v_2^{n+1}\|_{r+1}^2) + 7\Lambda (\| \xi_{v_1}^{n+1}\|^2 + \| \xi_{v_2}^{n+1}\|^2) \\
    &\quad + C\| \xi_u^{n+1} \|^2 (\|v_1^{n+1}\|_{\infty}^2 + \|v_2^{n+1}\|_{\infty}^2) + C\| \eta_u^{n+1} \|^2 (\|v_1^{n+1}\|_{\infty}^2 + \|v_2^{n+1}\|_{\infty}^2).
\end{align*}
Thus summing from $n=0$ to $m$ and multiplying by the subset indicator function, we reach 
\begin{align*}
    k\sum_{n=0}^m  \mathbf{1}_{\Omega_{\kappa,n}} \| \xi_{w_1}^{n+1}\|^2 &\leq 7\Lambda k\sum_{n=0}^m \mathbf{1}_{\Omega_{\kappa,n}} (\| \xi_{v_1}^{n+1}\|^2 + \| \xi_{v_2}^{n+1}\|^2)  
    + C\kappa k\sum_{n=0}^m \mathbf{1}_{\Omega_{\kappa,n}} \| \xi_{u}^{n+1}\|^2 \\
    &+ C\kappa h^{2r+2} k\sum_{n=0}^m \| u^{n+1}\|_{r+1}^2 + Ch^{2r+2} k\sum_{n=0}^m (\| w_1^{n+1}\|_{r+1}^2 + \| v_1^{n+1}\|_{r+1}^2 + \| v_2^{n+1}\|_{r+1}^2).
\end{align*}
By the same argument,
\begin{align*}
    k\sum_{n=0}^m  \mathbf{1}_{\Omega_{\kappa,n}} \| \xi_{w_2}^{n+1}\|^2 &\leq 7\Lambda k\sum_{n=0}^m \mathbf{1}_{\Omega_{\kappa,n}} (\| \xi_{v_1}^{n+1}\|^2 + \| \xi_{v_2}^{n+1}\|^2)  
    + C\kappa k\sum_{n=0}^m \mathbf{1}_{\Omega_{\kappa,n}} \| \xi_{u}^{n+1}\|^2 \\
    &+ C\kappa h^{2r+2} k\sum_{n=0}^m \| u^{n+1}\|_{r+1}^2 + Ch^{2r+2} k\sum_{n=0}^m (\| w_2^{n+1}\|_{r+1}^2 + \| v_1^{n+1}\|_{r+1}^2 + \| v_2^{n+1}\|_{r+1}^2).
\end{align*}
Therefore, we get \eqref{ineq:w_h-v_h-2D} by combining the two inequalities above. 
\end{proof}

\end{appendices}


 \printbibliography


\end{document}